\documentclass[11pt,twoside]{article}
\hfuzz=2pt
\usepackage{amsmath}
\usepackage{amssymb}
\usepackage{enumerate,epsfig}
\usepackage{epstopdf}
\usepackage{graphicx}
\textwidth=420pt
\newtheorem{theorem}{Theorem}[section]
\newtheorem{lemma}[theorem]{Lemma}

\newtheorem{remark}[theorem]{Remark}
\newtheorem{prop}[theorem]{Proposition}
\newtheorem{defi}[theorem]{Definition}

\newenvironment{proof}{\paragraph{Proof} \phantom{9}}{\hfill$\Box$\bigskip}
\newcommand{\R}{ {\mathbb R} }
\newcommand{\N}{{\mathbb N}}
\newcommand{\eps}{{\varepsilon}}
\renewcommand{\div}{\mbox{div}\,}
\makeatletter
\@addtoreset{equation}{section}
\makeatother

\newcommand{\cqfd}{{\unskip\kern 6pt\penalty 500
\raise -2pt\hbox{\vrule\vbox to 6pt{\hrule width 6pt
\vfill\hrule}\vrule}\par}}
\newcommand{\ep}{{\varepsilon}}
\newcommand{\ind}{{\mathbb I}}

\title{Global Existence of Weak Solutions for Compresssible Navier--Stokes Equations:
         Thermodynamically unstable pressure and anisotropic viscous stress tensor}
\author{Didier Bresch
\footnote{LAMA CNRS UMR 5127, University of Savoie Mont-Blanc, Bat. Le Chablais, Campus scientifique, 73376 Le Bourget du Lac, France. D.~{\sc Bresch} is partially supported by the ANR- 13-BS01-0003-01 project DYFICOLTI. Email: didier.bresch@univ-smb.fr },
Pierre--Emmanuel Jabin
\footnote{CSCAMM and Dept. of Mathematics, University of Maryland,
College Park, MD 20742, USA. P.--E.~{\sc Jabin} is partially supported by NSF Grant 1312142 and by NSF Grant RNMS (Ki-Net) 1107444. Email: pjabin@cscamm.umd.edu}
}
\date{}
\begin{document}
\maketitle

\abstract
We prove global existence of appropriate weak solutions for the compressible Navier--Stokes equations for more general stress tensor than those covered by {\sc P.--L. Lions} and E. {\sc Feireisl}'s theory. More precisely we focus on more {\it general pressure laws} which are {\it not thermodynamically stable}; we are also able to handle some  {\it anisotropy in the viscous stress tensor}. 
  To give answers to these two longstanding problems, we revisit the classical compactness theory on the density  by obtaining precise quantitative regularity estimates: This requires a more precise analysis of the structure of the equations combined to a novel approach to the compactness of the continuity equation.
  These two cases open the theory to important physical applications, for instance to describe solar events (virial pressure law), geophysical flows (eddy viscosity) or biological situations (anisotropy).  

\medskip

\noindent {\bf Keywords.} Compressible Navier-Stokes, global--weak solutions, 
transport equation,  propagation of regularity,  non--monotone pressure laws, 
anisotropic viscous stress,  vacuum state, non-local terms.

\medskip

\noindent {\bf AMS classifications.}  35Q30, 35D30, 54D30, 42B37, 35Q86, 92B05.

\vfill
\eject

\tableofcontents
\section{Introduction}
The question of global in time existence of solutions to fluid dynamics' models goes back to the pioneering work by {\sc J. Leray} (1933) where he introduced the concept of weak (turbulent) solutions to the Navier--Stokes systems describing the motion of an {\sc incompressible} fluid; this work has become the basis of the underlying mathematical theory up to present days.
  The theory for viscous {\sc compressible} fluids in a barotropic regime has, in comparison, been developed more recently in the monograph by {\sc P.--L. Lions} \cite{Li2} (1993-1998), later extended by E. ~{\sc Feireisl}  and collaborators \cite{FeNoPe} (2001) and has been since then a very active field of study. 

When changes in temperature are not taken into account, the barotropic Navier-Stokes system reads
\begin{equation}\label{Barocomp0}
\left\{
\begin{array}{rl}
& \partial_t \rho + {\rm div} (\rho u) =0,\\
& \partial_t (\rho u) + {\rm div}(\rho u\otimes u) - {\rm div} \, \sigma  =\rho f,
\end{array}
\right. 
\end{equation} 
where $\rho$ and $u$ denote respectively the density and the velocity field. 
The stress tensor $\sigma$ of a general fluid obeys Stokes' law $\sigma = {\cal S} - \,  P \,{\rm Id}$
where $P$ is a scalar function termed pressure (depending on the density in the compressible barotropic setting or being an unknown in the incompressible setting) and ${\cal S}$ denotes the viscous stress tensor which characterizes the measure of resistance of the fluid to flow. 

Our approach also applies to the Navier-Stokes-Fourier system, as explained in section \ref{withtemperature}, which is considered more physically relevant. But our main purpose here is to explain how the new regularity method that we introduce can be applied to a wide range of Navier-Stokes like models and not to focus on a particular system. For this reason, we discuss the main features of our new theory on the simpler \eqref{Barocomp0}.  We indicate later in the article after the main ideas how
to extend the result to some Navier-Stokes-Fourier systems.
 
    In comparison with Leray's work on incompressible flows, which is nowadays relatively "simple" at least from the point of view of modern functional analysis (and in the linear viscous stress tensor case), the mathematical theory of weak solutions to compressible fluids is quite involved, bearing many common aspects   with the theory of nonlinear conservation laws.

Our focus is on the global existence of weak solutions. For this reason we will not refer to the important question of existence of strong solutions or the corresponding uniqueness issues.

\medskip

 Several important problems about global existence of weak solutions for compressible flows remain open. We consider in this article the following  questions
\begin{itemize}
\item General pressure laws, in particular without any monotonicity assumption;
\item Anisotropy in the viscous stress tensor which is especially important in geophysics.
\end{itemize}
In the current {\sc Lions-Feireisl} theory,  the pressure law $P$ is often assumed to be of the form $P(\rho)= a\rho^\gamma$  but this can be generalized, a typical example being
\begin{align} 
&\nonumber  P\in {\cal C}^1([0,+\infty)), \quad P(0)=0 \hbox{ with }  \\
&a\rho^{\gamma-1} - b \le P'(\rho) \le  \frac{1}{a} \rho^{\gamma-1} + b \hbox{ with } \gamma>d/2 \label{nonmonopressure1}
\end{align}
for some constants $a>0 , b\ge 0$: See   {\sc B. Ducomet, E.~Feireisl, H. Petzeltova, I. Straskraba} \cite{DuFePeSt} or {\sc E.~Feireisl} \cite{Fe1} for slightly more general assumptions.
However it is always required that {\em $P(\rho)$ be increasing} after a certain critical 
value of~$\rho$.

\medskip

\noindent This monotonicity of $P$ is connected to several well known difficulties
\begin{itemize}
\item The monotonicity of the pressure law is required for the stability of the thermodynamical equilibrium. Changes in monotonicity in the pressure are typically connected to intricate phase transition problems. 
\item At the level of compressible Euler, {\em i.e.} when ${\cal S}=0$, non-monotone pressure laws may lead to a loss of hyperbolicity in the system, possibly leading to corrected systems (as by {\sc Korteweg} in particular). 
\end{itemize}

In spite of these issues, we are able to show that compressible Navier-Stokes systems like \eqref{Barocomp0} are {\em globally well posed without monotonicity assumptions on the pressure law}; instead only rough growth estimates are required. This allows to {\em consider for the first time several famous physical laws such as modified virial expansions}.  
 
\medskip

As for the pressure law, the theory initiated in the multi-dimensional setting by {\sc P.--L.~Lions} and {\sc  E.~Feireisl} requires that the stress tensor has the very specific form
\[
 \sigma = 2 \mu D(u) + \lambda {\rm div} u\,  {\rm Id}  - P(\rho)\,  {\rm Id}
\]
with $D(u)= (\nabla u + \nabla u^T)/2$,  $\mu$ and $\lambda$ such that
$\lambda + 2 {\mu}/{d}  \ge 0$. The coefficients $\lambda$ and $\mu$ do not need to be constant but require some explicit regularity,
 see for instance \cite{FeNo} for temperature dependent coefficients.
 
Unfortunately several physical situations involve some anisotropy in the stress tensor; geophysical flows for instance use and require such constitutive laws, see for instance  \cite{TeZi} and \cite{BrDeGe} with eddy viscosity in turbulent flows.
   
 We present in this article the first results able to handle {\it more general viscous stress tensor}  of the form  
\[
\sigma = A(t) \nabla u  + \lambda {\rm div} u \, {\rm Id}  - P(\rho)\, {\rm Id} 
\]
with a $d\times d$ symmetric matrix $A$ with regular enough coefficients. The matrix $A$ can incorporate anisotropic phenomena in the fluid. 
  Note that our result also applies to the case
\[
\sigma = A(t) D(u)  + \lambda {\rm div} u \, {\rm Id}  - P(\rho)\, {\rm Id}
\] 
 where $D(u)= (\nabla u + \nabla u^T)/2$ still.
 
 \medskip
 
Our new results therefore significantly expand the reach of the current theory for compressible Navier-Stokes and make it more robust with respect to the large variety of laws of state and stress tensors that are used. 
This is achieved through a complete revisiting of the classical compactness theory by obtaining {\it quantitative regularity estimates}. The idea is inspired by estimates obtained for nonlinear continuity equations in \cite{BeJa}, though with a different method than the one introduced here. Those estimates correspond to critical spaces, also developed and used for instance in works by J.~{\sc Bourgain}, H.~{\sc Br\'ezis} and P.~{\sc Mironescu} and by A.C. ~{\sc Ponce}, see \cite{BoBrMi} and \cite{Po}.  

   Because of the weak regularity of the velocity field, the corresponding norm of the critical space cannot be propagated. Instead the norm has to be modified by  weights based on a auxiliary function which solves a kind of dual equation adapted to the compressible Navier--Stokes system under consideration.  After proving appropriate properties of the weights, we can prove compactness on 
 the density.

\medskip

The plan of the article is as follows
\begin{itemize}
\item Section \ref{classicaltheory} presents the classical theory by P.~--~L.~{\sc Lions} and E.~{\sc Feireisl}, with the basic energy estimates. It explains why the classical proof of compactness does not seem able to handle the more general equations of state that concern us here. We also summarize the basic physical discussions on pressure laws and stress tensors choices which motivates our study. This section can be skipped by readers which are already familiar with the state of the art.
\item    In Section \ref{Eqsmainresults}, we present the equations and the corresponding main results concerning global existence of weak solutions for non-monotone pressure law  and then for anisotropic viscous stress tensor. Those are given in the barotropic setting.
\item  Section \ref{sketchnewcompactness} is devoted to an introduction to our new method. We give our quantitative compactness criterion and  
  we show the basic ideas in the simplex context of linear uncoupled transport equations 
  and a very rough sketch of proof in the compressible Navier--Stokes setting.
\item  Section \ref{secstability} states the stability results which constitute the main contribution of the paper. 
\item  Section \ref{usefulsection} states technical lemmas which are needed in the main proof and are based on classical harmonic analysis tools: Maximal and square functions properties, translation
of operators. 
\item  Section \ref{secrenormalized} and \ref{proofstabilityres} constitutes the heart of the proof. Section \ref{secrenormalized} is devoted to renormalized equation with definitions and properties of the weights.
   Section \ref{proofstabilityres} is devoted to the proof of the stability results of section \ref{secstability} both concerning more general pressure laws and concerning the anisotropic stress tensor.  
\item Section \ref{sec-proof-mainresults} concerns the construction of the approximate solutions. It uses the stability results of section \ref{secstability} to conclude the proof of the existence theorems of section \ref{Eqsmainresults}.
\item The extension for non-monotone pressure laws (with respect to density) to the Navier--Stokes--Fourier system is discussed in Section \ref{withtemperature}. It contains a discussion of the state of the art complementing section \ref{classicaltheory} in that case. It follows some steps
already included in the book \cite{FeNo} but also ask for a careful check of the estimates at each
level of approximation.
\item  We present in Section \ref{otherextensions} some models occurring in other contexts where the new mathematical techniques presented here could be useful in the future. 
\item Section \ref{notations} is a list of some of the notations that we use.
\item Section \ref{Besov}  is an appendix recalling basic facts on Besov spaces which are used in the article.
\end{itemize}

\medskip

\section{Classical theory by {\sc E. Feireisl} and  {\sc P.--L.~Lions}, open problems and physical considerations\label{classicaltheory}}
We consider for the moment compressible fluid dynamics in a general domain $\Omega$ which can be the whole space $\R^d$, a periodic box or a bounded smooth domain with adequate boundary conditions. We do not precise the boundary conditions and instead leave those various choices open as they may depend on the problem and we want to insist in this section on the common difficulties and approaches. We will later present our precise estimates in the periodic setting for simplicity.
\subsection{A priori estimates \label{basicaprioriest}}
We collect the main physical {\it a priori} estimates for very general barotropic systems
on $\R_+\times \Omega$ 
\begin{equation}\label{Barocompgeneral}
\left\{
\begin{array}{rl}
& \partial_t \rho + {\rm div} (\rho u) =0,\\
& \partial_t (\rho u) + {\rm div}(\rho u\otimes u) -{\cal D}\,u + \nabla P(\rho) =\rho f,
\end{array}
\right. 
\end{equation}
where ${\cal D}$ is only assumed to be a negative differential operator in divergence form on $u$ s.t.
\begin{equation}
\int_\Omega u\cdot {\cal D}\,u\,dx\sim -\int_\Omega |\nabla u|^2\,dx,\label{generalstress}
\end{equation}
and for any $\phi$ and $u$
\begin{equation}
\int_\Omega \phi\cdot {\cal D}\,u\,dx\leq C\,\|\nabla\phi\|_{L^2}\,\|\nabla u\|_{L^2}. \label{generalstress2}
\end{equation}
The following estimates form the basis of the classical theory of existence of weak solutions and we will use them in our own proof. We only give the formal derivation of the estimates at the time being.

First of all, the total energy of the fluid is dissipated. This energy is the sum of the kinetic energy and the potential energy (due to the compressibility) namely
\[
E(\rho,u)=\int_\Omega \left(\rho \frac{|u|^2}{2}+\rho e(\rho)\right)\,dx,
\]
where 
$$e(\rho)= \int_{\rho_{\rm ref}}^\rho P(s)/s^2 ds$$
 with $\rho_{\rm ref}$ a constant reference density. Observe that formally from \eqref{Barocompgeneral}
\[
\partial_t \bigl(\rho\frac{|u|^2}{2}\bigr)+\div\left(\rho\,u \frac{|u|^2}{2}\right)-u\cdot {\cal D}u+u\cdot \nabla P(\rho)=\rho\,f\cdot u, 
\]
and thus
\[
\frac{d}{dt} \int_\Omega \rho\frac{|u|^2}{2}-\int_\Omega u\cdot {\cal D}\,u-\int_\Omega P(\rho)\,\div\,u=\int_\Omega \rho f\cdot u.
\]
On the other hand, by the definition of $e$, the continuity equation on $\rho$ implies that
\[
\partial_t (\rho e(\rho))
+\div(\rho e(\rho)\,u)+P(\rho)\,\div u=0.
\]
Integrating and combining with the previous equality leads to the energy equality
\begin{equation}
\frac{d}{dt}E(\rho,u)-\int_\Omega u\cdot {\cal D}\,u=\int_\Omega \rho f\cdot u.
\label{energyeq}
\end{equation}
Let us quantify further the estimates which follow from \eqref{energyeq}. Assume that $P(\rho)$ behaves roughly like $\rho^\gamma$ in the following weak sense 
\begin{equation}
C^{-1}\,\rho^\gamma-C\leq P(\rho)\leq C\,\rho^\gamma+C,\label{quantpress}
\end{equation}
then $\rho e(\rho)$ also behaves like $\rho^\gamma$. Note that \eqref{quantpress} does not imply any monotonicity on $P$ which could keep oscillating. One could also work with even more general assumption than \eqref{quantpress}: Different exponents $\gamma$ on the left--hand side and the right--hand side for instance... But for simplicity we use \eqref{quantpress}.

Assuming that $f$ is bounded (or with enough integrability), one now deduces from \eqref{energyeq} the following uniform bounds
\begin{equation}\begin{split}
&\sup_t \int_\Omega \rho\,|u|^2\,dx\leq C+E(\rho^0,u^0),\\
& \sup_t \int_\Omega \rho^\gamma\,dx\leq C,\\
&\int_0^T\int_\Omega |\nabla u|^2\,dx\leq C.
\end{split}\label{energyestimates}
\end{equation}
We can now improve on the integrability of $\rho$, as it was first observed by {\sc P.-L. Lions}. Choose any smooth, positive $\chi(t)$ with compact support,  and test the momentum equation by $\chi\,g=\chi\,{\cal B}\,\rho^a$ where ${\cal B}$ is a linear operator (in $x$) s.t.
\[
\div\, g=(\rho^a-\overline{\rho^a}),\quad \|\nabla g\|_{L^p}\leq C_p\,\|\rho^a-\overline{\rho^a}\|_{L^p}, 
\quad  \|{\cal B}\,\phi\|_{L^p}\leq C_p\,\|\phi\|_{L^p}, \ \forall\ 1<p<\infty,
\]
where we denote by $\overline{\rho^a}$ the average of $\rho^a$ over $\Omega$. 
Finding $g$ is straightforward in the whole space but more delicate in bounded domain as the right boundary conditions must also be imposed. This is where {\sc E. Feireisl} {\em et al.} introduce the {\sc BOGOVSKI} operator.
   We obtain that
\[\begin{split}
\int\chi(t)\int_\Omega \rho^a\,P(\rho)\,dx\,dt\le &\int\chi(t)\int_\Omega g\,(\partial_t(\rho\,u)+\div(\rho\,u\otimes u)-{\cal D}\,u-\rho\,f)\,dx\,dt\\
& + \int \chi(t) \int_\Omega \overline{\rho^a} P(\rho).
\end{split}\]
By \eqref{quantpress}, the left--hand side dominates
\[
\int\chi(t)\int_\Omega \rho^{a+\gamma}\,dx\,dt.
\]
It is possible to bound the terms in the right--hand side. For instance by \eqref{generalstress2}
\[\begin{split}
-\int\chi(t)\, g\,{\cal D}\,u\,dx\,dt&\leq C\,\|\nabla u\|_{L^2([0,\ T],\;L^2(\Omega))}\, \|\chi\,\nabla g\|_{L^2([0,\ T],\;L^2(\Omega))}\\
&\leq C\,\|\nabla u\|_{L^2([0,\ T],\;L^2(\Omega))}\, \|\chi\,(\rho^a- \overline {\rho^a})\|_{L^2([0,\ T],\;L^2(\Omega))},
\end{split}
\]
by the choice of $g$. Given the bound \eqref{energyestimates} on $\nabla u$, 
this term does not pose any problem if $2\,a<a+\gamma$. Next
\begin{equation}
\int\chi\, g\,\partial_t(\rho\,u)\,dx\,dt=-\int(g\,\chi'(t)+\chi(t)\,{\cal B}\,(\partial_t (\rho^a- \overline{\rho^a})))\,\rho\,u\,dx\,dt.\label{gainrhotheta}
\end{equation}
The first term in the right--hand side is easy to bound; as for the second one, the continuity equation implies
\begin{equation}\begin{split}
\int\chi(t)\,{\cal B}\,(\partial_t (\rho^a-\overline{\rho^a}))\,\rho\,u\,dx\,dt=-\int\chi\,\left[{\cal B}\,(\div(u\, \rho^a))\right]\,\rho\,u\\
 - \int \chi \left[(a-1)\,{\cal B}\,\bigl(\rho^a\,\div\,u 
- \overline{{\rm div} (u \rho^a) + (a-1) \rho^a \div u)}\bigr)\right]\,\rho\,u.
\end{split} 
\end{equation} 
Using the properties of ${\cal B}$ and the energy estimates \eqref{energyestimates}, it is possible to control those terms as well as the last one in \eqref{gainrhotheta}, provided $a\leq 2\gamma/d -1$ and $\gamma>d/2$ which leads to
\begin{equation}
\int_0^T\int_\Omega \rho^{\gamma+a}\,dx\,dt\leq C(T, E(\rho^0,u^0)).\label{gainintegrability}
\end{equation}
\subsection{Heuristic presentation of the method by {\sc E.~Feireisl} and {\sc P.~--L.~Lions}\label{classicalapproach}}
Let us explain, briefly and only heuristically the main steps to prove global existence of weak solutions in the barotropic case with constant viscosities and power $\gamma$ pressure law. Our purpose is to highlight why a specific form of the pressure or of the stress tensor is needed in the classical approaches. We also refer for such a general presentation of the theory to the book by {\sc A. Novotny} and {\sc I. Straskraba} \cite{NoSt}, the monograph Etats de la Recherche edited by 
{\sc D. Bresch} \cite{Br} or the book by {\sc P.~Plotnikov} and {\sc J. Sokolowski} \cite{PlSo}.

Let us first consider the simplest model with constant viscosity coefficients $\mu$ and $\lambda$.
 In that case, the compressible Navier--Stokes equation reads on $\R_+\times \Omega$
\begin{equation}\label{Barocomp}
\left\{
\begin{array}{rl}
& \partial_t \rho + {\rm div} (\rho u) =0,\\
& \partial_t (\rho u) + {\rm div}(\rho u\otimes u) - \mu \Delta u - (\lambda+\mu) \nabla {\rm div} u + \nabla P(\rho) =\rho f,
\end{array}
\right. 
\end{equation} 
with $P(\rho)= a\rho^\gamma$. For simplicity, we work in a smooth, Lipschitz regular, bounded domain $\Omega$ with 
homogeneous Dirichlet boundary conditions on the velocity
\begin{equation}
u\vert_{\partial\Omega} =0.\label{Dirichlet}
\end{equation}   
A key concept for the existence of weak solutions is  the notion of renormalized solution to the continuity equation, as per the theory for linear transport equations by {\sc R.J. DiPerna} and {\sc P.--L. Lions}. 
Assuming $\rho$ and $u$ are smooth and satisfy the continuity equation, for all $b\in {\cal C}([0,+\infty))$,  one may multiply the equation by $b'(\rho)$ to find that $(\rho,u)$ also 
\begin{equation}
\partial_t b(\rho) + {\rm div}(b(\rho) u) + (b'(\rho)\rho - b(\rho)){\rm div} u = 0.\label{renormalizedeq}
\end{equation}
This leads to the following definition
\begin{defi}\label{defi}
For any $T\in (0,+\infty)$, $f$, $\rho_0$, $m_0$ satisfying some technical assumptions,
we say that a couple $(\rho,u)$ is a weak renormalized solution with bounded energy
if it has the following properties
\begin{itemize} 
\item $\rho \in L^\infty(0,T;L^\gamma(\Omega))\cap {\cal C}^0([0,T], L^\gamma_{\rm weak}(\Omega))$,
$\rho\ge 0$ {\it a.e.} in $(0,T)\times\Omega$, $\rho|_{t=0}= \rho_0$ {\it a.e.} in $\Omega$;
\item $u\in L^2(0,T;H_0^1(\Omega))$, 
$\rho |u|^2 \in L^\infty(0,T;L^1(\Omega))$, $\rho u$ is continuous in time with value in the weak topology of $L^{2\gamma/(\gamma+1)}_{\rm weak}(\Omega)$,
$(\rho u)|_{t=0}= m_0$ {\it a.e.} $\Omega$;
\item $(\rho,\,u)$ extended by zero out of $\Omega$ solves the mass and momentum equations in $\R^d$,
in ${\cal D}'((0,T)\times R^d)$;
\item For any smooth $b$ with appropriate monotony properties, $b(\rho)$ solves the renormalized Eq. \eqref{renormalizedeq}.
\item
For almost all $\tau \in (0,T)$, $(\rho,u)$ satisfies the energy inequality
\[
E(\rho,u)(\tau) +\int_0^\tau \int_\Omega (\mu |\nabla u|^2  + (\lambda+\mu)\, |{\rm div} u|^2) \le E_0 + \int_0^\tau\int_\Omega \rho f\cdot u.
\]
\end{itemize}
In this
inequality, $E(\rho,u)(\tau) = \int_\Omega (\rho|u|^2 / 2 + \rho e(\rho))(\tau)$, with
$e(\rho)= \int_{\rho_{\rm ref}}^\rho P(s)/s^2 ds$ ($\rho_{\rm ref}$ being any constant reference density), denotes the total
energy at time $\tau$ and $E_0=  \int_\Omega |m_0|^2 / 2\rho_0 + \rho_0 e(\rho_0)$
denotes the initial total energy.
\end{defi}
 Assuming $P(\rho) = a \rho^\gamma$ (in that case $e(\rho)$ may equal to $a\rho^{\gamma-1}/(\gamma-1)$), the theory developed by {\sc P.--L. Lions} to prove the global existence of renormalized weak solution
with bounded energy asks for some limitation on the adiabatic constant $\gamma$ namely
$\gamma > 3d/(d+2)$.  {\sc E.~Feireisl} {\it et al.} have generalized this approach in order to cover the range
$\gamma > 3/2$ in dimension 3 and more generally $\gamma>d/2$ where $d$ 
is the space dimension. 

We present the initial proof due to {\sc P.--L. Lions} and indicate quickly at the end how it was improved by
{\sc E.~Feireisl} {\it et al.}. The method relies on the construction of a sequence of approximate solution, derivation of {\em a priori} estimates and passage to the limit which requires delicate compactness estimate. We skip for the time being the construction of such an approximate sequence, see for instance the book by A.~{\sc Novotny} and I. {\sc straskraba} for details.

The approximate sequence, denoted by $(\rho_k,\;u_k)$, should satisfy the energy inequality leading to a first uniform {\it a priori bound}, using that $\mu>0$ and $\lambda+2\,\mu/d>0$
\[
\sup_t \int_\Omega (\rho_k\,|u_k|^2 / 2 + a \rho^\gamma_k /(\gamma-1))\,dx+
\mu \int_0^t\int_{\Omega} |\nabla u_k|^2\,dx\,dt\leq C,
\]
for some constant independent of $n$. 

For $\gamma>d/2$, we also have the final {\it a priori} estimate \eqref{gainintegrability} explained in the previous subsection namely
\[
\int_0^\infty \int_\Omega \rho_k^{\gamma+a} \le C(R,T) \hbox{ for } a \le \frac{2}{d}\gamma -1.
\]
When needed for clarification,  we denote by $\overline U$  the weak limit of a general sequence $U_k$
(up to a subsequence). Using the energy estimate and the extra integrability property proved on the density, and by extracting subsequences, one obtains the following convergence
\begin{align*}
&\rho_k \rightharpoonup \rho \hbox{ in } {\cal C}^0([0,T]; L^\gamma_{\rm weak}(\Omega)),\\
&\rho_k^\gamma \rightharpoonup \overline {\rho^\gamma} \hbox{ in } L^{(\gamma+a)/\gamma}((0,T)\times\Omega),\\
&\rho_k u_k \rightharpoonup \rho u \hbox{ in } {\cal C}^0([0,T]; L^{2\gamma/(\gamma+1)}(\Omega)),\\
&\rho_k u_k^iu_k^j \rightharpoonup \rho u^i\, u^j \hbox{ in } {\cal D}'((0,T)\times\Omega) \hbox{ for } i,j=1,2,3.
\end{align*}
The convergence of the nonlinear terms $\rho_k\,u_k$ and $\rho_k\,u_k\otimes u_k$ uses  the compactness in time of  $\rho_k$ deduced from the uniform estimate on  $\partial_t \rho_k$ given by the continuity equation and  the compactness in time of $\sqrt {\rho_k}u_k$ deduced form the uniform estimate on   $\partial_t(\rho_k u_k)$ given by the momentum equation. This is combined with the $L^2$ estimate on $\nabla u_k$. 

Consequently, the extensions by zero to $(0,T)\times R^3\slash\Omega$ of the functions
 $\rho, u, \overline{\rho^\gamma}$, denoted again  $\rho, u, \overline{\rho^\gamma}$,
satisfy the system in $\R_+\times \R^d$
\begin{equation}\label{Barocompaverage}
\left\{
\begin{array}{rl}
& \partial_t \rho + {\rm div} (\rho u) =0,\\
& \partial_t (\rho u) + {\rm div}(\rho u\otimes u) - \mu \Delta u - (\lambda+\mu) \nabla {\rm div} u + a \nabla  \overline{\rho^\gamma}=\rho f.
\end{array}
\right. 
\end{equation}
The difficulty consists in proving that $(\rho,u)$ is a renormalized weak solution 
with bounded energy and the main point is showing that $\overline {\rho^\gamma} = \rho^\gamma$ {\it a.e.} 
in $(0,T)\times\Omega$. 

This requires compactness on the density sequence which cannot follow from the previous {\em a priori} estimates only.
Instead {\sc P.--L.~Lions} uses a weak compactness of  the sequence $\{a \rho_k^\gamma - {\lambda+2\mu} {\rm div} u_k\}_{k\in N^*}$ which is
 usually called the viscous effective flux. This property was previously identified in one space dimension by {\sc D. Hoff} and  {\sc D. Serre}.  
 More precisely, we have the following property for all function $b\in {\cal C}^1([0,+\infty))$ satisfying some increasing properties
 at infinity
\begin{align} 
&\nonumber  \lim_{k\to +\infty} \int_0^T \int_\Omega (a \rho_k^\gamma - (2\mu+\lambda){\rm div} u_k)b(\rho_k) \varphi dx dt\\
&\qquad\qquad\qquad =
 \int_0^T \int_\Omega (a\overline{\rho^\gamma} - (2\mu+\lambda){\rm div} u)\overline{ b(\rho)}\varphi dx dt \label{divcurlequiv}
\end{align}
where the over-line quantities design the weak limit of the corresponding quantities and $\varphi \in {\cal D}((0,T)\times \Omega).$ Note that such a property is reminiscent of compensated compactness as the weak limit of a product is shown to be the product of the weak limits.
   In particular the previous property implies that
\begin{equation}\label{rel}
\overline{\rho{\rm div} u} - {\rho} {\rm div} u 
     = \frac{\overline{P(\rho)}\rho - \overline{P(\rho)\,\rho}}{2\mu+\lambda}
\end{equation}
Taking the divergence of the momentum equation,
we get the relation
\[
 \Delta [(2\mu+\lambda) {\rm div} u_k - P(\rho_k)] = 
    {\rm div} [\partial_t(\rho_k u_k) + {\rm div}(\rho_k u_k \otimes u_k)].
\]
Note that here the
 form of the stress tensor has been strongly used.
From this identity,  {\sc P.--L. Lions} proved the property \eqref{divcurlequiv} based on harmonic analysis due to {\sc  R. Coifman} and {\sc Y.~Meyer} (regularity properties of commutators) and takes the observations by {\sc D.  Serre} made in the one-dimensional case into account. The proof by  
{\sc E.~Feireisl} is based on div-curl Lemma introduced by {\sc F. Murat} and {\sc L.~Tartar}.  

   To simplify the remaining calculations, we assume $\gamma \ge 3d/(d+2)$ and in that case due to the extra integrability on the density, we get that $\rho_k \in L^2((0,T)\times\Omega).$ 
This lets us choose $b(s) = s \log s$ in the renormalized formulation for $\rho_k$ and $\rho$ and take the difference of  the two equations. Then pass to the  limit $n\to +\infty$ and use the identity of weak compactness on the effective flux  to replace terms with divergence of velocity by terms with density
using \eqref{rel}, leading to
\[
\partial_t(\overline{\rho\log\rho}- \rho\log \rho) + {\rm div} ((\overline{\rho\log\rho} - \rho\log \rho)u)
= \frac{1}{2\mu+\lambda} (\overline{P(\rho)}\rho - \overline{P(\rho)\rho}).
\]
Observe that the monotonicity of the pressure  $P(\rho)=a\rho^\gamma$ implies that
\[
\overline{P(\rho)}\rho - \overline{P(\rho)\rho}\leq 0.
\] 
This is the one point where the monotonicity assumption is used.
It allows to show that the defect measure for the sequence of density satisfies
\[
 {\rm dft} [\rho_k -\rho](t) = \int_\Omega \overline{\rho\log\rho}(t) - \rho\log\rho (t) \, dx\leq {\rm dft} [\rho_k -\rho](t=0).
\]
On the other hand, the
strict-convexity of the function $s\mapsto s\log s$, $s\ge 0$ implies that
$ {\rm dft} [\rho_k -\rho]\geq 0$.  If initially this quantity vanishes, it then vanishes at every later time.

Finally the commutation of the weak convergence with a strictly convex function yields {\em the strong convergence} of the density $\rho_k$ in $L^1_{\rm loc}$. Combined with the uniform bound of $\rho_k$ in $L^{\gamma+a} ((0,T)\times \Omega)$, we get
the strong convergence of the pressure term $\rho_k^\gamma$.

This concludes the proof  in the case $\gamma\ge 3d/(d+2)$.
The proof of {\sc E. Feireisl} works even if the density is not {\it a priori} square integrable.
For that {\sc E. Feireisl} observes that it is possible to control the amplitude of the possible oscillations
on the density in a norm $L^p$ with $p>2$ allowing to use an effective flux property with some truncature. Namely he introduced the following oscillation measure
\[
{\rm osc}_p [\rho_k-\rho] = \sup_{n\ge1}\,  [\limsup_{k\to +\infty} \|T_n(\rho_k)-T_n(\rho)\|_{L^p((0,T)\times\Omega} ],
\]
where $T_n$ are cut-off functions defined as
\[
T_n(z) = n T\bigl(\frac{z}{n}\bigr), \qquad n\ge 1
\]
with $T\in {\cal C}^2(R)$
\[
 T(z) = z \hbox{ for } z\le 1, \qquad
    T(z) = 2 \hbox{ for } z\ge 3, \qquad
    T \hbox{ concave on } R.
\]
 The existence result can  then obtained up to $\gamma >d/2$: See again the review by A.~{\sc Novotny} and I.~{\sc Straskraba} \cite{NoSt}.
 
  To the author's knowledge there exists few extension of the previous study to more general
pressure laws or more general stress tensor. Concerning a generalization of the pressure law, 
as explained in the introduction there exist the works by {\sc B.~Ducomet, E. Feireisl, H. Petzeltova, 
I. Straskraba} \cite{DuFePeSt} and  {\sc E. Feireisl} \cite{Fe1} where the hypothesis  imposed on the pressure $P$ imply that
 $$P(z)= r_3(z) - r_4(z)$$
 where $r_3$ is non-decreasing in $[0,+\infty)$ with $r_4\in {\cal C}^2([0,+\infty))$ satisfying 
 $r_4\ge 0$ and $r_4(z)\equiv 0$ when  $z\ge Z$ for a certain $Z\ge0$.
 The form is used to show that it is possible respectively  to continue to control the amplitude of the oscillations ${\rm osc}_p[\rho_k - \rho]$ and then to show that the defect measure vanishes if initially it vanishes.  The two papers \cite{Fe1} and \cite{DuFePeSt} we refer to allow to consider for instance
the two important cases : Van der Waals equation of state and  some cold nuclear equations 
of state with finite number of monomial (see the subsection on the physical discussion).
\subsection{The limitations of the {\sc Lions-Feireisl} theory} 
The previous heuristical part makes explicit the difficulty in extending the global existence result for more general non-monotone pressure law or for non-isotropic stress tensor. First of all the key point in the previous approach was 
\[
\overline{P(\rho)}\,\rho-\overline{P(\rho)\,\rho}\leq 0.
\]
This property is intimately connected to the monotonicity of $P(\rho)$ or of $P(\rho)$ for $\rho\geq \rho_c$ with truncation operators as in \cite{Fe1} or \cite{DuFePeSt}. Non-monotone pressure terms cannot satisfy such an inequality and are therefore completely outside the current theory.

\medskip

The difficulty with anisotropic stress tensor is that we are losing the other key relation in the previous proof namely \eqref{rel}.
   For non-isotropic stress tensor with an additional vertical component and  power pressure law for instance, we get instead the following relation
\[
\overline{\rho {\rm div} u} - \rho {\rm div} u \le  
a \frac{\overline\rho \overline{A_\mu\rho^\gamma}-\overline {\rho A_\mu\rho^\gamma}}{\mu_x+\lambda}
\] 
with some non-local anisotropic operator $A_\mu= (\Delta - (\mu_z-\mu_x)\partial_z^2)^{-1} \partial_z^2 $ where $\Delta$ is the total Laplacian in terms of $(x,z)$ with variables $x=(x_1,\cdots ,x_{d-1})$, $z=x_d$.

Unfortunately, we are again losing the structure and in particular the sign of the right--hand side, as observed in particular in \cite{BrDeGe}. Furthermore even small anisotropic perturbations of an isotropic stress tensor cannot be controlled in terms of the defect measure introduced by {\sc E.~Feireisl} and collaborators: Remark the non-local behavior in the right--hand side due to the term $A_\mu$. For this reason, the anisotropic case seems to fall completely out the theory developed by {\sc P.--L. Lions} and {\sc E.~Feireisl}.
 
 \medskip

Those two open questions are the main objective of this monograph.
\subsection{Physical discussions on pressure laws and stress tensors \label{discusslaws}} 
The derivation of the compressible Navier--Stokes system from first principles is delicate and goes well beyond the scope of this manuscript. In several respects the system is only an approximation and this should be kept in mind in any discussion of the precise form of the equations which should allow for some uncertainty. 

\subsubsection{Equations of state} 
It is in general a non straightforward  question to decide what kind of pressure law should be used depending on the many possible applications: mixtures of fluids, solids, and even the interior of stars. Among possible equation of state, one can find several well known laws such as Dalton's law of partial pressures (1801), ideal gas law (Clapeyron 1834), Van der Waals equation of state (Van De Waals 1873),  virial equation of state (H. Hamerlingh Onnes 1901). 

In general the pressure law $P(\rho,\vartheta)$ can depend on both the density $\rho$ and the temperature $\vartheta$. We present the temperature dependent system in full later but include the temperature already in the present discussion to emphasize its relevance and importance.

Let us give some important examples of equations of state
\begin{itemize}
\item State equations are barotropic if $P(\rho)$ depends only on the density. As explained  in the book by E. {\sc Feireisl}  \cite{Fei} (see pages 8--10 and 13--15),
the simplest example of a barotropic flow is an isothermal flow where the temperature is assumed to be constant. If both conduction of heat and its generation by dissipation of mechanical energy can be neglected then the temperature is uniquely determined as a function of the density (if initially the entropy is constant) yielding a barotropic state equation for the pressure $P(\rho)=a\rho^\gamma$ with $a>0$ and 
 $\displaystyle \gamma={(R+c_v)}/{c_v}>1$. Another barotropic flow was discussed in \cite{DuFePeSt}.
\item The classical Van der Waals equation reads
\[
(P+a\,\rho^2)\,(b-\rho)=c\,\rho\,\vartheta,
\]
where $a,\;b,\;c$ are constants. The pressure law is non-monotone if the temperature is below a critical value, $\vartheta<\vartheta_c$, but it satisfies \eqref{nonmonopressure1}. In compressible fluid dynamics, the Van des Waals equation of state is sometimes simplified by neglecting specific volume changes and becomes
\[
(P+a)\,(b-\rho)=c\,\rho\,\vartheta,
\]
with similar properties.
\item Using finite-temperature Hartree-Fock theory, it is possible to obtain a temperature dependent equation of state of the following form
\begin{equation}
P(\rho, \vartheta) = a_3(1 + \sigma)\rho^{2+\sigma} - a_0\rho^2 + k \vartheta \sum_{n\ge 1} B_n \rho^n,\label{PG}
\end{equation}
where $k$ is the Boltzmann's constant, and where the last expansion (a simplified virial series) converges rapidly because of the rapid decrease of the $B_n$. 
\item  Equations of state can include other physical mechanism. A good example is found in the article \cite{DuFePeSt} where radiation comes into play: a photon assembly is superimposed to the nuclear matter background. If this radiation is in quasi-local thermodynamical equilibrium with the (nuclear) fluid, the resulting mixture nucleons+photons can be described by a one-fluid heat-conducting Navier--Stokes system, provided one adds to the equation of state  a Stefan-Boltzmann contribution of black-body type
\[
P_R(\vartheta) = a\vartheta^4 \hbox{ with } a>0,
\]
and provided one adds a corresponding contribution to the energy equation. The corresponding models are more complex and do not satisfy \eqref{nonmonopressure1} in general. 

\item In the context of the previous example, a further simplification can be introduced leading to the so-called Eddington's standard model.
   This approximation assumes that the ratio between the total pressure $P(\rho, \vartheta) = P_G(\rho, \vartheta) + P_R(\vartheta)$ and the radiative pressure $P_R(\vartheta)$ is a pure constant
\[
\frac{P_R(\vartheta)}{P_G(\rho,\vartheta)+P_R(\vartheta)} = 1 - \beta,
\]
where $0 < \beta < 1$ and $P_G$ is given for instance by \eqref{PG}. Although crude, this model is in good agreement with more sophisticated models, in particular for the sun.

One case where this model leads to a pressure law satisfying \eqref{nonmonopressure1} is when one keeps only the low order term into the virial expansion. Suppose that $\sigma = 1$ and let us plug the expression of the two pressure laws in this relation,
\[
2a_3\rho^3  - a_0 \rho^2 + k B_1 \vartheta \rho = \frac{\beta}{1-\beta} \frac{a}{3}\vartheta^4.
\]
By solving this algebraic equation to leading order (high temperature), one gets 
\[
\vartheta \approx \bigl( \frac{6 a_3(1-\beta)}{a\beta} \bigr) \rho^{3/4},
\]
leading to the pressure law
\[
P(\rho,\vartheta) = \frac{2 a_3}{\beta} \rho^3 - a_0\rho^2 
                                  + k B_1\bigl( \frac{6 a_3(1-\beta)}{a\beta} \bigr) \rho^{7/4},
\]
which satisfies \eqref{nonmonopressure1} because of the
constant coefficients.

However in this approximation, only the higher order terms were kept. Considering non-constant coefficient or keeping all the virial sum in the 
 pressure law was out of the scope of  \cite{DuFePeSt} and leads to precisely the type of non- monotone pressure laws that we consider in the present work. 
\item The virial 
equation of state  for heat conducting Navier--Stokes equations can be derived from statistical physics and reads 
\[
P(\rho,\vartheta) = \rho\, \vartheta \Bigl(\sum_{n\ge 0} B_n(\vartheta) \rho^n\Bigr)
\]
with $B_0={\rm cte}$ and the coefficients $B_n(\vartheta)$ have to be specified for $n\ge1$. 

While the full virial pressure law is beyond the scope of this article, we can handle truncated virial with appropriate assumptions or pressure laws of the type $P(\rho,\vartheta) = P_e(\rho)+ P_{th}(\rho,\vartheta)$
\item Pressure laws can also incorporate many other type of phenomena. Compressible fluids may include or model biological agents which have their own type of interactions. In addition, as explained later, our techniques also apply to other types of ``momentum'' equations. The range of possible pressure laws is then even wider. 
\end{itemize}

Based on these examples, the possibilities of pressure laws are many. Most are not monotone and several do not satisfy \eqref{nonmonopressure1}, proving the need for a theory able to handle all sort of behaviors.
\subsubsection{Stress tensors} 
One finds a similar variety of stress tensors as for pressure laws. We recall that we denote $D(u)=(\nabla u+(\nabla u)^T)/2$.
\begin{itemize}
\item The isotropic stress tensor with constant coefficients
\[
{\cal D} =   \mu\, \Delta u + (\lambda+\mu)\,\nabla\, \div u,
\]
which is the classical example that can be handled by the {\sc Lions-Feireisl} theory:
See for instance \cite{Li2}, \cite{NoSt} and \cite{PlSo} with $\gamma>d/2$. 
See also the recent interesting  work by {\sc P.I. Plotnikov} and {\sc W. Weigant} 
(see \cite{PlWe})  in the two-dimensional in space case with $\gamma=1$.
\item Isotropic stress tensors with non constant coefficients better represent the physics of the fluid however. Those coefficients can be temperature $\vartheta$ dependent
\[
{\cal D} =   2\,\div\,(\mu(\vartheta)\, D(u)) + \nabla\,(\lambda(\vartheta) \div u).
\]
Provided adequate non-degeneracy conditions are made on $\mu$ and $\lambda$, this case can still be efficiently treated by the {\sc Lions-Feireisl} theory under some assumptions on the pressure law. See for instance \cite{Fei} or \cite{FeNo}.

\item  The coefficients of the isotropic stress tensors may also depend on the density
\[
{\cal D} =  2\,\div\,(\mu(\rho,\vartheta)\, D(u)) + \nabla\,(\lambda(\rho, \vartheta)\, \div u).
\]
This is a very difficult problem in general. The almost only successful insight in this case can be found in \cite{BD, BrDeGe, BrDeZa, VaYu, LiXi} with no dependency with respect to the temperature. Those articles require a very special form of $\mu(\rho)$ and $\lambda(\rho)$ and  without such precise assumptions, almost nothing is known. 
 Note also the very nice paper concerning global existence of strong solutions in two-dimension by {\sc A.~Kazhikhov} and {\sc V.A. Vaigant} where
$\mu$ is constant but $\lambda= \rho^\beta$ with $\beta\ge 3$, see \cite{KaVa}.
\item Geophysical flow cannot in general be assumed to be isotropic but instead some directions have different behaviors; this can be due to gravity in large scale fluids for instance. A nice example is found in the Handbook written by {\sc R. Temam}
and {\sc M. Ziane}, where the eddy-viscous term ${\cal D}$ is given by
\[
{\cal D} =   \mu_h \Delta_x u + \mu_z \partial_z^2 u + (\lambda+\mu)\, \nabla \div u,
\]
with $\mu_h \not = \mu_z$. While such an anisotropy only requires minor modifications for the incompressible Navier--Stokes system, it is not compatible with the {\sc Lions-Feireisl} approach,
see for instance \cite{BrDeGe}.  
\end{itemize}

\section{Equations and main results: The barotropic case\label{Eqsmainresults}}
We will from now on work on the torus $\Pi^d$. This is only for simplicity in order to avoid discussing boundary conditions or the behavior at infinity. The proofs would easily extend to other cases as mentioned at the end of the paper.

\subsection{Statements of the results: Theorem \ref{MainResultPressureLaw} and  \ref{MainResultAniso}}
We present in this section our main existence results. As usual for global existence of weak solutions to nonlinear PDEs, one has to prove stability estimates for sequences of approximate solutions and construct such approximate
sequences. The main contribution in this paper and the major part of the proofs concern the stability procedure and more precisely the compactness of the density.

\bigskip

\noindent {\bf I) Isotropic compressible Navier--Stokes equations with general pressure.}
Let us consider the isotropic compressible Navier--Stokes equations  
\begin{equation}\label{BaroPressureLaw}
\left\{
\begin{array}{rl}
& \partial_t \rho + {\rm div} (\rho u) =0,\\
& \partial_t (\rho u) + {\rm div}(\rho u\otimes u) - \mu \Delta u - (\lambda+\mu) \nabla {\rm div} u + \nabla P(\rho) =\rho f,
\end{array}
\right. 
\end{equation} 
with ${2}\,\mu/d+\lambda$, a pressure law $P$ which is continuous on $[0,+\infty)$, $P$ locally Lipschitz on $(0,+\infty)$ with $P(0)=0$ such that there exists $C>0$ with
\begin{equation}\label{gammacontrol}
C^{-1} \rho^\gamma - C  \le P(\rho) \le C \rho^\gamma + C
\end{equation}
and for all $s\ge 0$
\begin{equation}
|P'(s)|\leq \bar P s^{\tilde \gamma-1}.
\label{nonmonotonehyp}
\end{equation}
\medskip

One then has global existence
\begin{theorem} \label{MainResultPressureLaw} 
Assume that the initial data $u_0$ and $\rho_0\ge 0$ with $\int_{\Pi^d} \rho_0 = M_0>0$  satisfies  the bound
\[
E_0= \int_{\Pi^d} \bigl(\frac{|(\rho u)_0|^2}{2\rho_0} + \rho_0 e(\rho_0)\bigr)\,dx <+\infty.
\]
   Let the pressure law $P$  satisfies
{\rm (\ref{gammacontrol})} and {\rm (\ref{nonmonotonehyp})} with 
\begin{equation}
\gamma>\, \bigl(\max(2,\tilde \gamma) +1\bigr)\, \frac{d}{d+2}.\label{hypgammaiso}
\end{equation} 
   Then there exists a global  weak solution of the compressible Navier--Stokes system 
{\rm (\ref{BaroPressureLaw})} in the sense of Definition \ref{defi}. 
\end{theorem}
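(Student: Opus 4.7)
The plan is to follow the usual architecture for global weak existence for compressible Navier--Stokes: construct a sequence of approximate solutions $(\rho_k,u_k)$ via a regularization scheme (as will be carried out in Section \ref{sec-proof-mainresults}), derive the uniform a priori bounds reviewed in Section \ref{basicaprioriest} (the energy inequality yielding $\rho_k \in L^\infty_t L^\gamma_x$ and $\nabla u_k \in L^2_{t,x}$, together with the Bogovski-type improvement $\rho_k \in L^{\gamma+a}_{t,x}$ for any $a \le 2\gamma/d - 1$), extract weak-$\star$ limits, and pass to the limit in the equations. The only delicate point is showing that $\rho_k \to \rho$ strongly so that $P(\rho_k) \to P(\rho)$ and $\rho_k u_k \otimes u_k \to \rho u \otimes u$ in distributions. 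The classical Lions--Feireisl route is blocked here: the identification $\overline{P(\rho)}\rho = \overline{P(\rho)\rho}$ obtained from the effective viscous flux relies on $\overline{P(\rho)}\rho - \overline{P(\rho)\rho} \le 0$, which rests on the monotonicity of $P$, while here $P$ only satisfies the two-sided bound \eqref{gammacontrol} and the derivative bound \eqref{nonmonotonehyp}.

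The replacement is the quantitative compactness criterion sketched in Section \ref{sketchnewcompactness}. Rather than tracking an $L\log L$-type defect measure, I would control a weighted critical Besov-like seminorm of the form
\[
\int_0^T \!\!\int_{\Pi^d \times \Pi^d} K_h(x-y)\, w_k(t,x,y)\, |\rho_k(t,x) - \rho_k(t,y)|\, dx\, dy\, dt,
\]
where $\{K_h\}_{h>0}$ is a family of mollified kernels indexed by a small scale $h\to 0$ producing a seminorm equivalent to classical fractional controls of Bourgain--Br\'ezis--Mironescu/Ponce type, and where $w_k$ is a weight built from an auxiliary transport-diffusion equation along $u_k$ designed to absorb, in a dual sense, the bad commutator terms generated when differentiating $K_h$ along the flow. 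The key evolution produces two kinds of contributions: commutators of $K_h$ with $u_k$ that would normally require $\log$-Lipschitz control on the velocity but are absorbed here by letting $w_k$ decay along trajectories with large maximal function $M|\nabla u_k|$; and a pressure term whose $\tilde\gamma$-growth is compensated by the extra integrability $\rho_k\in L^{\gamma+a}$, which is the precise role of condition \eqref{hypgammaiso}. Indeed \eqref{hypgammaiso} rearranges as $\gamma+a > \max(2,\tilde\gamma)$ with $a = 2\gamma/d-1$, giving both renormalizability ($\rho_k\in L^2$) and integrability of the pressure contribution.

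The main obstacle, and the locus of the novelty, is the simultaneous construction of weights $w_k$ with a \emph{uniform lower bound in measure} (so that smallness of the weighted seminorm propagates to the unweighted one up to a controlled exceptional set) and compatibility with the structure of the momentum equation (so that the effective flux contribution $\nabla P(\rho_k) - (\lambda+2\mu)\nabla\mathrm{div}\,u_k$ can be inverted and its contribution to the kernel evolution handled). Once the weights are set up, the weighted seminorm is propagated from $t=0$ to any finite time with a $k$-uniform bound; passing $h\to 0$ then yields equi-integrability of the translates of $\rho_k$ and hence strong $L^1_{t,x}$ compactness.

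With $\rho_k\to\rho$ strongly, convergence of the pressure follows from \eqref{gammacontrol}, \eqref{nonmonotonehyp}, the bound in $L^{\gamma+a}$, and Vitali's theorem; the convective term $\rho_k u_k\otimes u_k$ is handled by an Aubin--Lions argument using the time compactness of $\rho_k$ and of $\rho_k u_k$ coming respectively from the continuity and momentum equations, combined with the weak $L^2_t H^1_x$ convergence of $u_k$. The renormalization property \eqref{renormalizedeq} for the limit follows from DiPerna--Lions theory since $u\in L^2_t H^1_x$ and $\rho\in L^2_{t,x}$, and the energy inequality passes to the limit by standard lower semicontinuity. This reduces the theorem to the stability estimate of Section \ref{secstability} and the approximation scheme of Section \ref{sec-proof-mainresults}.
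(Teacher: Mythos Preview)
Your outline is correct and follows the paper's approach: reduce existence to the stability result of Section~\ref{secstability} plus the approximation scheme of Section~\ref{sec-proof-mainresults}, with the weighted-kernel compactness criterion replacing the monotonicity-based defect measure. You have also correctly unpacked \eqref{hypgammaiso} as $p=\gamma+2\gamma/d-1>\max(2,\tilde\gamma)$, which is exactly the integrability needed in Theorem~\ref{maincompactness}~ii). One point worth sharpening: the weight used for the non-monotone case is not just the $M|\nabla u_k|$-penalized $w_0$ you describe but the more elaborate $w_1$ of \eqref{dk1}, whose penalization also includes $\rho_k^{\tilde\gamma}$ and $\rho_k|\div u_k|$ terms; this is what lets the pressure contribution be absorbed and what forces the property $w_1\le e^{-\lambda\rho_k}$ used to handle the case where $\rho_k(x)$ and $\rho_k(y)$ are comparable.
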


\bigskip

\noindent {\bf Remark.} Let us note that the solution satisfies the explicit regularity estimate 
\[
\int_{\Pi^{2d}} \ind_{\rho_k(x,t)\geq \eta}\,\ind_{\rho_k(y,t)\geq \eta}\,K_h(x-y)\,\chi(\delta\rho_k)
(t)\leq \frac{C\,\|K_h\|_{L^1}}{\eta^{1/2}\,
|\log h|^{\theta/2}},
\]
for some $\theta>0$ where $K_h$ is defined in proposition \ref{propcomp}, 
$\delta\rho_k$ and $\chi$ are defined in Section \ref{proofstabilityres}, see \eqref{defchi}.

\bigskip

\noindent {\bf II) A non-isotropic compressible Navier--Stokes equations.}
We consider an example of non-isotropic compressible Navier--Stokes equations
\begin{equation}\label{BaroAnisotropic}
\left\{
\begin{array}{rl}
& \partial_t \rho + {\rm div} (\rho u) =0, \\
& \partial_t (\rho u) + {\rm div}(\rho u\otimes u) - \div\,(A(t)\, \nabla u)  
- (\mu+\lambda) \nabla {\rm div} u + \nabla P(\rho) =0,
\end{array}
\right. 
\end{equation} 
with $A(t)$ a given smooth and symmetric matrix, satisfying
\begin{equation}
A(t)=\mu\,{\rm Id}+\delta A(t),\quad \mu>0,\quad \frac{2}d\,\mu+\lambda-\|\delta A(t)\|_{L^\infty}>0.
\end{equation}
where $\delta A$ will be a perturbation around $\mu \, {\rm Id}$.
 We again take $P$ locally Lipschitz on $[0,+\infty)$ with $P(0)=0$  but require it to be monotone 
 after a certain point
\begin{equation}
C^{-1}\,\rho^{\gamma-1}-C\leq P'(\rho)\leq C\,\rho^{\gamma-1}+C.\label{strictmonotone}
\end{equation}
with $\gamma>d/2$.
The second main result that we obtain is 
\begin{theorem} \label{MainResultAniso}
Assume that the initial data $u_0$ and $\rho_0\ge 0$ with $\int_{\Pi^d} \rho_0 = M_0>0$  satisfies  the bound
\[
E_0= \int_{\Pi^d} \bigl(\frac{|(\rho u)_0|^2}{2\rho_0} + \rho_0 e(\rho_0)\bigr)\,dx <+\infty.
\]
Let  the pressure $P$ satisfy \eqref{strictmonotone} with 
\[
\gamma > \frac{d}2 \left[\left(1+\frac{1}d\right) + \sqrt{1+\frac1{d^2}}\right]. \label{hypgammanoniso}
\] 
  There exists a universal constant $C_\star>0$
 such that if 
 \[
\|\delta A\|_{\infty} \le C_\star\,(2\mu+\lambda),
\]
  then there exists a global weak solution  of the compressible  Navier--Stokes equation  
  in the sense of Definition \ref{defi} replacing the isotropic energy inequality by the
  following anisotropic energy
\[
E(\rho,u)(\tau) +\int_0^\tau \int_\Omega (\nabla_x u^T\,A(t)\,\nabla u
    + (\mu+\lambda)\, |{\rm div} u|^2) \le E_0.
\]
\end{theorem}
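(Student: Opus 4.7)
The plan is to combine the approximation scheme of Section \ref{sec-proof-mainresults} with the new quantitative compactness method developed in Sections \ref{secrenormalized}--\ref{proofstabilityres}, exploiting the smallness of the anisotropic perturbation $\delta A$ to replace the classical effective flux identity that breaks down in the non-isotropic setting.

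First, I would build approximate solutions $(\rho_k,u_k)$ by the regularization--truncation procedure of Section \ref{sec-proof-mainresults}. These enjoy the anisotropic energy inequality with dissipation $\int \nabla u_k^T A(t)\nabla u_k+(\mu+\lambda)|{\rm div}\, u_k|^2$. The hypothesis $\tfrac{2}{d}\mu+\lambda-\|\delta A\|_\infty>0$ guarantees that this dissipation dominates a positive multiple of $\|\nabla u_k\|_{L^2}^2$, which together with \eqref{strictmonotone} yields the uniform bounds \eqref{energyestimates}. The Bogovskii test-function argument recalled in Section \ref{basicaprioriest} then produces the improved integrability $\rho_k\in L^{\gamma+a}$ for some $a\in(0,\tfrac{2\gamma}{d}-1]$, with $a$ strictly larger than what the naive $\gamma>d/2$ threshold provides---the enhanced lower bound on $\gamma$ in the statement is precisely tuned to give the extra margin needed below.

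The heart of the proof is the compactness of $\rho_k$, and here the classical Lions--Feireisl argument fails: inverting $\div(A(t)\nabla\cdot)+(\mu+\lambda)\nabla\div$ in the momentum equation, the effective flux is no longer the local quantity $(2\mu+\lambda)\div\,u-P(\rho)$, but picks up a non-local perturbation whose operator norm scales with $\|\delta A\|_\infty$. My approach is to measure compactness through a weighted critical functional of the form
\[
\mathcal{R}_h(t)=\int\int K_h(x-y)\,\chi(\delta\rho_k(t,x,y))\,w_k(t,x)\,w_k(t,y)\,dx\,dy,
\]
where $K_h$ is the logarithmic kernel of Proposition \ref{propcomp}, $\chi$ is the concave truncation introduced in Section \ref{proofstabilityres}, and the non-negative weights $w_k$ solve a dual transport equation
\[
\partial_t w_k+u_k\cdot\nabla w_k+M_k\,w_k=0,
\]
with $M_k$ built from a maximal function of $|\nabla u_k|$ tuned to absorb the Lipschitz defect of the advection. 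Differentiating $\mathcal{R}_h$ in time and using the continuity equation symmetrically in $x$ and $y$ produces a kernel--velocity commutator that, by the square function and maximal function estimates of Section \ref{usefulsection}, contributes a bounded term plus an error that vanishes like $|\log h|^{-\theta}$.

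The genuinely new contribution is the pressure commutator. Through the momentum equation and inversion of the anisotropic elliptic operator, it splits into a local piece controlled by \eqref{strictmonotone} (non-positive, thanks to the monotonicity of $P$) and a non-local piece in which a Riesz-type operator $R_A$ acts on $P(\rho_k)$. The smallness assumption $\|\delta A\|_\infty\le C_\star(2\mu+\lambda)$ is used precisely to absorb this non-local term into a fixed fraction of the local dissipative term; this is the point where $C_\star$ is fixed. The lower bound on $\gamma$ enters here through the Calder\'on--Zygmund mapping of $R_A$ on $L^{\gamma+a}$ and its pairing against $\sqrt{\rho_k}\,u_k\in L^\infty_t L^2_x$, and solving the resulting algebraic constraint yields exactly $\gamma>\tfrac{d}{2}[(1+\tfrac{1}{d})+\sqrt{1+\tfrac{1}{d^2}}]$. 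I expect this absorption step to be the main obstacle: unlike the isotropic case, where the sign of $\overline{P(\rho)\rho}-\overline{P(\rho)}\,\rho$ suffices, one must quantitatively balance a non-sign-definite, non-local operator against a local gain, which is why both a smallness assumption on $\delta A$ and a strengthened $\gamma$ are needed. Once a Gronwall inequality on $\mathcal{R}_h$ is closed and the logarithmic decay in $h$ is established uniformly in $k$, Proposition \ref{propcomp} gives strong $L^1_{\rm loc}$ compactness of $\rho_k$, and passage to the limit in the approximate system produces a weak solution in the modified sense of Definition \ref{defi}.
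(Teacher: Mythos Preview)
Your overall strategy is right---build regularized approximations, derive uniform estimates, and close compactness via a weighted functional that can tolerate the non-local remainder coming from the anisotropic operator---but two concrete choices in your proposal would prevent the argument from closing.

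First, you propagate the \emph{product} weight $w_k(t,x)\,w_k(t,y)$. As explained immediately after Proposition~\ref{proproughsketch}, the product form only yields regularity on $\{\rho_k(x)\ge\eta,\ \rho_k(y)\ge\eta\}$ (this is exactly Theorem~\ref{maincompactness2}), not genuine $L^1$ compactness, because the product can be small while $|\rho_k(x)-\rho_k(y)|$ is large near vacuum. The paper's proof of Theorem~\ref{maincompactness3} (which is what drives Theorem~\ref{MainResultAniso}) uses the \emph{sum} $w_{a,h}(x)+w_{a,h}(y)$ of \emph{convolved} weights $w_{a,h}=\overline K_h\star w_a$; only this form, combined with point~iii) of Proposition~\ref{weightprop}, gives control on the full integral.

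Second, your penalization $M_k$ is only the maximal function of $|\nabla u_k|$. In the anisotropic case this is not enough: the non-local piece $a_\mu A_\mu P(\rho_k)$ in \eqref{divu0noniso} produces terms of the form $\overline K_h\star|A_\mu\rho_k^\gamma|$ that cannot be absorbed by $M|\nabla u_k|$. The paper's weight $w_a$ is built from the penalization \eqref{dk00}, which explicitly includes $\overline K_h\star(|\div u_k|+|A_\mu\rho_k^\gamma|)$; this is what generates the dissipation term $\Pi_a$ in Lemma~\ref{boundQhAniso} against which the non-local contributions are eventually balanced. Relatedly, the nonlinearity is not the concave truncation $\chi$ of \eqref{defchi} but the power $\chi_a(\xi)\sim|\xi|^{1+\ell}$ with the specific exponent $\ell=1/(\gamma-1)$, chosen so that the H\"older splitting in Lemma~\ref{divanisotropic} (after the near-field/far-field decomposition $A_\mu=L_h+R_h$ at scale $\delta_h=h/|\log h|$) matches exactly. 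This is also where the condition on $\gamma$ actually enters: it is the integrability requirement $p>\gamma+1+\ell=\gamma^2/(\gamma-1)$ on $\rho_k$, not a pairing with $\sqrt{\rho_k}u_k$ as you suggest; solving $\gamma(1+2/d)-1>\gamma^2/(\gamma-1)$ gives the stated threshold.
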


\bigskip

\noindent {\bf Remark.} Let us note that the constraint on $\gamma$ corresponds to
the constraint on p:  $p>\gamma + \gamma/(\gamma-1)$ where $p$ is the extra
integrability property on $\rho$.

\subsection{Important comments/comparison with previous results}
The choice was made to focus on explaining the new method instead of trying to write results as general as possible but at the cost of further burdening the proofs. For this reason, Theorems \ref{MainResultPressureLaw} and \ref{MainResultAniso} are only two examples of what can be done. 

\medskip

We explain how to apply our new method to the Navier--Stokes--Fourier system (with an additional equation for temperature) in section \ref{withtemperature}. The Navier--Stokes--Fourier system is physically more relevant than the barotropic case and as seen from the discussion in subsection \ref{discusslaws}, it exhibits even more examples of non-monotone pressure laws.

\medskip

\noindent {I) \it Possible extensions.}
   In section \ref{otherextensions}, we also present applications to various other important models, in particular in the Bio-Sciences where the range of possible pressure laws (or what plays their role such as chemical attraction/repulsion) is wide. 
  But there are many other possible extensions; for instance \eqref{gammacontrol} could be replaced with a more general
\[
C^{-1}\,\rho^{\gamma_1}-C\leq P(\rho)\leq C\,\rho^{\gamma_2}+C,
\]
with different exponents $\gamma_1\neq\gamma_2$. While the proofs would essentially remain the same, the assumption \eqref{hypgammaiso} would then have to be replaced and would involve $\gamma_1$ and $\gamma_2$.
   Similarly, it is possible to consider spatially dependent stress tensor $A(t,x)$ in Theorem \ref{MainResultAniso}. This introduces additional terms in the proof but those can easily be handled as long as $A$ is smooth by classical methods for pseudo differential operators. 

\medskip

\noindent {II) \it Comparison with previous results.}

\smallskip

\noindent {II-1) \it Non-monotone pressure laws.}
     Theorem \ref{MainResultPressureLaw} is the first result to allow for completely non-monotone pressure laws. Among many important previous contributions, we refer to \cite{DuFePeSt, Fe1, CHT, Li2} and \cite{Fei, FeNo, NoSt, ChTr} for the Navier--Stokes--Fourier system,  which are our main point of comparison. All of those require $P'>0$ after a certain point and in fact typically a condition like \eqref{strictmonotone}. The removal of the key assumption of monotonicity has important consequences:
\begin{itemize}
\item From the physical and modeling point of view, it opens the possibility of working with a wider range of equations of state as discussed in subsection \ref{discusslaws} and it makes the current theory on viscous, compressible fluids more robust to perturbation of the model.
\item Changes of monotonicity in $P$ can create and develop oscillations in the density $\rho$ (because some ``regions'' of large density become locally attractive). It was a major question whether such  oscillations remain under control at least over bounded time intervals. This shows that the stability for bounded times is very different from uniform in time stability as $t\rightarrow +\infty$.     
    Only the latter requires  assumptions of thermodynamical nature such as the monotonicity of $P$.
\item Obviously well posedness for non monotone $P$ could not be obtained as is done here for the  compressible Euler system. As can be seen from the proofs, the viscous stress tensor in the compressible Navier-Stokes system has precisely the critical scaling to control the oscillations created by the non-monotonicity. This implies for instance that in phase transition phenomena, the transition occurs smoothly precisely at the scale of the viscosity.
\item Our results could have further consequences for instance to show convergence of numerical schemes (or for other approximate systems). Typical numerical schemes for compressible Navier--Stokes raise issues of oscillations in the density which are reminiscent of the ones faced in this article. The question of convergence of numerical schemes to compressible Navier--Stokes is an important and delicate subject in its own, going well beyond the scope of this short comment. We refer for instance to the works by {\sc R. Eymard, T. Gallou\"et, R. Herbin, J.--C. Latch\'e} and 
{\sc T.K. Karper}: See for instance  \cite{GHL, EGHL2} for the simpler Stokes case, \cite{EGHL, GGHL, GHL2} for Navier--Stokes and more recently to the work \cite{CEGH, Ka}.
\end{itemize} 

Concerning the requirement on the growth of the pressure at $\infty$, that is on the coefficient $\gamma$ in \eqref{hypgammaiso}
\begin{itemize}
\item In the typical case where $\tilde \gamma=\gamma$, \eqref{hypgammaiso} leads to the same constraint as in {\sc P.--L.~Lions} \cite{Li2} for a similar reason: The need to have $\rho\in L^2$ to make sense of $\rho\,\div u$. It is  worse than the $\gamma>d/2$ required for instance in \cite{Fei}. In $3d$, we hence need $\gamma>9/5$ versus only $\gamma>3/2$ in \cite{Fei}.
\item It may be possible to improve on \eqref{hypgammaiso} while still using the method introduced here but propagating compactness on appropriate truncation of $\rho$; for instance by writing an equivalent of Lemma \ref{renor} on $\phi(\rho(x))-\phi(\rho(y))$ as in the multi-dimensional setting by {\sc E. Feireisl}. This possibility was left to future works. Note that the requirement on $\gamma>d/2$ comes from   the need to gain integrability as per \eqref{gainintegrability} along the strategy presented in subsection \ref{basicaprioriest}. Our new method still relies on this estimate and therefore has no hope, on its own, to improve on the condition $\gamma>d/2$.   
\item In the context of general pressure laws, and even more so for Navier--Stokes--Fourier, assumption \eqref{hypgammaiso} is not a strong limitation. Virial-type pressure laws, where $P(\rho)$ is a polynomial expansion, automatically satisfy it for instance as do many other examples discussed in subsection \ref{discusslaws}. 
\end{itemize} 

\bigskip
\noindent {II-2)  \it Anisotropic stress tensor.}
Theorem \ref{MainResultAniso} is so far the only result of global existence of weak solutions which is able to handle anisotropy in the stress tensor. It applies for instance to eddy-viscous tensor mentioned above for geophysical flows
\[
{\cal D} =   \mu_h \Delta_x u + \mu_z \partial_z^2 u + (\lambda+\mu)\, \nabla \div u,
\]
where $\mu_h \not = \mu_z$ and corresponding to 
\begin{equation}
A_{ij}=\mu_h\,\delta_{ij}\quad\mbox{for}\ i,\,j=1,\,2,\qquad A_{33}=\mu_z,\qquad A_{ij}=0\quad\mbox{otherwise}.\label{example1A}
\end{equation}
This satisfies the assumptions of Theorem \ref{MainResultAniso} provided
$|\mu_h-\mu_x|$ is not too large, which is usually the case in the context of geophysical flows. 

Additional examples of applications are given in section \ref{otherextensions} but we wish to emphasize here that it is also possible to have a fully 
symmetric anisotropy, namely $\div(A\,D\,u)$ with $D(u)=\nabla u+\nabla u^T$ in the momentum equation. This is the equivalent of the anisotropic case in linear elasticity and it is also an important case for compressible fluids. Note that it leads to a different form of the stress tensor. With the above choice of $A$, Eq. \ref{example1A}, one would instead obtain
\[
 \div(A\,D\,u) =   \mu_h \Delta_x u + \mu_z \partial_z^2 u + \mu_z\,\nabla \partial_z u_z+(\lambda+\mu)\, \nabla \div u.
\]
Accordingly we choose Theorem \ref{MainResultAniso} with the non-symmetric anisotropy $\div(A\,\nabla u)$ as it corresponds to the eddy-viscous term by {\sc  R. Temam} and {\sc M. Ziane} mentioned above. But the extension to the symmetric anisotropy is possible although it introduces some minor complications. For instance one cannot simply obtain $\div u$ by solving a scalar elliptic system but one has to solve a vector valued one instead; please see the remark just after \eqref{divu0noniso} and at the end of the proof of Theorem \ref{MainResultAniso} in section \ref{sec-proof-mainresults}.

Ideally one would like to obtain an equivalent of Theorem \ref{MainResultAniso} assuming only uniform elliptic bounds on $A(t)$ and much lower bound on $\gamma$. Theorem \ref{MainResultAniso} is a first attempt in that direction, which can hopefully later be improved. 

However the reach of Theorem \ref{MainResultAniso} should not be minimized because non isotropy in the stress tensor appears to be a level of difficulty above even non-monotone pressure laws. Losing the pointwise relation between $\div u$ and $P(\rho(x))$ is {\em a major hurdle},  as it can also be seen from the proofs later in the article.  Instead one has to work with
\[
\div u(t,x)=P(\rho(t,x))+L\,P(\rho)+\,\mbox{effective pressure},
\] 
with $L$ a non local operator of order $0$. The difficulty is to control appropriately this non local term so that its contribution can eventually be bounded by the dissipation due to the local pressure term.

\bigskip
 
\noindent {\bf Notations.}
    For simplicity, in the rest of the article, $C$ will denote a numerical constant whose value may change from line to line. It may depend on some uniform estimates on the sequences of functions considered (as per bounds \eqref{bounduk} or \eqref{boundrhok} for instance) but it will never depend on the sequence under consideration (denoted with index $k$) or the scaling parameters $h$ or $h_0$.

\section{Sketch of the new compactness method \label{sketchnewcompactness}}
The standard compactness criteria used in the compressible Navier--Stokes framework is the 
Aubin--Lions--Simon Lemma to get compactness on  the terms $\rho u$ and $\rho u\otimes u$.
 A more complex trick is used  to get the strong convergence of the density. More precisely it
 combines  extra integrability estimates on the density and the effective flux property (a kind of weak compactness) and then a convexity-monotonicity tool to conclude. 

\medskip

   Here we present a tool which will be the cornerstone in our study to prove compactness
on the density and which will be appropriate to cover more general equation of state or
stress tensor form. 

In order to give the main idea of the method, we present it first in this section for the well known case of linear transport equations, {\em i.e.} assuming that $u$ is given. We then give a rough sketch of the main ideas we will use in the rest of the article. This presents the steps we will follow for proofs  in the more general setting.
\subsection{The compactness criterion}
We start by a well known result providing compactness of a sequence
\begin{prop} \label{propcomp}
Let $\rho_k$ be a sequence uniformly bounded in some $L^p((0,T)\times \Pi^d)$ with $1\leq p<\infty$. Assume that ${\cal K}_h$ is a sequence of positive, bounded functions s.t.
\[\begin{split}
&i.\quad \forall \eta>0,\quad \sup_h \int_{|x|\geq \eta} {\cal K}_h(x)\,dx<\infty,\qquad \mbox{supp}\, K_h\in B(0,R),\\
&ii.\quad \|{\cal K}_h\|_{L^1(\Pi^d)}\longrightarrow +\infty.
\end{split}\]
If $\partial_t \rho_k \in L^q([0,\ T]\times W^{-1,q}(\Pi^d))$ with $q\geq 1$ uniformly in $k$ and
\[
\limsup_k \Bigl[ \frac{1}{\|{\cal K}_h\|_{L^1}}\,\limsup_{t\in [0,T]} \int_{\Pi^{2d}} {\cal K}_h(x-y) \,|\rho_k(t,x)-\rho_k(t,y)|^p\,dx\,dy\Bigr] \longrightarrow 0,\quad \mbox{as}\ h\rightarrow 0
\]
then $\rho_k$ is compact in $L^p([0,\ T]\times\Pi^d)$. Conversely if $\rho_k$ is compact in $L^p([0,\ T]\times \Pi^d)$ then the above quantity converges to $0$ with $h$.
\label{kernelcompactness}
\end{prop}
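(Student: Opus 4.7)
The plan is, for each $\varepsilon>0$, to approximate the sequence $\{\rho_k\}$ in $L^p([0,T]\times\Pi^d)$ within $\varepsilon$ by a relatively compact family; total boundedness then yields precompactness. The central object is the normalized kernel $\bar{\mathcal K}_h:=\mathcal K_h/\|\mathcal K_h\|_{L^1}$, which is a probability density and, by assumptions (i) and (ii), concentrates at the origin:
\[
\int_{|z|\geq \eta}\bar{\mathcal K}_h(z)\,dz = \frac{1}{\|\mathcal K_h\|_{L^1}}\int_{|z|\geq \eta}\mathcal K_h(z)\,dz \longrightarrow 0 \quad\text{as }h\to 0
\]
for every fixed $\eta>0$. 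Setting $\omega_{k,t}(z):=\int_{\Pi^d}|\rho_k(t,x+z)-\rho_k(t,x)|^p\,dx$, the change of variable $z=x-y$ rewrites the hypothesis as $\lim_{h\to 0}\limsup_k\sup_t\int\bar{\mathcal K}_h(z)\,\omega_{k,t}(z)\,dz=0$; integrating in $t$ gives the same conclusion with $\omega_k(z):=\int_0^T\omega_{k,t}(z)\,dt$ on the right.

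The approximation proceeds in two stages. First, Jensen's inequality applied to the probability density $\bar{\mathcal K}_h$ yields
\[
\|\rho_k - \bar{\mathcal K}_h*\rho_k\|_{L^p([0,T]\times\Pi^d)}^p \leq \int\bar{\mathcal K}_h(z)\,\omega_k(z)\,dz,
\]
which is small for small $h$, uniformly in large $k$. But $\bar{\mathcal K}_h$ is only an $L^1$ function, so $\bar{\mathcal K}_h*\rho_k$ is not directly compact; I therefore further convolve with a standard smooth mollifier $\psi_\delta$ (supported in $B(0,\delta)$, integral $1$). A second Jensen computation together with Young's inequality gives
\[
\|\bar{\mathcal K}_h*\rho_k - \psi_\delta*\bar{\mathcal K}_h*\rho_k\|_{L^p} \leq \sup_{|y|\leq \delta}\|\bar{\mathcal K}_h(\cdot-y)-\bar{\mathcal K}_h\|_{L^1}\cdot\sup_k\|\rho_k\|_{L^p},
\]
which vanishes as $\delta\to 0$ for each fixed $h$ by the $L^1$-continuity of translations.

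For fixed $h,\delta$, the smoothed sequence $\psi_\delta*\bar{\mathcal K}_h*\rho_k$ is compact in $L^p([0,T]\times\Pi^d)$ by the Aubin--Lions lemma: its spatial gradient $(\nabla\psi_\delta)*\bar{\mathcal K}_h*\rho_k$ is bounded in $L^p(0,T;L^p(\Pi^d))$ (so the sequence is bounded in $L^p(0,T;W^{1,p}(\Pi^d))$), and decomposing $\partial_t\rho_k=\sum_i\partial_i F_k^i$ with $F_k^i\in L^q$ gives $\partial_t(\psi_\delta*\bar{\mathcal K}_h*\rho_k)=\sum_i(\partial_i\psi_\delta)*\bar{\mathcal K}_h*F_k^i$, bounded in $L^q(0,T;L^q(\Pi^d))$. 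Since $W^{1,p}(\Pi^d)\subset\subset L^p(\Pi^d)$, Aubin--Lions gives compactness in $L^p([0,T]\times\Pi^d)$. Choosing $h$ small and then $\delta$ small, $\{\rho_k\}$ lies within arbitrary $L^p$-distance of this compact family and is therefore totally bounded and precompact. For the converse, $L^p$-compactness provides a uniform modulus $\omega_{k,t}(z)\leq \eta$ for $|z|<\eta'$; splitting the weighted integral at $|z|=\eta'$ bounds the near-origin part by $\eta$ and the tail by $C\cdot\|\mathcal K_h\|_{L^1}^{-1}\int_{|z|\geq \eta'}\mathcal K_h\,dz\to 0$.

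The main obstacle is conceptual rather than computational: $\bar{\mathcal K}_h$ is not a classical mollifier (it need not be smooth, it may vanish on parts of $B(0,R)$, and its support has fixed radius $R$ rather than shrinking with $h$), so it cannot directly deliver compactness. The resolution is the two-scale device $\psi_\delta*\bar{\mathcal K}_h$: $\bar{\mathcal K}_h$ performs the quantitative approximation of $\rho_k$ via the hypothesis, while the auxiliary standard mollifier $\psi_\delta$ supplies the classical regularity needed to run Aubin--Lions.
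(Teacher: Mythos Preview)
Your proof is correct and follows essentially the same strategy as the paper: approximate $\rho_k$ by $\bar{\mathcal K}_h\star\rho_k$ via the Jensen estimate, then use the time derivative bound and an Aubin--Lions--Simon argument. The only difference is that the paper skips your second mollification step: it observes directly that for fixed $h$ the family $\{\bar{\mathcal K}_h\star\rho_k\}_k$ is already equicontinuous in $L^p_x$ (since $\|\bar{\mathcal K}_h(\cdot-y)-\bar{\mathcal K}_h\|_{L^1}\|\rho_k\|_{L^p}$ controls the translates, exactly the estimate you use), hence compact in $x$ by Riesz--Fr\'echet--Kolmogorov, and then invokes Aubin--Lions--Simon in that form. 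Your insertion of $\psi_\delta$ is not wrong---it simply recasts the same equicontinuity as a $W^{1,p}$ bound so that the most textbook version of Aubin--Lions applies---but it is an extra layer the paper does not need.
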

For the reader's convenience, we just quickly recall that the compactness in space is connected to the classical approximation by convolution.
Denote $\bar {\cal K}_h$ the normalized kernel
\[
\bar {\cal K}_h=\frac{{\cal K}_h}{\|{\cal K}_h\|_{L^1}}.
\] 
Write
\[
\begin{split}
\|\rho_k - \bar {\cal K}_h\star_x \rho_k \|_{L^p}^p 
   &\le\frac{1}{ \|{\cal K}_h\|_{L^1}^p}\int_{\Pi^d}\Bigl(\int_{\Pi^d} 
    {\cal K}_h(x-y) |\rho_k(t,x)-\rho_k(t, y)|dx \Bigr)^p \, dy\\
&\le
  \frac{1}{ \|{\cal K}_h\|_{L^1}}\int_{\Pi^{2d}} {\cal K}_h(x-y) |\rho_k(t,x)-\rho_k(t, y)|^p dx \, dy
\end{split}
\]
which converges to zero as $h\rightarrow 0$ uniformly in $k$ by assumption. 
On the other--hand for a fixed $h$, the sequence $\overline K_h\star_x u_k$ in $k$ is compact in $x$. This
complete the compactness in space. 
Concerning the compactness in time, we just have to couple everything and 
use the uniform bound on $\partial_t \rho_k$ as per the usual {\sc Aubin-Lions-Simon} Lemma. 
\bigskip

%

\noindent 
{\it The ${\cal K}_h$ functions.} In this paper we choose 
\[
K_h(x) =\frac{1}{(h+|x|)^a},\quad \mbox{for}\ |x|\leq 1/2. 
\]
with some $a>d$ and $K_h$ non negative,
independent of $h$ for $|x|\geq 2/3$, $K_h$ constant outside $B(0,3/4)$ and periodized such as to belong in
$C^\infty(\Pi^d\setminus B(0,3/4))$. For convenience we denote
\[
\overline K_h(x)=\frac{K_h(x)}{\|K_h\|_{L^1}}.
\] 

This is enough for linear transport equations but for compressible Navier--Stokes we also need 
\[
{\cal K}_{h_0}(x)=\int_{h_0}^1 \overline K_h(x)\,\frac{dh}h.
\]
We will see that
\[
\frac{C^{-1}}{(h_0+|x|)^d}\leq {\cal K}_{h_0}(x)\leq \frac{C}{(h_0+|x|)^d}.
\]
\subsection{Compactness for linear transport equation\label{seclineartransport}}
Consider a sequence of solutions $\rho_k$, on the torus $\Pi^d$ (so as to avoid any discussion of boundary conditions or behavior at infinity) to
\begin{equation}
\partial_t\rho_k+\div(\rho_k\,u_k)=0,\label{lineartransport}
\end{equation}
where $u_k$ is assumed to satisfy for some $1<p\leq \infty$
\begin{equation}
\sup_k \|u_k\|_{L^p_tW^{1,p}_x}<\infty, \label{bounduklinear}
\end{equation}
with $\div u_k$ compact in $x$, {\em i.e.}
\begin{equation}
\limsup_k \eps_k(h)=\frac{1}{\|K_h\|_{L^1}}\int_0^T\,\int_{\Pi^{2d}} K_h(x-y)\,|\div u_k(t,x)-\div u_k(t,y)|^p\longrightarrow 0,\label{compactestdivuk}
\end{equation}
as $h\rightarrow 0$. The condition on the divergence is replaced by bounds on $\rho_k$
\begin{equation}
\frac{1}{C}\leq \inf_x\rho_k\leq \sup_x\rho_k\leq C,\quad \forall\;t\in[0,\ T].\label{lowsuprhok}
\end{equation}
One then has the well known
\begin{prop}
Assume $\rho_k$ solves \eqref{lineartransport} with the bounds \eqref{bounduklinear}, \eqref{compactestdivuk} and \eqref{lowsuprhok}; assume moreover that the initial data $\rho_k^0$ is compact. Then $\rho_k$ is locally compact and more precisely
\[
\int_{\Pi^{2d}} K_h(x-y)\,|\rho_k(t,x)-\rho_k(t,y)|\,dx\,dy\leq C\,\frac{\|K_h\|_{L^1}}{|\log (h+\eps_k(h)+\tilde \eps_k(h))|},
\]
where
\[
\tilde \eps_k(h)=\frac{1}{\|K_h\|_{L^1}}\int_{\Pi^{2d}} K_h(x-y)\,|\rho_k^0(x)-\rho_k^0(y)|\,dx\,dy.
\]
\label{proplineartransport}
\end{prop}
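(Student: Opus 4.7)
The plan is to follow a renormalization strategy inspired by the Crippa--De Lellis argument, adapted to the kernel $K_h$. Since $\rho_k$ is pinched between two positive constants by \eqref{lowsuprhok}, I would first pass to $w_k=\log\rho_k\in L^\infty$, which satisfies $\partial_t w_k+u_k\cdot\nabla w_k=-\div u_k$ and for which $|\delta w_k|$ and $|\delta\rho_k|$ are pointwise comparable; it therefore suffices to prove the estimate for $w_k$. The doubled quantity $\delta w_k(t,x,y)=w_k(t,x)-w_k(t,y)$ satisfies
\[
\partial_t\delta w_k+u_k(x)\cdot\nabla_x\delta w_k+u_k(y)\cdot\nabla_y\delta w_k=-\delta\div u_k,
\]
where $\delta\div u_k=\div u_k(t,x)-\div u_k(t,y)$.

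With $\phi$ a smooth convex even approximation of $|\cdot|$ with bounded derivative, I would study the functional $R_k(t,h)=\int_{\Pi^{2d}}K_h(x-y)\,\phi(\delta w_k)\,dx\,dy$. Differentiating in time and integrating by parts in $x,y$, exploiting the symmetry $x\leftrightarrow y$, produces three terms: a commutator $\int\phi(\delta w_k)\,\nabla K_h\cdot(u_k(x)-u_k(y))$, a compressibility term $\int K_h\,\phi(\delta w_k)(\div u_k(x)+\div u_k(y))$, and a source $-\int K_h\,\phi'(\delta w_k)\,\delta\div u_k$. The transport-type terms can be controlled via the radial inequality $|\nabla K_h(z)|\,|z|\leq C K_h(z)$ (immediate from $K_h(z)=(h+|z|)^{-a}$) combined with the standard bound $|u_k(x)-u_k(y)|\leq C|x-y|(M|\nabla u_k|(x)+M|\nabla u_k|(y))$. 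H\"older in the dual pair $(p,q)$, the $L^p$-boundedness of the maximal operator, and the uniform bound on $\|w_k\|_\infty$ then give a control of order $R_k^{1/q}\,\|K_h\|_{L^1}^{1/p}$, while the source is bounded by $\eps_k(h)^{1/p}\|K_h\|_{L^1}$ using \eqref{compactestdivuk} and H\"older. Normalizing $\tilde R_k=R_k/\|K_h\|_{L^1}$ yields
\[
\frac{d\tilde R_k}{dt}\leq C\,\tilde R_k^{1/q}+C\,\eps_k(h)^{1/p}.
\]

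The remaining, and most delicate, step is to convert this pointwise-in-$h$ differential inequality into a $1/|\log|$ rate. My plan is to pass to the log-averaged functional $\mathcal N_k(t,h_0)=\int \mathcal K_{h_0}(x-y)\,\phi(\delta w_k)\,dx\,dy$ by integrating the pointwise inequality in $h\in[h_0,1]$ against $dh/h$. Using $\|\mathcal K_{h_0}\|_{L^1}\sim|\log h_0|$ together with the pointwise lower bound $\mathcal K_{h_0}(x)\geq C^{-1}(h_0+|x|)^{-d}$ recalled in the excerpt, H\"older in the scale variable converts the previous estimate into a Gronwall inequality for $\mathcal N_k/|\log h_0|$ whose right-hand side is independent of $h_0$, so the normalized functional stays bounded uniformly. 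Choosing $h_0=h+\eps_k(h)+\tilde\eps_k(h)$ (so that the initial contribution $\mathcal N_k(0)/|\log h_0|$ is also uniformly bounded) and reverting to a pointwise-in-$h$ estimate via the lower bound on $\mathcal K_{h_0}$ delivers the claimed bound $\int K_h|\delta w_k|\leq C\|K_h\|_{L^1}/|\log h_0|$. The main obstacle is precisely to orchestrate this scale averaging so that the initial modulus $\tilde\eps_k$, the compactness modulus $\eps_k$, and the scale $h$ combine into the single scale $h+\eps_k+\tilde\eps_k$ appearing in the denominator, which is the signature of DiPerna--Lions / Crippa--De Lellis type arguments.
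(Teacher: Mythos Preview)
Your reduction to $w_k=\log\rho_k$ and the decomposition of $\frac{d}{dt}R_k$ into commutator, compressibility, and source terms are fine, and the H\"older estimates leading to
\[
\frac{d\tilde R_k}{dt}\leq C\,\tilde R_k^{1/q}+C\,\eps_k(h)^{1/p}
\]
are correct. The gap is in what this inequality actually delivers. Because $1/q<1$, the term $\tilde R_k^{1/q}$ \emph{dominates} $\tilde R_k$ precisely when $\tilde R_k$ is small; integrating the ODE gives only $\tilde R_k(t)\leq C_T$, which is the trivial bound already implied by $\|\delta w_k\|_{L^\infty}\leq C$. No rate in $h$ comes out. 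Log-averaging against $dh/h$ does not repair this: with $\mathcal N=\int_{h_0}^1\tilde R_k\,dh/h$ you get, after H\"older in the scale variable, $\frac{d\mathcal N}{dt}\leq C\,\mathcal N^{1/q}|\log h_0|^{1/p}+\ldots$, and normalizing by $|\log h_0|$ reproduces the same sublinear inequality. The conclusion $\mathcal N\leq C|\log h_0|$ is again trivial, and cannot be upgraded to the pointwise bound $\int K_h|\delta w_k|\leq C\|K_h\|_{L^1}/|\log h_0|$, which would require $\mathcal N\leq C$.

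What is missing is exactly the device the paper introduces: an auxiliary weight $w$ solving $\partial_t w+u_k\cdot\nabla w=-\lambda\,M|\nabla u_k|\,w$, $w|_{t=0}=1$. One propagates the \emph{weighted} quantity $R(t)=\int K_h|\delta\rho_k|\,w(x)\,w(y)$; the maximal-function contribution from the commutator is then absorbed \emph{pointwise} by the damping in the weight equation (for $\lambda$ large), turning the Gronwall into a genuinely linear one and yielding $R(t)\leq C\|K_h\|_{L^1}(\eps_k(h)+\tilde\eps_k(h))$, a small quantity. The $1/|\log|$ rate then arises in a separate step: one controls $\int\rho_k|\log w|\leq C$ (hence the set $\{w\leq\eta\}$ has small mass), splits the unweighted integral over $\{w(x)w(y)\geq\eta^2\}$ and its complement, and optimizes in $\eta$. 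Your H\"older-based estimate trades the maximal function against $R_k^{1/q}$ globally instead of pointwise, and that is precisely why the argument stalls at the trivial bound.
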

This type of results for {\em non Lipschitz} velocity fields $u_k$ was first obtained by {\sc R.J. Di Perna} and P.--L. {\sc Lions} in \cite{DL} with the introduction of renormalized solutions for $u_k\in W^{1,1}$ and appropriate bounds on $\div\,u_k$. This was extended to $u_k\in BV$, first
by F. {\sc Bouchut} in \cite{Bo} in the kinetic context (see also M. {\sc Hauray} in \cite{Ha2}) and then by L. {\sc Ambrosio} in \cite{Am} in the most general case. We also refer to C. {\sc Le Bris} and P.--L. {\sc Lions} in  \cite{LL, Li2}, and to the nice lecture notes written by {\sc C. De Lellis}  in \cite{DeL}.
  In general $u_k\in BV$ is the optimal regularity as shown by N. {\sc Depauw}  in \cite{DeP}. This can only be improved with specific additional structure, such as provided by low dimension, see \cite{ABC, BoDe, CCR, CR, Ha1},  Hamiltonian properties \cite{CJ,JaMa}, or as a singular integral \cite{BoCr}.

Of more specific interest for us are the results which do not require bounds on $\div u_k$ (which are not available for compressible Navier--Stokes) but replace them by bounds on $\rho_k$, such as \eqref{lowsuprhok}. The compactness in Prop. \ref{proplineartransport} was first obtained in \cite{ADM}. 

Explicit regularity estimates of $\rho_k$ have first been derived by {\sc G.~Crippa} and {\sc C.~De Lellis} in \cite{CD} (see also \cite{Ja} for the $W^{1,1}$ case). These are based on explicit control on the characteristics. While it is quite convenient to work on the characteristics in many settings, this is not the case here, in particular due to the coupling between $\div\,u_k$ and $p(\rho_k)$.  

   In many respect the proof of Prop. \ref{proplineartransport} is an equivalent of the method of G.~{\sc Crippa} and {\sc C. De Lellis} in \cite{CD} at the PDE level, instead of the ODE level. Its interest will be manifest later in the article when dealing with the full Navier--Stokes system. The idea of controlling the compactness of solutions to transport equations through estimates such as provided by Prop. \ref{propcomp} was first introduced in \cite{BeJa} but relied on a very different method.

\begin{proof}
One does not try to propagate directly 
\[
\int_{\Pi^{2d}} K_h(x-y)\,|\rho_k(t,x)-\rho_k(t,y)|\,dx\,dy.
\]
Instead one introduces the weight $w(t,x)$ solution to the auxiliary equations
\begin{equation}
\partial_t w+u_k\cdot\nabla w=-\lambda \,M\,|\nabla u_k|,\qquad w\vert_{t=0} =1,\label{auxiliaryweight}
\end{equation}
where $M\,f$ denotes the maximal function of $f$ (recalled in Section \ref{usefulsection}) and $\lambda$ is constant to be chosen large enough. 

\medskip

\noindent {\em First step: Propagation of a weighted regularity.} We propagate
\[
R(t)=\int_{\Pi^{2d}} K_h(x-y)\,|\rho_k(t,x)-\rho_k(t,y)|\,w(t,x)\,w(t,y)\,dx\,dy.
\]
Using \eqref{lineartransport}, we obtain that
\[\begin{split}
&\partial_t |\rho_k(x)-\rho_k(y)|+\div_x\,(u_k(x)\,|\rho_k(x)-\rho_k(y)|)
+\div_y\,(u_k(y)\,|\rho_k(x)-\rho_k(y)|)\\
&\quad\leq\frac{1}{2}(\div u_k(x)+\div u_k(y))\,|\rho_k(x)-\rho_k(y)|\\
&\qquad-\frac{1}{2}(\div u_k(x)-\div u_k(y))\,(\rho_k(x)+\rho_k(y))\,s_k,
\end{split}\]
where $s_k=sign\,(\rho_k(x)-\rho_k(y))$. We refer to subsection \ref{renormequation} for the details of this calculation, which is rigorously justified for a fixed $k$ through the theory of renormalized solutions in \cite{DL}.
From this equation on $|\rho_k(x)-\rho_k(y)|$, we deduce
\[\begin{split}
\frac{d}{dt}R(t)&=\int_{\Pi^{2d}} \nabla K_h(x-y)\,(u_k(x)-u_k(y))\,|\rho_k(t,x)-\rho_k(t,y)|\,w(t,x)\,w(t,y)\\
&\ -\frac{1}{2}\int_{\Pi^{2d}}  K_h(x-y)\,(\div u_k(x)-\div u_k(y))\,(\rho_k(x)-\rho_k(y))\,s_k\,w(x)\,w(y)\\
&\ +\int_{\Pi^{2d}}  K_h(x-y)\,|\rho_k(x)-\rho_k(y)|\,\left(\partial_t w+u_k\cdot\nabla w+\frac{1}{2}\,\div u_k\,w\right)\,w(y)\\
&\ +symmetric.
\end{split}
\]
Observe that by \eqref{auxiliaryweight}, $w\leq 1$ and therefore by \eqref{lowsuprhok} and the definition of $\eps_k$ in \eqref{compactestdivuk}, the second term in the right--hand side is easily bounded
\[
\int_{\Pi^{2d}}  K_h(x-y)\,(\div u_k(x)-\div u_k(y))\,(\rho_k(x)-\rho_k(y))\,s_k\,w(x)\,w(y)\leq C\,\eps_k(h).
\]
For the first term, one uses the well known inequality (see \cite{Stein, Stein2} or section \ref{usefulsection})
\[
|u_k(x)-u_k(y)|\leq C_d\,|x-y|\,(M\,|\nabla u_k|(x)+M\,|\nabla u_k|(y)),
\]
combined with the remark that from the choice of $K_h$
\[
|\nabla K_h(x-y)|\,|x-y|\leq C\,K_h(x-y).
\]
Therefore
\[\begin{split}
&\int_{\Pi^{2d}} \nabla K_h(x-y)\,(u_k(x)-u_k(y))\,|\rho_k(t,x)-\rho_k(t,y)|\,w(t,x)\,w(t,y)\\
&\leq C\,\int_{\Pi^{2d}} K_h(x-y)\,(M\,|\nabla u_k|(x)+M\,|\nabla u_k|(y))\,|\rho_k(x)-\rho_k(y)|\,w(x)\,w(y),
\end{split}
\]
and combining everything
\[\begin{split}
&\frac{d}{dt}R(t)\leq C\,\|K_h\|_{L^1}\,\eps_k(h)\\
&+\int_{\Pi^{2d}}  K_h\,|\rho_k(x)-\rho_k(y)|\,\big(\partial_t w+u_k\cdot\nabla w+C\,(\div u_k+M\,|\nabla u_k|)\,w\big)\,w(y)\\
&+symmetric.
\end{split}
\]
Since $\div\,u_k\leq d\,|\nabla u_k|\leq d\,M\,|\nabla u_k|$, by taking the constant $\lambda$ large enough in \eqref{auxiliaryweight},
\[
\partial_t w+u_k\cdot\nabla w+C\,(\div u_k+M\,|\nabla u_k|)\,w\leq 0,
\]
and hence 
\begin{equation}
R(t)\leq \int_{\Pi^{2d}}  K_h\,|\rho_k^0(x)-\rho_k^0(y)|+C\,t\,\|K_h\|_{L^1}\,\eps_k(h)\leq C\,\|K_h\|_{L^1}\,(\eps_k(h)+\tilde \eps_k(h)).\label{boundRt}
\end{equation}

\medskip

\noindent  {\em Second step: property of the weight.} We need to control the measure of the set where the weight $w$ is small. Obviously if $w$ were to vanish everywhere then the control of $R(t)$ would be trivial but of very little interest. From Eq. \eqref{auxiliaryweight}
\[
\partial_t (\rho_k\,|\log w|)+\div\,(\rho_k |\log w|)=\lambda\,\rho_k\,M\,|\nabla u_k|.
\]
And thus 
\begin{equation}
\frac{d}{dt}\int_{\Pi^d} |\log w|\,\rho_k\,dx=\lambda \,\int_{\Pi^d}\rho_k\,M\,|\nabla u_k|\,dx\leq 
\lambda\,\|\rho_k\|_{L^{p^*}}\,\|M\,|\nabla u_k|\|_{L^p}\leq C,
\label{linearlog}
\end{equation}
by \eqref{bounduklinear}, \eqref{lowsuprhok} and the fact that the maximal function is bounded on $L^p$ for $p>1$.

\medskip

\noindent  {\em Third step: Conclusion of the proof.} 
\noindent Using again \eqref{lowsuprhok}
\[
|\{x,\ w(t,x)\leq \eta\}|\leq \frac{C}{|\log \eta|}\int_{\Pi^d} |\log w|\,\rho_k\,dx\leq \frac{C}{|\log \eta|}.
\]
Thus
\[\begin{split}
&\int_{\Pi^{2d}} K_h(x-y)\,|\rho_k(t,x)-\rho_k(t,y)|\\
&\qquad=\int_{w(x)>\eta,\ w(y)> \eta} K_h\,|\rho_k(t,x)-\rho_k(t,y)|\\
&+
\int_{w(x)\leq \eta\ or\ w(y)\leq \eta} K_h(x-y)\,|\rho_k(t,x)-\rho_k(t,y)|,
\end{split}\]
and so
\[\begin{split}
&\int_{\Pi^{2d}} K_h(x-y)\,|\rho_k(t,x)-\rho_k(t,y)|\leq \frac{1}{\eta^2}\int K_h\,|\rho_k(t,x)-\rho_k(t,y)|\,w(t,x)\,w(t,y)\\
&\qquad\qquad\qquad\qquad+\frac{C}{|\log \eta|}\,\|K_h\|_{L^1}\\
&\qquad\leq  C\,\|K_h\|_{L^1}\,\left(\frac{\eps_k(h)+\tilde\eps_k(h)}{\eta^2}+\frac{1}{|\log \eta|}\right),
\end{split}
\]
which by minimizing in $\eta$ finishes the proof.
\end{proof}

\subsection{A rough sketch of the extension to compressible Navier--Stokes}
The aim of this subsection is to provide a rough idea of how to extend the previous method. We will only consider the case of general pressure laws and assume that the stress tensor is isotropic.
  When now considering the compressible Navier--Stokes system, the divergence $\div u_k$ is not given anymore but has to be calculated from $\rho_k$ through a relation of the kind
\begin{equation}
\div u_k=P(\rho_k)+R_k,\label{divukPsimp}
\end{equation}
where $R_k$ includes the force applied on the fluid, the effective stress tensor... To keep things as simple as possible here, we temporarily assume that
\begin{equation}
\begin{split}
&\sup_k \|R_k\|_{L^\infty}<\infty,\\
&\limsup_k \eps_k(h)=\frac{1}{\|K_h\|_{L^1}}\int_0^T\,\int_{\Pi^{2d}} K_h(x-y)\,|R_k(t,x)-R_k(t,y)|^p\longrightarrow 0.
\end{split}\label{assumptRk}
\end{equation}
We do not assume monotonicity on the pressure $P$ but simply the control
\begin{equation}
|P'(\rho)|\leq C\,\rho^{\gamma-1}.\label{nonmonotonerough}
\end{equation}
A modification of the previous proof then yields
\begin{prop}
Assume $\rho_k$ solves \eqref{lineartransport}  and the bounds 
\[
\sup_k \|\rho_k\|_{L^\infty_t\,L^1_x}<\infty,\qquad \sup_k \|\rho_k\|_{L^p_{t,x}}<\infty\quad\mbox{with}\ p\geq\gamma+1.
\]
Assume moreover that $\sup_k \|u_k\|_{L^2_t H^1_x}<\infty$, that \eqref{divukPsimp} holds with the bounds \eqref{assumptRk} on $R_k$ and \eqref{nonmonotonerough} on $P$. Then $\rho_k$ is locally compact away from vacuum and more precisely
\[
\int_{\rho_k(x)\geq \eta,\;\rho_k(y)\geq \eta} K_h(x-y)\,|\rho_k(t,x)-\rho_k(t,y)|\,dx\,dy\leq C_\eta\,\frac{\|K_h\|_{L^1}}{|\log (h+(\eps_k(h))+\tilde \eps_k(h))|},
\]
where
\[
\tilde \eps_k(h)=\frac{1}{\|K_h\|_{L^1}}\int_{\Pi^{2d}} K_h(x-y)\,|\rho_k^0(x)-\rho_k^0(y)|\,dx\,dy.
\]
\label{proproughsketch}
\end{prop}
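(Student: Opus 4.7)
The plan is to adapt the proof of Proposition~\ref{proplineartransport} to the compressible setting. The key new difficulty is that $\div u_k$ is no longer compact on its own: the decomposition $\div u_k = P(\rho_k) + R_k$ transfers the lack of regularity onto a nonlinear function of the density. The auxiliary weight $w(t,x)\in(0,1]$ must therefore be modified so as to absorb not only $M|\nabla u_k|$ but also the new pressure contribution. Concretely, set $w|_{t=0}=1$ and
\[
\partial_t w + u_k\cdot\nabla w = -\lambda\,\bigl(M|\nabla u_k| + \rho_k^\gamma\bigr)\,w,
\]
for a large constant $\lambda$ to be fixed, and propagate
\[
R(t)=\int_{\Pi^{2d}} K_h(x-y)\,|\rho_k(t,x)-\rho_k(t,y)|\,w(t,x)\,w(t,y)\,dx\,dy.
\]

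Computing $\tfrac{d}{dt}R(t)$ with the renormalized equation for $|\rho_k(x)-\rho_k(y)|$ exactly as in Section~\ref{seclineartransport}, the commutator term coming from $\nabla K_h\cdot(u_k(x)-u_k(y))$ is bounded by $C\int K_h(M|\nabla u_k|(x)+M|\nabla u_k|(y))|\rho_k(x)-\rho_k(y)|w(x)w(y)$, while substituting $\div u_k = P(\rho_k) + R_k$ in the right-hand side of the renormalization produces two kinds of new contributions. The $R_k$-type terms are directly bounded by $C\|K_h\|_{L^1}\eps_k(h)$ thanks to \eqref{assumptRk}. The $P$-type terms are the crux: both of them reduce in absolute value to an expression controlled by
\[
C\,\bigl(\rho_k^\gamma(x)+\rho_k^\gamma(y)\bigr)\,|\rho_k(x)-\rho_k(y)|,
\]
using \eqref{nonmonotonerough} and Young's inequality (the worst case being $|P(\rho_k(x))-P(\rho_k(y))|(\rho_k(x)+\rho_k(y)) \le C(\rho_k^\gamma(x)+\rho_k^\gamma(y))|\rho_k(x)-\rho_k(y)|$). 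Choosing $\lambda$ large enough so that $\partial_t w+u_k\cdot\nabla w+C(M|\nabla u_k|+\rho_k^\gamma)\,w\le 0$ absorbs both types of terms into the weight derivatives, leaving
\[
R(t)\le C\,\|K_h\|_{L^1}\bigl(\eps_k(h)+\tilde\eps_k(h)\bigr).
\]

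It remains to control the set where the weight is small. Multiplying $\partial_t\log w + u_k\cdot\nabla\log w = -\lambda(M|\nabla u_k|+\rho_k^\gamma)$ by $\rho_k$ and using the continuity equation gives
\[
\frac{d}{dt}\int_{\Pi^d}\rho_k\,|\log w|\,dx = \lambda\int_{\Pi^d}\rho_k\bigl(M|\nabla u_k|+\rho_k^\gamma\bigr)\,dx,
\]
whose right-hand side is integrable in time: $\rho_k\,M|\nabla u_k|\in L^1_{t,x}$ by H\"older and the $L^2$-boundedness of $M$ (using $p\ge\gamma+1\ge 2$), and $\rho_k^{\gamma+1}\in L^1_{t,x}$ directly from $p\ge\gamma+1$. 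Hence $\sup_t\int\rho_k|\log w|\le C$, so by Chebyshev $|\{\rho_k\ge\eta\}\cap\{w\le\eta'\}|\le C/(\eta\,|\log\eta'|)$. The conclusion then follows the linear-case splitting: the part of the double integral where $w(x),w(y)>\eta'$ is bounded by $R(t)/\eta'^2$; on the complement one uses $|\rho_k(x)-\rho_k(y)|\le\rho_k(x)+\rho_k(y)$, the restriction $\rho_k\ge\eta$, and the Chebyshev bound together with H\"older against $\rho_k\in L^p$ to control the off-diagonal convolution against $K_h$. Optimizing $\eta'=(h+\eps_k(h)+\tilde\eps_k(h))^\alpha$ for an appropriate $\alpha\in(0,1/2)$ produces the announced logarithmic decay.

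The main obstacle is the pressure term itself: without monotonicity of $P$ the Lions--Feireisl sign trick is unavailable, and the worst contribution $(\rho_k^\gamma(x)+\rho_k^\gamma(y))|\rho_k(x)-\rho_k(y)|$ must be paid for by the decay of the weight. This both forces the restriction to $\{\rho_k\ge\eta\}$ in the conclusion (the set where $w$ degenerates while $\rho_k$ is small is not controlled by $\int\rho_k|\log w|$) and dictates the integrability threshold $p\ge\gamma+1$ that is needed to close the ODE for the logarithmic mass.
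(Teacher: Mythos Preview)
Your proposal is correct and follows essentially the same route as the paper: the same modified weight equation with the added $\rho_k^\gamma$ penalization, the same weighted quantity $R(t)$, the same bound $|P(\rho_k(x))-P(\rho_k(y))|\,(\rho_k(x)+\rho_k(y))\le C(\rho_k^\gamma(x)+\rho_k^\gamma(y))|\rho_k(x)-\rho_k(y)|$ via \eqref{nonmonotonerough}, absorption by choosing $\lambda$ large, the estimate $\int\rho_k|\log w|\le C$ using $p\ge\gamma+1$, and the same final splitting in $\eta'$. The only inessential differences are that you write the weight equation in multiplicative form (which in fact matches the rigorous version \eqref{eqw} used later in the paper) and that the $R_k$ contribution in the paper carries a power $(\eps_k(h))^{1-1/p}$ rather than $\eps_k(h)$, which does not affect the conclusion.
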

Unfortunately Prop. \ref{proproughsketch} is only a rough and unsatisfactory attempt for the following reasons
\begin{itemize}
\item The main problem with Prop. \ref{proproughsketch} is that it does not imply compactness on the sequence $\rho_k$ because it only controls oscillations of $\rho_k$ for large enough values but we do not have any lower bounds on $\rho_k$. In fact not only can $\rho_k$ vanish but for weak solutions, vacuum could even form: That is there may be a set of non vanishing measure where $\rho_k=0$. 
This comes from the fact that the proof only give an estimate on
\[
\int_{\Pi^{2d}}K_h(x-y)\,|\rho_k(t,x)-\rho_k(t,y)|\,w(x)\,w(y)\,dx\,dy,
\]    
but since there is no lower bound on $\rho_k$ anymore, estimates like \eqref{linearlog} only control the set where $w(x)\,w(y)$ is small and both $\rho_k(x)$ and $\rho_k(y)$ are small. Unfortunately $|\rho_k(t,x)-\rho_k(t,y)|$ could be large while only one of $\rho_k(x)$ and $\rho_k(y)$ is small (and hence  $w(x)\,w(y)$ is small as well).
\item The solution is to work with $w(x)+w(y)$ instead of $w(x)\,w(y)$. Now the sum  $w(x)+w(y)$ can only be small if $|\rho_k(t,x)-\rho_k(t,y)|$ is small as well, meaning that a bound on
\[
\int_{\Pi^{2d}}K_h(x-y)\,|\rho_k(t,x)-\rho_k(t,y)|\,(w(x)+w(y))\,dx\,dy,
\]    
together with estimates like \eqref{linearlog} would control the compactness on $\rho_k$. Unfortunately this leads to various additional difficulties because some terms are now not localized at the right point.  For instance one has problems estimating the commutator term in $\nabla K_h\cdot(u_k(x)-u_k(y))$ or one cannot directly control terms like $\div u_k(x)\,w(y)$ by the penalization which would now be of the form $M\,|\nabla u_k|(x)\,w(x)$. Some of these problems are solved by using more elaborate harmonic analysis tools, others require a more precise analysis of the structure of the equations. Those difficulties are even magnified for anisotropic stress tensor which add even trickier non-local terms.
\item The integrability assumption on $\rho_k$, $p>\gamma+1$ is not very realistic and too demanding. If $p= \gamma (1+2/d) - 1$ as for the compressible Navier-Stokes
equations with power law $P(\rho)= a \rho^\gamma$, then this requires $\gamma > d$.
 Improving it creates important difficulty in the interaction with the penalization. It forces us to modify the penalization and prevent us from getting an inequality like \eqref{linearlog} and in fact only modified inequalities can be obtained, of the type
\[
\sup_k\int_{\Pi^d} |\log w(t,x)|^\theta\,\rho_k(t,x)\,dx<\infty.
\]
\item The bounds \eqref{assumptRk} that we have assumed for simplicity on $R_k$ cannot be deduced from the equations. The effective pressure is not bounded in $L^\infty$ and it is not {\em a priori} compact (it will only be so at the very end as a consequence of $\rho_k$ being compact). Instead we will have to establish regularity bounds on the effective pressure when integrated against specific test functions; but in a manner more precise than the existing {\sc Lions-Feireisl} theory, see Lemma \ref{Drhou} later.
\item This is of course only a stability result, in order to get existence one has to work with appropriate approximate system. This will be the subject of  Section \ref{sec-proof-mainresults}. 
\end{itemize}
\begin{proof} One now works with a different equation for the weight
\begin{equation}
\partial_t w+u_k\cdot\nabla w=-\lambda\,\left(M\,|\nabla u_k|+\rho_k^\gamma\right),
\quad w\vert_{t=0} =1,\label{auxiliaryweight2}
\end{equation}
where $M\,f$ is again the maximal function of $f$. 

\bigskip

\noindent {\em First step: Propagation of some weighted regularity} The beginning of the first step essentially remains the same as in the proof of Prop. \ref{lineartransport}: One propagates
\[
R(t)=\int_{\Pi^{2d}} K_h(x-y)\,|\rho_k(t,x)-\rho_k(t,y)|\,w(t,x)\,w(t,y)\,dx\,dy.
\]
The initial calculations are nearly identical. The only difference is that we do not have \eqref{compactestdivuk} any more so we simply keep the term with $\div u_k(x)-\div u_k(y)$ for the time being. We thus obtain
\begin{equation}
\begin{split}
\frac{d}{dt} R(t)\leq &  \int_{\Pi^{2d}} K_h(x-y)\,(\div u_k(x)-\div u_k(y))\\
& \hskip3cm      (\rho_k(t,x)+\rho_k(t,y))\,s_k\,w(t,x)\,w(t,y)\\
&
 -\lambda\,\int_{\Pi^{2d}}(\rho_k^\gamma(x)+\rho_k^\gamma(y))\, |\rho_k(t,x)-\rho_k(t,y)|\,w(t,x)\,w(t,y),
\end{split}\label{R}
\end{equation}
by taking the additional term in Eq. \eqref{auxiliaryweight2} into account. This is of course where the coupling between $u_k$ and $\rho_k$ comes into play, here only through the simplified equation \eqref{divukPsimp}. Thus
\begin{equation}
\begin{split}
&\int_{\Pi^{2d}} K_h(x-y)\,(\div u_k(x)-\div u_k(y))\,(\rho_k(t,x)+\rho_k(t,y))\,s_k\,w(t,x)\,w(t,y)\\
&\ =\int_{\Pi^{2d}} K_h(x-y)\,(P(\rho_k(x))-P(\rho_k(y))) \\
& \hskip6cm \,(\rho_k(t,x)+\rho_k(t,y))\,s_k\,w(t,x)\,w(t,y)\\
& +\int_{\Pi^{2d}} K_h(x-y)\,(R_k(x)-R_k(y)) \,(\rho_k(t,x)+\rho_k(t,y)) \,s_k\,w(t,x)\,w(t,y).
  \label{divv}\\
\end{split}
\end{equation}
By the uniform $L^p$ bound on $\rho_k$ and the estimate \eqref{assumptRk}, one has
\begin{equation}
\begin{split}
&\int_0^t\int_{\Pi^{2d}} K_h(x-y)\,(R_k(x)-R_k(y)) \,(\rho_k(t,x)+\rho_k(t,y))\,s_k\,w(t,x)\,w(t,y)\\
&\qquad\qquad\leq C\,\|K_h\|_{L^1}\,(\eps_k(h))^{1-1/p}. \label{Rkcomp}
\end{split}
\end{equation}
Using now \eqref{nonmonotone}, it is possible to bound
\[
\begin{split}
|P(\rho_k(x))-P(\rho_k(y))|&\leq |\rho_k(t,x)-\rho_k(t,y)|\,\int_0^1 |P'(s\,\rho_k(x)+(1-s)\,\rho_k(y))|\,ds\\
&\leq C\,(\rho_k^{\gamma-1}(x)+\rho_k^{\gamma-1}(y))\,|\rho_k(t,x)-\rho_k(t,y)|,
\end{split}
\]
leading to
\[\begin{split}
&\int_{\Pi^{2d}} K_h(x-y)\,(P(\rho_k(x))-P(\rho_k(y))) \,(\rho_k(t,x)+\rho_k(t,y))\,s_k\,w(t,x)\,w(t,y)\\
&\ \leq C\,\int_{\Pi^{2d}} K_h(x-y)\,(\rho_k^{\gamma-1}(x)+\rho_k^{\gamma-1}(y)) \, (\rho_k(t,x)+\rho_k(t,y))\\
&\hskip5cm |\rho_k(t,x)-\rho_k(t,y)|\,w(t,x)\,w(t,y)\\
&\ \leq C\,\int_{\Pi^{2d}} K_h(x-y)\,(\rho_k^{\gamma}(x)+\rho_k^{\gamma}(y)) \,
|\rho_k(t,x)-\rho_k(t,y)|\,w(t,x)\,w(t,y).
\end{split}
\]
Using now this estimate, the Equality \eqref{divv}, the compactness \eqref{Rkcomp} and by taking $\lambda$ large enough  one finds from \eqref{R}
\[
R(t)\leq R(0)+C\,\|K_h\|_{L^1}\,(\eps_k(h))^{1-1/p}.
\]

\medskip

\noindent{\em Second step and third steps: Property of the weight and conclusion.} The starting point is again the same and gives
\[
\int_{\Pi^d} |\log w(t,x)|\,\rho_k(t,x)\,dx\leq C,
\]
with $C$ independent of $k$ but where we now need $\rho_k\in L^p$ with $p\geq \gamma+1$ because of the additional term in Eq. \eqref{auxiliaryweight2}. Now
\[
\begin{split}
&\int_{\rho_k(x)\geq \eta,\ \rho_k(y)\geq\eta} K_h(x-y)\,|\rho_k(t,x)-\rho_k(t,y)|\\
&\quad=\int_{\rho_k(x)\geq \eta,\ \rho_k(y)\geq\eta,\ w(x)\geq\eta',\ w(y)\geq\eta' } K_h(x-y)\,|\rho_k(t,x)-\rho_k(t,y)|\\
&\qquad+\|K\|_{L^1}\,\int_{\rho_k(x)\geq \eta,\ w(x)\leq\eta'} (1+\rho_k(x)).
\end{split}
\]
On the one hand,
\[\begin{split}
&\int_{\rho_k(x)\geq \eta,\ w(x)\leq\eta'} (1+\rho_k(x))\leq (\frac{1}\eta+1) \int_{w(x)\leq\eta'} \rho_k(x)\\
&\qquad\qquad\leq (\frac{1}\eta+1)\frac{1}{|\log \eta'|}\,\int |\log w(x)|\,\rho_k(x)\leq  (\frac{1}\eta+1)\frac{C}{|\log \eta'|}.
\end{split}\]
On the other hand
\[\begin{split}
&\int_{\rho_k(x)\geq \eta,\ \rho_k(y)\geq\eta,\ w(x)\geq\eta',\ w(y)\geq\eta' } K_h(x-y)\,|\rho_k(t,x)-\rho_k(t,y)|\\
&\quad\leq \frac{1}{(\eta')^2}\,\int_{\Pi^{2d}}\,K_h(x-y)\,|\rho_k(t,x)-\rho_k(t,y)|\,w(t,x)\,w(t,y).
\end{split}
\]
Therefore
\[\begin{split}
&\int_{\rho_k(x)\geq \eta,\ \rho_k(y)\geq\eta} K_h(x-y)\,|\rho_k(t,x)-\rho_k(t,y)| \leq
 (\frac{1}\eta+1) \frac{C}{|\log \eta'|}\\
&\hskip3cm+\frac{C}{(\eta')^2}\,\|K_h\|_{L^1}\,\left(\tilde\eps_k(h)+(\eps_k(h))^{1-1/p}\right),
\end{split}
\]
which concludes the proof by optimizing in $\eta'$.
\end{proof}
\section{Stability results\label{secstability}}
\subsection{The equations}
\subsubsection {General pressure law.}
   Let $(\rho_k, u_k)$ solve
\begin{equation}
\partial_t \rho_k+\div(\rho_k\, u_k)=\alpha_k\Delta\rho_k.\label{continuity}
\end{equation}
and 
\begin{equation}
\mu_k(t,x)\, \div u_k=P_k(t,x,\rho_k)+\Delta^{-1}\,\div(\partial_t(\rho_k\,u_k)+\div(\rho_k\,u_k\otimes u_k))+F_k.\label{divu0}
\end{equation}

\noindent {\it Important remark.}
 Note that we allow here a possible explicit dependence on $t$ and $x$ in $P_k$. This does not really affect our stability results and it will allow later to use directly the stability estimates for the case with temperature. We will write some comments for reader's who are only interested by the hypothesis mentioned in the barotropic  theorem.

\smallskip

   The viscosity of the fluid is assumed to be bounded from below and above
\begin{equation}
\exists \bar\mu,\qquad \frac{1}{\bar \mu}\leq \mu_k(t,x)\leq \bar\mu\label{elliptic}
\end{equation}
We consider  the following control on the density (for $p>1$) 
\begin{equation}
\sup_k \left[
\|\rho_k\|_{L^\infty([0,\ T],\ L^1(\Pi^d) \cap L^\gamma(\Pi^d))}+\|\rho_k\|_{L^p([0,\ T]\times\Pi^d)}\right]<\infty.\label{boundrhok}
\end{equation}
and the following control for $u_k$
\begin{equation}
\sup_k \Bigl[ \|\rho_k\,|u_k|^2\|_{L^\infty([0,\ T],\ L^1(\Pi^d))} + 
    \|\nabla u_k\|_{L^2(0,T;L^2(\Pi^d))} \Bigr] <+ \infty. \label{bounduk}
\end{equation}
   We also need some control on the time derivative of $\rho_k u_k$ through
\begin{equation}
\exists \bar p>1,\quad\sup_k\;\left\|\partial_t(\rho_k u_k)\right\|_{L^2_t W^{-1,\bar p}_x}<\infty,
\label{rhout}
\end{equation}
and on the time derivative of $\rho_k$ namely
\begin{equation}
\exists  q>1,\quad\sup_k\;\left\|\partial_t \rho_k\right\|_{L^q_t W^{-1,q}_x}<\infty.
\label{rhorho_t}
\end{equation}

\noindent {\it Remark.} Note  that  usually (see for instance \cite{Fei}--\cite{Li2}), \ref{rhout} and \ref{rhorho_t} are   consequences of the  momentum equation and the mass equation using the uniform estimates given by the energy  estimates and the extra integrability on the density.

\bigskip

Concerning the equation of state, we will consider that for every $y,\;x$, $P_k(t,x,s)$ continuous in $s$ on $[0,+\infty)$ and positive,  $P_k(s=0)=0$, $P_k$ locally Lipschitz in $s$ on $(0,+\infty)$ with  one of the two following cases:
\begin{itemize}
\item {\rm i)} Pressure laws with a quasi-monotone property:

  There exists $,\bar P,\; \rho_0$ independent of $t,\;x$ such that: \\
  If $s\geq \rho_0$, $P_k(t,x,s)$ is a function $\tilde P_k(t,x)$ plus a function independent of $t,\;x$ Ê
  
 \smallskip
  
  and
\begin{equation}
\begin{split}
 &\partial_s (\bigl[P_k(t,x,s)- \tilde P_k(t,x)\bigr]/s)\geq 0  \hbox{ for all } s\geq \rho_0, 
     \qquad \lim_{s\to +\infty} P_k(t,x,s)=+\infty,\\
 & |P_k(t,x,r)-P_k(t,y,s)| \\
 & \hskip2cm \leq \bar P\,|r-s|+Q_k(t,x,y)\hbox{ for all } r,\;s\leq \rho_0,\ x,\;y\in\Pi^d,\\
&\limsup_{h\rightarrow 0}\,\sup_k\int_0^T\int_{\Pi^{2d}} \frac{K_h(x-y)}{\|K_h\|_{L^1}}\,
    \Bigl(|\tilde P_k(t,x)-\tilde P_k(t,y)| \\
& \hskip5cm +Q_k(t,x,y)\Bigr)\,dx\,dy\,dt=0. 
\end{split}\label{monotone}
\end{equation}

\item {\rm ii)} Non-monotone pressure laws (with very general Lipschitz pressure laws);

   There exists $\bar P>0$, $\tilde\gamma>0$ and $\tilde P_k(t,x)$ in $L^2([0,\ T]\times\Pi^d)$, 
   $Q_k$ and $R_k$ in $L^1([0,\ T]\times\Pi^{2d})$ such  that for all $t,\;x,\;y$ 
\begin{equation}\begin{split}
&|P_k(t,x,\rho_k(t,x))-P_k(t,y,\rho_k(t,y))|\leq Q_k(t,x,y)\\
&+ \left[\bar P\,\left((\rho_k(t,x))^{\tilde \gamma-1}+(\rho_k(t,x))^{\tilde \gamma-1}\right)+\tilde P_k(t,x)+\tilde P_k(t,y)\right]\,|\rho_k(t,x)-\rho_k(t,y)|,\\
&P_k(t,x,\rho_k(t,x))\leq \bar P\,(\rho_k(t,x))^{\tilde\gamma}+R_k(t,x) \hbox{ with } R_k\ge 0,\\
& \sup_k(\|\tilde P_k\|_{L^2([0,\ T]\times\Pi^d)}\!+\!\|R_k\|_{L^1([0,\ T]\times\Pi^d)})<\infty,\\
&\limsup_{h\rightarrow 0}\sup_k\int_0^T\int_{\Pi^{2d}} \frac{K_h(x-y)}{\|K_h\|_{L^1}}\,
  \Bigl(|\tilde P_k(t,x)-\tilde P_k(t,y)| \\
 & \hskip2cm +|R_k(t,x) - R_k(t,y)|+Q_k(t,x,y)\Bigr)\,dx\,dy\,dt=0. \\
\end{split}\label{nonmonotone}
\end{equation}
\end{itemize}

\noindent {\it Two important remarks.}
 
\noindent 1 --   It is important to note that the general hypothesis on pressure laws will be used to conclude in the heat-conducting compressible Navier-Stokes case: see Section \ref{withtemperature}. 
     The pressure law will include for instance a radiative part (namely a part depending only on the temperature as in \cite{FeNo})  and a pressure law in density with coefficients depending on temperature (see the comments in the section under consideration). 
     Note that compactness in space for the temperature will be obtained directly from the thermal flux:
 This property will be strongly used to check  hypothesis for the pressure under consideration
 especially the last one
\begin{equation}
\begin{split}
 \limsup_{h\rightarrow 0}\sup_k\int_0^T\int_{\Pi^{2d}} \frac{K_h(x-y)}{\|K_h\|_{L^1}} 
 &\Bigl( |\tilde P_k(t,x)-\tilde P_k(t,y)| \\
 & + |R_k(t,x) - R_k(t,y)| +Q_k(t,x,y)\Bigr )=0.
 \end{split}
\end{equation}
  Of course, the first step will be to show that with the pressure under consideration
estimates on the velocity and the density mentioned in the stability process are obtained.

\medskip   
\noindent 2 --  In the basic case where $P_k$ does not depend explicitly on $t$ or $x$
(namely $\tilde P_k \equiv 0$,  $Q_k\equiv 0$ and $R_k\equiv 0$) then \eqref{nonmonotone} reduces to the very simple condition
\[
|P_k(r)-P_k(s)|\leq \bar P\,r^{\tilde\gamma-1} |r-s|.
\]
Note that this assumption is satisfied if $P$ is locally Lipschitz on $(0,+\infty)$
with
$$|P'(s)| \le \bar P s^{\tilde \gamma -1}$$
namely with the hypothesis mentioned in Theorem 3.1.
Readers who are interested by the barotropic case with this hypothesis are invited to 
choose $\tilde P_k \equiv 0$,  $Q_k\equiv 0$ and $R_k\equiv 0$ in the proof. 
Hypothesis on the pressure law are used  in the subsection named "The coupling
with the pressure law''.

\medskip

\noindent {\it Remark.}  Note that i) with the lower bound $P(\rho) \ge C^{-1} \rho^\gamma - C$
provides the same assumptions than in the article \cite{Fe1} by E. {\sc Feireisl}. Point
i) will be used to construct approximate solutions in the non-monotone case. 

\medskip

\subsubsection {A non-isotropic stress tensor.}  
In that case, assume $(\rho_k, u_k)$ solve \eqref{continuity} with $\alpha_k=0$
\[
\partial_t \rho_k+\div(\rho_k\, u_k)= 0.
\]
and 
\begin{equation} \nonumber 
\hskip-5cm \div u_k= \nu_k\, P_k(\rho_k) 
        + \nu_k\,a_\mu A_\mu \,  P_k(\rho_k)
\end{equation}
\begin{equation} 
\hskip1cm +\nu_k\,(\Delta -a_\mu\, E_k)^{-1}\,\div(\partial_t(\rho_k\,u_k)+\div(\rho_k\,u_k\otimes u_k))\label{divu0noniso}
\end{equation}
where $\displaystyle A_\mu = (\Delta -a_\mu\,E_k)^{-1}\, E_k$. 

\bigskip

\noindent {\it Remark.} If one considers a symmetric anisotropy, $\div (A\,D\,u)$ in Theorem \ref{MainResultAniso}, then instead of \eqref{divu0noniso} we have the more complicated formula 
\begin{equation}
\begin{split}
\div u_k= &\nu_k\, P_k(\rho_k) 
        + \nu_k\,a_\mu A_\mu \,  P_k(\rho_k)\\
&\quad +\nu_k\,\div\,(\Delta\,I -a_\mu\, E_k)^{-1}\,(\partial_t(\rho_k\,u_k)+\div(\rho_k\,u_k\otimes u_k)),
\end{split}\label{divu0nonisosym}
\end{equation}
where $\displaystyle A_\mu = (\Delta\,I -a_\mu\,E_k)^{-1}\cdot \tilde E_k$. But now $E_k$ and $\tilde E_k$ may be different and are vector-valued operators, so that in particular $(\Delta\,I -a_\mu\,E_k)^{-1}$ means inverting a vector valued elliptic system. Except for the formulation there would however be no actual difference in the rest of the proof.

\medskip

Coming back to \eqref{divu0noniso}, we assume ellipticity on $\nu_k$
\begin{equation}
0<\underline\nu\leq \nu_k\leq \overline\nu<\infty.\label{elliptic2}
\end{equation}
We assume that $E_k$ is a given operator (differential or integral) s.t. \begin{itemize}
\item $(\Delta -a_\mu\,E_k)^{-1}\,\Delta$ is bounded on every $L^p$ space,
\item $A_\mu=(\Delta -a_\mu\,E_k)^{-1}\,E_k$ is bounded of norm less than $1$ on every $L^p$ space and can be represented by a convolution with a singular integral denoted by $A_\mu$ still
\[
A_\mu \,f=A_\mu\star_x f,\quad |A_\mu(x)|\leq \frac{C}{|x|^d},\quad \int A_\mu(x)\,dx=0.
\] 
\end{itemize}
Note here that to make more apparent the smallness of the non isotropic part, we explicitly scale it with $a_\mu$.
    We consider again  the control \eqref{boundrhok} on the density  but for 
    $p>\gamma^2/(\gamma-1)$,
and the bound \eqref{bounduk} for $u_k$.
   We also need the same controls: \eqref{rhout} on the time derivative of $\rho_k u_k$ and
\eqref{rhorho_t} on the time of the $\rho_k$.

\medskip

The main idea in this part is to investigate the compactness for an anisotropic viscous stress obtained as the perturbation of the usual isotropic viscous stress tensor, namely 
$-\div(A\,\nabla u)+(\lambda+\mu)\, \nabla {\rm div} u$ assuming $A=\mu\,Id+\delta A$ and 
$a_\mu = \|\delta A\|\le \varepsilon$ for some small enough  $\varepsilon$.  

\subsection{The main stability results: Theorems \ref{maincompactness}, \ref{maincompactness2} and \ref{maincompactness3}}

\subsubsection{General pressure laws}
The main step in that case is to prove the two compactness results
\begin{theorem} \label{maincompactness}
Assume that $\rho_k$ solves \eqref{continuity}, $u_k$ solves \eqref{divu0} with the bounds \eqref{elliptic}, \eqref{bounduk}, \eqref{rhout}, \eqref{rhorho_t}, and that $\mu_k$ and $F_k$ are compact in $L^1$. Moreover
\begin{itemize}
\item[i)] If $\alpha_k>0$, we assume the estimate \eqref{boundrhok} on $\rho_k$ with $\gamma>3/2$ and $p>2$ and quasi-monotonicity on $P_k$ through \eqref{monotone}.
\item[ii)] if $\alpha_k=0$, it is enough to assume \eqref{boundrhok} with $\gamma>3/2$ and $p>\max(2,\tilde \gamma)$ and only \eqref{nonmonotone} on $P_k$.
\end{itemize}
Then the sequence $\rho_k$ is compact in $L^1_{loc}$. 
\end{theorem}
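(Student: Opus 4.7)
My plan is to verify the quantitative compactness criterion of Proposition \ref{propcomp} applied with the family $\mathcal{K}_{h_0}$ introduced in Section \ref{sketchnewcompactness}. The hypotheses \eqref{boundrhok} and \eqref{rhorho_t} take care of the $L^p$ and time-regularity prerequisites, so the real task is to show that
$$\frac{1}{\|\mathcal{K}_{h_0}\|_{L^1}}\int_{\Pi^{2d}} \mathcal{K}_{h_0}(x-y)\,|\rho_k(t,x)-\rho_k(t,y)|\,dx\,dy \longrightarrow 0$$
as $h_0\to 0$, uniformly in $k$. Following the philosophy laid out in Section \ref{sketchnewcompactness}, I would propagate a \emph{weighted} version of this quantity and then control the set where the weights become small. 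Because $\rho_k$ has no positive lower bound and vacuum may genuinely form, the remarks after Proposition \ref{proproughsketch} suggest that a purely multiplicative weight $w_k(x)w_k(y)$ is insufficient and must be complemented by an additive piece $w_k(x)+w_k(y)$.

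The weight $w_k$ will solve a transport-type equation
$$\partial_t w_k + u_k\cdot\nabla w_k = -\lambda\,\Phi_k\,w_k, \qquad w_k|_{t=0}=1,$$
where $\Phi_k$ combines $M|\nabla u_k|$ with a pressure-sized term of order $\rho_k^{\tilde\gamma-1+\epsilon}$. Propagating the weighted quantity relies on the renormalized form of the continuity equation for $|\rho_k(x)-\rho_k(y)|$ (DiPerna--Lions theory when $\alpha_k=0$, with an additional nonnegative $\alpha_k|\nabla\rho_k|^2$ dissipation when $\alpha_k>0$), together with the pointwise commutator bound $|u_k(x)-u_k(y)|\leq C|x-y|(M|\nabla u_k|(x)+M|\nabla u_k|(y))$ recalled in Section \ref{usefulsection}. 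The delicate term is the one carrying $\div u_k(x)-\div u_k(y)$: I substitute \eqref{divu0} for $\div u_k$. In case (i) one exploits the quasi-monotonicity to get a favorable sign for $(P_k(t,x,\rho_k(x))-P_k(t,x,\rho_k(y)))(\rho_k(x)-\rho_k(y))$; in case (ii), \eqref{nonmonotone} produces a term of the form $(\rho_k^{\tilde\gamma-1}(x)+\rho_k^{\tilde\gamma-1}(y))|\rho_k(x)-\rho_k(y)|$ which is absorbed by the $\rho_k^{\tilde\gamma}$ penalization in $\Phi_k$. The residual $x$-dependent ingredients $\tilde P_k, Q_k, R_k$ are handled through their assumed vanishing against $K_h/\|K_h\|_{L^1}$.

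The hard part is the non-local effective-flux remainder $\Delta^{-1}\div(\partial_t(\rho_k u_k)+\div(\rho_k u_k\otimes u_k))$ together with the variable viscosity $\mu_k$ in the decomposition of $\div u_k$. Unlike in the toy Proposition \ref{proproughsketch}, this quantity is neither $L^\infty$ nor \emph{a priori} compact, and the simplifying assumption \eqref{assumptRk} cannot be invoked. What rescues us is the classical Lions--Feireisl effective-flux cancellation: I would adapt it to the test function $\mathcal{K}_{h_0}(x-y)(\rho_k(x)+\rho_k(y))\,s_k\,w_k(x)w_k(y)$ and then combine with the harmonic-analysis tools of Section \ref{usefulsection} (maximal and square functions, translation-commutators of singular integrals) to show that this non-local contribution produces an error vanishing with $h_0$. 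The assumed compactness of $\mu_k$ and $F_k$ in $L^1$ enters precisely here, allowing the viscosity and forcing to be treated as perturbations of a constant-coefficient problem. The split between (i) and (ii) mirrors what one can afford in this bookkeeping once $\alpha_k>0$ provides extra parabolic smoothness versus when it does not.

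The last step is to control the measure of $\{w_k \text{ small}\}$. Testing the equation for $w_k$ against $\rho_k|\log w_k|^\theta$ and using \eqref{boundrhok} together with the $L^p$-boundedness of the maximal function yields
$$\sup_{t\in[0,T]}\int_{\Pi^d}|\log w_k(t,x)|^\theta\,\rho_k(t,x)\,dx\leq C$$
for some $\theta>1$. The exponent $\theta$ is critical: raising $|\log w_k|$ to a power strictly greater than one lets us absorb a larger penalization $\Phi_k$ and so reach the realistic range of $\gamma$ in \eqref{hypgammaiso}, rather than the prohibitive $p>\gamma+1$ of the sketch. Splitting the double integral according to $\{w_k(x)+w_k(y)\geq\eta'\}$ and its complement, bounding the first piece by the weighted estimate and the second by the logarithmic control, and finally optimizing in $\eta'$, then gives the required decay in $h_0$ and concludes the compactness of $\rho_k$ in $L^1_{\mathrm{loc}}$.
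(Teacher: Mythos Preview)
Your outline captures the architecture---additive weights, renormalized propagation, the criterion of Proposition~\ref{propcomp}---but the core weight mechanism is inverted and would not close. You assert $\theta>1$; the paper needs $0<\theta<1$. The penalization for case~(ii) is not just $M|\nabla u_k|+\rho_k^{\tilde\gamma}$: it must also carry the term $\rho_k\,|\div u_k|$ (see \eqref{dk1}), whose role is to force the pointwise bound $w_1\le e^{-\lambda\rho_k}$ (Proposition~\ref{weightprop}\,i). With that in hand, propagating $\rho_k(1+|\log w_1|)^\theta$ produces a right-hand side $D_1\,\rho_k/(1+|\log w_1|)^{1-\theta}$, and since $|\log w_1|\ge\lambda\rho_k$, the denominator supplies an extra decay $(1+\rho_k)^{-(1-\theta)}$ that cuts the integrability demand from $p>\tilde\gamma+1$ down to $p>\tilde\gamma$. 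Taking $\theta>1$ makes the denominator \emph{amplify} the penalization and the bound on $\int\rho_k|\log w_1|^\theta$ cannot close; your weight, lacking $\rho_k|\div u_k|$, never yields $w_1\le e^{-\lambda\rho_k}$ to begin with.

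Two further points. The effective-flux remainder is not handled by ``the classical Lions--Feireisl cancellation'': that div-curl mechanism gives a pointwise relation modulo a defect measure, whereas here one needs the quantitative smallness of $\int K_h(x-y)\,\Phi\,(D\rho u_k(x)-D\rho u_k(y))$ for test functions $\Phi$ that themselves solve transport equations. This is Lemma~\ref{Drhou}, and its proof goes through a time-regularization $u_{k,\eta}$ of the velocity, the bound \eqref{rhout}, truncation in $\rho_k$, and an interpolation between $L^2_tL^{\bar p'}_x$ and $W^{1,\infty}_tW^{-1,\infty}_x$ regularity of $\Phi$; it is not an adaptation of the standard effective-flux identity. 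Finally, in case~(i) with $\alpha_k>0$ the diffusion forces one to work with the \emph{convolved} weight $\overline K_h\star w_0$ rather than $w_0$ itself (Lemma~\ref{boundQh}\,i), and the set $\{\overline K_h\star w_0\le\eta\}$ is then controlled by the covering argument of Proposition~\ref{weightprop}\,iii); your sketch treats both cases with the raw weight.
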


\bigskip

We also provide a complementary result which is a more precise rate of compactness
 away from the vacuum namely
 
\begin{theorem} \label{maincompactness2}
Assume again that $\rho_k$ solves \eqref{continuity} with $\alpha_k=0$, $u_k$ solves \eqref{divu0} with the bounds \eqref{elliptic}, \eqref{bounduk}, \eqref{rhout}, \eqref{rhorho_t} and that $\mu_k$ and $F_k$ are compact in $L^1$. Assume that \eqref{boundrhok} holds with $\gamma>d/2$ and $p>\max(2,\tilde\gamma)$ and that $P_k$ satisfies \eqref{nonmonotone}. Then there exists $\theta>0$ and a continuous function $\eps$ with $\eps(0)=0$, depending only on $\mu_k$ and $F_k$ s.t.
\[
\int_{\Pi^{2d}} \ind_{\rho_k(x)\geq \eta}\,\ind_{\rho_k(y)\geq \eta}\,K_h(x-y)\,\chi(\delta\rho_k)\leq \frac{C\,\|K_h\|_{L^1}}{\eta^{1/2}\,
|\log (\eps(h)+h^\theta)|^{\theta/2}}.
\]
\end{theorem}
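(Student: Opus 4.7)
The plan is to follow the strategy sketched in Section~\ref{sketchnewcompactness} via Proposition~\ref{proproughsketch}, with two essential refinements needed to reach the weaker assumption $\gamma>d/2$ and to produce the explicit quantitative rate. First I would introduce a weight $w_k$ solving a dual transport equation of the form
\[
\partial_t w_k + u_k\cdot\nabla w_k = -\lambda\,\Phi_k\, w_k,\qquad w_k|_{t=0}=1,
\]
with a penalization $\Phi_k$ built to dominate $M|\nabla u_k|$ and the nonlinear pressure contribution $\rho_k^{\tilde\gamma-1}+\tilde P_k$ that appears in \eqref{nonmonotone}. Because the integrability exponent $p$ may be strictly below $\tilde\gamma+1$, the naive log-estimate \eqref{linearlog} cannot be propagated as is; instead only a fractional-log bound of the shape $\sup_k\int_{\Pi^d}|\log w_k|^\theta\rho_k\,dx\leq C$ will hold, which is precisely why the final rate carries $|\log|^{\theta/2}$ rather than a full logarithm.

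Second, using a renormalization of \eqref{continuity} (in the spirit of DiPerna--Lions), one derives an equation for $\chi(\delta\rho_k)$ where $\delta\rho_k=\rho_k(t,x)-\rho_k(t,y)$, and propagates the weighted quantity
\[
R_h(t)=\int_{\Pi^{2d}} K_h(x-y)\,\chi(\delta\rho_k)\,\bigl(w_k(t,x)+w_k(t,y)\bigr)\,dx\,dy,
\]
using the sum of weights rather than the product so that smallness of $w_k$ captures smallness of $|\delta\rho_k|$ in the region $\{\rho_k\ge\eta\}$. The transport commutator $\nabla K_h(x-y)\cdot(u_k(x)-u_k(y))$ is absorbed via the pointwise Stein bound $|u_k(x)-u_k(y)|\leq C|x-y|(M|\nabla u_k|(x)+M|\nabla u_k|(y))$ together with $|x-y||\nabla K_h|\leq C K_h$, which is exactly matched by the $M|\nabla u_k|$ term inside $\Phi_k$.

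Third, the coupling is closed by substituting \eqref{divu0} for $\div u_k$: the local pressure part $P_k(\rho_k)/\mu_k$ is attacked by \eqref{nonmonotone}, whose $\rho_k^{\tilde\gamma-1}|\delta\rho_k|$ factor is absorbed by the $\rho_k^{\tilde\gamma-1}$ contribution to $\Phi_k$, while the explicit $t,x$-pieces $\tilde P_k,Q_k,R_k$ have vanishing $K_h$-oscillation by hypothesis. The main obstacle, and the place where most work is needed, is the non-local effective-pressure term $\mu_k^{-1}\Delta^{-1}\div(\partial_t(\rho_k u_k)+\div(\rho_k u_k\otimes u_k))$ together with the $\mu_k^{-1}F_k$ term: these are not pointwise compact, so one cannot simply invoke an $L^1$-type modulus. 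Instead, relying on the assumed compactness of $\mu_k$ and $F_k$ in $L^1$ and on the harmonic-analysis tools of Section~\ref{usefulsection} (translations of singular integrals and square-function estimates), one must establish a quantitative modulus $\eps(h)\to 0$ for the integral of these terms against $K_h$ tested with $\chi'(\delta\rho_k)(\rho_k(x)+\rho_k(y))(w_k(x)+w_k(y))$; this is essentially the analogue here of the qualitative effective-flux identity in the Lions--Feireisl theory, but now quantified. Combined with the momentum-equation controls \eqref{rhout}--\eqref{rhorho_t}, this yields
\[
R_h(t)\leq C\,\|K_h\|_{L^1}\,\bigl(\eps(h)+h^\theta\bigr).
\]

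Finally, to reach the stated estimate, I would integrate the previous bound against $dh/h$ over $h\in[h_0,1]$ to produce the kernel $\mathcal{K}_{h_0}$ and thereby gain a $|\log(\eps(h)+h^\theta)|$ factor in the denominator. Localizing to $\{\rho_k(x)\ge\eta\}\cap\{\rho_k(y)\ge\eta\}$ and splitting according to whether $w_k(x)+w_k(y)\lessgtr\eta'$, one uses the bound $|\{w_k\le\eta'\}\cap\{\rho_k\ge\eta\}|\leq C\eta^{-1}|\log\eta'|^{-\theta}$ (a consequence of the fractional-log estimate) on the small-$w$ part and the weighted bound on the remainder; optimizing $\eta'$ against $(\eps(h)+h^\theta)^{1/2}$ produces both the $\eta^{-1/2}$ loss and the $|\log|^{-\theta/2}$ rate. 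The hardest ingredient throughout is the quantitative handling of the non-local $\Delta^{-1}\div\partial_t(\rho_k u_k)$ contribution; everything else is a refinement of the Prop.~\ref{proproughsketch} mechanism.
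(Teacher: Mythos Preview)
Your proposal has the right overall architecture (weight, renormalized equation on $\chi(\delta\rho_k)$, coupling through \eqref{divu0}, fractional-log control on the weight), but there is a genuine gap in the choice of weight combination. You propagate the quantity with the \emph{sum} $w_k(x)+w_k(y)$ and then claim a bound $R_h(t)\leq C\,\|K_h\|_{L^1}\,(\eps(h)+h^\theta)$ for a \emph{single} $h$. With the sum of weights this cannot be closed at fixed $h$: the commutator term $\nabla K_h\cdot(u_k(x)-u_k(y))$ produces a cross contribution of the type $D_{|x-y|}u_k(y)\,w_k(x)$ (and its symmetric), where the maximal-function bound sits at $y$ while the weight sits at $x$. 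The penalization in $\Phi_k$ only absorbs $M|\nabla u_k|(x)\,w_k(x)$, so this cross term is not controlled pointwise in $h$; it is precisely here that the paper integrates against $dh/h$ and invokes Lemma~\ref{shiftDulemma} to pick up an unavoidable $|\log h_0|^{1/2}$ loss. That loss is fine for the compactness statement (Theorem~\ref{maincompactness}) but is \emph{not} the $h^\theta$ rate asserted in Theorem~\ref{maincompactness2}, and integrating in $h$ would in any case yield an estimate for ${\cal K}_{h_0}$ rather than for the stated $K_h$.

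The paper avoids this by using, for Theorem~\ref{maincompactness2} only, the \emph{product} weight $W_2(t,x,y)=w_1(t,x)\,w_1(t,y)$. With the product the commutator term is symmetric and the cross issue disappears: one gets $A_2\leq 0$ directly (see Lemma~\ref{boundQh}, case~ii), no $h$-integration is needed, and the propagation yields the clean single-scale bound \eqref{fw2}. The price is exactly the one you were trying to avoid: $w_1(x)\,w_1(y)$ is small as soon as \emph{either} weight is small, so one cannot deduce compactness from it and must restrict to $\{\rho_k(x)\geq\eta\}\cap\{\rho_k(y)\geq\eta\}$. The final splitting and optimization then go as you describe, using $\rho_k\in L^2$ and the fractional-log bound $\int\rho_k|\log w_1|^\theta\leq C$ to get the $\eta^{-1/2}|\log(\cdot)|^{-\theta/2}$ rate. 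In short: switch to the product weight for this theorem; the sum-weight route you outline is the one for Theorem~\ref{maincompactness}, not for the explicit rate here.
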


For instance if $\tilde P_k$, $R_k$, $\mu_k$ and $F_k$ are uniformly in $W^{s,1}$ for $s>0$, then 
for some constant $C>0$
\[
\int_{\Pi^{2d}} \ind_{\rho_k(x)\geq \eta}\,\ind_{\rho_k(y)\geq \eta}\,K_h(x-y)\,\chi(\delta\rho_k)\leq C\,\frac{\|K_h\|_{L^1}}{\eta^{1/2}\,
 |\log h|^{\theta/2}}.
\]
Since those results depend on the regularity of $\mu_k$ and $F_k$, we denote  $\eps_0(h)$ a continuous function with $\eps_0(0)=0$ s.t. 
\begin{equation}
\begin{split}
&\int_0^T\!\int_{\Pi^{2d}} K_h(x-y)\,\Big(|F_k(t,x)-F_k(t,y)|+|\mu_k(t,x)-\mu_k(t,y)|\\
&+|\tilde P_k(t,x)-\tilde P_k(t,y)|+|R_k(t,x)- R_k(t,y)|+|Q_k(t,x,y)|\Big)\leq \eps_0(h)\,\|K_h\|_{L^1}.
\end{split}\label{compactFmu}
\end{equation}


\subsubsection{Non isotropic stress tensor}
In that case our result reads
 \begin{theorem}  \label{maincompactness3}
Assume that $\rho_k$ solves \eqref{continuity}, $u_k$ solves \eqref{divu0noniso} with the bounds \eqref{bounduk}, \eqref{rhout}, \eqref{rhorho_t} and \eqref{elliptic2} together with all the assumptions on $E_k$ below \eqref{divu0noniso}. Assume as well that $P_k$ satisfies \eqref{monotone} and that \eqref{boundrhok} with $\gamma>d/2$ and $p>\gamma^2/(\gamma-1)$. There exists a universal constant $C_*> 0$ s.t. if 
\[
a_\mu\leq C_*,
\]
then $\rho_k$ is compact in $L^1_{\rm loc}$. 
\end{theorem}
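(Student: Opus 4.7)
\medskip

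\noindent\textbf{Proof proposal.} The overall strategy follows the template developed in Section \ref{sketchnewcompactness} and used for Theorem \ref{maincompactness}: introduce a weight $w_k(t,x)$ solving a transport inequality of the form
\[
\partial_t w_k + u_k\cdot\nabla w_k = -\lambda\bigl(M|\nabla u_k| + \rho_k^{\gamma-1}\,M|\nabla u_k| + \rho_k^\beta\bigr)\,w_k,\qquad w_k|_{t=0}=1,
\]
for a large constant $\lambda$ and an exponent $\beta$ chosen to match the integrability constraint $p>\gamma^2/(\gamma-1)$, and propagate the modified weighted quantity
\[
R_{h_0}(t)=\int_{\Pi^{2d}}\mathcal{K}_{h_0}(x-y)\,\chi(\delta\rho_k)\,\bigl(w_k(t,x)+w_k(t,y)\bigr)\,dx\,dy,
\]
with $\delta\rho_k=\rho_k(t,x)-\rho_k(t,y)$ and $\chi$ a suitable truncation. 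Differentiating in $t$, using the renormalized continuity equation, and splitting $\nabla\mathcal{K}_{h_0}\cdot(u_k(x)-u_k(y))$ via the pointwise bound $|u_k(x)-u_k(y)|\le C|x-y|(M|\nabla u_k|(x)+M|\nabla u_k|(y))$, the commutator contributions are absorbed by the $M|\nabla u_k|$ penalization in the weight, exactly as in the isotropic proof.

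The genuinely new step lies in the treatment of the term involving $\div u_k(x)-\div u_k(y)$. Substituting \eqref{divu0noniso} splits this contribution into three pieces: a local monotone piece $\nu_k(P_k(\rho_k(x))-P_k(\rho_k(y)))$, the non-local anisotropic piece $\nu_k a_\mu\bigl(A_\mu P_k(\rho_k)(x)-A_\mu P_k(\rho_k)(y)\bigr)$, and an effective-flux piece coming from $(\Delta-a_\mu E_k)^{-1}\div(\partial_t(\rho_k u_k)+\div(\rho_k u_k\otimes u_k))$. For the local piece, the quasi-monotonicity assumption \eqref{monotone} yields
\[
\bigl(P_k(\rho_k(x))-P_k(\rho_k(y))\bigr)\,\mathrm{sign}(\delta\rho_k)\,(\rho_k(x)+\rho_k(y))\;\ge\; c_0\,|\delta\rho_k|\,(\rho_k^{\gamma}(x)+\rho_k^{\gamma}(y))
\]
up to lower-order terms controlled by the compactness hypotheses on $\tilde P_k,\,Q_k$, giving a strictly dissipative contribution.

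The main obstacle is absorbing the non-local piece $a_\mu A_\mu P_k(\rho_k)$ by this local dissipation. The plan is to write, using that $A_\mu$ is a convolution with a singular kernel of zero mean and order $d$,
\[
A_\mu P_k(\rho_k)(x)-A_\mu P_k(\rho_k)(y)=\int A_\mu(z)\bigl[P_k(\rho_k(x-z))-P_k(\rho_k(y-z))\bigr]\,dz,
\]
then insert this into $R_{h_0}(t)$, translate the integrating variables, and use the doubling/translation lemmas of Section \ref{usefulsection} (plus the identity $\|\mathcal{K}_{h_0}(\cdot-z)\|_{L^1}\sim \|\mathcal{K}_{h_0}\|_{L^1}$ up to a logarithmic factor absorbed in the $\log h_0$ budget) to bound the resulting expression by $C\,a_\mu$ times a quantity controlled by $R_{h_0}$ itself plus the local dissipative term. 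This is where the smallness $a_\mu\le C_\star$ is used in a crucial way: choosing $C_\star$ smaller than the constant produced by the singular-integral $L^p$ norm of $A_\mu$ and the lower bound $c_0$ obtained from \eqref{monotone} lets the anisotropic term be absorbed into the monotone one. The effective-flux piece is handled as in the proof of Theorem \ref{maincompactness}, using the bounds \eqref{rhout}--\eqref{rhorho_t} and the bound of Lemma \ref{Drhou} (referenced in the sketch), noting that $(\Delta-a_\mu E_k)^{-1}\Delta$ is $L^p$-bounded uniformly in $a_\mu$.

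Once the weighted inequality $R_{h_0}(t)\le R_{h_0}(0)+\eps_0(h_0)$ is established, the proof concludes as in the paradigmatic case of Section \ref{sketchnewcompactness}. The weight estimate
\[
\sup_k\int_{\Pi^d}|\log w_k(t,x)|^\theta\,\rho_k(t,x)\,dx<\infty
\]
is obtained by testing the $w_k$ equation against $\rho_k$ and exploiting $\rho_k\in L^p$ with $p>\gamma^2/(\gamma-1)$: the exponent $\gamma^2/(\gamma-1)$ is precisely what makes $\rho_k\cdot\rho_k^{\beta}$ with $\beta\sim\gamma$ integrable after pairing with the maximal function via H\"older. Splitting the integral $\int \mathcal{K}_{h_0}|\delta\rho_k|$ into $\{w_k(x)+w_k(y)\ge\eta'\}$ and its complement, using $R_{h_0}$ on the first and the weight bound on the second, and optimizing in $\eta'$, yields compactness of $\rho_k$ in $L^1_{\mathrm{loc}}$. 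The main anticipated technical difficulty is the uniform (in $h_0$ and $k$) control of the non-local translation commutator described above; everything else is essentially a parallel adaptation of the scheme in Section \ref{proofstabilityres}.
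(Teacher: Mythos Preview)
Your proposal follows the correct template (weighted propagation, monotone local dissipation, absorb the anisotropic remainder by smallness of $a_\mu$, then remove the weight), but the handling of the non-local piece $a_\mu\bigl(A_\mu P_k(\rho_k)(x)-A_\mu P_k(\rho_k)(y)\bigr)$ has a real gap.

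Writing
\[
A_\mu P_k(\rho_k)(x)-A_\mu P_k(\rho_k)(y)=\int A_\mu(z)\,\bigl[P_k(\rho_k(x-z))-P_k(\rho_k(y-z))\bigr]\,dz
\]
and translating $(x,y)\mapsto(x+z,y+z)$ does \emph{not} reduce the problem to ``$Ca_\mu$ times $R_{h_0}$''. After translation, the functions $\chi'(\delta\rho_k)$, $\bar\rho_k$ and the weights sit at $(x+z,y+z)$ while the pressure difference sits at $(x,y)$; since $A_\mu(z)\sim|z|^{-d}$ is not integrable, the shift $z$ ranges over all scales, and there is no reason the shifted and unshifted expressions are comparable uniformly in $z$ --- the observation $\|\mathcal{K}_{h_0}(\cdot-z)\|_{L^1}=\|\mathcal{K}_{h_0}\|_{L^1}$ is true but irrelevant to this mismatch. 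This is precisely the obstruction flagged in the discussion after Subsection~\ref{classicalapproach}.

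The paper resolves this by three ingredients you are missing. First, the weight itself must contain a non-local penalization $\overline K_h\star\bigl(|\div u_k|+|A_\mu\rho_k^\gamma|\bigr)$ (this is $D_a$ in \eqref{dk00}); without it the terms $II^R$ coming from $(\div u_k(x)+\div u_k(y))$ cannot be closed. Second, one does not use the linear $\chi$ but $\chi_a(\xi)\sim|\xi|^{1+\ell}$ with the \emph{specific} exponent $\ell=1/(\gamma-1)$; the algebraic identities $(\ell+1)/\ell=\gamma$ and $(\gamma+\ell+1)/\gamma=\ell+1$ are what make a H\"older estimate close, and the constraint $p>\gamma+1+\ell=\gamma^2/(\gamma-1)$ arises exactly here --- not from the weight estimate as you suggest. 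Third, $A_\mu$ is split scale-by-scale into a far-field $R_h=A_\mu\star N_{\delta_h}$ and a near-field $L_h$ supported in $\{|z|\le\delta_h\}$ with $\delta_h=h/|\log h|$: the $R_h$ contribution is bounded by $|\log h_0|^{\theta}$, $\theta<1$, via Littlewood--Paley and the square-function Lemma~\ref{shiftNlemma}; the $L_h$ contribution is where smallness of $a_\mu$ is actually used, after symmetrizing the (regularized) weight $w_{a,h}=\overline K_h\star w_a$ across the short shift $|z|\le\delta_h\ll h$ and applying the H\"older estimate tied to $\ell$. Your one-step translation argument collapses these three mechanisms into a single move that does not go through.
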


\bigskip

\noindent {\bf Remarks.}
Theorems \ref{maincompactness}, \ref{maincompactness2}, \ref{maincompactness3} are really the main contributions of this article. For instance, deducing Theorems \ref{MainResultPressureLaw} and \ref{MainResultAniso} follows usual and straightforward approximation procedures. 

As such the main improvements with respect to the existing theory can be seen in the fact that point ii) in Theorem \ref{maincompactness} does not require monotonicity on $P_k$ and in the fact that Theorem \ref{maincompactness3} does not require isotropy on the stress tensor.

Our starting approximate system involves diffusion, $\alpha_k\neq 0$, in the continuity equation \eqref{continuity}. As can be seen from point i) of Theorem \ref{maincompactness}, our compactness result in that case requires an isotropic stress tensor and a pressure $P_k$ which is monotone after a certain point by \eqref{monotone}. This limitation is the reason why we have to consider also approximations $P_k$ and $E_k$ of the pressure and the stress tensor. While it may superficially appear that we did not improve the existing theory in that case with diffusion, we want to point out that
\begin{itemize}
\item We could not have used {\sc P.--L. Lions}' approach because this requires strict monotonicity: $P_k'>0$ everywhere. Instead any non-monotone pressure $P$ satisfying \eqref{nonmonotone} can be approximated by $P_k$ satisfying \eqref{monotone} simply by considering $P_k=P+\eps_k\,\rho^{\bar\gamma}$ as long as $\bar\gamma>\tilde\gamma$ and thus without changing the requirements on $\gamma$.
\item {\sc E. Feireisl} {\it et al} can handle ``quasi-monotone'' pressure laws satisfying \eqref{monotone} together with diffusion but they require higher integrability on $\rho_k$ for this: $p\geq 4$ in \eqref{boundrhok}. This in turn leads to a more complex approximation procedure.
\end{itemize}
%
\section{Technical lemmas and renormalized solutions\label{usefulsection}}
%
\subsection{Useful technical lemmas}
 Let us recall the well known inequality, which we used in subsection \ref{seclineartransport} and will use several times in the following (see \cite{Stein} for instance)
\begin{equation}
|\Phi(x)-\Phi(y)|\leq C\,|x-y|\,(M|\nabla\Phi|(x)+M|\nabla \Phi|(y)),\label{maximalineq}
\end{equation}
where $M$ is the localized maximal operator
\begin{equation}
M\,f(x)=\sup_{r\leq 1} \frac{1}{|B(0,r)|}\,\int_{B(0,r)} f(x+z)\,dz.\label{defmaximal}
\end{equation}
As it will be seen later, there is a technical difficulty in the proof, which would lead us to try (and fail) to control $M|\nabla u_k|(y)$ by $M|\nabla u_k|(x)$. Instead we have to be more precise than \eqref{maximalineq} in order to avoid this. To deal with such problems,
we use more sophisticated tools.
    First one has 
\begin{lemma} There exists $C>0$ s.t. for any $u\in W^{1,1}(\Pi^d)$, one has
\[
|u(x)-u(y)|\leq C\,|x-y|\,(D_{|x-y|}u (x)+D_{|x-y|}u (y)), 
\]
where we denote
\[
D_h u(x)=\frac{1}{h}\,\int_{|z|\leq h} \frac{|\nabla u(x+z)|}{|z|^{d-1}}\,dz.
\]\label{diffulemma}
\end{lemma}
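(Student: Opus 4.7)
The plan is to prove Lemma \ref{diffulemma} by a refinement of the classical pointwise representation of $u(x)-u(y)$ via the Riesz potential, reducing to line integrals of $\nabla u$ averaged over a ``lens'' region containing both $x$ and $y$. The result should follow from a direct computation in polar coordinates. By density and Fubini, it is enough to treat $u\in C^1(\Pi^d)$ and argue pointwise.

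Set $h=|x-y|$ and let $B_\ast = B(x,h)\cap B(y,h)$. A basic geometric fact is that $|B_\ast|\geq c_d\, h^d$ for a dimensional constant $c_d>0$ (the intersection is a non-degenerate lens when the centers are at distance $h$ and the radii equal $h$). Writing
\[
|u(x)-u(y)|\leq \frac{1}{|B_\ast|}\int_{B_\ast}\bigl(|u(x)-u(z)|+|u(z)-u(y)|\bigr)\,dz,
\]
it suffices to bound $I(x):=\int_{B(x,h)}|u(x)-u(z)|\,dz$ (and its twin centered at $y$) by $C\,h^{d+1}D_h u(x)$; the lemma will then follow upon dividing by $|B_\ast|\sim h^d$.

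For the key estimate, I would use polar coordinates $z=x+\rho\omega$ with $\omega\in S^{d-1}$, $\rho\in(0,h)$, and the fundamental theorem of calculus $|u(x)-u(x+\rho\omega)|\leq \int_0^\rho|\nabla u(x+s\omega)|\,ds$. Swapping the $s$ and $\rho$ integrations gives
\[
I(x)\leq \int_{S^{d-1}}\!\int_0^h |\nabla u(x+s\omega)|\!\left(\int_s^h\!\rho^{d-1}\,d\rho\right)\!ds\,d\omega
\leq \frac{h^d}{d}\int_{S^{d-1}}\!\int_0^h |\nabla u(x+s\omega)|\,ds\,d\omega.
\]
The inner double integral is exactly $\int_{B(x,h)}|\nabla u(z)|\,|z-x|^{-(d-1)}\,dz = h\,D_h u(x)$, so $I(x)\leq C h^{d+1}D_h u(x)$, and symmetrically for $y$. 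Combining with $|B_\ast|\geq c_d h^d$ yields the announced inequality $|u(x)-u(y)|\leq C|x-y|(D_h u(x)+D_h u(y))$.

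There is no real obstacle here; the argument is a sharpened version of the standard maximal-function bound \eqref{maximalineq}, the only subtle point being the use of the \emph{truncated} Riesz potential $D_h u$ at the precise scale $h=|x-y|$ rather than the global maximal function. The density step from $C^1$ to $W^{1,1}$ is harmless: both sides pass to almost-everywhere limits along a mollification, since for fixed $h$ the operator $D_h$ is continuous on $L^1$ by Young's inequality (the kernel $|z|^{-(d-1)}\ind_{|z|\leq h}$ is integrable).
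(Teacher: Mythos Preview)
Your proof is correct and is essentially the same approach as the paper's: the paper only sketches the idea of averaging line integrals of $\nabla u$ over trajectories from $x$ to $y$ that stay within a ball of scale $|x-y|$, and your two-segment paths $x\to z\to y$ with $z$ averaged over the lens $B(x,h)\cap B(y,h)$ are precisely a concrete realization of that averaging. The polar-coordinate computation and the identification of the inner integral with $h\,D_h u(x)$ are exactly the mechanism the paper has in mind.
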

\begin{proof}
A full proof of such well known result can for instance be found in \cite{Ja} in a more general setting namely $u\in BV$. The idea is simply to consider trajectories $\gamma(t)$ from $x$ to $y$ which stays within the ball of diameter $|x-y|$ to control
\[
|u(x)-u(y)|\leq \int_0^1 \gamma'(t)\cdot\nabla u(\gamma(t))\,dt.
\]
And then to average over all such trajectories with length of order $|x-y|$. 
Similar calculations are also present for instance in \cite{FaKeSe}.
\end{proof}
 
Note that this result implies the estimate \eqref{maximalineq} as
\begin{lemma}
\label{compareDhmax}
 There exists $C>0$, for any $u\in W^{1,p}(\Pi^d)$ with $p\geq 1$
\[
D_h\,u(x)\leq C\,M|\nabla u|(x).
\]\
\end{lemma}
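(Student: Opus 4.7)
The plan is to compare $D_h u(x)$ to $M|\nabla u|(x)$ by dyadically decomposing the ball $\{|z|\leq h\}$ into the annuli $A_k = \{2^{-(k+1)}h \leq |z| \leq 2^{-k} h\}$ for $k=0,1,2,\ldots$, and bounding each contribution by the maximal function. I will assume $h\leq 1$, which is the relevant regime (and which is implicitly used so that the balls $B(0,2^{-k}h)$ fall inside the radius-$1$ range appearing in the definition \eqref{defmaximal} of the localized maximal operator).

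On the annulus $A_k$, one has the trivial lower bound $|z|^{d-1}\geq (2^{-(k+1)}h)^{d-1}$, so the factor $|z|^{-(d-1)}$ is bounded above by $C_d(2^{-k}h)^{-(d-1)}$. Extending the integration from the annulus to the full ball $B(0,2^{-k}h)$ (using non-negativity of $|\nabla u|$) and then recognizing the average yields
\[
\int_{A_k} \frac{|\nabla u(x+z)|}{|z|^{d-1}}\,dz
\leq \frac{C_d}{(2^{-k}h)^{d-1}}\,|B(0,2^{-k}h)|\,\Bigl(\frac{1}{|B(0,2^{-k}h)|}\int_{B(0,2^{-k}h)}|\nabla u(x+z)|\,dz\Bigr)
\leq C_d'\,(2^{-k}h)\,M|\nabla u|(x),
\]
where in the last step I used that $2^{-k}h\leq 1$ so the inner average is controlled by $M|\nabla u|(x)$.

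Summing the geometric series over $k\geq 0$ gives
\[
\int_{|z|\leq h}\frac{|\nabla u(x+z)|}{|z|^{d-1}}\,dz
\leq C_d'\,h\,M|\nabla u|(x)\sum_{k\geq 0} 2^{-k}
= C\,h\,M|\nabla u|(x),
\]
and dividing by $h$ yields the claimed estimate $D_h u(x)\leq C\,M|\nabla u|(x)$. The argument is essentially a routine dyadic comparison, so there is no substantive obstacle; the only point requiring care is the choice $h\leq 1$ to stay within the radii allowed by the localized maximal operator \eqref{defmaximal}.
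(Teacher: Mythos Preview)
Your proof is correct and follows essentially the same approach as the paper: a dyadic decomposition of the ball $\{|z|\leq h\}$ into annuli, bounding $|z|^{-(d-1)}$ on each annulus, recognizing the maximal function on each piece, and summing the resulting geometric series. The only cosmetic difference is that the paper indexes its annuli by absolute dyadic scales $2^{-i}$ (starting from $i_0$ with $2^{-i_0-1}<h\leq 2^{-i_0}$) while you index by $2^{-k}h$, but the content is identical.
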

\begin{proof}
Do a dyadic decomposition and define $i_0$ s.t. $2^{-i_0-1}< h\leq 2^{-i_0}$
\[\begin{split}
D_h\,u(x)&\leq \frac{1}{h}\sum_{i\geq i_0} \int_{2^{-i-1}<|z|\leq 2^{-i}} \frac{|\nabla u(x+z)|}{|z|^{d-1}}\,dz\\
&\leq \sum_{i\geq i_0} \frac{2^{(i+1)\,(d-1)}}{h}  \int_{2^{-i-1}<|z|\leq 2^{-i}} |\nabla u(x+z)|\,dz\\
&\leq 2^{d-1}\sum_{i\geq i_0} |B(0,1)|\,\frac{2^{-i}}{h} M\,|\nabla u|(x)
\leq C\,M\,|\nabla u|(x).
\end{split}\]
\end{proof}

The key improvement in using $D_h$ is that small translations of the operator $D_h$ are actually easy to control
\begin{lemma} \label{shiftDulemma}
For any $1<p<\infty$, there exists $C>0$ s.t. for any $u\in H^1(\Pi^d)$
\begin{equation}
\int_{h_0}^{1} \int_{\Pi^d} \overline K_h(z)\,\|D_{|z|}\,u(.)-D_{|z|}\,u(.+z)\|_{L^p}\,dz\,\frac{dh}{h}\leq C\,\,\|u\|_{B^1_{p,1}},\label{optimalbesov}
\end{equation}
where the definition and basic properties of the Besov space $B^1_{p,1}$ are recalled in section \ref{Besov}. As a consequence
\begin{equation}
\int_{h_0}^{1} \int_{\Pi^d} \overline K_h(z)\,\|D_{|z|}\,u(.)-D_{|z|}\,u(.+z)\|_{L^2}\,dz\,\frac{dh}{h}\leq C\,|\log h_0|^{1/2}\,\|u\|_{H^1}.\label{squarefunction}
\end{equation}
It is also possible to disconnect the shift from the radius in $D_r u$ and obtain for instance
\begin{equation}
\int_{h_0}^{1} \int_{\Pi^{2d}} \overline K_h(z)\,\overline K_h(w)\|D_{|z|}\,u(.)-D_{|z|}\,u(.+w)\|_{L^2}\,dz\, dw\,\frac{dh}{h}\leq C\,|\log h_0|^{1/2}\,\|u\|_{H^1}.\label{squarefunctiondisc}
\end{equation}
\end{lemma}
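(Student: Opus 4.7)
\textbf{Proof plan for Lemma \ref{shiftDulemma}.} The starting point is to rewrite $D_r u = \psi_r\star g$ with $g:=|\nabla u|$ and $\psi_r(w):=r^{-1}|w|^{-(d-1)}\ind_{|w|\leq r}$, so that the translated difference becomes a fixed convolution applied to the single function $g$,
\[
D_{|z|}u(\cdot) - D_{|z|}u(\cdot+z) = \bigl(\psi_{|z|} - \psi_{|z|}(\cdot+z)\bigr)\star g.
\]
This reduces all three claims to a good understanding of the symbol associated with $\psi_r$ and of its shifts.

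For \eqref{squarefunction} I would pass to the Fourier side. The symbol $\hat\psi_r(\xi)$ is of the form $G(r|\xi|)$ for a bounded radial function $G$, and since $\psi_r$ contains the Newton-potential singularity $|w|^{-(d-1)}$ at the origin (whose radial Fourier transform behaves like $|\xi|^{-1}$) one has $|G(s)|\leq C\min(1,s^{-1})$. Coupled with $|1-e^{iz\cdot\xi}|\leq\min(2,|z||\xi|)$ this produces the kernel bound
\[
\bigl|\widehat{\psi_{|z|} - \psi_{|z|}(\cdot+z)}(\xi)\bigr|^2 \leq C\min\!\bigl((|z||\xi|)^2,(|z||\xi|)^{-2}\bigr).
\]
Plancherel, together with the fact that $\overline K_h$ is an $L^1$-normalized weight concentrated at scale $h$, then yields
\[
\int \overline K_h(z)\,\bigl\|D_{|z|}u - D_{|z|}u(\cdot+z)\bigr\|_{L^2}^2\,dz \leq C\!\int |\hat g(\xi)|^2\,m(h|\xi|)\,d\xi,
\]
with $m(s):=\min(s^2,s^{-2})$. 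Cauchy--Schwarz in $z$ (using $\|\overline K_h\|_{L^1}=1$) controls the $L^1$-integral in $z$ of the $L^2$-norm by the square root of the above, a second Cauchy--Schwarz on the measure $dh/h$ over $[h_0,1]$ generates the $|\log h_0|^{1/2}$ factor, and Fubini with the convergence of $\int_0^\infty m(s)\,ds/s$ finally delivers the bound $C|\log h_0|^{1/2}\|\nabla u\|_{L^2}\leq C|\log h_0|^{1/2}\|u\|_{H^1}$.

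For the Besov estimate \eqref{optimalbesov} I would transport the same Fourier picture into $L^p$ via a Littlewood--Paley decomposition $u=\sum_j\Delta_j u$ together with the characterization $\|u\|_{B^1_{p,1}}\sim \sum_j 2^j\|\Delta_j u\|_{L^p}$ recalled in Section \ref{Besov}. Bernstein's inequality on each block and the $L^p$-multiplier estimate inherited from $|G(r|\xi|)|\lesssim\min(1,(r|\xi|)^{-1})$ give a frequency-localized analogue of the Fourier bound, of the form $\|D_{|z|}\Delta_j u - D_{|z|}\Delta_j u(\cdot+z)\|_{L^p}\leq C\,2^j\|\Delta_j u\|_{L^p}\,m^{1/2}(|z|2^j)$. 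Integrating against $\overline K_h(z)\,dz\,dh/h$ and summing in $j$, the scalar integral $\int_0^1 m^{1/2}(h\,2^j)\,dh/h$ is bounded uniformly in $j$ by the change of variables $s=h\,2^j$, so one recovers $C\sum_j 2^j\|\Delta_j u\|_{L^p}=C\|u\|_{B^1_{p,1}}$. The disconnected version \eqref{squarefunctiondisc} follows from the same argument: the extra translation $w$ is absorbed by the unit mass of $\overline K_h(w)$ and the remaining $L^2$-analysis above is unaffected.

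The main obstacle I anticipate is the $L^p$ case with $p\neq 2$, where Plancherel is unavailable and one must produce the crucial smoothing factor $(r|\xi|)^{-1}$ via $L^p$-multiplier theory (Mikhlin--H\"ormander) or Bernstein-type estimates on Littlewood--Paley blocks, while carefully tracking the two regimes $|z|2^j\leq 1$ and $|z|2^j\geq 1$ so as not to lose a logarithmic factor; without this gain the naive bound $\|(\psi_r-\psi_r(\cdot+z))\star g\|_{L^p}\leq C\|g\|_{L^p}$ would produce only an $O(|\log h_0|)$ estimate instead of a uniform control in $h_0$.
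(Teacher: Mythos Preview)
Your Plancherel argument for \eqref{squarefunction} is correct and is in fact a more direct route to the $L^2$ bound than the paper takes. One minor caveat: the intermediate claim $\int \overline K_h(z)\,m(|z||\xi|)\,dz \leq C\,m(h|\xi|)$ is not literally true because $\overline K_h$ has a heavy polynomial tail (it only decays like $|z|^{-a}$ with $a>d$), but your final Fubini step saves it: using $\int_{h_0}^1 \overline K_h(z)\,\frac{dh}{h}\leq C(|z|+h_0)^{-d}$ one gets $\int m(|z||\xi|)\,|z|^{-d}\,dz \leq C\int_0^\infty m(s)\,\frac{ds}{s}<\infty$, uniformly in $\xi$.

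For \eqref{optimalbesov} there is a genuine gap. You decompose $u=\sum_j \Delta_j u$ and claim the high-frequency smoothing bound $\|D_{|z|}\Delta_j u - D_{|z|}\Delta_j u(\cdot+z)\|_{L^p}\leq C\,2^j\|\Delta_j u\|_{L^p}\,(|z|2^j)^{-1}$ via the multiplier decay of $G$. But the function to which $\psi_{|z|}$ is applied is $|\nabla \Delta_j u|$, and the absolute value destroys the frequency localization: there is no reason for $|\nabla\Delta_j u|$ to live near frequency $2^j$. A wave-packet $\Delta_j u=\phi(2^j\cdot)$ gives $|\nabla\Delta_j u|=2^j|\phi'(2^j\cdot)|$, a bump of width $2^{-j}$ and fixed integral; averaging it by $\psi_{|z|}$ at scale $|z|\gg 2^{-j}$ spreads it into a bump of width $\sim|z|$ with the same integral, so the shifted difference has $L^p$ norm of order $|z|^{1/p-1}$, which for $|z|2^j\gg 1$ is much larger than $\|\Delta_j u\|_{L^p}/|z|\sim 2^{-j/p}|z|^{-1}$.

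The paper avoids this by never decomposing $u$. It first integrates out $h$ via $\int_{h_0}^1 \overline K_h(z)\,\frac{dh}{h}\leq C(|z|+h_0)^{-d}$ and passes to spherical coordinates $z=r\omega$, reducing the whole expression to $\int_{S^{d-1}}\int_{h_0}^1 \|L_{\omega,r}\star g\|_{L^p}\,\frac{dr}{r}\,d\omega$ with $g=|\nabla u|$ and $L_{\omega,r}(x)=r^{-d}\bigl[\psi_1(x/r)-\psi_1(x/r-\omega)\bigr]$, a scaled zero-average kernel. It then applies a general result (Lemma~\ref{shiftconvolution}) about such one-parameter kernel families, whose proof does use Littlewood--Paley, but on $g$ directly; the two regimes $2^k\lessgtr r^{-1}$ are handled via real-variable bounds ($L_{\omega,r}\in W^{s,1}$ uniformly, $\int |z|^s|L_{\omega,r}(z)|\,dz\lesssim r^s$) rather than Fourier multipliers, so there is no need for the convolved function to be frequency-localized. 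The output is $\|g\|_{B^0_{p,1}}$ (written as $\|u\|_{B^1_{p,1}}$) for \eqref{optimalbesov} and $|\log h_0|^{1/2}\|g\|_{L^p}$ for \eqref{squarefunction}. If you want to salvage your Littlewood--Paley plan, decompose $g=|\nabla u|$ rather than $u$.
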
 
We can in fact write a more general version of Lemma \ref{shiftDulemma} for any kernel 
\begin{lemma} \label{shiftNlemma}
For any $1<p<\infty$, any family $N_r\in W^{s,1}(\Pi^d)$ for some $s>0$ s.t. 
\[\begin{split}
&\sup_{|\omega|\leq 1}\sup_r r^{-s}\,\int_{\Pi^d} |z|^s\,|N_r(z)-N_r(z+r\,\omega)|\,dz<\infty,\\
& \sup_r \left(\|N_r\|_{L^1}+ r^s\,\|N_r\|_{W^{s,1}}\right)<\infty,
\end{split}\]
there exists $C>0$ s.t. for any $u\in L^p(\Pi^d)$ 
\begin{equation}
\int_{h_0}^{1} \int_{\Pi^d} \overline K_h(z)\,\|N_{h}\star u(.)-N_{h}\star u(.+z)\|_{L^p}\,dz\,\frac{dh}{h}\leq C\,|\log h_0|^{1/2}\,\|u\|_{L^p}.\label{squarefunctiongen}
\end{equation}
\end{lemma}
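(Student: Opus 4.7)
Set $T_h u(x,z) := (N_h\star u)(x)-(N_h\star u)(x+z)$ and denote by $A$ the left-hand side of the desired inequality. My plan is to reduce to a quadratic estimate via Cauchy--Schwarz, handle the $L^2$ case by Plancherel, and extend to general $L^p$ through vector-valued Calder\'on--Zygmund theory.

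\textit{Step 1 (Cauchy--Schwarz reduction).} Since $\overline K_h(z)\,dz$ is a probability measure on $\Pi^d$, Jensen's inequality gives $\bigl(\int \overline K_h(z)\|T_h u(\cdot,z)\|_{L^p}\,dz\bigr)^2 \le \int \overline K_h(z)\|T_h u(\cdot,z)\|_{L^p}^2\,dz$, and Cauchy--Schwarz in $dh/h$ on $[h_0,1]$ then extracts the factor $|\log h_0|^{1/2}$. It therefore suffices to establish
\[
B := \int_{h_0}^1 \int \overline K_h(z)\,\|T_h u(\cdot,z)\|_{L^p}^2\,dz\,\frac{dh}{h} \le C\,\|u\|_{L^p}^2.
\]

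\textit{Step 2 ($L^2$ bound).} For $p=2$, Plancherel yields $B = \sum_\xi |\hat u(\xi)|^2 \int_{h_0}^1 |\hat N_h(\xi)|^2 g_h(\xi)\,dh/h$ with $g_h(\xi):=\int \overline K_h(z)|e^{i\xi\cdot z}-1|^2\,dz$. From the two hypotheses on $N_r$ one extracts $|\hat N_h(\xi)| \le C \min(1,(h|\xi|)^{-s})$, while the concentration of $\overline K_h$ at scale $h$ gives $g_h(\xi) \le C \min(1,(h|\xi|)^\beta)$ for some $\beta>0$ determined by the exponent $a$ in the definition of $K_h$. The substitution $t=h|\xi|$ converts the $h$-integral to one that is absolutely convergent, uniformly in $\xi$, yielding $B \le C\|u\|_{L^2}^2$.

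\textit{Step 3 ($L^p$ extension).} Regard $u\mapsto Tu := (T_h u(\cdot,z))_{(z,h)}$ as a convolution operator with $L^2(d\nu)$-valued kernel $k(x;z,h):=N_h(x)-N_h(x+z)$, where $d\nu := \overline K_h(z)\,dz\,dh/h$. Step 2 is the bound $T: L^2(\Pi^d) \to L^2(\Pi^d;L^2(d\nu))$. Using the weighted modulus hypothesis at scale $|x|\sim h$ combined with the $W^{s,1}$ bound for smaller scales, one verifies the vector-valued H\"ormander condition for $k$. The vector-valued Calder\'on--Zygmund theorem then gives the square function bound $\|Q(u)\|_{L^p} \le C\|u\|_{L^p}$ for every $1<p<\infty$, where $Q(u)(x) := \bigl(\int\int |T_h u(x,z)|^2\,d\nu\bigr)^{1/2}$. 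For $1<p\le 2$, Minkowski's inequality yields $B^{1/2} \le \|Q(u)\|_{L^p}$ at once. For $p\ge 2$, the dual bound $T^*:L^{p'}(L^2)\to L^{p'}$ combined with the Minkowski embedding $L^2(L^{p'})\subset L^{p'}(L^2)$ (valid since $p'\le 2$) gives $T^*:L^2(L^{p'})\to L^{p'}$, whose dual is exactly the required estimate $T:L^p\to L^2(\Pi^d;L^p)$, i.e.\ $B \le C\|u\|_{L^p}^2$.

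\textit{Main obstacle.} The delicate step is the verification of the vector-valued H\"ormander condition on $k(\cdot;z,h)$, which requires controlling mixed differences $|N_h(w)-N_h(w-\delta)-N_h(w+z)+N_h(w+z-\delta)|$ in $L^2(d\nu)$ as an integrable function of $w$ away from the origin. The two hypotheses on $N_r$ are precisely tuned for this: the weighted $|z|^s$ modulus at scale $r\omega$ captures the cancellations when $|z|\sim h$, while the $W^{s,1}$ bound handles the small-$z$ range. Without exploiting this near-orthogonality across scales encoded by $d\nu$, a direct application of Young's inequality only produces $|\log h_0|$, losing the crucial extra factor of $|\log h_0|^{1/2}$.
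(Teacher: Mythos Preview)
Your approach is genuinely different from the paper's, and worth comparing. The paper does not use Cauchy--Schwarz or vector-valued Calder\'on--Zygmund at all. Instead it passes to spherical coordinates: since $\int_{h_0}^1 \overline K_h(z)\,dh/h \le C(|z|+h_0)^{-d}$, the double integral in $z$ and $h$ collapses to $\int_{S^{d-1}}\int_{h_0}^1 \|N_r\star u(\cdot)-N_r\star u(\cdot+r\omega)\|_{L^p}\,dr/r\,d\omega$. For each fixed direction $\omega$, the difference is $L_{\omega,r}\star u$ with $L_{\omega,r}(x)=N_r(x)-N_r(x+r\omega)$, and the three hypotheses on $N_r$ are precisely what makes $L_{\omega,r}$ satisfy the assumptions of Lemma~\ref{shiftconvolution} (zero mean, $L^1$ and $r^s W^{s,1}$ bounds, and the moment bound $r^{-s}\int|z|^s|L_{\omega,r}(z)|\,dz<\infty$). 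That lemma is then proved by Littlewood--Paley decomposition and a truncated Besov sum (Lemma~\ref{truncatedbesov}). The paper itself remarks that square functions would also work, so your route is legitimate in spirit.

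That said, there are two concrete problems. First, your duality argument for $p>2$ in Step~3 has the Minkowski embedding reversed. For $p'\le 2$, Minkowski's integral inequality gives $\|g\|_{L^2_\nu(L^{p'}_x)} \le \|g\|_{L^{p'}_x(L^2_\nu)}$, i.e.\ $L^{p'}_x(L^2_\nu)\hookrightarrow L^2_\nu(L^{p'}_x)$, which is the opposite of what you claim. So from $T^*:L^{p'}_x(L^2_\nu)\to L^{p'}$ you cannot conclude $T^*:L^2_\nu(L^{p'}_x)\to L^{p'}$, and the bound on $B$ for $p>2$ does not follow. (In fairness, the paper's own argument via Lemma~\ref{shiftconvolution} and Lemma~\ref{truncatedbesov} only yields the $|\log h_0|^{1/2}$ bound for $p\le 2$, which is all that is used downstream.) Second, the H\"ormander verification you flag as the ``main obstacle'' is genuinely nontrivial here and you have not carried it out: the hypotheses on $N_r$ are integral $L^1$-type bounds, well-suited to convolution estimates against Littlewood--Paley pieces, but they do not immediately give the pointwise-in-$w$, $L^2(d\nu)$-valued second-difference control that the vector-valued kernel condition demands. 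The paper's reduction to the one-parameter family $L_{\omega,r}$ sidesteps this entirely, which is why its hypotheses are phrased the way they are.
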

We will mostly use the specific version in Lemma \ref{shiftDulemma} but will need the more general Lemma \ref{shiftNlemma} to handle the anisotropic case in Lemma \ref{divanisotropic}.
   Both lemmas are in fact a corollary of a classical result
\begin{lemma} \label{shiftconvolution}
For any $1<p<\infty$, any family $L_r$ of kernels satisfying for some $s>0$
\begin{equation}
\int L_r=0,\quad \sup_r \left(\|L_r\|_{L^1}+ r^s\,\|L_r\|_{W^{s,1}}\right)\leq C_L,\quad \sup_r r^{-s}\,\int |z|^s\,|L_r(z)|\,dz\leq C_L.
\end{equation}
Then there exists $C>0$ depending only on $C_L$ above s.t. for any $u\in L^p(\Pi^d)$
\begin{equation}
\int_{h_0}^{1} \,\|L_r \star u\|_{L^p}\,\frac{dr}{r}\leq C\,\,\|u\|_{B^0_{p,1}}.\label{optimalbesovconvolution}
\end{equation}
As a consequence for $p\leq 2$
\begin{equation}
\int_{h_0}^{1}  \|L_r\star\,u\|_{L^p}\,\frac{dr}{r}\leq C\,|\log h_0|^{1/2}\,\|u\|_{L^p}.\label{squarefunctionconvolution}
\end{equation}
\end{lemma}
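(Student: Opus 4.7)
The plan is to exploit a standard Littlewood--Paley decomposition of $u$ and to estimate each frequency block separately, exploiting the scale--mismatch between the wavelength $2^{-j}$ of $\Delta_j u$ and the concentration scale $r$ of $L_r$.

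Write $u=\sum_{j\geq -1}\Delta_j u$ on $\Pi^d$, with $\Delta_{-1}u$ the mean of $u$ and $\Delta_j u$ for $j\geq 0$ frequency--localized in the annulus $|\xi|\sim 2^j$; in particular $\int\Delta_j u=0$ for $j\geq 0$, and $L_r\star\Delta_{-1}u=0$ since $\int L_r=0$. The core estimate is
\[
\|L_r\star\Delta_j u\|_{L^p}\leq C\,\phi(r\,2^j)\,\|\Delta_j u\|_{L^p},\qquad \phi(t):=\min(t^{s_0},\,t^{-s_0}),\quad s_0:=\min(s,1),
\]
with constants depending only on $C_L$ and $p$. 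For the low--scale regime $r\,2^j\leq 1$, I would use the vanishing moment $\int L_r=0$ to rewrite $L_r\star\Delta_j u(x)=\int L_r(y)[\Delta_j u(x-y)-\Delta_j u(x)]\,dy$, combine Minkowski with the Bernstein inequality $\|\Delta_j u(\cdot-y)-\Delta_j u\|_{L^p}\leq C\min(|y|2^j,1)\|\Delta_j u\|_{L^p}$, and insert the moment hypothesis to produce the factor $(r\,2^j)^{s_0}$. For the high--scale regime $r\,2^j\geq 1$, I would write $L_r\star\Delta_j u=(L_r\star\tilde\Delta_j)\star\Delta_j u$ with $\tilde\Delta_j$ a fattened Littlewood--Paley kernel of zero mean, and estimate
\[
\|L_r\star\tilde\Delta_j\|_{L^1}\leq\int|\tilde\Delta_j(z)|\,\|L_r(\cdot-z)-L_r\|_{L^1}\,dz,
\]
using the Sobolev translation bound $\|L_r(\cdot-z)-L_r\|_{L^1}\leq C\,C_L\min((|z|/r)^{s_0},1)$ supplied by the $W^{s,1}$ hypothesis, together with the rescaling $\tilde\Delta_j(z)=2^{jd}\tilde\chi(2^j z)$ and Schwartz decay of $\tilde\chi$, which together extract the factor $(r\,2^j)^{-s_0}$.

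Granted this frequency--by--frequency bound, the first conclusion is immediate: Minkowski and Fubini give
\[
\int_{h_0}^1\|L_r\star u\|_{L^p}\,\frac{dr}{r}\leq C\sum_{j\geq 0}\|\Delta_j u\|_{L^p}\int_0^\infty\phi(t)\,\frac{dt}{t}\leq C\,\|u\|_{B^0_{p,1}},
\]
because $\phi$ is integrable in $dt/t$ on $(0,\infty)$. For the second conclusion I would first apply Cauchy--Schwarz in $r$ to reduce matters to $\int_{h_0}^1\|L_r\star u\|_{L^p}^2\,dr/r\leq C\,\|u\|_{L^p}^2$. Setting $a_j:=\|\Delta_j u\|_{L^p}$ and using the weighted Cauchy--Schwarz
\[
\Bigl(\sum_j\phi(r\,2^j)\,a_j\Bigr)^2\leq\Bigl(\sum_j\phi(r\,2^j)\Bigr)\Bigl(\sum_j\phi(r\,2^j)\,a_j^2\Bigr)\leq C\sum_j\phi(r\,2^j)\,a_j^2,
\]
Fubini and integrability of $\phi$ deliver $\int_{h_0}^1\|L_r\star u\|_{L^p}^2\,dr/r\leq C\sum_j a_j^2=C\,\|u\|_{B^0_{p,2}}^2$. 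The Littlewood--Paley identification $L^p=F^0_{p,2}$ and the embedding $F^0_{p,2}\hookrightarrow B^0_{p,2}$ valid for $p\leq 2$ then yield $\|u\|_{B^0_{p,2}}\leq C\|u\|_{L^p}$, which closes the argument.

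The main technical obstacle I expect is the high--scale bound $\|L_r\star\tilde\Delta_j\|_{L^1}\lesssim(r\,2^j)^{-s_0}$: translating the abstract hypothesis $r^s\|L_r\|_{W^{s,1}}\leq C_L$ into a quantitative $L^1$ translation estimate for $L_r$, and then tracking how the rescaling of $\tilde\Delta_j$ converts Schwartz decay into the required decay in $r\,2^j$, is the place with the least automatic ingredients; the low--scale side reduces to a clean moment computation, and the passage from $B^0_{p,2}$ to $L^p$ for $p\leq 2$ is a standard Besov--Triebel--Lizorkin embedding.
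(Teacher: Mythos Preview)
Your proposal is correct and follows the same core strategy as the paper: Littlewood--Paley decompose $u$, establish the block estimate $\|L_r\star\Delta_j u\|_{L^p}\lesssim\phi(r2^j)\|\Delta_j u\|_{L^p}$ with $\phi(t)=\min(t^{s_0},t^{-s_0})$ (low scale via the zero mean of $L_r$ and its $s$-th moment, high scale via its $W^{s,1}$ regularity), and sum. For \eqref{optimalbesovconvolution} the two arguments coincide; the paper's high-scale step is the slightly slicker one-liner $\|L_r\star U_k\|_{L^p}\leq\|L_r\|_{W^{s,1}}\|U_k\|_{W^{-s,p}}$, which is exactly what your translation argument on $L_r\star\tilde\Delta_j$ unpacks.

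For \eqref{squarefunctionconvolution} the routes diverge slightly. The paper first integrates the block estimate in $r$, reducing to the truncated sum $\sum_{k\leq|\log_2 h_0|}\|U_k\|_{L^p}$ plus a geometrically decaying tail, and then invokes Lemma~\ref{truncatedbesov} (Cauchy--Schwarz over the $|\log_2 h_0|$ retained blocks) to produce the factor $|\log h_0|^{1/2}$ and the $B^0_{p,2}$ norm. You instead apply Cauchy--Schwarz in $r$ to pass to $\int_{h_0}^1\|L_r\star u\|_{L^p}^2\,dr/r$, bound that uniformly in $h_0$ by $\|u\|_{B^0_{p,2}}^2$ via a weighted $\ell^1$--$\ell^2$ inequality on the blocks, and close with the same embedding $L^p\hookrightarrow B^0_{p,2}$ for $p\leq2$. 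Both are short and standard; yours yields the cleaner intermediate statement that the square function is bounded independently of $h_0$, while the paper's version makes the frequency truncation at scale $h_0^{-1}$ explicit.
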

Note that by a simple change of variables in $r$, one has for instance
for any fixed power $l$
\[
\int_{h_0}^{1}  \|L_{r^l}\star\,u\|_{L^p}\,\frac{dr}{r}\leq C_l\,|\log h_0|^{1/2}\,\|u\|_{L^p}.
\]

\noindent {\it Remark.} The bounds \eqref{squarefunction} and \eqref{squarefunctionconvolution} could also be obtained by straightforward application of the so-called square function, see the book written by E.M. {\sc Stein} \cite{Stein}. We instead use Besov spaces as this yields the interesting and optimal inequalities \eqref{optimalbesov}-\eqref{optimalbesovconvolution} as an intermediary step.

\bigskip
\noindent {\bf Proof of Lemma \ref{shiftDulemma} and Lemma \ref{shiftNlemma} assuming  Lemma \ref{shiftconvolution}.} First of all observe that $D_h\,u=N_h\star u$ with
\[
N_h=\frac{1}{h\,|z|^{d-1}}\,\ind_{|z|\leq h|}
\]
which satisfies all the assumptions of Lemma \ref{shiftNlemma}. Therefore the proofs of Lemmas \ref{shiftDulemma} and \ref{shiftNlemma} are identical, just by replacing $D_h$ by $N_h\star$. Hence we only give the proof of Lemma \ref{shiftDulemma}, 
Calculate
\[
\int_{h_0}^1 \overline K_h(z)\,\frac{dh}{h}\leq\int_{h_0}^1 \frac{C\,h^{\nu-d}}{(h+|z|)^\nu}\,\frac{dh}{h} \leq \frac{C}{(|z|+h_0)^d}. 
\]
Note also for future use that the same calculation provides
\begin{equation}
\int_{h_0}^1 \overline K_h(z)\,\frac{dh}{h}\geq  \frac{1}{C\,(|z|+h_0)^d}.\label{averageKh} 
\end{equation}
Therefore, using spherical coordinates
\[\begin{split}
&\int_{h_0}^{1} \int_{\Pi^d} \overline K_h(z)\,\|D_{|z|}\,u(.)-D_{|z|}\,u(.+z)\|_{L^p}\,dz\,dh\\
&\qquad\leq C\,\int_{S^{d-1}}\int_{h_0}^1 \|D_{r}\,u(.)-D_{r}\,u(.+r\,\omega)\|_{L^p}\,\frac{dr}{r+h_0}\,d\omega.\\
\end{split}\]
Denote
\[\displaystyle 
L_\omega(x)=\frac{\ind_{|x|\leq 1}}{|x|^{d-1}}-\frac{\ind_{|x-\omega|\leq 1}}{|x-\omega|^{d-1}},\quad L_{\omega,r}(x)=r^{-d}\,L_\omega(x/r),
\]
and remark that $L_{\omega}\in W^{s,1}$ with a norm uniform in $\omega$ and with support in the unit ball. Moreover
\[
D_ru(x)-D_ru(x+r\omega)=\int |\nabla u|(x-r\,z)\,L_\omega(z)\,dz=L_{\omega,r}\star |\nabla u|.
\]
We hence apply Lemma \ref{shiftconvolution} since the family $L_{\omega,r}$ satisfies the required  hypothesis
and we get
\[
\int_{h_0}^1 \|L_{\omega,r}\star \nabla\,u\|_{L^p}\,\frac{dr}{r}\leq C\,\|u\|_{B^1_{1,p}},
\]
with a constant $C$ independent of $\omega$ and so
\[
\begin{split}
&\int_{h_0}^{1} \int_{\Pi^d} \overline K_h(z)\,\|D_{|z|}\,u(.)-D_{|z|}\,u(.+z)\|_{L^p}\,dz\,dh\\
&\qquad\leq C\,\int_{S^{d-1}}\int_{h_0}^1 \|L_{\omega,r}\star \nabla u\|_{L^p}\,\frac{dr}{r}\,d\omega\leq C\,\int_{S^{d-1}}\|u\|_{B_{1,p}^1}\,d\omega,
\end{split}
\]
yielding \eqref{optimalbesov}. The bound \eqref{squarefunction} is deduced in the same manner. The proof of the bound \eqref{squarefunctiondisc} follows the same steps; the only difference is that the average over the sphere is replaced by a smoother integration against the weight $1/(1+|w|)^a$.
$\square$

\bigskip

\bigskip
\noindent {\bf Proof of Lemma \ref{shiftconvolution}.}  First remark that $L_r$ is not smooth enough to be used as the basic kernels $\Psi_k$ in the classical Littlewood-Paley decomposition (see section \ref{Besov}) as in particular the Fourier transform of $L_r$ is not necessarily compactly supported.
   We use instead the Littlewood-Paley decomposition of $u$. Denote 
\[
U_k=\Psi_k\star u.
\]
The kernel $L_r$ has $0$ average and so
\[
L_r\star U_k=\int_{\Pi^d} L_r(x-y)\,(U_k(y)-U_k(x))\,dy.
\]
Therefore 
\[\begin{split}
\|L_{r}\star U_k\|_{L^p}&\leq\int_{\Pi^d} L_r(z)\,\|U_k(.)-U_k(.+z)\|_{L^p}\,dz\\
&\leq \int_{\Pi^d} L_r(z)\,|z|^s\,\|U_k\|_{W^{s,p}}\,dz,\\
\end{split}
\]
yielding by the assumption on $L_r$, for $k< |\log_2 r|$ 
\begin{equation}
\|L_{r}\star U_k\|_{L^p}\leq C\,r^{s}\,2^{k\,s}\,\|U_k\|_{L^p},\label{ksmall}
\end{equation} 
by Prop. \ref{propLP}. Note that $C$ only depends on $\int |z|^s\, L_r(z)\,dz$.

\medskip

\noindent We now use similarly that $L_r\in W^{s,1}$ and deduce for $k\geq |\log_2 r|$ by Prop. \ref{propLP},
\begin{equation}
\|L_{r}\star U_k\|_{L^p}\leq \|L_{r}\|_{W^{s,1}}\,\|U_k\|_{W^{-s,p}}\leq C\,r^{-s}\,2^{-k\,s}\,\|U_k\|_{L^p},\label{klarge}
\end{equation} 
 where $C$ only depends on $\sup_r r^s\,\|L_r\|_{W^{s,1}}$.
From the decomposition of $f$
\[\begin{split}
&\int_{h_0}^1 \|L_r\star u\|_{L^p}\,\frac{dr}{r}=\sum_{k=0}^\infty \int_{h_0}^1 \|L_{r}\star U_k\|_{L^p} \frac{dr}{r}\\
&\qquad\leq C\,\sum_{k=0}^\infty \|U_k\|_{L^p} \Bigg(\ind_{k\leq |\log_2 h_0|}\int_{h_0}^{2^{-k}} r^{s}\,2^{k\,s}\frac{dr}{r}
+\int_{\max(h_0,2^{-k})}^1 r^{-s}\,2^{-k\,s}\frac{dr}{r}\Bigg),  
\end{split}
\]
by using \eqref{ksmall} and \eqref{klarge}. This shows that 
\begin{equation}\begin{split}
\int_{h_0}^1 \|L_r\star u\|_{L^p}\,\frac{dr}{r}\leq &C\,\sum_{k\leq |\log_2 h_0|} \|U_k\|_{L^p}+C\,\sum_{k>|\log_2 h_0|} \frac{2^{-k\,s}}{h_0^s}\,\|U_k\|_{L^p}.\label{interBesov}
\end{split}\end{equation}
Now simply bound
\[\begin{split}
\sum_{k\leq |\log_2 h_0|} \|U_k\|_{L^p}
+\sum_{k>|\log_2 h_0|} \frac{2^{-ks}}{h_0^s}\,\|U_k\|_{L^p}&\leq C\,\sum_{k=0}^\infty 
2^k\,\|U_k\|_{L^p}\\
&=C\,\|u\|_{B^0_{p,1}},
\end{split}
\]
which gives \eqref{optimalbesovconvolution}.

\bigskip

\noindent Next remark that
\[\begin{split}
\sum_{k>|\log_2 h_0|} \frac{2^{-ks}}{h_0^s}\,\|U_k\|_{L^p}&
\leq  C \sup_k \|U_k\|_{L^p}
\leq C\,\|u\|_{B^0_{p,\infty}}.
\end{split}\]
Therefore \eqref{interBesov} combined with Lemma \ref{truncatedbesov} yields
\[
\int_{h_0}^1 \|L_r\star u\|_{L^p}\,\frac{dr}{r}\leq C\,\sqrt{|\log_2 h_0|}\,\|u\|_{L^p}+C\,\|u\|_{B^0_{p,\infty}},
\] 
which gives \eqref{squarefunctionconvolution} by Prop. \ref{propLP}.$\square$

\bigskip

\noindent Finally we emphasize that 
\begin{lemma}
The kernel
\[
{\cal K}_{h_0}(z)=\int_{h_0}^1 \overline K_h(z)\,\frac{dh}{h} 
\]
also satisfies $i)$ and $ii)$ of Prop. {\rm \ref{kernelcompactness}}. 
\label{lemmakernelaveraged}
\end{lemma}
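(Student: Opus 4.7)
The plan is to deduce both properties directly from the pointwise two-sided bound
\[
\frac{C^{-1}}{(h_0+|x|)^d}\leq {\cal K}_{h_0}(x)\leq \frac{C}{(h_0+|x|)^d}
\]
which is stated just before Section \ref{seclineartransport}, and whose lower direction is exactly \eqref{averageKh}. The matching upper direction follows from the same splitting of the $h$-integration: on $|x|\leq 1/2$ one uses the normalization $\overline K_h(x)\lesssim h^{a-d}/(h+|x|)^a$ (with $a>d$), splits $[h_0,1]$ into $h\leq|x|$ and $h\geq|x|$, and in each regime the integrand against $dh/h$ integrates to a constant multiple of $|x|^{-d}$; for $|x|\geq 1/2$ (the zone where $K_h$ is constant on $\Pi^d$) the statement is trivial.

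For property $ii)$, the cleanest argument is Fubini. Since each $\overline K_h$ is an $L^1$-normalized probability density on $\Pi^d$ by definition,
\[
\|{\cal K}_{h_0}\|_{L^1(\Pi^d)} \;=\; \int_{h_0}^1 \|\overline K_h\|_{L^1(\Pi^d)}\,\frac{dh}{h} \;=\; \int_{h_0}^1 \frac{dh}{h} \;=\; |\log h_0|,
\]
which tends to $+\infty$ as $h_0\to 0$. The same logarithmic divergence is also visible by integrating the lower bound $C^{-1}(h_0+|x|)^{-d}$ in polar coordinates over $\{|x|\leq 1/2\}$.

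For property $i)$, the upper bound yields, for any fixed $\eta>0$ and uniformly in $h_0\in(0,1]$,
\[
\int_{|x|\geq \eta}{\cal K}_{h_0}(x)\,dx \;\leq\; C\int_{|x|\geq\eta}\frac{dx}{(h_0+|x|)^d} \;\leq\; C\int_{|x|\geq\eta}\frac{dx}{|x|^d} \;\leq\; C_\eta,
\]
the last integral being finite on the bounded torus $\Pi^d$ (of order $|\log\eta|$) and manifestly independent of $h_0$. The support/compact-localization part of hypothesis $i)$ is automatic on the compact torus, with $R$ taken to be its diameter.

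There is no genuine obstacle here; the lemma is essentially a corollary of the pointwise asymptotics of ${\cal K}_{h_0}$ that has already been used repeatedly above. The only step that requires actual verification is the upper bound on ${\cal K}_{h_0}$ itself, and this is a direct computation of the same type as the one producing \eqref{averageKh}, which I would carry out in one or two lines at the start of the proof before invoking Fubini for $ii)$ and the elementary tail estimate above for $i)$.
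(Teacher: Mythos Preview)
Your proof is correct and follows essentially the same route as the paper, which simply writes ``This is a straightforward consequence of using \eqref{averageKh}.'' You supply more detail than the paper does; in particular your Fubini computation $\|{\cal K}_{h_0}\|_{L^1}=\int_{h_0}^1 \|\overline K_h\|_{L^1}\,dh/h=|\log h_0|$ is a cleaner way to get $ii)$ than integrating the pointwise lower bound, and your tail estimate for $i)$ is exactly what is needed. One small remark: you treat the upper bound on ${\cal K}_{h_0}$ as something still to be verified, but it is already written out in the paper immediately \emph{before} \eqref{averageKh} (the line $\int_{h_0}^1 \overline K_h(z)\,dh/h \leq C/(|z|+h_0)^d$), so you do not need to re-derive it.
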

\begin{proof}
This is a straightforward consequence of using \eqref{averageKh}.
\end{proof}
\subsection{A brief presentation of renormalized solutions}
Many steps in our proofs manipulate solutions to the transport equation, either under the conservative form
\begin{equation}
\partial_t \rho+\div(\rho\,u)=0,\label{conservative}
\end{equation}
or under the advective form
\begin{equation}
\partial_t w+u\cdot\nabla w=F.\label{advective}
\end{equation}
We will also consider the particular form of \eqref{advective}
\begin{equation}
\partial_t w+u\cdot\nabla w=f\,w,\label{advective2}
\end{equation}
which can directly be obtained from \eqref{advective} by taking $F=f\,w$.

However since $u$ is not Lipschitz, we do not have strong solutions to these equations and one should in principle be careful with using them. Those manipulations can be justified using the theory of renormalized solutions as introduced in \cite{DL}. Instead of having to justify every time, we briefly explain in this subsection how one may proceed. The reader more familiar with the theory of renormalized solutions may safely skip most of the presentation below.

Assume for the purpose of this subsection that $u$ is a given vector field in $L^2_t H^1_x$. The basic idea behind the renormalized solution is the commutator estimate
\begin{lemma}
Assume that $\rho\in L^2_{t,x}$ and $w\in L^2_{t,x}$. Consider any convolution kernel $L\in C^1$, compactly supported in some $B(0,r)$ with $\int_{\Pi^d} L\,dx=1$. Then
\[\begin{split}
&\|\div(L_\eps\star_x(\rho\,u)-u\,L_\eps\star u)\|_{L^1_{t,x}}\longrightarrow 0\quad\mbox{as}\ \eps\rightarrow 0,\\
&\|(L_\eps\star_x(u\cdot\nabla_x w)-u\cdot\nabla_x L_\eps\star w\|_{L^1_{t,x}}\longrightarrow 0\quad\mbox{as}\ \eps\rightarrow 0.
\end{split}\] \label{lemmacommutator} 
\end{lemma}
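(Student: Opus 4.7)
The plan is to follow the classical commutator argument of {\sc DiPerna}--{\sc Lions} \cite{DL}, adapted to the fact that we only have $L^2_{t,x}$ regularity on $\rho,w$ and $L^2_tH^1_x$ regularity on $u$. I will treat the first identity carefully; the second reduces to the first via the elementary identity $u\cdot\nabla w=\div(uw)-w\,\div u$. Setting
\[
R_\eps(x):=\div_x\bigl(L_\eps\star_x(\rho\,u)\bigr)(x)-\div_x\bigl(u(x)\,(L_\eps\star_x\rho)(x)\bigr),
\]
and using $\int L_\eps=1$, a direct algebraic manipulation gives
\[
R_\eps(x)=\int_{\Pi^d}(u(y)-u(x))\cdot\nabla L_\eps(x-y)\,\rho(y)\,dy-\div u(x)\,(L_\eps\star\rho)(x).
\]
After the change of variables $y=x-\eps z$, the first term becomes $\int\nabla L(z)\cdot\tfrac{u(x-\eps z)-u(x)}{\eps}\,\rho(x-\eps z)\,dz$, with $z$ supported in $B(0,r)$.

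For smooth $u$, a first-order Taylor expansion in $\eps$ together with integration by parts in $z$ (using again $\int L=1$) identifies the pointwise limit as $\rho(x)\,\div u(x)$, which cancels the second contribution to $R_\eps$. To handle general $u\in L^2_tH^1_x$, the key step is to obtain a dominating bound independent of $\eps$. Stein's inequality \eqref{maximalineq} gives
\[
\Bigl|\tfrac{u(x-\eps z)-u(x)}{\eps}\Bigr|\leq C\,|z|\,\bigl(M|\nabla u|(x)+M|\nabla u|(x-\eps z)\bigr),
\]
so that $|R_\eps(x)|\leq C\bigl(M|\nabla u|(x)+\widetilde M|\nabla u|(x)\bigr)\widetilde M\rho(x)+|\div u(x)|\,\widetilde M\rho(x)$ for suitable maximal-type operators. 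Combining $L^2\to L^2$ boundedness of $M$ with H\"older's inequality, this yields a uniform bound on $\|R_\eps\|_{L^1_{t,x}}$ that is moreover equi-integrable in $(t,x)$. Approximating $u$ and $\rho$ by space-time mollifications $u^\delta,\rho^\delta$, the convergence $R_\eps^\delta\to 0$ for smooth data and the continuous dependence $\|R_\eps-R_\eps^\delta\|_{L^1}\leq C(\|u-u^\delta\|_{L^2_tH^1_x}+\|\rho-\rho^\delta\|_{L^2_{t,x}})$ uniformly in $\eps$ conclude via a diagonal extraction.

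For the second identity, writing
\[
L_\eps\star(u\cdot\nabla w)-u\cdot\nabla(L_\eps\star w)=\bigl[\div(L_\eps\star(u\,w))-\div(u\,L_\eps\star w)\bigr]-\bigl[L_\eps\star(w\,\div u)-w\,\div u\bigr]-(w-L_\eps\star w)\,\div u,
\]
the first bracket is the commutator of the first identity applied to $\rho:=w$, hence converges to $0$ in $L^1$; the other two brackets tend to $0$ in $L^1_{t,x}$ by the standard $L^1$-continuity of mollification, using $w\,\div u\in L^1_{t,x}$ and $w\in L^2_{t,x}$, $\div u\in L^2_{t,x}$. The main obstacle is precisely the limit in Step 2: obtaining a uniform equi-integrable bound on $R_\eps$ under merely $L^2\times H^1$ regularity. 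This is exactly what \eqref{maximalineq} and the $L^p$ boundedness of the maximal function provide for $p>1$, which is why the statement is formulated with $\rho,w\in L^2_{t,x}$ rather than $L^1_{t,x}$.
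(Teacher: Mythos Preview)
Your argument is correct and is precisely the classical {\sc DiPerna}--{\sc Lions} commutator estimate the paper invokes; the paper itself does not give a proof but simply refers to \cite{DL}. Your use of \eqref{maximalineq} for the uniform domination is a minor variant of the standard finite-difference bound $\|u(\cdot-\eps z)-u(\cdot)\|_{L^2_x}\leq \eps|z|\,\|\nabla u\|_{L^2_x}$ combined with H\"older in $x$, and is in fact in the spirit of what the paper remarks just after the statement (that its own maximal-function techniques could make the estimate more explicit).
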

The proof of Lemma \ref{lemmacommutator} is straightforward and can be found in \cite{DL}. Note however that the techniques we introduce here could also be used, a variant of Prop. \ref{proplineartransport}, to make the estimates even more explicit.
   From Lemma \ref{lemmacommutator}, one may simply prove
\begin{lemma}
Assume that $\rho\in L^2_{t,x}$ is a solution in the sense of distribution to \eqref{conservative}. Assume $w\in L^2_{t,x}$, with $F\in L^1$, a solution in the sense of distribution to \eqref{advective}. Then for any $\chi\in W^{1,\infty}(\R)$, one has in the sense of distribution that
\[
\begin{split}
&\partial_t \chi(\rho)+\div(\chi(\rho)\,u)=(\chi(\rho)-\rho \chi'(\rho))\,\div u,\\
&\partial_t \chi(w)+u\cdot\nabla \chi(w)=F\,\chi'(w).
\end{split}
\]
Finally if in addition $\rho\in L^{p_1},\; w\in L^{p_2},\;u\in L^{p_3}$ with $1/p_1+1/p_2+1/p_3\leq 1$ and $F\in L^q_{t,x}$ with $1/p_1+1/q\leq 1$, then in the sense of distribution for any $\chi\in W^{1,\infty}(\R)$
\[
\partial_t (\rho\,\chi(w))+\div(\rho\,\chi(w)\,u)=F\,\chi'(w)\,\rho.
\]\label{lemmarenormalized}
\end{lemma}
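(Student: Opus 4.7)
The plan is to apply the standard DiPerna--Lions regularization strategy: mollify each equation in $x$ against a smooth kernel $L_\eps$, apply the classical chain rule to the now $x$-smooth regularized unknowns, and let $\eps \to 0$ using the commutator estimates of Lemma \ref{lemmacommutator}.

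For the first identity, set $\rho_\eps = L_\eps \star_x \rho$ and convolve \eqref{conservative} to obtain
\[
\partial_t \rho_\eps + \div(u\,\rho_\eps) = r_\eps^{(1)},\qquad r_\eps^{(1)} = \div\bigl(u\,\rho_\eps - L_\eps\star_x(\rho\,u)\bigr),
\]
with $r_\eps^{(1)} \to 0$ in $L^1_{t,x}$ by Lemma \ref{lemmacommutator}. Since $\rho_\eps$ is smooth in $x$, the classical chain rule applied after multiplying by $\chi'(\rho_\eps)$ gives
\[
\partial_t \chi(\rho_\eps) + \div(\chi(\rho_\eps)\,u) + \bigl(\rho_\eps\,\chi'(\rho_\eps) - \chi(\rho_\eps)\bigr)\div u = \chi'(\rho_\eps)\,r_\eps^{(1)},
\]
whose right-hand side tends to zero in $L^1_{t,x}$ as $\chi'\in L^\infty$. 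The left-hand side converges in $\mathcal{D}'$ thanks to the Lipschitz continuity of $\chi$ and the $L^2_{t,x}$ control on $\rho$ combined with the $L^2_t H^1_x$ regularity of $u$, yielding the first stated equation. The proof of the second identity is identical, applied to $w_\eps = L_\eps\star_x w$ via the second commutator bound in Lemma \ref{lemmacommutator}.

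For the product identity, regularize both equations at the same scale $\eps$. Multiplying the regularized continuity equation by $\chi(w_\eps)$ and the renormalized advective equation by $\rho_\eps$, and summing via the Leibniz rule, produces
\[
\partial_t(\rho_\eps\,\chi(w_\eps)) + \div(\rho_\eps\,\chi(w_\eps)\,u) = \rho_\eps\,\chi'(w_\eps)\,L_\eps\star F + \chi(w_\eps)\,r_\eps^{(1)} + \rho_\eps\,\chi'(w_\eps)\,r_\eps^{(2)}.
\]
The hypothesis $1/p_1+1/p_2+1/p_3\le 1$ (together with the sublinear growth $|\chi(z)|\le C(1+|z|)$ that follows from $\chi\in W^{1,\infty}$) makes $\rho_\eps\,\chi(w_\eps)\,u$ uniformly integrable in some $L^r_{t,x}$ with $r\ge 1$, so the transport and time-derivative terms on the left pass to the limit in $\mathcal{D}'$; the hypothesis $1/p_1+1/q\le 1$ makes the source term converge to $\rho\,\chi'(w)\,F$ in $L^1_{t,x}$.

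The main obstacle is the vanishing in $\mathcal{D}'$ of the mixed commutator terms $\chi(w_\eps)\,r_\eps^{(1)}$ and $\rho_\eps\,\chi'(w_\eps)\,r_\eps^{(2)}$: a naive $L^\infty$--$L^1$ pairing fails because $\chi(w_\eps)$ only grows linearly and $\rho_\eps$ is only in $L^{p_1}$. The standard fix is to sharpen Lemma \ref{lemmacommutator}, showing by interpolation between the $L^1_{t,x}$ decay of the commutators and the uniform a priori bounds on $\rho_\eps$, $u$, $w_\eps$, $\nabla u$ in their respective Lebesgue spaces that in fact $r_\eps^{(i)}\to 0$ in the $L^{r'}$ dual to the space in which $\chi(w_\eps)$ respectively $\rho_\eps\,\chi'(w_\eps)$ is bounded. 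Alternatively one may truncate $\chi$ to $\chi_M\in L^\infty$, pass to the limit first in $\eps\to 0$ (where the $L^\infty$ bound makes the commutator vanish trivially) and then in $M\to\infty$ using the uniform integrability guaranteed by the hypotheses. Either route produces the third identity.
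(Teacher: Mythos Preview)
Your approach is exactly the one the paper indicates: mollify, apply the chain/Leibniz rule to the smooth approximants, and pass to the limit via Lemma~\ref{lemmacommutator}. The paper itself gives no further detail beyond referring to DiPerna--Lions, so in substance you have reproduced (and expanded) the intended argument.

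One correction is worth making. You write that $\chi\in W^{1,\infty}(\R)$ only gives the sublinear bound $|\chi(z)|\le C(1+|z|)$, and then treat the term $\chi(w_\eps)\,r_\eps^{(1)}$ as a genuine obstacle. In the standard reading $W^{1,\infty}(\R)$ means both $\chi$ and $\chi'$ are bounded, so $\chi(w_\eps)$ is uniformly in $L^\infty_{t,x}$ and the pairing $\chi(w_\eps)\,r_\eps^{(1)}\to 0$ in $L^1$ is immediate; no truncation or interpolation is needed for that term. (This boundedness of $\chi$ is in fact what makes $\rho\,\chi(w)\,u$ integrable under the stated exponents without invoking $p_2$.) Your concern about the second mixed term $\rho_\eps\,\chi'(w_\eps)\,r_\eps^{(2)}$ is more legitimate, since $\rho_\eps$ is only in $L^{p_1}$ while Lemma~\ref{lemmacommutator} as stated gives $r_\eps^{(2)}\to 0$ only in $L^1$; your two proposed remedies (strengthening the commutator estimate using the available integrability of $w$ and $\nabla u$, or truncating and passing to the limit in two stages) are both standard and either one closes the argument.
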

Of course Lemma \ref{lemmarenormalized} applies to \eqref{advective2} in the exact same manner just replacing $F$ by $f\,w$, provided that $f\in L^p$ and $w\in L^{p^*}$ with $1/p^*+1/p=1$ (so that $F\in L^1$) and $f\,w\in L^q_{t,x}$.

\bigskip

Lemma \ref{lemmarenormalized} can be used to justify most of our manipulations later on. Remark that all terms in the equation make sense in ${\cal D}'$: For instance $u\cdot\nabla w=\div(u\,w)-w\,\div u$ which is well defined since $u$, $\div u$ and $w$ belong to $L^2$.
  The proof of Lemma \ref{lemmarenormalized} is essentially found in \cite{DL} and consists simply in writing approximate equations on $L_\eps\star \rho$, $L_\eps\star w$, perform the required manipulation on those quantities and then simply pass to the limit in $\eps$.

As a straightforward consequence, we can easily obtain uniqueness for \eqref{conservative}. Consider two solutions $\rho_1,\;\rho_2\in L^2_{t,x}$ to \eqref{conservative} with same initial data. Apply the previous lemma to $\rho=\rho_1-\rho_2$ and $\chi(\rho)=|\rho|$ and simply integrate the equation over $\Pi^d$ to find
\[
\frac{d}{dt}\int_{\Pi^d} |\rho_1(t,x)-\rho_2(t,x)|\,dx=0.
\]
Thus
\begin{lemma}
For a given $\rho^0\in L^2_x$, there exists at most one solution $\rho\in L^2_{t,x}$ to \eqref{conservative}.\label{uniqueconservative}
\end{lemma}
The uniqueness for the dual problem \eqref{advective} or \eqref{advective2} is however more delicate and in particular the previous strategy cannot work unless $\div u\in L^\infty_x$. The estimates are now slightly different from \eqref{advective} or \eqref{advective2} and we present them for \eqref{advective2} as we use more this form later on.

\medskip

If one considers two solutions $w_1$ and $w_2$ to \eqref{advective2} and a solution $\rho$ to \eqref{conservative}, one has
\[
\frac{d}{dt}\int_{\Pi^d} \rho(t,x)\,|w_1(t,x)-w_2(t,x)|\,dx=\int_{\Pi^d} f\,\rho(t,x)\,|w_1(t,x)-w_2(t,x)|\,dx,
\]
leading to
\begin{lemma}
Assume that
\begin{itemize}
\item $\rho\in L^2_{t,x}$ solves \eqref{conservative}.
\item $w_1$ and $w_2$ are two solutions in $L^2_{t,x}$ to \eqref{advective2} with $w_1(t=0)=w_2(t=0)$ for a given $f\in L^p_{t,x}$ and $w_i\in L^{p^*}$, $i=1,\;2$, with $1/p^*+1/p=1$.
\item $\rho\in L^{p_1},\; w\in L^{p_2},\;u\in L^{p_3}$ with $1/p_1+1/p_2+1/p_3\leq 1$ and $f\,w\in L^q_{t,x}$ with $1/p_1+1/q\leq 1$.
\item Finally either $f\in L^\infty$ or $f\leq 0$ and $\rho\geq 0$.
\end{itemize}
Then $w_1=w_2$ $\rho$ a.e. 
\label{uniqueadvective}
\end{lemma}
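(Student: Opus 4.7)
The overall strategy is the standard Gronwall/duality argument, with Lemma \ref{lemmarenormalized} providing the renormalization that is needed because $u$ is not Lipschitz. The plan has four steps.

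\emph{Step 1: reduction.} Set $w := w_1 - w_2 \in L^2_{t,x}$. By linearity of \eqref{advective2}, $w$ is a distributional solution of $\partial_t w + u\cdot\nabla w = f\,w$ with $w|_{t=0}=0$. All the integrability hypotheses on $w_1,w_2$ transfer to $w$, so the last product formula in Lemma \ref{lemmarenormalized} is applicable to the pair $(\rho,w)$.

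\emph{Step 2: renormalization with a convex approximation of $|\cdot|$.} Choose $\chi_\eps\in C^1(\R)\cap W^{1,\infty}(\R)$ with $\chi_\eps(0)=0$, $\chi_\eps(s)\nearrow|s|$ as $\eps\to 0$, $|\chi_\eps'|\le 1$, and $s\,\chi_\eps'(s)\to |s|$ pointwise (for instance $\chi_\eps(s)=\sqrt{s^2+\eps^2}-\eps$). Applying the last identity in Lemma \ref{lemmarenormalized},
\[
\partial_t\bigl(\rho\,\chi_\eps(w)\bigr)+\div\bigl(\rho\,\chi_\eps(w)\,u\bigr)=f\,w\,\chi_\eps'(w)\,\rho\qquad\text{in}\ \mathcal D'.
\]
The hypothesis $1/p_1+1/p_2+1/p_3\leq 1$ ensures $\rho\,\chi_\eps(w)\,u\in L^1_{t,x}$ (so the divergence is well defined as a distribution), and $1/p_1+1/q\leq 1$ together with $|\chi_\eps'|\le 1$ ensures that the right-hand side is in $L^1_{t,x}$ uniformly in $\eps$.

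\emph{Step 3: integrate in space and pass to the limit.} Testing against $\phi(t)$ (constant in $x$) on $\Pi^d$ kills the divergence term, giving, in $\mathcal D'((0,T))$,
\[
\frac{d}{dt}\int_{\Pi^d}\rho\,\chi_\eps(w)\,dx=\int_{\Pi^d} f\,w\,\chi_\eps'(w)\,\rho\,dx.
\]
Because $\rho\,\chi_\eps(w)$ is uniformly bounded by $\rho\,|w|\in L^1_{t,x}$ and $f\,w\,\chi_\eps'(w)\,\rho$ is uniformly bounded by $|f\,w|\,\rho\in L^1_{t,x}$, dominated convergence lets us send $\eps\to 0$ to obtain
\[
\frac{d}{dt}\int_{\Pi^d}\rho(t,x)\,|w(t,x)|\,dx=\int_{\Pi^d} f(t,x)\,\rho(t,x)\,|w(t,x)|\,dx.
\]
The renormalized framework of Lemma \ref{lemmarenormalized} also provides enough time regularity for $\rho\,\chi_\eps(w)$ (weak time continuity) to give sense to the initial value $\int\rho|w|(0)=0$, which follows from $w|_{t=0}=0$.

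\emph{Step 4: conclude by Gronwall or monotonicity.} In the case $f\in L^\infty$, the standard Gronwall lemma applied to $t\mapsto \int\rho|w|(t)$ yields
\[
\int_{\Pi^d}\rho(t,x)\,|w(t,x)|\,dx\leq e^{\,t\,\|f\|_{L^\infty}}\int_{\Pi^d}\rho(0,x)\,|w(0,x)|\,dx=0.
\]
In the case $f\leq 0$ and $\rho\geq 0$, the right-hand side of the identity in Step 3 is non-positive, so $t\mapsto\int\rho|w|(t)$ is non-increasing and again vanishes for all $t$. In either situation $\rho\,|w_1-w_2|\equiv 0$ a.e.\ in $(0,T)\times\Pi^d$, which is exactly the claim $w_1=w_2$ $\rho$-a.e.

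The main obstacle is Step 2, namely justifying the application of Lemma \ref{lemmarenormalized} to the non-smooth absolute value through the approximation $\chi_\eps$; everything else is an essentially mechanical Gronwall argument once this renormalized identity is in hand. The ancillary subtlety is Step 3, where we must check that the time derivative and initial trace make sense despite $u$ being only in $L^2_tH^1_x$---this is handled by the weak time continuity built into the renormalized theory and by the smoothness of $\chi_\eps$ uniformly away from the limit.
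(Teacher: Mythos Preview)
Your proof is correct and follows exactly the approach sketched in the paper: derive the identity $\frac{d}{dt}\int\rho|w_1-w_2|=\int f\,\rho\,|w_1-w_2|$ via Lemma \ref{lemmarenormalized} (regularizing $|\cdot|$ to apply the product formula), then conclude by Gronwall when $f\in L^\infty$ or by monotonicity when $f\le 0$, $\rho\ge 0$. The paper only states the key identity without detailing the approximation argument, so your Steps 2--3 simply flesh out what is implicit there.
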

Of course if $\rho>0$ everywhere then Lemma \ref{uniqueadvective} provides the uniqueness of the solution to \eqref{advective}. But in general $\rho$ could vanish on a set of non zero measure (this is the difficult vacuum problem for compressible Navier-Stokes). In that case one cannot expect in general uniqueness for \eqref{advective}.

We will use the same strategy of integrating against a solution $\rho$ to the conservative equation \eqref{conservative} to obtain some bounds on $\log w$ for $w$ a solution to \eqref{advective}. 
\begin{lemma}
Assume that
\begin{itemize}
\item $\rho\geq 0$ in $L^2_{t,x}$ solves \eqref{conservative}.
\item $w_1$ is a solution in $L^2_{t,x}$ to \eqref{advective2} with $0\leq w_1\leq 1$, $w_1(t=0)=w^0$ for a given $f\in L^p_{t,x}$ and $w\in L^{p^*}$, $i=1,\;2$, with $1/p^*+1/p=1$.
\item $\rho\in L^{p_1},\; w\in L^{p_2},\;u\in L^{p_3}$ with $1/p_1+1/p_2+1/p_3\leq 1$ and $f\,w\in L^q_{t,x}$ with $1/p_1+1/q\leq 1$.
\end{itemize}
Then one has for any $0\leq \theta\leq 1$
\[\begin{split}
\int_{\Pi^d} (1+|\log w(t,x)|)^\theta\,\rho(t,x)\,dx&\leq \int_{\Pi^d} |\log w^0|^\theta\,\rho(t,x)\,dx\\+&\theta\,\int_0^t\int_{\Pi^d} |f(s,x)|\,(1+|\log w(s,x)|)^{\theta-1}\,\rho(s,x)\,dx\,ds.
\end{split}
\]\label{lemmalog}
\end{lemma}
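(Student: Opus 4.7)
The plan is to apply the renormalization identity of Lemma \ref{lemmarenormalized} to the product $\rho\,\chi(w)$ with $\chi(w) = (1+|\log w|)^\theta$, and then integrate in space. Since $\chi$ blows up as $w \to 0^+$ and therefore is not in $W^{1,\infty}(\R)$, a truncation step is essential. For each integer $n \geq 1$ I would introduce $\chi_n \in W^{1,\infty}([0,1])$ coinciding with $(1+|\log w|)^\theta$ on $[1/n,1]$ and extended linearly (or by any monotone $W^{1,\infty}$ extension) on $[0,1/n]$. By construction $\chi_n \nearrow \chi$ pointwise on $(0,1]$, and a short computation using $0 \leq \theta \leq 1$ yields the crucial uniform bound
\[
|w\,\chi_n'(w)| \leq \theta\,(1+|\log w|)^{\theta-1} \leq \theta \qquad \text{for all } w \in (0,1].
\]

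Applying Lemma \ref{lemmarenormalized} with $F = f w$ and $\chi = \chi_n$ (whose hypotheses are met by the present integrability assumptions on $\rho$, $w$, $u$ and $fw$) gives in ${\cal D}'((0,T)\times\Pi^d)$
\[
\partial_t\bigl(\rho\,\chi_n(w)\bigr) + \div\bigl(\rho\,u\,\chi_n(w)\bigr) = f\,w\,\chi_n'(w)\,\rho.
\]
Integrating over $\Pi^d$ — the transport term vanishes by periodicity — and then in time from $0$ to $t$ produces the identity
\[
\int_{\Pi^d} \rho(t)\,\chi_n(w(t))\,dx = \int_{\Pi^d} \rho(0)\,\chi_n(w^0)\,dx + \int_0^t\!\!\int_{\Pi^d} f\,w\,\chi_n'(w)\,\rho\,dx\,ds.
\]

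It then remains to let $n \to \infty$. The two $\chi_n$ terms converge by monotone convergence to their natural limits (possibly $+\infty$), using $\rho \geq 0$ and $\chi_n \nearrow \chi$. For the source term, one has $w\chi_n'(w) \to -\theta\,(1+|\log w|)^{\theta-1}$ pointwise on $\{w > 0\}$ while vanishing identically on $\{w = 0\}$, and the uniform bound $|fw\chi_n'(w)\rho| \leq \theta\,|f|\,\rho$ allows passage to the limit by dominated convergence whenever $|f|\rho \in L^1$; if that integral is infinite the stated inequality is trivially true since its right-hand side is then infinite. Passing to absolute values inside the time integral yields the announced bound.

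The main obstacle is really the uniform control $|w\chi_n'(w)| \leq \theta$, which rests crucially on the restriction $\theta \leq 1$: for $\theta > 1$ the factor $(1+|\log w|)^{\theta-1}$ is unbounded, the dominating function fails to be integrable, and the limiting step would break down. Beyond this, the argument is a routine application of the renormalization machinery already set up in Lemma \ref{lemmarenormalized}.
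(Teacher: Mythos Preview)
Your proof is correct and follows essentially the same route as the paper: apply the last point of Lemma~\ref{lemmarenormalized} to a Lipschitz approximation of $(1+|\log w|)^\theta$, integrate, and pass to the limit by monotone convergence. The only difference is cosmetic: the paper regularizes via $\chi_\eps(w)=(1+|\log(\eps+w)|)^\theta$, whereas you truncate at $w=1/n$ and extend linearly below; both choices produce a $W^{1,\infty}$ function with $|w\chi'(w)|\leq \theta$ uniformly, which is the only estimate that matters.
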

The lemma is proved simply by applying Lemma \ref{lemmarenormalized} (the last point) to a sequence $\chi_\eps(w)=(1+|\log (\eps+w)|)^\theta$, as for a fixed $\eps>0$, $\chi_\eps$ is Lipschitz. One then integrates in $t$ and $x$ and finally passes to the limit $\eps\rightarrow 0$ by the monotone convergence theorem.

Note that the $log$ transform allows to derive \eqref{advective} from \eqref{advective2} but requires in addition $\log w\in L^2$ while Lemma \ref{lemmalog} does not require any a priori estimates on $\log w$.

\bigskip

Let us finish this subsection by briefly mentioning the existence question. This does not use renormalized per se, although as we saw using the solutions once they are obtained requires the theory.

For uniqueness, the conservative form was well behaved and the advective form delicate. Hence for existence, things are reversed. Unless $\div u\in L^\infty$ it is not possible to have a general existence result for \eqref{conservative}. In general a solution to \eqref{conservative} with only $\div u\in L^2$ may concentrate, forming Dirac masses for instance.

But it is quite simple to obtain a general existence result for \eqref{advective}
\begin{lemma}
Assume that $w^0\in L^\infty(\Pi^d)$ and either that $f\in L^\infty(\Pi^d)$ or that $f\leq 0$, $f\in L^1_{t,x}$ and $w^0\geq 0$. Then there exists $w\in L^\infty([0,\ T]\times \Pi^d)$ for any $T>0$ solution to \eqref{advective} in the sense of distributions. \label{lemmaexistence}
\end{lemma}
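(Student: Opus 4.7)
The plan is standard regularization plus compactness: solve a smoothed problem classically along characteristics, obtain a uniform $L^\infty$ bound, and extract a weak-$*$ limit. Let $u_n=\eta_n\star_x u$ be a space mollification of $u$, so that $u_n$ is smooth in $x$ and $u_n\to u$ in $L^2_tH^1_x$; let $f_n$ be smooth approximations of $f$ with $f_n\to f$ in $L^1_{t,x}$, satisfying $\|f_n\|_\infty\leq \|f\|_\infty$ in Case~1 and $f_n\leq 0$ in Case~2; let $w^0_n$ be smooth with $\|w^0_n\|_\infty\leq \|w^0\|_\infty$, $w^0_n\to w^0$ in $L^1$, and $w^0_n\geq 0$ in Case~2. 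Since $u_n$ is Lipschitz in $x$, its flow $X_n(t,\cdot)$ is globally defined on $\Pi^d$.

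For each $n$, the Cauchy problem
\[
\partial_t w_n + u_n\cdot\nabla w_n = f_n\,w_n,\qquad w_n|_{t=0}=w^0_n,
\]
admits a unique smooth solution given along trajectories by
\[
w_n(t,X_n(t,x)) = w^0_n(x)\,\exp\!\left(\int_0^t f_n(s,X_n(s,x))\,ds\right).
\]
In Case~1 this yields $\|w_n\|_{L^\infty}\leq \|w^0\|_\infty\,e^{T\|f\|_\infty}$. In Case~2 the exponential is bounded by $1$ and $w_n\geq 0$, so $0\leq w_n\leq \|w^0\|_\infty$. In either case $(w_n)$ is uniformly bounded in $L^\infty([0,T]\times\Pi^d)$ and, up to extraction, $w_n\rightharpoonup^* w$ for some $w\in L^\infty$.

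To conclude, test the equation in conservative form against $\phi\in C_c^\infty([0,T)\times\Pi^d)$:
\[
\int_0^T\!\!\int_{\Pi^d} w_n\bigl(\partial_t\phi + u_n\cdot\nabla\phi + \phi\,\div u_n + f_n\,\phi\bigr)\,dx\,dt = -\int_{\Pi^d} w^0_n\,\phi(0,\cdot)\,dx.
\]
Mollification yields $u_n\to u$ and $\div u_n\to \div u$ strongly in $L^2_{t,x}$; together with the smoothness and compact support of $\phi$, the bracketed coefficient converges strongly in $L^1_{t,x}$ to $\partial_t\phi+u\cdot\nabla\phi+\phi\,\div u+f\,\phi$. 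Combined with the weak-$*$ convergence of $w_n$ in $L^\infty$, the left-hand side passes to the limit, while $w^0_n\to w^0$ in $L^1$ handles the right-hand side. Hence $w$ solves the advective equation in the sense of distributions with initial datum $w^0$.

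The only delicate step is the limit in $\int w_n\,u_n\cdot\nabla\phi$, where just weak-$*$ information on $w_n$ is available. It is resolved by the strong $L^2_{t,x}$ convergence of $u_n$ (and $\div u_n$) afforded by the mollification, paired against the bounded sequence $w_n\rightharpoonup^* w$ through the smooth, compactly supported $\nabla\phi$. No renormalization machinery is invoked at this stage, in agreement with the paper's remark: uniqueness of the advective equation can fail in the vacuum set and is addressed separately by the duality argument of Lemma~\ref{uniqueadvective}.
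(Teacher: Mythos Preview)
Your proof is correct and follows essentially the same route as the paper: regularize the velocity, solve along characteristics to get a uniform $L^\infty$ bound (via the exponential in Case~1, via the sign condition in Case~2), extract a weak-$*$ limit, and pass to the limit using the strong $L^2$ convergence of $u_n$ and $\div u_n$. The only cosmetic difference is that you also mollify $f$ and $w^0$, whereas the paper keeps them fixed and works directly with $f$ and $w^0$ in the characteristic formula; this is harmless and your splitting $\int w_n B_n-\int wB=\int w_n(B_n-B)+\int(w_n-w)B$ handles the extra approximation cleanly.
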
 
\begin{proof}
Consider a sequence $u_n\in C^\infty$ s.t. $u_n$ converges to $u$ in $L^2_t H^1_x$. Define the solution $w_n$ to
\[
\partial_t w_n+u_n\cdot\nabla_x w_n=f\,w_n,\quad w_n(t=0)=w^0.
\]
This solution $w_n$ is easy to construct by using the characteristics flow based on $u_n$. Now if $f\in L^\infty$ then
\[
\|w_n(t,.)\|_{L^\infty_x}\leq \|w^0\|_{L^\infty_x}\,e^{t\,\|f\|_{L^\infty_{t,x}}}.
\]
In the other case if $w^0\geq 0$ then $w_n\geq 0$. Furthermore if $f\leq 0$ then 
\[
\|w_n(t,.)\|_{L^\infty_x}\leq \|w^0\|_{L^\infty_x}.
\]
So in both cases $w_n$ is uniformly bounded in $L^\infty([0,\ T]\times \Pi^d)$ for any $T>0$. Extracting a subsequence, still denoted by $w_n$ for simplicity, $w_n$ converges to $w$ in the weak-* topology of $L^\infty([0,\ T]\times \Pi^d)$.

It only remains to pass to the limit in $u_n\cdot \nabla_x w_n=\div(u_n\,w_n)-w_n\,\div u_n$ which follows from the strong convergence in $L^2$ of $u_n$ and $\div u_n$. Similarly one may pass to the limit in $f\,w_n$.  
\end{proof}
\section{Renormalized equation and weights \label{secrenormalized}}
We explain here the various renormalizations of the transport equation satisfied by $\rho_k $, We then define the
weights we will consider  and give their properties. 
\subsection{Renormalized equation\label{renormequation}}
We explain in this subsection how to obtain the equation satisfied by various quantities that we will need and of the form $Z_k^{x,y} \chi(\rho_k^x-\rho_k^y)$ where $Z_k^{x,y}$ is chosen as $Z_k^{x,y} = K_h(x-y) W_{k,h}^{x,y}$ with $W_{k,h}^{x,y}=W_{k,h}(t,x,y)$ appropriate weights.
\begin{lemma} \label{renor}
Assume $\rho_k$ solves \eqref{continuity} with \eqref{boundrhok} for $p>2$ and that $u_k$ satisfies \eqref{bounduk}.  Denote for convenience 
\[\begin{split}
&\rho_k^x=\rho_k(t,x),\quad \rho_k^y=\rho(t,y),\quad u_k^x=u_k(t,x),\quad u_k^y=u_k(t,y),\\
& \delta\rho_k = \rho_k^x - \rho_k^y,\quad \bar \rho_k = \rho_k^x + \rho_k^y.
\end{split}\]
Then for any $\chi\in W^{2,\infty}$
\[
\begin{split}
& \int_{\Pi^{2d}} \bigl[ K_{h}(x-y) W_{k,h}^{x,y}\,  \chi(\delta\rho_k) \bigr] (t) 
-  \int_{\Pi^{2d}} \bigl[ K_{h}(x-y) W_{k,h}^{x,y}\,  \chi(\delta\rho_k) \bigr](0)  \\
& \qquad    
+\int_0^t \int_{\Pi^{2d}} \bigl(\chi'(\delta\rho_k) \delta\rho_k 
      - \chi(\delta\rho_k)\bigr)\bigl(\div_x u_k^x
      + \div_y u_k^y \bigr) K_{h}(x-y) W_{k,h}^{x,y} \\
&-\int_0^t\int_{\Pi^{2d}} \chi(\delta\rho_k)\bigl[
      u_k^x\cdot\nabla_x K_h(x-y)  + u_k^y\cdot\nabla_y  K_h(x-y)\\
&\qquad\qquad
   \hskip4cm   + \alpha_k (\Delta_x+\Delta_y) K_h(x-y)\bigr] W_{k,h}^{x,y}  \\
&\qquad -  2 \int_0^t\int_{\Pi^{2d}} \alpha_k\,\nabla K_h(x-y)\, \chi(\delta\rho_k)\, \left[\nabla_x W_{k,h}^{x,y} -\nabla_y W_{k,h}^{x,y}\right]   \\
&\qquad - 2\,\alpha_k\,\int_0^t\int_{\Pi^{2d}} K_h(x-y)\,\chi(\delta\rho_k) \left[\Delta_x\,W_{k,h}^{x,y}+\Delta_y\, W_{k,h}^{x,y}\right]
\end{split}
\]
\[
\begin{split}
&\quad = \int_0^t\int_{\Pi^{2d}} \chi(\delta\rho_k)[\partial_t W_{k,h}^{x,y} 
      + u_k^x\cdot\nabla_x W_{k,h}^{x,y} + u_k^y\cdot\nabla_y W_{k,h}^{x,y}\\
&\qquad\qquad      - \alpha_k (\Delta_x+\Delta_y) W_{k,h}^{x,y}]K_h(x-y)  \\
&\quad
    - \frac{1}{2}\int_0^t\int_{\Pi^{2d}} \chi'(\delta\rho_k) K_h(x-y)W_{k,h}^{x,y} \bigl( \div_x u_k^x - \div_y u_k^y) \bar\rho_k  \\
&\quad 
    + \frac{1}{2} \int_0^t\int_{\Pi^{2d}} \chi'(\delta\rho_k) K_h(x-y)W_{k,h}^{x,y}(\div_x u_k^x + \div_y u_k^y) \delta\rho_k \\
& \qquad - \alpha_k \int_0^t\int_{\Pi^{2d}}   \chi''(\rho_k) 
   K_h(x-y)W_{k,h}^{x,y}   \bigl(|\nabla_x\delta\rho_k|^2 + |\nabla_y\delta\rho_k|^2\bigr).
     \\
\end{split}
\]
\end{lemma}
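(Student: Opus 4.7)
The plan is to derive this identity as the integrated form of a pointwise equation for $\chi(\delta\rho_k)$ on the doubled space $\Pi^{2d}$, multiplied by the test function $K_h(x-y)\,W_{k,h}^{x,y}$ and integrated by parts so that every derivative falls on $K_h$ and $W_{k,h}^{x,y}$.

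First, viewing $\rho_k^x(t,x,y):=\rho_k(t,x)$ and $\rho_k^y(t,x,y):=\rho_k(t,y)$ as functions on $[0,T]\times\Pi^{2d}$, and using $\nabla_y\rho_k^x=0$ (so $\nabla_x\rho_k^x=\nabla_x\delta\rho_k$, $\Delta_x\rho_k^x=\Delta_x\delta\rho_k$) and symmetrically in $y$, the subtraction of the two continuity equations reads
\begin{equation*}
\partial_t\delta\rho_k+u_k^x\cdot\nabla_x\delta\rho_k+u_k^y\cdot\nabla_y\delta\rho_k-\alpha_k(\Delta_x+\Delta_y)\delta\rho_k=-\rho_k^x\,\div_x u_k^x+\rho_k^y\,\div_y u_k^y.
\end{equation*}
Applying $\chi\in W^{2,\infty}$ via the chain rule, with $\chi'(\delta\rho_k)\,\alpha_k\Delta_x\delta\rho_k=\alpha_k\Delta_x\chi(\delta\rho_k)-\alpha_k\chi''(\delta\rho_k)|\nabla_x\delta\rho_k|^2$ (and analogously in $y$) producing the quadratic Fisher term, and splitting the source via the elementary identity
\begin{equation*}
-\rho_k^x\,\div_x u_k^x+\rho_k^y\,\div_y u_k^y=-\tfrac12\bar\rho_k\,(\div_x u_k^x-\div_y u_k^y)-\tfrac12\delta\rho_k\,(\div_x u_k^x+\div_y u_k^y),
\end{equation*}
yields a pointwise equation for $\chi(\delta\rho_k)$ whose right-hand side already displays the two $\chi'$-contributions of the statement.

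I then multiply this equation by $K_h(x-y)\,W_{k,h}^{x,y}$ and integrate over $[0,t]\times\Pi^{2d}$, shifting every derivative off $\chi(\delta\rho_k)$ by integration by parts: the time derivative produces the boundary difference $[\int K_hW\chi]_0^t$ together with $-\int K_h\chi\,\partial_t W$; each advective term produces three pieces transporting $K_h$, transporting $W$, and multiplying $K_hW$ by $\div u_k$; and each Laplacian expands as $(\Delta_x+\Delta_y)(K_hW)=W(\Delta_x+\Delta_y)K_h+2(\nabla_xK_h\cdot\nabla_xW+\nabla_yK_h\cdot\nabla_yW)+K_h(\Delta_x+\Delta_y)W$, with the cross term collapsing to $2\,\nabla K_h\cdot(\nabla_xW-\nabla_yW)$ via $\nabla_yK_h(x-y)=-\nabla_xK_h(x-y)$. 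One then reorganizes: the conservative-form residue $-\chi(\delta\rho_k)(\div_x u_k^x+\div_y u_k^y)K_hW$ coming from the advective IBPs is combined with $+\tfrac12\chi'(\delta\rho_k)\delta\rho_k(\div_x u_k^x+\div_y u_k^y)K_hW$ added to both sides, producing on the left the stated coefficient $\chi'(\delta\rho_k)\delta\rho_k-\chi(\delta\rho_k)$; and $-\alpha_k\int K_h\chi(\Delta_x+\Delta_y)W$ is added to both sides so that the right-hand side displays the full parabolic operator $\partial_tW+u_k^x\cdot\nabla_xW+u_k^y\cdot\nabla_yW-\alpha_k(\Delta_x+\Delta_y)W$ against $K_h\chi$, which is what produces the coefficient $-2\alpha_k$ in front of the pure $W$-Laplacian term on the left.

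The main technical point to check is the chain-rule step that produces the Fisher term. When $\alpha_k>0$ this is immediate, parabolic regularization making $\rho_k$ smooth. When $\alpha_k=0$ it follows from the renormalization theory recalled in Lemma \ref{lemmarenormalized}, applied on $\Pi^{2d}$ with the tensor velocity $(u_k^x,u_k^y)\in L^2_tH^1_{(x,y)}$ (guaranteed by $u_k\in L^2_tH^1_x$); the integrability bound $p>2$ in \eqref{boundrhok} combined with $\div u_k\in L^2_{t,x}$ ensures that $\rho_k\div u_k\in L^1$ so that the source of the equation for $\delta\rho_k$ is a well-defined distribution and Lemma \ref{lemmarenormalized} applies to $\rho_k^x-\rho_k^y$. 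The subsequent integrations by parts against $K_h$ and $W_{k,h}^{x,y}$ pose no extra obstacle, $K_h$ being smooth by construction and $W_{k,h}^{x,y}$ being built in the next subsection with sufficient regularity; any residual roughness is absorbed by a standard mollification in $(x,y)$ and passage to the limit.
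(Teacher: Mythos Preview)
Your proof is correct and follows essentially the same route as the paper: doubling of variables to get the equation for $\delta\rho_k$, the algebraic split of the source into $\bar\rho_k(\div_x-\div_y)$ and $\delta\rho_k(\div_x+\div_y)$ halves, renormalization by $\chi$ (DiPerna--Lions when $\alpha_k=0$, parabolic smoothing when $\alpha_k>0$), multiplication by $K_h W_{k,h}^{x,y}$, and integration by parts with the Leibniz expansion of $(\Delta_x+\Delta_y)(K_hW)$ collapsed via $\nabla_y K_h(x-y)=-\nabla_x K_h(x-y)$. The only differences are cosmetic: the paper writes the $\delta\rho_k$-equation in divergence form and multiplies by the single test function $Z=K_hW$ before splitting, whereas you start in advective form and reorganize afterward. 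One small bookkeeping slip: to turn the residue $-\chi(\div_x u_k^x+\div_y u_k^y)K_hW$ on the left into the stated $(\chi'\delta\rho_k-\chi)(\div_x+\div_y)K_hW$ while producing $+\tfrac12\chi'\delta\rho_k(\div_x+\div_y)K_hW$ on the right, you must add the full $\chi'\delta\rho_k(\div_x+\div_y)K_hW$ to both sides, not $\tfrac12$ of it (your right-hand side initially carries $-\tfrac12\chi'\delta\rho_k(\div_x+\div_y)$); this does not affect the argument.
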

\begin{proof}
The result essentially relies on a doubling of variable argument and straight forward algebraic calculations. 

Since $\rho_k$ solves \eqref{continuity}, one has that  
\[
\partial_t \rho_k^x + {\rm div}_x(\rho_k^x u_k^x) = \alpha_k \Delta_x \rho_k^x,
\]
\[
\partial_t \rho_k^y + {\rm div}_y(\rho_k^y u_k^y) = \alpha_k \Delta_y \rho_k^y.
\]
Recalling $\delta\rho_k=\rho_k^x-\rho_k^y$, and using that $\rho_k\in L^p_{t,x}$ with $p>2$ and hence $\rho_k \div\,u_k$ is well defined, one  can check that
\[
\partial_t \delta\rho_k+\div_x(u_k^x \,\delta\rho_k)+\div_y (u_k^y\,\delta\rho_k)=\alpha_k(\Delta_x+\Delta_y)\,\delta\rho_k -\rho_k^y\,\div_x u_k^x+\rho_k^x\,\div_y u_k^y.
\]
Then, recalling the notation $\bar \rho_k = \rho_k^x+\rho_k^y$, we observe that
\[
\begin{split}
&-\rho_k^y\,\div_x u_k^x+\rho_k^x \,\div_y u_k^y
=\frac{1}{2}\Big(\div_x u_k^x\,\rho_k^x-\div_x u_k^x\,\rho_k^y\div_y u_k^y\,\rho_k^x-\div_y u_k^y\rho_k^y\\
&\qquad-\div_x u_k^x\,\rho_k^x
-\div_x u_k^x\,\rho_k^y
+\div_y u_k^y\,\rho_k^x+\div_y u_k^y\,\rho_k^y\Big)\\
&\quad =\frac{1}{2}(\div_x u_k^x+\div_y u_k^y)\,\delta\rho_k-\frac{1}{2}(\div u_k^x-\div u_k^y)\,\bar\rho_k.\\
\end{split}\]
Consequently, we can write
\begin{equation}
\begin{split}
&\partial_t \delta\rho_k+\div_x(u_k^x\,\delta\rho_k)+\div_y (u_k^y\,\delta\rho_k)=\alpha_k\,(\Delta_x+\Delta_y)\,\delta\rho_k\\
&\qquad+\frac{1}{2}(\div_x u_k^x+\div_y u_k^y)\,\delta\rho_k-\frac{1}{2}(\div u_k^x-\div u_k^y)\,\bar\rho_k,
\end{split}\label{deltarho}
\end{equation}
We now turn to the renormalized equation that means the equation satisfied by $\chi(\delta_k)$
for a nonlinear function $s\mapsto \chi(s)$. Formally the equation can be obtained by multiplying \eqref{deltarho} by $\chi'(\delta\rho_k)$. If $\alpha_k=0$ and $\rho_k$ is not smooth then the formal calculation can be justified following {\sc Di~Perna--Lions} techniques using regularizing by convolution and the estimate \eqref{bounduk}, {\em i.e.} $u_k\in L^2_t H^1_x$. Then 
\[
\begin{split}
& \partial_t \chi(\delta\rho_k) + \div_x(u_k^x \chi(\delta\rho_k))
   + \div_y(u_k^y \chi(\delta\rho_k))  \\
&  \qquad\qquad  + \bigl(\chi'(\delta\rho_k) \delta\rho_k - \chi(\delta\rho_k)\bigr)\bigl(\div_x u_k^x
      + \div_y u_k^y \bigr)\\
&\quad =   
\alpha_k (\Delta_x+\Delta_y) \chi(\delta\rho_k) 
    - \frac{1}{2} \chi'(\delta\rho_k) \bigl( \div_x u_k^x - \div_y u_k^y) \bar\rho_k  \\
&   \qquad  + \frac{1}{2}  \chi'(\delta\rho_k)(\div_x u_k^x + \div_y u_k^y) \delta\rho_k 
 - \alpha_k  \chi''(\rho_k) \bigl(|\nabla_x\delta\rho_k|^2 + |\nabla_y\delta\rho_k|^2\bigr).
\end{split}\]
For any $V_k^x$, $V_k^y$ and smooth enough $Z_{k,h}^{x,y}$, one has in the sense of distributions
\[
Z_{k,h}^{x,y}\Delta_x V_k^x = \div_x(Z_{k,h}^{x,y}\nabla_x V_k^x) - \div_x (V_k^x\nabla_x Z_k^{x,y}) 
+ V_k^x\Delta_x Z_{k,h}^{x,y}.$$
$$  Z_{k,h}^{x,y}\Delta_y V_k^y = \div_y(Z_{k,h}^{x,y}\nabla_y V_k^y) - \div_y (V_k^y\nabla_y Z_{k,h}^{x,y}) 
+ V_k^y\Delta_y Z_{k,h}^{x,y}.
\]
Consequently, we get the following equation for $Z_{k,h}^{x,y} \chi(\delta\rho_k)$: 
\[
\begin{split}
& \partial_t [Z_{k,h}^{x,y} \chi(\delta\rho_k)] + \div_x(u_k^x \chi(\delta\rho_k)Z_{k,h}^{x,y})
   + \div_y(u_k^y \chi(\delta\rho_k)Z_{k,h}^{x,y})  \\
& \qquad    + \bigl(\chi'(\delta\rho_k) \delta\rho_k - \chi(\delta\rho_k)\bigr)\bigl(\div_x u_k^x
      + \div_y u_k^y \bigr)Z_{k,h}^{x,y} \\
&\qquad -\chi(\delta\rho_k)[\partial_t Z_{k,h}^{x,y} 
      + u_k^x\cdot\nabla_x Z_{k,h}^{x,y} + u_k^y\cdot\nabla_y Z_{k,h}^{x,y}
      - \alpha_k (\Delta_x+\Delta_y) Z_{k,h}^{x,y}]=r.h.s,  \\
\end{split}
\]
with
\[
\begin{split}
&r.h.s. = 
    - \frac{1}{2} \chi'(\delta\rho_k) Z_{k,h}^{x,y} \bigl( \div_x u_k^x - \div_y u_k^y) \bar\rho_k  
+ \frac{1}{2}  \chi'(\delta\rho_k)Z_{k,h}^{x,y}(\div_x u_k^x + \div_y u_k^y) \delta\rho_k \\
& \qquad\quad   - \alpha_k \chi''(\rho_k) 
   Z_{k,h}^{x,y}  \bigl(|\nabla_x\delta\rho_k|^2 + |\nabla_y\delta\rho_k|^2\bigr)
     \\
&\  +2\,\alpha_k\, Z^{x,y}_{k,h}\,\left(\Delta_x+\Delta_y\right)\chi(\delta\rho_k)+ \alpha_k \bigl[ \div_x ( Z_{k,h}^{x,y}\nabla_x \chi(\delta\rho_k)) - \div_x(\chi(\delta\rho_k)\nabla_x  Z_{k,h}^{x,y}) \\
&\qquad\qquad +  \div_y (Z_{k,h}^{x,y}\nabla_y \chi(\delta\rho_k)) 
     - \div_y(\chi(\delta\rho_k)\nabla_y  Z_{k,h}^{x,y}) \bigr].\\
\end{split}\]
Integrating in time and space and performing the required integration by parts, we get the desired equality writing $Z_{k,h}^{x,y} = K_h(x-y) W_{k,h}^{x,y}$.
\end{proof}

%
%
\subsection{The weights: Choice and properties\label{weightsec}}
 In this subsection, we choose the PDEs satisfied by the weights, we state and then prove some of their properties.
\subsubsection{Basic considerations}
We define weights $w(t,x)$ which satisfy
\begin{equation}
\partial_t w+u_k(t,x)\cdot \nabla_x w=-D\,w+\alpha_k\Delta_{x} w,\qquad w(t=0,x)=w^0(x)\label{eqw}
\end{equation}
for some appropriate penalization $D$ depending on the case under consideration.
   Note that $w$ depends on $k$, but we do not precise the index to keep notations simple. 
The choice of $D$ will be based on the need to control ``bad'' terms when looking at the propagation
of the weighted quantity.
The choice will  also have to ensure that the weights are not too small, too often.

\subsubsection{Isotropic viscosity, general pressure laws.}

\noindent {\it The case with $\alpha_k>0$ and monotone pressure.}
The simplest choice for the penalization $D$ to define $w_0$  is
\begin{equation}
D_0=\lambda\,M\,|\nabla u_k|,\label{dk0}
\end{equation}
with  $\lambda$ a fixed constant (chosen later on) and $M$ the localized maximal operator as defined by \eqref{defmaximal}.
   We choose accordingly in that case
\[
w_0|_{t=0}=w_0^0 \equiv1.
\]

\bigskip
\noindent {\it The case $\alpha_k=0$ and non-monotone pressure.}
In the absence of diffusion in \eqref{continuity} ($\alpha_k=0$) and when the pressure term $P_k$ is non-monotone for instance, one needs to add a term $\rho_k^{\tilde\gamma}$ in the penalization. This would lead to very strong assumptions, in particular on the exponent $p$ in \eqref{boundrhok} (and hence $\gamma$) as explained after Prop. \ref{proproughsketch}. It is possible to obtain better results using that $\rho \in L^{p}$ for some $p>2$,  by taking the more refined
\begin{equation}\begin{split}
\frac{D_1}{\lambda}=&  \rho_k \, |\div u_k|
+|\div u_k|+M\,|\nabla u_k|+\rho_k^{\tilde\gamma}+\tilde P_k\,\rho_k+R_k,
\end{split}\label{dk1}
\end{equation}
for the general compactness result.
  We take for simplicity
\[
w_1|_{t=0}=w_1^0 \equiv \exp\left(-\lambda \sup \rho_k^0\right).
\] 
   The reason for the first term in $D_1$ compared to $D_0$ is to ensure that $w_1\leq e^{-\lambda \rho_k}$ which helps compensates the penalization in $\rho_k^{\tilde\gamma}$ to get the
 property on $\rho_k |\log w_1|^\theta$ for some $\theta>0$.  
  The three last terms are needed to respectively counterbalance: Additional divergence terms in the propagation quantity compared to $w_0$, the same $M\,|\nabla u_k|$ as for $w_0$ and the $\rho_k^{\tilde\gamma}$ for terms coming from the pressure.

\subsubsection{Anisotropic stress tensor.} 

The choice for the penalization, denoted $D_a$ in this case and leading to
the weight $w_a$, is now 
\begin{equation}
 \frac{D_a}{\lambda}  =  M |\nabla u_k| 
    +      \overline {K_h}\star (|{\rm div} u_k| +  \bigl|A_\mu \rho_k^\gamma\bigr|).
\label{dk00} 
\end{equation}
Note that the second term in the penalization is used to control the non local part of the pressure terms. As initial condition, we choose accordingly 
$$w_a\vert_{t=0} = w_a^0 \equiv 1.$$

\subsubsection{The forms of  the weights.}  

Two types of weights $W$ are used
\[
 W(t,x,y)=w(t,x)+w(t,y),\quad\mbox{or}\quad
 W(t,x,y)=w(t,x)\,w(t,y).
\]
The first one will provide compactness and will be used with \eqref{dk0} or \eqref{dk1}. The second, used with \eqref{dk1} gives better explicit regularity estimates but far from vacuum and is considered
for the sake of completeness.
   Therefore one defines
\begin{equation}\begin{split}
&W_0(t,x,y)=w_0(t,x)+w_0(t,y),\quad W_1(t,x,y)=w_1(t,x)+w_1(t,y),\\ &W_2(t,x,y)=w_1(t,x) w_1(t,y),\quad W_a(t,x,y)=w_a(t,x)+w_a(t,y).
\end{split}\end{equation}
As for the penalization, we use the notation $W$ when the particular choice is not relevant and $W_i$, $i=0,\,1,\,2$ or $a$ otherwise. For all choices, one has
\begin{equation}
\partial_t W+u_k(x)\cdot \nabla_x W+u_k(y)\cdot \nabla_y W=-Q+\alpha_k\Delta_{x,y} W.\label{W}
\end{equation}
The term $Q$ depends on the choices of penalizations and weights with the four possibilities
\[\begin{split}
&Q_0=D_0\,w_0(t,x)+D_0\,w_0(t,y),\qquad Q_1= D_1\,w_1(t,x)+D_1\,w_1(t,y),\\
&Q_2=(D_1(t,x)+D_1(t,y))\,w_1(t,x)\,w_1(t,y),  \qquad Q_a=D_a\,w_a(t,x)+D_a\,w_a(t,y).
\end{split}\]

\bigskip


\subsubsection{The weight properties}
We summarize the main estimates on the weights previously defined
\begin{prop} Assume that $\rho_k$ solves \eqref{continuity} with the bounds 
\eqref{bounduk} on $u_k$and \eqref{boundrhok} with $p>\max(2,\;\tilde\gamma)$. Assume that $\tilde P_k,\;R_k$ are given by \eqref{nonmonotone} and in particular $\tilde P_k$ is uniformly bounded in $L^2_{t,x}$ and $R_k$ in $L^1_{t,x}$.
Then there exists weights $w_0$, $w_1$, $w_a$ which satisfy Equation \eqref{eqw} with 
initial data respectively 
$$w^0\vert_{t=0} = 1, 
 \quad w_1\vert_{t=0} = \exp(-\lambda\, \sup \rho_k^0),
 \quad w_a\vert_{t=0} = 1$$
 and $D_0, D_1, D_a$ respectively  given by \eqref{dk0}, \eqref{dk1} and \eqref{dk00}
 such that
\begin{itemize}
\item[i)] For any $t$,$x$:
\begin{equation} \label{propoweights}
0\leq w_0(t,x)\leq 1, \quad 0\leq w_a(t,x)\leq 1,\quad 0\leq w_1(t,x)\leq e^{\displaystyle -\lambda\,\rho_k(t,x)}.
\end{equation}
\item[ii)] One has
\[
\int \rho_k(t,x)\,|\log w_0(t,x)|\,dx\leq C\,(1+\lambda),
\]
if $\alpha_k=0$ and $p> \max(2,\,\tilde\gamma)$ then similarly there exists $\theta>0$ s.t.
\[
\int  \rho_k(t,x)\,|\log w_1(t,x)|^\theta\,dx\leq C_\lambda.
\]
while finally if $p\ge \gamma+1$
\begin{equation}
\label{estimwa}
\int  \rho_k(t,x)\,|\log w_a(t,x)|\,dx\leq C(1+\lambda).
\end{equation}

\item[iii)] 
 For any $\eta$, we  have the two estimates
\[
\int_{\Pi^d} \rho_k\,\ind_{\displaystyle \overline K_h\star w_0\leq \eta}\,dx\leq C\,\frac{1+\lambda}{|\log \eta|},
\]
and if $p\ge \gamma+1$
\[
\int_{\Pi^d} \rho_k\,\ind_{\displaystyle \overline K_h\star w_a\leq \eta}\,dx\leq C\,\frac{1+\lambda}{|\log \eta|}.
\]
\item[iv)] Denoting  $w_{a,h} = \overline K_h \star w_a$, if $p>\gamma$, we have for some $0<\theta<1$
\[\begin{split}
\int_{h_0}^1\int_0^t&\Big\| \overline K_h \star \bigl(\overline K_h \star (|\div u_k|+ |A_\mu \rho_k^\gamma|) w_a\bigr) -\\
&\qquad
     \bigl(\overline K_h \star (|\div u_k|+ |A_\mu \rho_k^\gamma|)\bigr)\,\omega_{a,h}\Big\|_{L^q}\,dt\,\frac{dh}{h}\leq C\,|\log h_0|^\theta,    
\end{split} 
\]
with $q=\min(2,p/\gamma).$ 

\end{itemize}
\label{weightprop}
\end{prop}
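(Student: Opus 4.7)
\textbf{Plan for Proposition \ref{weightprop}.} The four parts will be proved in order, and essentially independently, with (i) supplying the pointwise information needed in (ii)--(iv) and Lemma \ref{lemmalog} providing the main mechanism for (ii). The existence of the weights themselves follows from a direct adaptation of Lemma \ref{lemmaexistence}: regularize $u_k$, solve the parabolic (resp.\ transport if $\alpha_k=0$) equation \eqref{eqw}, and pass to the limit using the uniform bounds and Lemma \ref{lemmacommutator} to handle the transport term. For the pointwise estimates in (i), the bounds $0\le w_0,w_a\le 1$ follow from the maximum principle, since $D_0,D_a\ge 0$ and both initial data equal $1$. The nontrivial bound $w_1\le e^{-\lambda\rho_k}$ uses that in the setting of $w_1$ we have $\alpha_k=0$: computing the advective equation for $e^{-\lambda\rho_k}$ from the continuity equation gives
\[
\partial_t e^{-\lambda\rho_k}+u_k\cdot\nabla e^{-\lambda\rho_k}=\lambda\rho_k\,\div u_k\,e^{-\lambda\rho_k}\ge -\lambda\rho_k|\div u_k|\,e^{-\lambda\rho_k}\ge -D_1\,e^{-\lambda\rho_k},
\]
so $e^{-\lambda\rho_k}$ is a supersolution of the equation satisfied by $w_1$, and the initial comparison $w_1^0=e^{-\lambda\sup\rho_k^0}\le e^{-\lambda\rho_k^0}$ together with Lemma \ref{uniqueadvective} yields the claim. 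This step is exactly where the first term in the definition \eqref{dk1} of $D_1$ is used.

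For (ii) the plan is to apply (a parabolic variant of) Lemma \ref{lemmalog}: the diffusion $\alpha_k\Delta w$ contributes terms of the form $2\alpha_k\int\nabla\rho_k\cdot\nabla|\log w|-\alpha_k\int\rho_k|\nabla\log w|^2$ after integration by parts, which are controlled by Cauchy--Schwarz and absorbed in the dissipative term. For $w_0$ with $\theta=1$ this reduces to bounding $\lambda\int_0^t\!\int\rho_k\,M|\nabla u_k|$, which is $\le\lambda\|\rho_k\|_{L^2}\|M|\nabla u_k|\|_{L^2}\le C\lambda$ by \eqref{bounduk}, $p>2$ and the $L^2$-boundedness of the maximal function. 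For $w_a$ the argument is the same once one writes $\int\rho_k\,\overline K_h\star F=\int(\overline K_h\star\rho_k)F$ and uses Hölder together with the $L^{p/\gamma}$-boundedness of $A_\mu$, which needs exactly $p\ge\gamma+1$ to close. For $w_1$ the term $\rho_k^{\tilde\gamma}$ in $D_1$ is not integrable against $\rho_k$ with $\theta=1$; I therefore pick $0<\theta<p-\tilde\gamma$ (possible since $p>\tilde\gamma$) and exploit the bound $w_1\le e^{-\lambda\rho_k}$ from (i), giving $(1+|\log w_1|)^{\theta-1}\le C(1+\rho_k)^{\theta-1}$. The worst term in Lemma \ref{lemmalog} then becomes $\rho_k\cdot\rho_k^{\tilde\gamma}(1+\rho_k)^{\theta-1}\lesssim\rho_k^{\tilde\gamma+\theta}$, which is in $L^1_{t,x}$ by \eqref{boundrhok}. \emph{This is the main technical obstacle} and is the only place where the exponential pointwise bound of (i) is genuinely needed.

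For (iii) I use that $-\log$ is convex and $\overline K_h$ is a probability density, so Jensen gives $|\log\overline K_h\star w|\le\overline K_h\star|\log w|$, and by Chebyshev
\[
\int\rho_k\,\ind_{\overline K_h\star w\le\eta}\le\frac{1}{|\log\eta|}\int\rho_k\,\overline K_h\star|\log w|=\frac{1}{|\log\eta|}\int(\overline K_h\star\rho_k)|\log w|.
\]
Splitting $\overline K_h\star\rho_k=\rho_k+(\overline K_h\star\rho_k-\rho_k)$ reduces the problem to the bound from (ii) plus a commutator term $\int(\overline K_h\star\rho_k-\rho_k)|\log w|$; the latter is handled by propagating the quantity $\int(\overline K_h\star\rho_k)|\log w|$ directly, writing the equation for $\overline K_h\star\rho_k$ with a commutator error $[u_k\cdot\nabla,\,\overline K_h\star]\rho_k$ controlled by Lemma \ref{lemmacommutator}, and repeating the calculation of (ii).

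Finally for (iv), the key observation is the pointwise algebraic identity
\[
\overline K_h\star(g\,w_a)(x)-g(x)\,w_{a,h}(x)=\int\overline K_h(x-y)\bigl(g(y)-g(x)\bigr)w_a(y)\,dy,
\]
with $g=\overline K_h\star F$ and $F=|\div u_k|+|A_\mu\rho_k^\gamma|$. Using $0\le w_a\le 1$ and Minkowski, the $L^q$ norm is bounded by $\int\overline K_h(z)\|g(\cdot+z)-g(\cdot)\|_{L^q}\,dz$. This is exactly the quantity controlled by Lemma \ref{shiftNlemma} applied with $N_h=\overline K_h$ (whose moment and $W^{s,1}$ bounds are verified directly from the explicit form of $K_h$), giving the $|\log h_0|^{1/2}\|F\|_{L^q}$ estimate after integration in $h$. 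The choice $q=\min(2,p/\gamma)$ ensures $F\in L^q_{t,x}$ since $\div u_k\in L^2_{t,x}$ by \eqref{bounduk} and $A_\mu$ is bounded on $L^{p/\gamma}$. With $\theta=1/2$ this is the desired bound.
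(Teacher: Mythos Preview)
Your outline for parts (i), (ii) and (iv) is essentially the paper's argument. One omission in (ii): when $\alpha_k>0$, the cross term $2\alpha_k\int\nabla\rho_k\cdot\nabla|\log w_0|$ is \emph{not} fully absorbed by the dissipation $-\alpha_k\int\rho_k|\nabla\log w_0|^2$ after Cauchy--Schwarz; the leftover $\alpha_k\int|\nabla\rho_k|^2/\rho_k$ must be bounded separately. The paper does this via the renormalized identity
\[
\frac{d}{dt}\int\rho_k\log\rho_k=-\int\rho_k\,\div u_k-\alpha_k\int\frac{|\nabla\rho_k|^2}{\rho_k},
\]
which gives $\alpha_k\int_0^t\!\int|\nabla\rho_k|^2/\rho_k\le C$ since $\rho_k\in L^2$.

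For part (iii) your approach is genuinely different from the paper's, and the commutator step does not close as written. Your Jensen/Chebyshev reduction to $\int(\overline K_h\star\rho_k)|\log w_0|$ is correct, but propagating this quantity produces the term
\[
\int\bigl[\overline K_h\star(\rho_k u_k)-(\overline K_h\star\rho_k)\,u_k\bigr]\cdot\nabla|\log w_0|,
\]
and Lemma~\ref{lemmacommutator} only says the bracketed commutator vanishes as the convolution scale tends to $0$; here $h$ is fixed and you must pair the commutator against $\nabla|\log w_0|$, for which you have no control without a weight $\rho_k$ (and even then only $\alpha_k\int\rho_k|\nabla\log w_0|^2$). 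The paper avoids this entirely by a geometric covering argument: one first shows, using the explicit form of $K_h$, that if $\overline K_h\star w_0(x)\le\eta$ then $\overline K_h\star w_0\le C\eta$ on all of $B(x,h)$; covering the ``bad'' set $\tilde\Omega_{h,\eta}=\{\overline K_h\star w_0\le\eta,\ w_0\ge\sqrt\eta\}$ by cubes of size $\sim h$ then gives $|\tilde\Omega_{h,\eta}|\le C\sqrt\eta$ directly, and the remaining set $\{w_0<\sqrt\eta\}$ is handled by the bound from (ii). This sidesteps any commutator estimate.
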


\begin{remark} Part i) tells us that $w_i$ is small at the right points (in particular when $\rho_k$ is large). On the other hand we want $w_i$ to be small only on a set of small mass otherwise one obviously does not control much. This is the role of part ii). We use part iv) to regularize weights
in the anisotropic case. Part iii) is also used to get control under the form given in
\ref{propcomp} from the estimates with weights.
\end{remark}

\begin{remark} Even when $\alpha_k>0$, it would be possible to define $D_1$  in order to have a bound like $w_1\leq e^{-\lambda\rho_k(t,x)^{q-1}}$.
For instance take 
\[\begin{split}
\frac{D_1}{\lambda}=&\rho_k^{q-1}\,(q-1)\,|\div u_k|
+\frac{\alpha_{k,\lambda}}{\lambda} \,|\nabla\log w_1|^2\\
&+|\div u_k|+M\,|\nabla u_k|+L\,\rho_k^\gamma+\tilde P_k\,\rho_k+R_k,
\end{split}
\]
with
\[
\alpha_{k,\lambda}=\alpha_k\left(1-\frac{\bar\alpha_\lambda}{1+\lambda\,\rho_k^{q-1}}\right) , \qquad \bar \alpha_\lambda=\frac{q-2}{(q-1)}.
\]
However one needs $q\leq p/2$ and $q>2$ which already forces $p>4$. Moreover the main difficulty when $\alpha_k>0$ comes from the proof of Lemma \ref{boundQh} which forces us to work with $\overline K_h\star w_0$ and not $w_0$. Because of that any pointwise inequality between $w(x)$ and $\rho_k(x)$ is mostly useless. 
\end{remark}

\begin{proof}

\noindent {\it Point {\rm i)}.}  This point focuses on the construction of the weights satisfying the  
bounds~\eqref{propoweights}.

\smallskip

\noindent {\it Construction of $w_0$.} The construction of the weights $w_0$ is classical  since it satisfies a parabolic equation; moreover since $D_0$ is positive then one has $0\leq w_0\leq 1$.

 \smallskip
   
\noindent Ê{\it Construction of $w_a$:}    Choosing $w_a\vert_{t=0} =1$ and $f=-D_a \le 0$ and noticing that $D_a\in L^2$ and we hence easily construct (see Lemmas  \ref{lemmaexistence},
\ref{lemmarenormalized} and \ref{uniqueconservative}) $w_a$ such that $0\leq w_a\leq 1$ solution of
$$\partial_t w_a + u_k\cdot\nabla w_a + D_a\, w_a = 0,
    \qquad w_a\vert_{t=0} = 1.$$
Note that $D_a\ge |{\rm div} u_k|$ because $M\,|\nabla u_k|\geq |\nabla u_k|\geq |{\rm div} u_k|$ 
and $w_a$ also solves
\[
\partial_t w_a + {\rm div} (u_k w_a) + (D_a - {\rm div} u_k)\,  w_a = 0,
\]
so that by the maximum principle, we also have $w_a\leq \rho_k$ where $\rho_k>0$. This means that we can actually uniquely define $w_a$ by imposing $w_a=0$ if $\rho_k=0$.
 
\medskip

\noindent Ê{\it Construction of $w_1$:}   Choosing $w_1\vert_{t=0} = \exp(- \lambda \sup \rho_k^0)$ and $f=-D_1 \le 0$ and noticing that $D_1\in L^1$ 
and $D_1 \ge |{\rm div} u_k|$ we again construct $w_1$ just as $w_a$ such that 
$0\leq w_1\leq 1$ with $w_1(t,x)=0$ where $\rho_k(t,x)=0$ and solution to
$$\partial_t w_1 + {\rm div} (u_k w_1) + (D_1 - {\rm div} u_k)\,  w_1 = 0,
    \qquad w_1\vert_{t=0} = \exp(- \lambda \sup \rho_k^0).$$
Note that using renormalization technique on the mass equation
$$ \partial_t [\exp (-\lambda \rho_k]
    + {\rm div} \bigl( u_k  [ \exp (-\lambda \rho_k)]\bigr)\\
  + [ - \lambda   \rho_k {\rm div} u_k   
      - {\rm div} u_k  ]  \exp (-\lambda \rho_k) 
    = 0.$$
Subtracting the two equations we get the following equation on 
$g=w_1 - \exp(-\lambda \rho_k)$:
$$ \partial_t g + {\rm div} (u_k g)
   + (D_1 - {\rm div} u_k) g  = - (D_1 + \lambda  \rho_k{\rm div} u_k)
     \exp (-\lambda \rho_k) 
$$
Recall now that $D_1 \ge | {\rm div} u_k|$ and 
$D_1 \ge - \lambda  \rho_k {\rm div} u_k$, thus  using the maximum principle, we get 
$$w_1 \le e^{-\lambda \rho_k}$$
recalling that we have $w_1=0$ where $\rho_k=0$.

\bigskip

\noindent {\it Point {\rm ii)}.}  By point i), $w\leq 1$,  hence $|\log w|=-\log w$ and from \eqref{eqw}, denoting $|\log w_i|=A_i$,
\begin{equation}\begin{split}
\partial_t (A_i)+u_k\cdot \nabla_x(A_i) -D_i &=-\frac{\alpha_k}{w_i}\Delta\,w_i\\
&=\alpha_k \Delta A_i -\alpha_k\,|\nabla A_i|^2.
\end{split}\label{logw}\end{equation}
For $A_0$, we directly apply
\[
\partial_t (\rho_k\,A_0)+\nabla_x\cdot (\rho_k\,
A_0)=D_0\,\rho_k+\alpha_k\Delta(\rho_k\,A_0)-2\alpha_k\,\nabla \rho_k\cdot \nabla A_0-\alpha_k \rho_k |\nabla A_0|^2,
\]
and integrate to find
\[\label{A0} \begin{split}
\int \rho_k\,A_0\,dx=&C+\int_0^t\int D_0(t,x)\,\rho_k\,dx\,ds-\alpha_k\int_0^t\int \rho_k |\nabla A_0|^2\,dx\,ds\\
&-2\alpha_k\,\int\nabla\rho_k\cdot\nabla A_0\,dx\,ds.
\end{split}\]
Simply bound
\[
\int \nabla \rho_k\,\cdot\nabla A_0\leq \frac{1}{4}\int \rho_k |\nabla A_0|^2+\int \frac{|\nabla\rho_k|^2}{\rho_k}.
\]
On the other hand, using renormalization techniques, 
\[
\frac{d}{dt}\int \rho_k\,\log \rho_k\,dx=-\int \rho_k\,\div u_k\,dx-\alpha_k\int \frac{|\nabla\rho_k|^2}{\rho_k}\,dx. 
\]
However as $p\geq 2$ then $\rho_k\,\div u_k$ is bounded uniformly in $L^1_{t,x}$ and $\rho_k\,\log \rho_k$ in $L^\infty_t(L^1_x)$. This implies, from the previous equality, that
\[
\alpha_k\int_0^t\int \frac{|\nabla\rho_k|^2}{\rho_k}\,dx\,ds\leq C,
\]
and consequently
\[
-2\alpha_k\int_0^t\int \nabla \rho_k\,\cdot\nabla A_i\,dx\,ds\leq \frac{\alpha_k}{2}\int_0^t\int \rho_k |\nabla A_i|^2\,dx\,ds+C.
\]
Using this in the equality on $\displaystyle \int \rho_k\,A_0$ given previously, we get
\[
\int \rho_k\,A_0\,dx=C+\int_0^t\int D_0(t,x)\,\rho_k\,dx\,ds
     -\frac{\alpha_k}{2}\int_0^t\int \rho_k |\nabla A_0|^2\,dx\,ds.
\]
In that case, we know that $\|D_0\|_{L^2}\leq C\,\lambda$ and since $p\geq 2$ we get 
\begin{equation}
\label{rhoA} \qquad 
\int \rho_k\,|\log w_0|\,dx+\alpha_k\int_0^t\int\rho_k\,|\nabla A_0|^2\,dx\,ds \leq C(1+\lambda).
\end{equation}

\bigskip

Concerning $w_a$, the estimate is similar, even simpler as $\alpha_k=0$, to get
$$\int \rho_k\,|\log w_a|\,dx  \le C(1+\lambda)$$
assuming $p\ge \gamma + 1$. 
Indeed $M |\nabla u_k|$ and $\overline K_h\star |\div u_k|$ are bounded in $L^2$ by \eqref{bounduk}. Finally
\[\begin{split}
\int_0^T\!\! \int_{\Pi^d}\rho_k\,\overline K_h\star(|A_\mu\,\rho_k^\gamma|)&\leq 
\left(\int_0^T\!\! \int_{\Pi^d}\rho_k^{\gamma+1}\right)^{{1}/{(\gamma+1)}}  \left(\int_0^T\!\! \int_{\Pi^d}|A_\mu \,\rho_k^\gamma|^{(\gamma+1)/\gamma}\right)^{{\gamma}/{(\gamma+1)}}\\
& \leq C\,\int_0^T \int_{\Pi^d}\rho_k^{\gamma+1},
\end{split}
\] 
since $A_\mu$ is continuous on any $L^q$ space for $1<q<+\infty$. 
The right--hand side is bounded assuming $p>\gamma +1$.

\bigskip

For $\omega_1$, the estimate is as before a bit different. 
We now assume that $\alpha_k=0$, define $\tilde A_1=(1+A_1)^\theta$ and obtain from \eqref{logw}
\[
\partial_t \tilde A_1+u_k\cdot\nabla \tilde A_1=\theta\,\frac{D_1}{(1+A_1)^{1-\theta}}.
\]
Integrating and recalling $A_1\geq \lambda\,\rho_k$, $M\,|\nabla u_k|$ and $\tilde P_k$ by \eqref{nonmonotone} are uniformly bounded in $L^2$ and $R_k$ in $L^1$ 
\[\begin{split}
&\int \rho_k\,\tilde A_1\,dx\leq C+C\,\int_0^t\int \frac{1+\rho_k^{2}}{1+\rho_k^{(1-\theta)}}\,|\div u_k|\,dx\,ds\\
&\qquad+C\,\int_0^t\int \frac{\rho_k}{1+\rho_k^{(1-\theta)}}\,(M\,|\nabla u_k|+\tilde P_k\,\rho_k+R_k)\,dx\,ds\\
&\qquad+C\,\int_0^t\int \frac{\rho_k^{\tilde\gamma+1}}{1+\rho_k^{(1-\theta)}}\,dx\,ds\leq C,
\end{split}\]
for some $\theta>0$ depending on $p-\max(2,\;\tilde\gamma)$.
This gives the desired control regarding $\rho |\log w_1|^\theta$ for an exponent $\theta$ small enough.

\bigskip

\noindent {Point \rm iii)}
 Estimate \eqref{rhoA} will not be enough in the proof and we will need to control the mass of $\rho_k$ where $\overline K_h\star w_0$ is small. Denote 
\[
\Omega_{h,\eta}=\{x\in \Pi^d,\ \overline K_h\star w_0(x)\leq \eta\},\qquad \tilde \Omega_{h,\eta}=\{x\in\Omega_{h,\eta},\ w_0(x)\geq \sqrt{\eta}\}.
\]
The time $t$ is fixed during this argument and for simplicity we omit it.

   One cannot easily estimate $|\Omega_{h,\eta}|$ directly but it is straightforward to bound $|\tilde\Omega_{h,\eta}|$. 
Assume $x\in \Omega_{h,\eta}$ {\em i.e.} $\overline K_h\star w_0(x)\leq \eta$. From the expression of $K_h$, if $|\delta|\leq h$
\[
\overline K_h(z+\delta)\leq \frac{\|K_h\|_{L^1}^{-1}}{(h+|z+\delta|)^a} \leq C\,\overline K_h(z), 
\]
we deduce that for any $y\in B(x,h)$
\[
\overline K_h\star w_0 (y)\leq C\,\eta.
\]
Now cover $\tilde \Omega_{\eta,h}$ by $\bigcup_i C_i^h$ with  $C_i^h$ disjoint hyper-cubes of diameter $h/C$. For any $i$, denote $\tilde\Omega_{\eta,h}^i=\tilde\Omega_{\eta,h}\cap C_i^h$. 

If $\tilde\Omega_{\eta,h}^i\neq \emptyset$ then $\overline K_h\star w_0(x)\leq C\,\eta$ on the whole $C_i^h$. In that case
\[\begin{split}
C\eta\,h^{d}&\geq \int_{C_i^h} \overline K_h \star w_0\geq \int_{C_i^h}\int_{\tilde \Omega_{\eta,h}^i} \overline K_h(x-y)\,w_0(y)\,dy\,dx\\
&\geq \frac{\sqrt{\eta}}{C}\,|\tilde \Omega_{\eta,h}^i|.
\end{split}\] 
We conclude that $|\tilde \Omega_{\eta,h}^i|\leq C\,\sqrt{\eta}\,h^{d}$. Summing over the cubes, we deduce that one has
\[
|\tilde\Omega_{\eta,h}|\leq C\,\sqrt{\eta}.
\]
Finally
\[
\int_{\Omega_{\eta,h}} \rho_k\,dx\leq \int_{\tilde \Omega_{\eta,h}}\rho_k\,dx+\frac{2}{|\log \eta|}\,\int_{\Pi^d} \rho_k\,|\log w_0|\,dx\leq C\,\eta^{1/2-1/2\gamma}+  \frac{C}{\,|\log \eta|},
\]
since $\rho\in L^\infty_t(L^\gamma_x)$ for some $\gamma>1$. This is the desired bound.

  The same bound may be obtained on the quantity $\rho_k \ind_{\overline K_h\star w_a\le \eta}$ in a similar way when $p>\gamma+1$ because of bound (\ref{estimwa}) on $\rho_k|\log w_a|$.
 
\bigskip

\noindent {Point \rm iv)} Denote to simplify $f=|\div u_k|+|A_\mu\,\rho^\gamma_k|$, then by the definition of $w_{a,h}$
\[\begin{split}
\int_{h_0}^1\int_0^t&\Big\| \overline K_h \star \bigl(\overline K_h \star f\, w_a\bigr) -
     \bigl(\overline K_h \star f \bigr)\,\omega_{a,h}\Big\|_{L^q}\,dt\,\frac{dh}{h}\\
&\leq \int_{h_0}^1\int_0^t\overline K_h(z)\Big\| \bigl(\overline K_h \star f(.+z) -
\overline K_h \star f(.) \bigr)\,\omega_{a}(.+z)\Big\|_{L^q}\,dt\,\frac{dh}{h}\\
&\leq \int_0^t\int_{h_0}^1\overline K_h(z)\Big\| \bigl(\overline K_h \star f(t,.+z)- \overline K_h \star f(t,.) \bigr)\Big\|_{L^q}\,\frac{dh}{h}\,dt\\
&\leq C\,|\log h_0|^{1/2}\,\int_0^t \|f(t,.)\|_{L^q_x}\,dt\leq C\,|\log h_0|^{1/2},    
\end{split} 
\]
by a direct application of Lemma \ref{shiftNlemma} with $N_h=\bar K_h$ provided $f$ is uniformly bounded in $L^1_t L^q_x$ which is guaranteed by $q\leq \min (2,\;p/\gamma)$.



\end{proof}
%
\section{Proof of Theorems \ref{maincompactness}, \ref{maincompactness2} and \ref{maincompactness3} \label{proofstabilityres}}
 We start with the propagation of regularity
 on the transport equation in terms of the regularity of $\div u_k$, more precisely $\div_x u_k^x - \div_y u_k^y$.
We prove in the second subsection  some estimates on the effective pressure. This allows to
 write a Lemma in the third subsection controlling $\div_x u_k^x - \div_y u_k^y$ and then to close the loop in the fourth 
 subsection thus concluding the proof.
%
\subsection{The propagation of regularity on the transport equation}
     This subsection uses only Eq. \eqref{continuity} on $\rho_k$ without yet specifying the coupling between $\div u_k$ and $\rho_k$ (for instance through \eqref{divu0}). Recall that we denote
\[
\delta \rho_k=\rho_k(t,x)-\rho_k(t,y),\qquad \bar\rho_k=\rho_k(t,x)+\rho_k(t,y).
\]
Choose any $C^2$ function $\chi$ s.t.
\begin{equation}
\left|\chi(\xi)-\frac{1}{2}\chi'(\xi)\,\xi\right|\leq \frac{1}{2} \chi'(\xi)\,\xi,
\quad 
\chi'(\xi) \,\xi\leq C\,\chi(\xi)\leq C\,|\xi|.\label{defchi}
\end{equation}
It is for instance possible to take $\chi(\xi)=\xi^2$ for $|\xi|\leq 1/2$ and $\chi(\xi)=|\xi|$ for $|\xi|\geq 1$. Note however that the first inequality on the right forces $\chi(\xi)\geq |\xi| /C$. 

\medskip

\noindent Similarly for the anisotropic viscous term, for some $\ell>0$, choose any $\chi_a\in C^1$ 
\begin{equation}\begin{split}
&\left|\chi_a(\xi)-\frac{1}{2}\chi_a'(\xi)\,\xi\right|\leq \frac{1-\ell}{2} \chi_a'(\xi)\,\xi,
\quad 
\chi_a'(\xi) \,\xi\leq C\,\chi_a(\xi)\leq C\,|\xi|^{1+\ell},\\
&
(\xi^\gamma+\tilde\xi^\gamma)\,
\bigl(- \chi_a'(\xi-\tilde\xi)(\xi-\tilde\xi) + 2\chi_a(\xi-\tilde\xi)\bigr) \\
& \hskip4cm \geq- (\xi^\gamma-\tilde\xi^\gamma)\,\frac{\ell-1}{\ell}\chi_a'(\xi-\tilde\xi)(\xi+\tilde\xi).
\end{split}
\label{defchi0}
\end{equation}
Note that it is possible to simply choose $\chi_a=|\xi|^{1+\ell}$. But to unify the notations and the calculations with the other terms involving $\chi$, we use the abstract $\chi_a$.

The properties on this non-linear function will be strongly used to characterize the effect of the pressure law in the contribution of $\div_x u_k(x) - \div_y u_k(y)$ in the anisotropic case. The form of $\chi_a$ and the choice of $\ell$  will have to be determined very precisely so that the corresponding which will be exactly counterbalanced by the $\lambda$ terms in Lemma \ref{boundQhAniso}.

\medskip

   We can write two distinct Lemmas respectively concerning the non monotone pressure law case and the anisotropic tensor case.
\begin{lemma} Assume that $\rho_k$ solves \eqref{continuity} with estimates \eqref{boundrhok} and \eqref{bounduk} on $u_k$.

{\rm i)} With diffusion, $\alpha_k>0$, if $p> 2$,  $\exists \eps_{h_0}(k)\rightarrow 0$ as $k\rightarrow \infty$ for a fixed $h_0$ 
\[\begin{split}
&\int_{h_0}^1\int_{\Pi^{4d}} \overline K_h(x-z)\,\overline K_h(y-w)\,W_{0}(t,z,w)\,K_h(x-y)\,\chi(\delta\rho_k)\,dx\,dy\,dz\,dw\,\frac{dh}{h}\\
&\ \leq C\,(\eps_{h_0}(k)+|\log h_0|^{1/2})\\
&-\frac{1}{2}\,\int_{h_0}^1 \int_0^t\int_{\Pi^{4d}} K_h(x\!-\!y)\,(\div u_k(x)\!-\!\div u_k(y))\,\chi'_0\,\bar\rho_k\,W_{0}\,\overline K_h(x\!-\!z)\,\overline K_h(y\!-\!w)\, \frac{dh}{h} \\
& - \frac{1}{2}\,\int_{h_0}^1\int_0^t\int_{\Pi^{4d}}K_h(x-y)\,(\div u_k(x)+\div u_k(y))\\
&\qquad\qquad\qquad\qquad\qquad (\chi'(\delta\rho_k)\,\delta\rho_k-2\,\chi(\delta\rho_k))\,\,W_{0}\,\overline K_h(x-z)\,\overline K_h(y-w)\frac{dh}{h} \\
&-  \frac{\lambda}{2} \,  \int_{h_0}^1\int_0^t\int_{\Pi^{3d}} K_h(x-y) \chi(\delta\rho_k)\, \overline K_h(x-z)\,M|\nabla u_k|(z)\, w_0(z)\,\frac{dh}{h},
\end{split}\]
where we recall that $W_0=w_0(t,x)+w_0(t,y)$.

\medskip

{\rm  ii)} Without diffusion, $\alpha_k=0$, if $p\geq 2$, 
\[
\begin{split}
&\int_{h_0}^1\int_{\Pi^{2d}}\overline K_h(x-y)\,\chi(\delta\rho_k)\, (w_1(t,x)+w_1(t,y))\,dx\,dy\,\frac{dh}{h}\leq C\, |\log h_0|^{1/2}\,\|u_k\|_{H^1}\\
& -2\,\lambda\,\int_{h_0}^1\int_0^t\int_{\Pi^{2d}} \overline K_h(x-y)\,(\rho_k^{\tilde\gamma}(x)+\tilde P_k(x)\,\rho_k(x)+R_k)\,w_1(x)\,  \chi(\delta\rho_k)\,\frac{dh}{h}\\ 
&-2\,\int_0^t\int_{h_0}^1\int_{\Pi^{2d}} \overline K_h(x-y)\,(\div u_k(x)-\div u_k(y))\,\Big(\frac{1}{2}\chi'(\delta\,\rho_k)\,\bar\rho_k\\
&\quad +\chi(\delta\rho_k)-\frac{1}{2}\chi'(\delta\,\rho_k)\,\delta\rho_k \Big)\,w_1(x)\,\frac{dh}{h}.\end{split}
\]
For the derivation of explicit regularity estimates, we have as well the version with the product weight, namely
\[
\begin{split}
\int_{\Pi^{2d}}K_h(x-y)\,&\chi(\delta\rho_k)\,w_1(t,x)\,w_1(t,y)\,dx\,dy\leq C\\
-\int_0^t\int_{\Pi^{2d}} K_h(x-y)\,&(\div u_k(x)-\div u_k(y))\,\chi'(\delta\,\rho_k)\,\bar\rho_k\, w_1(x)\,w_1(y)\\
-\lambda\,\int_0^t\int_{\Pi^{2d}} K_h(x-y)\,&\Big(\rho_k^{\tilde\gamma}(x)+\tilde P_k(x)\,\rho_k(x)+ R_k(x)+\rho_k^{\tilde\gamma}(y)\\
&\qquad+\tilde P_k(y)\,\rho_k(y)+R_k(y)\Big)\,  \chi(\delta\rho_k)\,w_1(x)\,w_1(y).
\end{split}
\]
\label{boundQh}
\end{lemma}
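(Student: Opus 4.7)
The strategy is to apply the renormalized identity of Lemma \ref{renor} to $R(t)=\int_{\Pi^{2d}} K_h(x-y)\,W\,\chi(\delta\rho_k)$ with $W$ equal to $W_0$, $W_1$ or $W_2$ according to the case, and then to use the weight equation \eqref{W} to replace $\partial_t W + u_k\cdot\nabla W - \alpha_k\Delta W$ by $-Q$. This moves a non-negative penalization $\int_0^t\int_{\Pi^{2d}} K_h\,Q\,\chi(\delta\rho_k)$ onto the left-hand side, while the $-\alpha_k\chi''(\delta\rho_k)(|\nabla_x\delta\rho_k|^2+|\nabla_y\delta\rho_k|^2)$ dissipation on the RHS is non-positive and is discarded. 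The initial term $R(0)$ is bounded uniformly using $\chi(\delta\rho_k)\leq C|\delta\rho_k|$ and the $L^\infty_t L^1_x$ bound on $\rho_k$. It then remains to estimate the transport commutator produced by $K_h$, the two divergence contributions, and (in part (i) only) the diffusion cross-terms.

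The transport commutator equals $\int \nabla_x K_h(x-y)\cdot(u_k^x-u_k^y)\,\chi(\delta\rho_k)\,W$ after using $\nabla_y K_h(x-y)=-\nabla_x K_h(x-y)$. By Lemma \ref{diffulemma} together with the pointwise bound $|x-y|\,|\nabla K_h(x-y)|\leq C\,K_h(x-y)$, it is dominated by $C\int K_h\bigl(D_{|x-y|}u_k(x)+D_{|x-y|}u_k(y)\bigr)\,\chi\,W$, and by Lemma \ref{compareDhmax} by $C\int K_h\bigl(M|\nabla u_k|(x)+M|\nabla u_k|(y)\bigr)\,\chi\,W$; the $\lambda\,M|\nabla u_k|$ piece present in each of $D_0$, $D_1$ absorbs this after choosing $\lambda$ large enough. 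The two divergence contributions from the RHS of Lemma \ref{renor} combine into $\int\bigl[\chi(\delta\rho_k)-\tfrac12\chi'(\delta\rho_k)\,\delta\rho_k\bigr]\,K_h\,W\,(\div u^x+\div u^y)$, which by the first property in \eqref{defchi} is bounded by $C\int \chi\,K_h\,W\,|\div u|$, absorbed in part (ii) by the extra $|\div u_k|$ term built into $D_1$. The remaining piece $-\tfrac12\int \chi'(\delta\rho_k)\,K_h\,W\,(\div u^x-\div u^y)\,\bar\rho_k$ is the genuine coupling to the momentum equation and is kept as a remainder, to be handled later via the effective flux. For part (ii) with the sum-weight one normalizes by $\|K_h\|_{L^1}$, integrates in $h$, and uses Lemma \ref{shiftDulemma} applied to $u_k\in H^1$ to produce the $C\,|\log h_0|^{1/2}\|u_k\|_{H^1}$ prefactor; the product-weight version follows from the same computation with $W_2=w_1(x)w_1(y)$ and $Q_2=(D_1(x)+D_1(y))w_1(x)w_1(y)$, with no $h$-averaging needed since $w_1(x)w_1(y)$ already decays symmetrically and no boundary-layer terms appear.

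Part (i) is significantly more delicate because of the diffusion contributions $-\alpha_k\int\chi\,\Delta K_h\,W$, $-2\alpha_k\int\nabla K_h\,\chi\,[\nabla_x W-\nabla_y W]$ and $-2\alpha_k\int K_h\,\chi\,(\Delta_x+\Delta_y)W$ appearing on the LHS of Lemma \ref{renor}. These cannot be absorbed pointwise by $Q_0$, and the fix is to convolve the whole inequality with $\overline K_h(x-z)\,\overline K_h(y-w)$ and then integrate against $dh/h$ on $[h_0,1]$; the convolution replaces $w_0$ by the mollification $\overline K_h\star w_0$, for which the diffusion cross-terms contribute an $\alpha_k$ times $O(h^{-2})$ factor which becomes integrable after averaging in $h$. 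The main obstacle is the resulting mismatch: the natural penalization $\lambda\,M|\nabla u_k|(x)\,w_0(x)$ built into $Q_0$ lives at $x$, whereas the convolved weight $\overline K_h\star w_0$ now sits at a shifted point $z$. Bridging this mismatch requires the shift estimates of Lemmas \ref{shiftDulemma}--\ref{shiftconvolution} applied to $M|\nabla u_k|\in L^2_{t,x}$; this is precisely what produces both the $|\log h_0|^{1/2}$ factor in the bound and the $k$-dependent remainder $\eps_{h_0}(k)$ which collects the compactness-in-time contributions coming from \eqref{rhorho_t} and vanishes as $k\to\infty$ at fixed $h_0$.
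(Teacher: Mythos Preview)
Your overall architecture is right, but there is a real gap in the treatment of the \emph{sum} weights $W_0=w_0(x)+w_0(y)$ and $W_1=w_1(x)+w_1(y)$: the absorption you invoke in your step 5 does not work for the cross terms, and this is precisely the crux of the lemma.

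Concretely, with $W_1=w_1(x)+w_1(y)$ the penalization available from $Q_1$ is $\lambda\,M|\nabla u_k|(x)\,w_1(x)+\lambda\,M|\nabla u_k|(y)\,w_1(y)$, whereas your commutator bound produces the full product $\bigl(M|\nabla u_k|(x)+M|\nabla u_k|(y)\bigr)\bigl(w_1(x)+w_1(y)\bigr)$, which contains the cross term $M|\nabla u_k|(y)\,w_1(x)$ that nothing absorbs. Applying Lemma~\ref{compareDhmax} first and then hoping to shift the maximal function is the wrong order: $M|\nabla u_k|$ has no useful translation estimate. The paper instead keeps $D_{|x-y|}u_k(y)$, writes $D_{|x-y|}u_k(y)\,w_1(x)=D_{|x-y|}u_k(x)\,w_1(x)+\bigl(D_{|x-y|}u_k(y)-D_{|x-y|}u_k(x)\bigr)w_1(x)$, absorbs the first piece via Lemma~\ref{compareDhmax}, and controls the difference by \eqref{squarefunction} in Lemma~\ref{shiftDulemma} \emph{after} integrating in $dh/h$ and using $\chi(\delta\rho_k)\leq C\bar\rho_k\in L^2$. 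This is what generates the $|\log h_0|^{1/2}$ term; your sentence in step 7 gestures at this but the mechanism is not the one you describe.

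Exactly the same cross-term obstruction hits the $(\div u_k^x+\div u_k^y)$ contribution: with $W_1$ you cannot absorb $\div u_k(y)\,w_1(x)$ by the $|\div u_k(x)|\,w_1(x)$ piece of $D_1$. The paper's remedy is to write $\div u_k(y)=\div u_k(x)-(\div u_k(x)-\div u_k(y))$ and push the difference into the remainder. This is why the remainder in the statement carries the extra factor $\chi(\delta\rho_k)-\tfrac12\chi'(\delta\rho_k)\,\delta\rho_k$ beyond the $\tfrac12\chi'(\delta\rho_k)\,\bar\rho_k$ that you expected; your decomposition does not reproduce the stated inequality. For the product weight $W_2=w_1(x)w_1(y)$ none of this arises, because $Q_2$ is already symmetric, and your account of that case is correct.

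Two smaller corrections for part~(i): the quantity $\eps_{h_0}(k)$ has nothing to do with \eqref{rhorho_t}; it is simply $\alpha_k\int_{h_0}^1 h^{-3}\,dh$ coming from the crude bound $|\Delta K_h|\leq C\,h^{-2}\|K_h\|_{L^1}^{-1}K_h$ on the diffusion cross-terms, and it vanishes as $k\to\infty$ at fixed $h_0$ only because $\alpha_k\to 0$. Also, the shift estimate used there is \eqref{squarefunctiondisc} applied to $D_{|x-y|}u_k$ at the four points $x,y,z,w$, not Lemma~\ref{shiftconvolution} applied to $M|\nabla u_k|$.
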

    For convenience, we write separately the result we will use in the anisotropic case
\begin{lemma} Assume that $\rho_k$ solves \eqref{continuity} with estimates \eqref{bounduk}-\eqref{boundrhok}. Without diffusion, $\alpha_k=0$, assume \eqref{defchi0} on $\chi_a$ with $p> \gamma+\ell+1$, and denote $w_{a,h}=\overline K_h\star w_a$,
\[
\begin{split}
&\int_{h_0}^1 \int_{\Pi^{2d}} \frac{\overline K_{h}(x-y)}{h} (w_{a,h}(x) + w_{a,h}(y))\,  \chi_a(\delta\rho_k) (t)  \\
&
\leq \int_{h_0}^1 \int_{\Pi^{2d}} \frac{\overline K_{h}(x-y)}{h} (w_{a,h}(x) + w_{a,h}(y))\,  \chi_a(\delta\rho_k)|_{t=0}  
+C\,|\log h_0|^{\theta} +  I + II-\Pi_a,  \\
\end{split}
\]
with the dissipation term 
\[
 \Pi_a=   \lambda \int_0^t \int_{h_0}^1 \int_{\Pi^{2d}} w_{a,h}(x)\,
     \chi_a(\delta \rho_k)\,\overline K_h \star (|\div u_k|+|A_\mu \rho^\gamma|)(x)\,
    \bar{K_h} \frac{dh}{h} ,
\]
while 
\[\begin{split}
I= - \frac{1}{2}\int_{h_0}^1\int_0^t\int_{\Pi^{2d}} \frac{\overline K_h(x-y)}{h}\,(\div u_k(x)-\div u_k(y))&
        \chi_a'(\delta\rho_k) \,\bar \rho_k\\
&(w_{a,h}(x)+w_{a,h}(y)),
\end{split}\]
and 
\[\begin{split}
II=- \frac{1}{2}\int_{h_0}^1 \int_0^t\int_{\Pi^{2d}} 
&\frac{\overline K_h(x-y)}{h}\,(\div u_k(x)+\div u_k(y))\\
& \hskip2cm  (\chi'_a(\delta\rho_k) \delta\rho_k - 2 \chi_a(\delta\rho_k))\, 
(w_{a,h}(x)+ w_{a,h}(y)).
\end{split}\]
\label{boundQhAniso}
\end{lemma}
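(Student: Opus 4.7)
The plan is to invoke the doubling-of-variables identity of Lemma \ref{renor} with $\alpha_k=0$, $\chi=\chi_a$, and the symmetric weight $W_{k,h}^{x,y}=w_{a,h}(t,x)+w_{a,h}(t,y)$; then to integrate in $dh/h$ over $[h_0,1]$ and divide by $\|K_h\|_{L^1}$ so as to work with the normalized kernel $\overline K_h$. The algebra of Lemma \ref{renor} distributes the two divergence source terms as follows: the antisymmetric contribution $-\tfrac12\chi_a'(\delta\rho_k)(\div u_k^x-\div u_k^y)\bar\rho_k$ on the right-hand side is precisely $I$; while combining the $(\chi_a'(\delta\rho_k)\delta\rho_k-\chi_a(\delta\rho_k))$ factor on the left-hand side with the symmetric $\tfrac12\chi_a'(\delta\rho_k)(\div u_k^x+\div u_k^y)\delta\rho_k$ on the right yields $-\tfrac12(\chi_a'(\delta\rho_k)\delta\rho_k-2\chi_a(\delta\rho_k))(\div u_k^x+\div u_k^y)$, matching $II$.

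The only remaining ingredient is to evaluate the weight-transport contribution $\int \chi_a(\delta\rho_k)(\partial_t W+u_k^x\cdot\nabla_x W+u_k^y\cdot\nabla_y W)\,K_h$. Since $w_a$ satisfies $\partial_t w_a+u_k\cdot\nabla w_a=-D_a\,w_a$ in the sense of Lemma \ref{lemmarenormalized}, convolution by $\overline K_h$ yields
\[
\partial_t w_{a,h}+u_k\cdot\nabla w_{a,h}=-\overline K_h\star(D_a\,w_a)+\mathcal R_h,\qquad \mathcal R_h:=[u_k\cdot\nabla,\overline K_h\star]\,w_a.
\]
By the definition \eqref{dk00} of $D_a$, the doubly-convolved pressure-divergence piece $\overline K_h\star\bigl(\overline K_h\star(|\div u_k|+|A_\mu\rho_k^\gamma|)\,w_a\bigr)$ is replaced, thanks to Proposition \ref{weightprop} iv), by $w_{a,h}\cdot\overline K_h\star(|\div u_k|+|A_\mu\rho_k^\gamma|)$ up to an $L^q$-error with $q=\min(2,p/\gamma)$ whose integral in $dh/h$ is bounded by $C|\log h_0|^\theta$. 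Paired against $\chi_a(\delta\rho_k)\le C|\delta\rho_k|^{1+\ell}$, which lies in $L^{q'}$ by the hypothesis $p>\gamma+\ell+1$, this produces $-\Pi_a$ up to an admissible remainder of size $|\log h_0|^\theta$.

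The leftover terms are the $\lambda\,\overline K_h\star(M|\nabla u_k|\,w_a)$ part of $D_a$, the commutator $\mathcal R_h$, and the kernel-transport term $-\int \chi_a(\delta\rho_k)\bigl(u_k^x\cdot\nabla_x+u_k^y\cdot\nabla_y\bigr)K_h\,W$ coming from Lemma \ref{renor}. Using $\nabla_xK_h(x-y)=-\nabla_yK_h(x-y)$ and $|x-y|\,|\nabla K_h(x-y)|\le C\,K_h(x-y)$, this last term is pointwise dominated by $C\,K_h(x-y)\,(D_{|x-y|}u_k(x)+D_{|x-y|}u_k(y))\,\chi_a(\delta\rho_k)\,W$ via Lemma \ref{diffulemma}. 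The diagonal pieces $D_{|x-y|}u_k(x)\,w_{a,h}(x)$ are absorbed into the $M|\nabla u_k|\,w_a$ dissipation using Lemma \ref{compareDhmax}; the off-diagonal pieces $D_{|x-y|}u_k(y)\,w_{a,h}(x)$, which cannot be absorbed pointwise, are controlled after integration in $dh/h$ by Lemma \ref{shiftDulemma}, producing the announced $|\log h_0|^{1/2}$ factor. The commutator $\mathcal R_h$ is handled identically through Lemma \ref{shiftNlemma} with $N_h=\overline K_h$.

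The main obstacle lies precisely in this off-diagonal coupling between the penalization at $x$ and the convolved weight at $y$, a genuinely new difficulty relative to the isotropic monotone setting: the non-local operator $A_\mu$ inside $D_a$ forces one to regularize $w_a$ by $\overline K_h$, and afterwards no pointwise majoration between $w_{a,h}(x)$ and $w_{a,h}(y)$ remains, so the estimate cannot be closed with the maximal-function inequality \eqref{maximalineq} alone. The shift bounds of Lemmas \ref{shiftDulemma}--\ref{shiftNlemma} combined with the iterated-convolution compatibility provided by Proposition \ref{weightprop} iv) at the common exponent $q=\min(2,p/\gamma)$ are the decisive technical ingredients, and their compatibility dictates the integrability hypothesis $p>\gamma+\ell+1$ appearing in the statement.
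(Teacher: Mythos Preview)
Your proposal is correct and follows essentially the same route as the paper's proof: apply Lemma~\ref{renor} with $\chi_a$ and $W=w_{a,h}(x)+w_{a,h}(y)$, read off $I$ and $II$ from the divergence terms, use Proposition~\ref{weightprop}~iv) to swap the double convolution into $\Pi_a$ (the paper's term $D$), and bound the kernel-transport and weight-commutator pieces (the paper's $A$ and $B$) exactly as in case~i) of Lemma~\ref{boundQh}, the only new feature being the higher integrability $p>\gamma+\ell+1$ needed because $\chi_a(\xi)\sim|\xi|^{1+\ell}$. One small imprecision: the commutator $\mathcal R_h$ is not bounded by Lemma~\ref{shiftNlemma} directly but rather, as in the paper's treatment of $B_1$, by first applying Lemma~\ref{diffulemma} to $|u_k(x)-u_k(z)|$ and then using Lemma~\ref{shiftDulemma} (via \eqref{squarefunctiondisc}) for the off-diagonal shift.
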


\bigskip

\noindent {\bf Remark.} We emphasize that the $\lambda$ terms in relations  i) and ii) of Lemma \ref{boundQh} come from the penalization in the definition of the weights $w_0$ and $w_1$. They will help to  counterbalance 
terms  coming from the contribution
by $\div_x u_k(x)-\div_y u_k(y)$.  Similarly the nonlocal term $\Pi_a$ 
follows from the definition of the weights $w_a$. 
bad terms coming from the viscosity anisotropy.


\begin{proof}\

\medskip

\noindent {\bf Case \rm i)}.
Denote
\[
W_{0,h}(t,x,y)=\int_{\Pi^{2d}}\overline K_h(x-z)\,\overline K_h(y-w)\,W_0(t,z,w)\,dz\,dw,
\]
and let us use $\chi$ in the renormalized equation from Lemma \ref{renor}. We get 
\[\begin{split}
&\int_{\Pi^d} W_{0,h}(t,x,y)\,K_h(x-y)\,\chi(\delta\rho_k) \,dx\,dy=A+B+D+E\\
&\ -\frac{1}{2}\int_0^t\int K_h(x-y)\,(\div u_k(x)-\div u_k(y))
   \,\chi'(\delta\rho_k)\,\bar\rho_k\,W_{0,h}\,dx\,dy,\\
& - \frac{1}{2}\int_0^t\int K_h(x-y)\,(\div u_k(x)+\div u_k(y))\,(\chi'(\delta\rho_k)\,\delta\rho_k-2\,\chi(\delta\rho_k))\,W_{0,h}\,dx\,dy,
\end{split}\]
with, by the symmetry of $K_h$, $\overline K_h$ and $W_{0,h}$, and in particular since $\nabla_x W_{0,h}(x,y)=\nabla_y W_{0,h}(y,x)$
\[\begin{split}
&A=\int_0^t\int_{\Pi^{2d}} (u_k(t,x)-u_k(t,y))\cdot \nabla K_h(x-y)\,\,\chi(\delta\,\rho_k)
        \,W_{0,h}\,dx\,dy\,dt,\\
&B=2\int_0^t\int_{\Pi^{2d}} K_h(x-y)\,(\partial_t W_{0,h}(x)+u_k\cdot\nabla_x W_{0,h}
             -\alpha_k \Delta_x W_{0,h})\,\chi(\delta\,\rho_k)\,dx\,dy\,dt,\\
&D=
    2\,\alpha_k\int_0^t\int_{\Pi^{2d}} \chi(\delta\,\rho_k)\,\left[\Delta_x\,K_h(x-y)\,W_{0,h} + 2\,K_h(x-y)\,\Delta_x\,W_{0,h}\right]\,  dx\,dy\,dt,\\
&E=-2\,\alpha_k\int_0^t\int_{\Pi^{2d}}K_h\,W_{0,h}\,\chi''(\delta\,\rho_k)\,|\nabla_x\delta\rho_k|^2\,dx\,dy\,dt.\\
\end{split}
\]
Now note that by the convexity of $\chi$, $E\leq 0$. Then simply bound using \eqref{defchi}, 
\[
D\leq 8\,\alpha_k\,\,h^{-2}\,\|K_h\|_{L^1}\,\|\rho\|_{L^{1}}\leq C\,\alpha_k\,\,h^{-2}\,\|K_h\|_{L^1},
\]
leading us to choose $\eps_{h_0}(k)=\alpha_k\,\int_{h_0}^1 h^{-2}\,\frac{dh}{h}$.

As for $B$, using Eq. \eqref{eqw}
\[
B=B_1-2\int K_h(x-y)\,\chi(\delta\rho_k)\,\tilde K_h\star_{x,y}\,R_0\,dx\,dy\,dt,
\]
with
\[\begin{split}
&B_1=2\int K_h(x-y)\,\chi(\delta\rho_k)\,(u_k(x)-u_k(z))\cdot
\nabla_x\overline K_h(x-z)\\
&\qquad\quad\overline K_h(y-w)\,W_0(t,z,w)\,dx\,dy\,dz\,dw\,dt.
\end{split}
\]
We recall that $R_0(x,y)=D_0(x)\,w_0(x)+D_0(y)\,w_0(y)$ with $D_0=\lambda\,\overline K_h\star (M\,|\nabla u_k|)$ and we thus only have to bound $B_1$.
By Lemma \ref{diffulemma}, we have 
\[\begin{split}
&B_1\le C\,\int K_h(x-y)\,\chi(\delta\rho_k)\,(D_{|x-y|} u_k(x)+D_{|x-y|} u_k(z))\\
&\qquad\quad|x-z|\,
|\nabla\overline K_h(x-z)|\overline K_h(y-w)\,W_0(t,z,w)\\
&\ \leq C \int K_h(x-y)\,\chi(\delta\rho_k)\,(D_{|x-y|} u_k(x)+D_{|x-y|} u_k(z))
\,\overline K_h(x-z)\overline K_h(y-w)\,W_0,
\end{split}
\]
as $|x|\,|\nabla K_h|\leq C\,K_h$. 

Next recalling that $W_0(t,z,w)=w_0(t,z)+w_0(t,w)$, by symmetry
\[\begin{split}
&B_1\le C\,\int K_h(x-y)\,\chi(\delta\rho_k)\,D_{|x-y|} u_k(z)
\,\overline K_h(x-z)\overline K_h(y-w)\,w_0(t,z)\\
&\ +C\, \int K_h(x-y)\,\chi(\delta\rho_k)\,(D_{|x-y|} u_k(x)+D_{|x-y|} u_k(w)-2D_{|x-y|} u_k(z))
\\
&\qquad\qquad\qquad\qquad\qquad\qquad\qquad\overline K_h(x-z)\overline K_h(y-w)\,\,w_0(t,z).\\
\end{split}
\]
Since $D_{|x-y|} u_k(z)\leq C\,M|\nabla u_k|(z)$, for $\lambda$ large enough, the first term may be bounded by 
\[
-\frac{\lambda}{2}\,\int K_h(x-y)\,\chi(\delta\rho_k)\,\overline K_h\star_x(M|\nabla u_k|\,w_0)\,dx\,dy\,dt.
\]
Use the uniform bound on $\|\rho_k\|_{L^p}$ with $p>2$, to find 
\[\begin{split}
&\int K_h(x-y)\,\chi(\delta\rho_k)\,(D_{|x-y|} u_k(x)+D_{|x-y|} u_k(w)-2D_{|x-y|} u_k(z))\tilde K_h\,w_0(t,z)\\
&\ \leq C\,\int \left\|D_{|r|} u_k(.)+D_{|r|} u_k(.+r+u)-2D_{|r|} u_k(.+v)\right\|_{L^2}\,K_h(r)\,\overline K_h(u)\,\overline K_h(v),
\end{split}
\]
where we used that $w=x+(y-x)+(w-y)$. We now use Lemma \ref{shiftDulemma} and more precisely the inequality \eqref{squarefunctiondisc} to obtain
\[\begin{split}
&\int_{h_0}^1\int \overline K_h(x-y)\,\chi(\delta\rho_k)\,(D_{|x-y|} u_k(x)+D_{|x-y|} u_k(w)-2D_{|x-y|} u_k(z))\tilde K_h\,w_0\,\frac{dh}{h}\\
&\ \leq C\,|\log h_0|^{1/2}\,\int_0^t\|u_k(t,.)\|_{H^1}\,dt\leq C\,|\log h_0|^{1/2}.
\end{split}
\]
Therefore we have that
\[\begin{split}
&\int_{h_0}^1 B\,\frac{\|K_h\|_{L^1}^{-1}}{h}\,dh\leq C\,\eps_{h_0}(k)+C\,|\log h_0|^{1/2}\\
&\ -\frac{3\,\lambda}{4}\int_{h_0}^1\int_0^t\int \overline K_h(x-y)\,\chi(\delta\rho_k)\,M|\nabla u_k|(z)
\,\overline K_h(x-z)\,w_0(z)\,dz\,dx\,dy\,dt\,\frac{dh}{h}.
\end{split}\]
The computations are similar for $A$ and we only give the main steps. We have,
using again Lemma \ref{diffulemma}  
\[\begin{split}
& \int_{\Pi^{2d}} \nabla K_h(x-y)\cdot (u_k(t,x)-u_k(t,y))\,\chi(\delta\rho_k)\,W_{0,h} \\
&\qquad  \le  C\,\int_{\Pi^{2d}} K_h(x-y)\,(D_{|x-y|}\,u_k(x)+D_{|x-y|}\,u_k(y))\,\chi(\delta\,\rho_k)\,W_{0,h}.
\end{split}\]
By decomposing $W_{0,h}$, we can write just as for $B_1$
\[\begin{split}
& \int_{\Pi^{2d}} \nabla K_h(x-y)\cdot (u_k(t,x)-u_k(t,y))\,\chi(\delta\rho_k)\,W_{0,h} \\
&\ \leq C\,\int_{\Pi^{2d}} K_h(x-y)\,M\,|\nabla u_k|(z)\,\chi(\delta\,\rho_k)\,\overline K_h(x-z)\,w_{0}(z)\\
&+C\,\int_{\Pi^{2d}} K_h(x-y)\, (D_{|x-y|}\,u_k(x)+D_{|x-y|}\,u_k(y)+D_{|x-y|}\,u_k(w)-3\,D_{|x-y|}\,u_k(z))\\
&\qquad\qquad\qquad\qquad\qquad\qquad\qquad \chi(\delta\,\rho_k)\,\overline K_h(x-z)\,\overline K_h(y-w)\,w_0(z).
\end{split}
\]
The first term in the r.h.s. can again be bounded by 
\[
-\frac{\lambda}{2}\,\int K_h(x-y)\,\chi(\delta\rho_k)\,\overline K_h\star_x(M|\nabla u_k|\,w_0)\,dx\,dy\,dt.
\]
The second term in the r.h.s. is now integrated in $h$ and controlled as before thanks to the bound \eqref{squarefunctiondisc} in Lemma \ref{shiftDulemma} and the uniform $L^p$ bound on $\rho_k$ and $H^1$ on $u_k$.
This leads to 
\[\begin{split}
&\int_{h_0^1} A\,\|K_h\|_{L^1}^{-1}\,\frac{dh}{h}\leq C\,|\log h_0|^{1/2}\\
&\ +\frac{\lambda}{4}\,\int_{h_0}^1\int_0^t\int_{\Pi^{2d}} \overline K_h(x-y)\,M|\nabla u_k|(z)\,\chi(\delta\,\rho_k)\,\overline K_h(x-z)\,w_{0}(z)\,dx\,dy\,dz\,dt\,\frac{dh}{h}.
\end{split}\]
Now summing all the contributions we get
\[\begin{split}
&\int_{h_0}^1 (A+B+D+E)\,\|K_h\|_{L^1}^{-1}\,\frac{dh}{h} \leq C\,\eps_{h_0}(k)+C\,|\log h_0|^{1/2} \\
&\qquad\qquad -\frac{\lambda}{2}\, \int_{h_0}^1\int_0^t \int K_h(x-y) \chi(\delta\rho_k)  \overline K_h \star  (M|\nabla u_k|\, w_0).
\end{split}\]
Note that indeed $\eps_{h_0}(k)\rightarrow 0$ as $k\rightarrow\infty$ for a fixed $h_0$. This concludes the proof in that first case. 

\bigskip

\noindent {\bf Case {\rm ii)}}. In this part, we assume $\alpha_k=0$. We may not assume that $\rho_k$ is smooth anymore. However by \cite{DL} since $\rho_k\in L^2$ and so is $\nabla u_k$, one may use the renormalized relation with $\varphi = \chi$ and choose $W_{k,h}^{x,y} = W_i$. 
We then can use the identity given in Lemma \ref{renor}. 
Denoting
\[
\tilde \chi(\xi)=\frac{1}{\chi(\xi)}\,\left(\chi(\xi)-\frac{1}{2}\, \chi'(\xi)\, \xi\right),
\]
we get for  $i=1,\,2$ 
\[\begin{split}
\int_{\Pi^{2d}} K_h(x-y)\,\chi(\delta\rho_k)\,W_i(t,x,y)\,dx\,dy \leq  A_i+ B_i+D_i,
\end{split}\]
where by the symmetry in $x$ and $y$
\[\begin{split}
A_1&=\int_0^t\int_{\Pi^{2d}} (u_k(t,x)-u_k(t,y))\cdot \nabla K_h(x-y)
   \,\chi(\delta\,\rho_k)\,W_1\,dx\,dy\,dt\\
&-\lambda\,\int_0^t\int_{\Pi^{2d}} K_h(x-y)\,(M\,|\nabla u_k|(x)\,w_1(x)+M\,|\nabla u_k|(y)\,w_1(y))\,\chi(\delta\,\rho_k),
\end{split}
\]
while
\[\begin{split}
&A_2=\int_0^t\int_{\Pi^{2d}} (u_k(t,x)-u_k(t,y))\cdot \nabla K_h(x-y)\,\chi(\delta\,\rho_k)\,W_2\,dx\,dy\,dt\\
&-\lambda\,\int_0^t\int_{\Pi^{2d}} K_h(x-y)\,(M\,|\nabla u_k|(x)+M\,|\nabla u_k|(y))\,w_1(x)\,w_1(y))\,\chi(\delta\,\rho_k).
\end{split}
\]
And
\[
\begin{split}
B_1=2\int_0^t\int_{\Pi^{2d}} K_h(x-y)\,\big(&\partial_t w_1(x)+u_k(x)\cdot\nabla_x w_1+2\div_x u_k(x)\,\tilde\chi(\delta\rho_k)\,w_1\\
&+\lambda\,M|\nabla u_k|(x)\,w_1\big)\,\chi(\delta\,\rho_k)\,dx\,dy\,dt,\\
\end{split}
\]
while
\[
\begin{split}
B_2=2\int_0^t\int_{\Pi^{2d}} K_h(x-y)\,\big(&\partial_t w_1(x)+u_k(x)\cdot\nabla_x w_1+\div_x u_k(x)\,\tilde\chi(\delta\rho_k)\,w_1\\
&+\lambda\,M|\nabla u_k|(x)\,w_1\big)\,w_1(y)\,\chi(\delta\,\rho_k)\,dx\,dy\,dt.\\
\end{split}
\]
Finally
\[
\begin{split}
&D_1=-2\,\int_0^t\int_{\Pi^{2d}} K_h(x-y)\,(\div u_k(x)-\div u_k(y))\,\Big(\frac{1}{2}\chi'(\delta\,\rho_k)\,\bar\rho_k\\
&\quad +\chi(\delta\rho_k)-\frac{1}{2}\chi'(\delta\,\rho_k)\,\delta\rho_k \Big)\,w_1(x)\,dx\,dy\,dt,\\
\end{split}
\]
and
\[
\begin{split}
D_2=-\int_0^t\int_{\Pi^{2d}} K_h(x-y)\,&(\div u_k(x)-\div u_k(y))\,\chi'(\delta\,\rho_k)\,\bar\rho_k\\
& w_1(x)\,w_1(y)\,dx\,dy\,dt.\\
\end{split}
\]
Note that we have split in several parts a null contribution in terms of the maximal function
namely the ones with $M|\nabla u_k|^2$.
Notice also the additional terms in $D_1$ that come from cross products such as $\div u_k(y)$\,$\,\tilde \chi\,\chi\,w_1(x)$ which would pose problems in $B_1$. 

\medskip

The contributions $D_1$ and $D_2$ are already under the right form. Using Eq. \eqref{eqw} with \eqref{dk1}, one may directly bound
\[
\begin{split}
B_1\leq -2\,\lambda\,\int_0^t\int_{\Pi^{2d}}K_h(x-y)\,(\rho_k^{\tilde\gamma}(t,x)+\tilde P_k(x)\,\rho_k(x)+R_k(x))\,w_1(t,x)\,\chi(\delta\,\rho_k),\\
\end{split}
\]
and
\[
\begin{split}
B_2\leq -2\,\lambda\,\int_0^t\int_{\Pi^{2d}}K_h(x-y)\,(\rho_k^{\tilde\gamma}(t,x)+\tilde P_k(x)\,\rho_k(x)+R_k(x))\,w_1(x)\,w_1(y)\,\chi(\delta\,\rho_k),
\end{split}
\]
giving the desired result by symmetry of the expression in $x$ and $y$.

The term $A_2$ is straightforward to handle as well. Use \eqref{maximalineq} to get
\[\begin{split}
&A_2\leq \int_0^t\int_{\Pi^{2d}} |\nabla K_h(x-y)|\,|x-y|\,(M\,|\nabla u_k|(x)+M\,|\nabla u_k|(y))\,\chi(\delta\,\rho_k)\,W_2\\
&-\lambda\,\int_0^t\int_{\Pi^{2d}} K_h(x-y)\,(M\,|\nabla u_k|(x)+M\,|\nabla u_k|(y))\,w_1(x)\,w_1(y))\,\chi(\delta\,\rho_k).
\end{split}
\]
Since $|x|\,|\nabla K_h|\leq C K_h$, by taking $\lambda$ large enough, one obtains
\[
A_2\leq 0.
\]
The term $A_1$ is more complex because it has no symmetry. By Lemma \ref{diffulemma}
\[\begin{split}
A_1&\leq 
C\,\int_0^t\int_{\Pi^{2d}} |\nabla K_h(x-y)|\,|x-y|\,(D_{|x-y|}\,u_k(x)+D_{|x-y|}\,u_k(y))\,\chi(\delta\,\rho_k)\\
&\qquad\qquad\qquad w_1(t,x)\,dx\,dy\,dt\\
&-\lambda\,\int_0^t\int_{\Pi^{2d}} K_h(x-y)\,M\,|\nabla u_k|(x)\,w_1(t,x)\,\chi(\delta\,\rho_k)
 + \hbox{similar terms in}\ w_1(t,y).
\end{split}
\]
 The {\em key problem} here is the $D_h\,u(y)\,w_1(x)$ term which one has to control by the term $M\,|\nabla u|(x)\,w_1(x)$. This is where integration over $h$ and the use of Lemma \ref{shiftDulemma} is needed (the other term in $w_1(y)$ is dealt with in a symmetric manner). For that we will
 add and subtract an appropriate quantity to see the quantity 
 $D_{|x-y|} u_k (x) - D_{|x-y|} u_k(y)$.

\bigskip

   By the definition of $K_h$, $|z|\,|\nabla K_h(z)|\leq C\,K_h(z)$ and by \eqref{defchi}, $\chi(\delta\rho_k)\leq C\,(\rho_k(x)+\rho_k(y))$ with $\rho_k\in L^2$ uniformly moreover $w_1$ in uniformly
bounded. Hence using Cauchy-Schwartz and denoting $z=x-y$
\[\begin{split}
\int_{h_0}^1 \frac{A_1}{\|K_h\|_{L^1}}\frac{dh}{h}&\leq
2 C\,\int_{h_0}^1\int_0^t \int_{\Pi^d} \overline K_h(z)\,\|D_{|z|}\,u_k(.)-D_{|z|}u_k(.+z)\|_{L^2}\, \frac{dh}{h}\\
&+4\,C\,\int_0^t\int_{\Pi^{2d}} {\cal K}_{h_0} (x-y)
\,D_{|x-y|}\,u_k(x)\,\chi(\delta\,\rho_k)\,W_1\\
&-2 \lambda\,\int_0^t\int_{\Pi^{2d}}  {\cal K}_{h_0} (x-y)\,M\,|\nabla u_k|(x)\,w_1(x)\,\chi(\delta\,\rho_k)\\
&\leq 2 C\,\int_{h_0}^1\int_{\Pi^d} \overline K_h(z)\,\|D_{|z|}\,u_k(.)-D_{|z|}u_k(.+z)\|_{L^2}\, \frac{dh}{h},
\end{split}
\]
by taking $\lambda$ large enough since Lemma \ref{compareDhmax} bounds $D_{|x-y|}\,u_k(x)$ by $M\,|\nabla u_k|(x)$. 

\bigskip

\noindent Finally using Lemma \ref{shiftDulemma}
\[
\int_{h_0}^1 \frac{A_1}{\|K_h\|_{L^1}}\frac{dh}{h}\leq C\,|\log h_0|^{1/2}\,\|u_k\|_{H^1}.
\]
Summing up $A_i+B_i+D_i$, and integrating against $\displaystyle \frac{dh}{\|K_h\|_{L^1}\,h}$ for $i=1$, concludes the proof.

%
\end{proof}

\begin{proof} \hskip-5pt{\bf of Lemma \ref{boundQhAniso}.}
In this part, we again assume $\alpha_k=0$ and still use \cite{DL} to obtain the renormalized relation of Lemma \ref{renor} with $\varphi = \chi_a$ and $W_{k,h}^{x,y} =\overline K_h\star  W_a = w_{a,h}+w_{a,h}$. With this exception the proof follows the lines of point i) in the previous Lemma \ref{boundQh}, so we only sketch it here.

From Lemma \ref{renor}, we get
\[
\begin{split}
&\int_{h_0}^1 \int_{\Pi^{2d}} \frac{\overline K_{h}(x-y)}{h} (w_{a,h}(x) + w_{a,h}(y))\,  \chi_a(\delta\rho_k) (t)  \\
& 
\leq \int_{h_0}^1 \int_{\Pi^{2d}} \frac{\overline K_{h}(x-y)}{h} (w_{a,h}(x) + w_{a,h}(y))\,  \chi_a(\delta\rho_k)|_{t=0}   +
  A + B  + D +  I + II + \Pi_a,  \\
\end{split}
\]
with the terms 
\[
 A=   \int_0^t \int_{\Pi^{2d}} (u_k(t,x)-u_k(t,y))\cdot \nabla\overline  K_h(x-y) \chi_a(\delta\rho_k) 
        (w_{a,h}(x)+ w_{a,h}(y)),
\]
while
\[\begin{split}
 B
 & = 2\,\Big[ \int_0^t \int_{\Pi^{3d}}
         K_h(x-y) \chi_a(\delta\rho_k) (u_k(x) - u_k(z))\cdot \nabla_x \overline K_h(x-z)
                w_a(t,z)  \\                
&   \hskip4cm  -  \lambda 
\int_0^t \int_{\Pi^{2d}} K_h(x-y) \chi_a(\delta\rho_k) \overline K_h \star (M|\nabla u_k| w_a)\Big],
\end{split}\]
and
\[\begin{split}
 D =  
 &-  \lambda \Bigl[ \int_0^t \int_{h_0}^1 \int_{\Pi^{2d}} 
     \chi_a(\delta \rho_k)\,\overline K_h \star ((|\div u_k|+|A_\mu \rho^\gamma|)w_{a}\,)(x)\,
     \overline K_h(x-y) \frac{dh}{h}    \\
  & -  \int_0^t \int_{h_0}^1 \int_{\Pi^{2d}} w_{a,h}(x)\,
     \chi_a(\delta \rho_k)\,\overline K_h \star (|\div u_k|+|A_\mu \rho^\gamma|)(x)\,
    \overline{K_h}(x-y) \frac{dh}{h} 
  \Bigr].
\end{split}\]
The dissipation term is under the right form
\[
 \Pi_a=  - \lambda \int_0^t \int_{h_0}^1 \int_{\Pi^{2d}} w_{a,h}(x)\,
     \chi_a(\delta \rho_k)\,\overline K_h \star (|\div u_k|+|A_\mu \rho^\gamma|)(x)\,
    \overline{K_h}(x-y) \frac{dh}{h} ,
\]
and so are by symmetry
\[\begin{split}
I= - \frac{1}{2}\int_{h_0}^1\int_0^t\int_{\Pi^{2d}} &\frac{\overline K_h(x-y)}{h}\,(\div u_k(x)-\div u_k(y))
        \chi_a'(\delta\rho_k) \,\bar \rho_k\,w_{a,h}(x),
\end{split}\]
and 
\[\begin{split}
II=- \frac{1}{2}\int_{h_0}^1 \int_0^t\int_{\Pi^{2d}} 
&\frac{\overline K_h(x-y)}{h}\,(\div u_k(x)+\div u_k(y))\\
& \hskip3cm  (\chi'_a(\delta\rho_k) \delta\rho_k - 2 \chi_a(\delta\rho_k))\, w_{a,h}(x).
\end{split}\]
The terms $A$ and $B$ are treated exactly as in case i) of Lemma \ref{boundQh}; they only require the higher integrability $p>\gamma+1+\ell$. 

The only additional term is hence $D$ which is required in order to write the dissipation term $\Pi_a$ in the right form. $D$ is bounded directly by point iv)
 in Lemma \ref{weightprop}. Thus 
\[
A+B+D\leq C\,|\log h_0|^\theta,
\]
for some $0<\theta<1$ which concludes the proof.
\end{proof}

%
\subsection{The control on the effective flux}
Before coupling the previous estimate with the equation on $\div u_k$, we start with a lemma which will be used in every situation as it controls the regularity properties of
\[
D\rho u_k=\Delta^{-1}\,\div\,\left(\partial_t(\rho_k\,u_k)+\div(\rho_k\,u_k\otimes u_k)\right), 
\]
per
\begin{lemma}
Assume that $\rho_k$ solves \eqref{continuity}, that \eqref{bounduk}-\eqref{rhout} hold and \eqref{boundrhok} with $\gamma>d/2$. Assume moreover that $\Phi\in L^\infty([0,\ T]\times \Pi^{2d})$ and that 
\[\begin{split}
C_\phi&:=\left\|\int_{\Pi^d}\overline K_h(x-y)\,\Phi(t,x,y)\,dy\right\|_{ W^{1,1}_t\,W^{-1,1}_x}\\&
+\left\|\int_{\Pi^d}\overline K_h(x-y)\,\Phi(t,x,y)\,dx\right\|_{ W^{1,1}_t\,W^{-1,1}_y}
<\infty,
\end{split}\] 
then there exists $\theta>0$ s.t.
\[\begin{split}
&\int_0^t \int_{\Pi^d} K_h(x-y)\,\Phi(t,x,y)\,(D\rho u_k(t,y)-D\rho u_k(t,x))\,dx\,dy\,dt\\
&\qquad\qquad\leq C\,\|K_h\|_{L^1}\,h^\theta\,\left(\|\Phi\|_{L^\infty_{t,x}}+ C_\Phi\right).
\end{split}\]
\label{Drhou}
\end{lemma}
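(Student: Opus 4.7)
The natural splitting is
\[
D\rho u_k \;=\; \partial_t U_k + V_k, \qquad U_k:=\Delta^{-1}\mathrm{div}(\rho_k u_k),\quad V_k:=\Delta^{-1}\mathrm{div}\,\mathrm{div}(\rho_k u_k\otimes u_k),
\]
which isolates two analytically very different pieces. Since $\Delta^{-1}\mathrm{div}$ gains one spatial derivative and $\rho_k u_k \in L^\infty_t L^{2\gamma/(\gamma+1)}_x$ by the kinetic-energy bound (using $\gamma>d/2$), one has $U_k \in L^\infty_t W^{1,q}_x$ for some $q>1$. On the other hand, interpolating the energy bound $\rho_k|u_k|^2\in L^\infty_t L^1_x$ with \eqref{boundrhok} and \eqref{bounduk} gives $\rho_k u_k\otimes u_k \in L^{q_2}_{t,x}$ for some $q_2>1$, and since $\Delta^{-1}\mathrm{div}\,\mathrm{div}$ is a Calder\'on--Zygmund operator bounded on every $L^{q_2}$, also $V_k\in L^{q_2}_{t,x}$.

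For the $\partial_t U_k$ contribution I will integrate by parts in time, throwing the time derivative onto $\Phi$. The boundary terms at $t=0,t$ are of the form $\int K_h(x-y)\Phi(x,y)(U_k(y)-U_k(x))\,dx\,dy$, and are controlled directly through the change of variables $z=y-x$, the support bound $\mathrm{supp}\,K_h \subset B(0,R)$, and the modulus of continuity $\|U_k(\cdot+z)-U_k(\cdot)\|_{L^q}\le |z|\,\|\nabla U_k\|_{L^q}$, producing $C\,h\,\|K_h\|_{L^1}\,\|\Phi\|_\infty$. For the interior piece
\[
-\int_0^t\!\!\int K_h(x-y)\,\partial_s\Phi(s,x,y)\,(U_k(s,y)-U_k(s,x))\,dx\,dy\,ds,
\]
Fubini rewrites this as $\int U_k\,\partial_s G_2 - \int U_k\,\partial_s G_1$ with $G_i$ the marginals in the assumption on $C_\Phi$; pairing the $W^{1,1}_t W^{-1,1}_x$ bound on $G_i$ against $U_k\in L^\infty_t W^{1,q}_x$, and invoking Sobolev embedding on the torus (justified by $q>1$), closes the estimate with an $h^\theta$ factor coming from the scale-$h$ localization of $K_h$.

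For the $V_k$ contribution no pointwise regularity is available, so I will instead use that the Calder\'on--Zygmund operator $T=\Delta^{-1}\mathrm{div}\,\mathrm{div}$ commutes with convolution: $\bar K_h \star V_k = T(\bar K_h \star (\rho_k u_k\otimes u_k))$. Writing the left-hand side as $\int V_k\, G_2 - \int V_k\, G_1$ and viewing it as a commutator of $V_k$ with approximations of the identity at scale $h$, one interpolates the uniform $L^\infty$ bound $\|G_i\|_\infty \le \|\Phi\|_\infty\|K_h\|_{L^1}$ with the $W^{-1,1}_x$ control provided by $C_\Phi$; applying Lemma \ref{shiftNlemma} to the Calder\'on--Zygmund kernel of $T$ acting on $f=\rho_k u_k\otimes u_k\in L^{q_2}_{t,x}$ then extracts the $h^\theta$ smallness. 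The main obstacle will lie precisely here: since CZ images of $L^{q_2}$ functions carry no pointwise modulus of continuity, the $h^\theta$ factor for $V_k$ must be produced by a genuine interpolation between the uniform bound and the weak $W^{-1,1}$ regularity in $C_\Phi$, leveraging the commutator structure of Section \ref{usefulsection}, whereas the $\partial_t U_k$ part is comparatively routine once the time integration by parts is performed.
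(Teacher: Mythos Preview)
Your splitting $D\rho u_k=\partial_t U_k+V_k$ is natural but ultimately fails, because the gain of $h^\theta$ does not come from either piece separately: it comes from a cancellation between them that only becomes visible after replacing $u_k$ by a smooth approximation and invoking the continuity equation.

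Concretely, two things go wrong. For the $V_k$ piece, $V_k=\Delta^{-1}\mathrm{div}\,\mathrm{div}(\rho_k u_k\otimes u_k)$ lies merely in some $L^{q_2}_{t,x}$ and carries no spatial regularity whatsoever, so $V_k(y)-V_k(x)$ cannot produce a factor of $h^\theta$. Your appeal to Lemma~\ref{shiftNlemma} is incorrect: that lemma requires $\sup_r\|N_r\|_{L^1}<\infty$, which fails for Calder\'on--Zygmund kernels, and there is no commutator structure here since the two marginals $G_1,G_2$ are genuinely different functions. Interpolating $\|G_i\|_\infty$ with the $W^{-1,1}_x$ bound on $G_i$ does nothing useful when paired against a function with no positive Sobolev regularity. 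For the interior $\partial_t U_k$ piece after integration by parts, you are trying to pair $\partial_s G_i\in L^1_t W^{-1,1}_x$ against $U_k\in L^\infty_t W^{1,q}_x$ with $q=2\gamma/(\gamma+1)<\infty$; this duality does not close, and even if it did, the outcome would be $C\|K_h\|_{L^1}\,C_\Phi$ with no $h^\theta$ anywhere, since the $C_\Phi$ hypothesis is already stated for the \emph{normalized} kernel $\overline K_h$.

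The paper's route is essentially different. One introduces a space--time regularization $u_{k,\eta}$ of $u_k$ and first shows, via the time-compactness of $\rho_k u_k$ encoded in \eqref{rhout} and interpolation, that $\|\rho_k(u_k-u_{k,\eta})\|_{L^{1+0}_{t,x}}\le C\eta^\theta$. One then keeps the two pieces of $D\rho u_{k,\eta}$ together and uses the continuity equation \eqref{continuity} to rewrite
\[
\partial_t(\rho_k u_{k,\eta})+\mathrm{div}(\rho_k u_k\otimes u_{k,\eta})=\rho_k\bigl(\partial_t u_{k,\eta}+u_k\cdot\nabla u_{k,\eta}\bigr)+\alpha_k\,u_{k,\eta}\,\Delta\rho_k,
\]
so that $D\rho u_{k,\eta}$ has genuine $W^{1,1}_x$ regularity at cost $\eta^{-\kappa}$. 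This cancellation between the time derivative of $\rho_k$ and the transport divergence is precisely what your splitting destroys. The $h^\theta$ then appears from $|D\rho u_{k,\eta}(y)-D\rho u_{k,\eta}(x)|$, and optimizing between $\eta^\theta$ and $\eta^{-\kappa}h$ gives the result for smooth test functions; a final interpolation against the trivial bound (no $h^\theta$, using only $D\rho u_k\in L^2_t L^{\bar p}_x$) brings the regularity requirement on $\Phi$ down to $L^\infty\cap W^{1,1}_t W^{-1,1}_x$.
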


\begin{proof}  This proof is divided in four steps: The first one concerns a control 
on $\rho_k |u_k- u_{k,\eta}|$ where $u_{k,\eta}$ is a regularization of $u_k$ defined
later-on; The second step concerns the proof of an estimate for $\Phi_x= \bigl(\overline K_h \star \Phi\bigr)(t,x)$ and $\Phi_y= \bigl(\overline K_h \star \Phi\bigr)(t,y)$ in~$L^2_t L_x^{\bar p'}$ with $\bar p'= \bar p/(\bar p-1)$; The third step  concerns a control with respect to $h$ when $\Phi_x$, $\Phi_y$ in $L^2_t L_x^{\bar p'}
\cap W^{1,+\infty}_t W^{-1,\infty-0}$ with $\bar p'= \bar p/(\bar p-1)$; The last term is
the end of the proof obtained by interpolation.

\bigskip

\noindent {i) \it A control on $\rho_k |u_k- u_{k,\eta}|$ where $u_{k,\eta}$ 
                          is a regularization in space and time defined later-on.}
Choose a kernel ${\cal K}_\eta\in C^{\infty}_c(\R_+\times\R_+)$ s.t. ${\cal K}_\eta(t,s)=0$ if $|t-s|\geq \eta$, for smoothing in time. We still denote, with a slight abuse of notation
\[
{\cal K}_\eta\star_t u_k (t)=\int_{\R_+} {\cal K}_\eta(t,s)\, u_k (s)\,ds.
\]
Now since $u_k$ is uniformly bounded in $L^2_t H^1_x\subset L^2_t L^6_x$ in dimension $d=3$ and in $L^2_t L^q_x$ for any $q<\infty$ in dimension $d=2$, one has 
\[\begin{split}
&\int \rho_k(t,x)\,\frac{(u_k(t,x)-u_k(s,x))^2}{1+|u_k(t,x)|+|u_k(s,x)|}\,{\cal K}_\eta(t,s)\,dt\,ds\,dx\\
&\quad \leq \int \rho_k(t,x)\,(u_k(t,x)-u_k(s,x))\,{\cal K}_\eta(t,s)\,\overline K_{\eta'}\star_x\frac{u_k(t,.)-u_k(s,.)}{1+|u_k(t,.)|+|u_k(s,.)|}\\
&\qquad+C\,(\eta')^\theta\,\|\rho_k\|_{L^\infty_t L^\gamma_x},
\end{split}
\]
with $\theta>0$ if $\gamma>d/2$ for $d=2,\;3$.  Note that
\[
\|\partial_t \rho_k\|_{L^1_t W^{-1,1}_x}\leq C,
\]
and by interpolation, as $\gamma>d/2$ and thus $\gamma>2d/(d+2)$, there exists $\theta>0$ s.t.
\[
\|\partial_t^\theta \rho_k\|_{L^2_t H^{-1}_x}\leq C.
\] 
Thus
\[\begin{split}
&\int \rho_k(t,x)\,\frac{(u_k(t,x)-u_k(s,x))^2}{1+|u_k(t,x)|+|u_k(s,x)|}\,{\cal K}_\eta(t,s)\,dt\,ds\,dx\\
& \leq\int(\rho_k(t)\,u_k(t,x)-\rho_k(s)\,u_k(s,x))\,{\cal K}_\eta(t,s)\,\overline K_{\eta'}\star\frac{u_k(t,.)-u_k(s,.)}{1+|u_k(t,.)|+|u_k(s,.)|}\\
&\qquad+C\,(\eta')^\theta+C\,\frac{\eta^\theta}{\eta'^d}\,\left\|u_k
\right\|_{L^2_t H^1_x}.\end{split}\]
Using \eqref{rhout}, one deduces that
\[
\begin{split}
&\int \rho_k(t,x)\,\frac{(u_k(t,x)-u_k(s,x))^2}{1+|u_k(t,x)|+|u_k(s,x)|}\,{\cal K}_\eta(t,s)\,dt\,ds\,dx\\
&\leq C\eta'^\theta+C\,\frac{\eta^\theta}{\eta'^d}+C\,\frac{\eta}{\eta'^d}.
\end{split}
\]
Optimizing in $\eta'$ and interpolating (using again $\gamma>d/2$), one finally gets that for some $\theta>0$
\[
\int \rho_k(t,x)\,(u_k(t,x)-u_k(s,x))^2\,{\cal K}_\eta(t,s)\,dt\,ds\,dx\leq C\eta^\theta.
\]
Define for some $\nu>0$
\[
u_{k,\eta}=\overline K_{\eta^\nu}\star_x\,{\cal K}_\eta\star_t u_k. 
\]
Using the regularity in $x$ of $u_{k,\eta}$, one has that
\[
\int \rho_k(t,x)\,|u_k(t,x)-u_{k,\eta}(t,x)|^2\,dx\,dt\leq C\,\eta^\theta,
\]
and finally by \eqref{bounduk}, 
there exists $\theta>0$ s.t.
\begin{equation}
\left\|\rho_k\,(u_k-u_{k,\eta})\right\|_{L^{1+0}_t L^{1+0}_x}\leq C\,\eta^{\theta}.\label{ut-us}
\end{equation}

\medskip

\noindent {ii) \it The case where $\Phi_x$, $\Phi_y$ is only in $L^2_t L_x^{\bar p'}$ with $\bar p'= \bar p/(\bar p-1)$.} We recall that $\bar p$ is the exponent in \eqref{rhout}.
Denote
\[\begin{split}
I\Phi=&\int_0^t \int_{\Pi^{2d}} K_h(x-y)\,\Phi(t,x,y)\,(D\rho u_k(t,y)-D\rho u_k(t,x))\,dx\,dy\,dt\\
\end{split}\]
which can be seen as a linear form on $\Phi$. Recall as well 
\[
\Phi_x=\int_{\Pi^d} \overline K_h(x-y)\,\Phi(t,x,y)\,dy,\quad \Phi_y=\int_{\Pi^d} \overline K_h(x-y)\,\Phi(t,x,y)\,dx.
\]
By \eqref{rhout}, $D\rho u_k$ is uniformly bounded in $L^2_t L^{\bar p}_x$. Therefore
\begin{equation}
|I\,\Phi|\leq C\,\|K_h\|_{L^1}\,\left(\|\Phi_x\|_{L^2_t L^{\bar p'}_x}+\|\Phi_y\|_{L^2_t L^{\bar p'}_x}\right),\quad \frac{1}{\bar p'}=1-\frac{1}{\bar p}>0.\label{IPhi0}
\end{equation}

\medskip

\noindent {iii) \it The case $\Phi_x$, $\Phi_y$ in $L^2_t L_x^{\bar p'}
\cap W^{1,+\infty}_t W^{-1,\infty-0}$ with $\bar p'= \bar p/(\bar p-1)$.}
 Denote
\[
\tilde C_\Phi=\|\Phi_x\|_{L^2_t L^{\bar p'}_x}+\|\Phi_y\|_{L^2_t L^{\bar p'}_x}+ \|\Phi_x\|_{W^{1,\infty}_t W^{-1,\infty-0}}+ \|\Phi_y\|_{W^{1,\infty}_t W^{-1,\infty-0}},
\]
and 
\[
R_1=\Delta^{-1}\,\div \rho_k\,(u_k-u_{k,\eta}).
\]
Observe that by \eqref{ut-us} and integration by part in time
\[
\int_0^t\int_{\Pi^d} \Phi_x\,\partial_t R_1\,dx\,dt\leq \tilde C_\Phi\,\eta^\theta\,\|K_h\|_{L^1}.
\]
The same procedure can be performed with $\div (\rho_k\,u_k\otimes u_k)$. Denoting
\[
D\rho u_{k,\eta}=\Delta^{-1}\,\div\,\left(\partial_t(\rho_k\,u_{k,\eta})+\div (\rho_k\,u_k\otimes u_{k,\eta})\right),
\]
one then has
\[\begin{split}
I\,\Phi \leq &\tilde C_\Phi\,\|K_h\|_{L^1}\,\eta^\theta\\
&+\int_0^t\!\! \int_{\Pi^{2d}} K_h(x-y)\,\Phi(t,x,y)\,(D\rho u_{k,\eta}(t,y)-D\rho u_{k,\eta}(t,x))\,dx\,dy\,dt.
\end{split}
\]
However using \eqref{continuity}
\[
\partial_t(\rho_k\,u_{k,\eta})+\div (\rho_k\,u_k\otimes u_{k,\eta})=\rho_k( \partial_t u_{k,\eta}+u_k\cdot \nabla u_{k,\eta})+\alpha_k\,u_{k,\eta}\,\Delta \rho_k.
\]
For some exponent $\kappa$
\[
\left\|\Delta^{-1}\div\,\left(\rho_k( \partial_t u_{k,\eta}+u_k\cdot \nabla u_{k,\eta})\right)\right\|_{L^1_t W^{1,1}_x}\leq C\,\eta^{-\kappa},
\]
and
\[
\alpha_k \left\|\Delta^{-1}\div\,\left(\alpha_k\,u_{k,\eta}\,\Delta \rho_k\right)\right\|_{L^2_t\,L^2_x}\leq C\,\eta^{-\kappa}\,\sqrt{\alpha_k}.
\]
Therefore
\[\begin{split}
&\int_0^t\!\! \int_{\Pi^d} K_h(x-y)\,\Phi(t,x,y)\,(D\rho u_{k,\eta}(t,y)-D\rho u_{k,\eta}(t,x))\,dx\,dy\,dt\\
&\quad\leq \tilde C_\Phi\,\eta^{-\kappa}\,\|K_h\|_{L^1}\,(h+\sqrt{\alpha_k})^{1-0}.
\end{split}\]
Finally
\[
\begin{split}
& I\,\Phi\leq
\tilde C_\Phi\,\|K_h\|_{L^1}\,\left(\eta^\theta+\eta^{-\kappa}\,(h+\sqrt{\alpha_k})\right),\\ 
\end{split}
\]
and by optimizing in $\eta$, there exists $\theta>0$ s.t.
\begin{equation}
I\,\Phi\leq C\,\|K_h\|_{L^1}\,h^\theta\,\left(\|\Phi_x\|_{W^{1,\infty}_t W^{-1,\infty-0}}+ \|\Phi_y\|_{W^{1,\infty}_t W^{-1,\infty-0}}\right).\label{IPhi1}
\end{equation}

\medskip

\noindent {iv) \it Interpolation between the two inequalities \eqref{IPhi0} and \eqref{IPhi1}.}
 For any $s\in (0,1)$ there exists $\theta>0$ s.t.
\[\begin{split}
I\,\Phi\leq C\,\|K_h\|_{L^1}\,h^\theta\,\Big(&\|\Phi_x\|_{L^2_t L^{\bar p'}_x}+\|\Phi_y\|_{L^2_t L^{\bar p'}_x}\\
& + \|\Phi_x\|_{W^{s,q+0}_t W^{-s,r+0}_x}+ \|\Phi_y\|_{W^{1,q+0}_t W^{-1,r+0}_x}\Big),
\end{split}\]
with
\[
\frac{1}{q}=\frac{1-s}{2},\quad \frac{1}{r}=\frac{1-s}{\bar p'}.
\]
On the other hand, if for example $\Phi_x$ belongs to $L^\infty_{t,x}$ and to $W^{1,1}_t W^{-1,1}_x$ then by interpolation $\Phi_x$ is in $W^{s,1/s-0}_t\,W^{-s,1/s-0}_x$. Hence $C_\phi$ controls the $W^{s,q+0}_t W^{-s,r+0}_x$ norm provided
\[
s<1/q=\frac{1-s}{2},\quad s<1/r=\frac{1-s}{\bar p'}.
\]
This is always possible by taking $s$ small enough (but strictly positive) as $\bar p>1$ and hence $\bar p'<\infty$. This concludes the proof.

Note finally that the interpolations between Sobolev spaces are not exact which is the reason for the $1/s-0$ or $q+0$ and $r+0$ (one would have to use Besov spaces instead, see for instance \cite{LP}).
\end{proof}
%
\subsection{The coupling with the pressure law}
We handle all weights at the same time. For convenience, we denote
\[
\chi_1(t,x,y)=\frac{1}{2}\chi'(\delta\rho_k)\,\bar\rho_k+\chi(\rho_k)-\frac{1}{2}\chi'(\delta\rho_k)\,\delta\rho_k.
\]
In the case without viscosity, one has
\begin{lemma}
Assume that $\rho_k$ solves \eqref{continuity} with $\alpha_k=0$, that \eqref{rhout},  \eqref{bounduk}, \eqref{boundrhok} with $\gamma>d/2$ and $p>2$ hold. Assume moreover that $u$ solves \eqref{divu0} with $\mu_k$ compact in $L^1$ and satisfying \eqref{elliptic}, $F_k$ compact in $L^1$, $P_k$ satisfying \eqref{nonmonotone}. \\
i) Then there exists a continuous function $\eps(.)$ with $\eps(0)=0$ s.t.
\[\begin{split}
& -\int_0^t\int_{\Pi^{2d}} K_h(x-y)\,(\div u_k(x)-\div u_k(y))\,\chi_1\,w_1(x) \leq C\,\|K_h\|_{L^1}\,\eps(h)\\
&\quad+C\,\int_0^t\int_{\Pi^{4d}}K_h(x-y)\,\left(1+\rho_k^{\tilde \gamma}(x)+\tilde P_k(x)\,\rho_k(x)+R_k(x)\right)\,\chi(\delta\rho_k)\,w_{1}(x).
\end{split}\]
ii) There exist $\theta>0$ and a continuous function $\eps$ with $\eps(0)=0$, depending only on $p$ and the smoothness of $\mu_k$ and $F_k$, s.t.
\[\begin{split}
& -\int_0^t\int_{\Pi^{2d}} K_h(x-y)\,(\div u_k(x)-\div u_k(y))\,\chi'(\delta\rho_k)\,\bar\rho_k\,w_1(x)\,w_1(y)\\
&\quad \leq C\,\|K_h\|_{L^1}\,\left(\eps(h)+h^\theta\right)+C\,\int_0^t\int_{\Pi^{4d}}K_h(x-y)\, \Big(1+\rho_k^{\tilde \gamma}(x)+\tilde P_k(x)\,\rho_k(x)\\
&\hspace{3cm}+R_k(x)+\rho_k^{\tilde \gamma}(y)+\tilde P_k(y)\,\rho_k(y)+R_k(y)\Big)\,
\chi(\delta\rho_k)\,w_{1}(x)\,w_1(y).
\end{split}\]
For instance if $\mu_k$ and $F_k$ belong to $W^{s,1}$ for some $s>0$ then one may take $\eps(h)=h^\theta$ for some $\theta>0$.
\label{divnoviscosity}
\end{lemma}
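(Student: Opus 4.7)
The strategy is to substitute \eqref{divu0} directly into the left-hand side. Writing $\mu_k(z)\,\div u_k(z) = P_k(z)+D\rho u_k(z)+F_k(z)$ for $z=x,y$ and using the splitting
\[
\mu_k(x)\bigl(\div u_k(x)\!-\!\div u_k(y)\bigr) = \bigl(P_k(x)\!-\!P_k(y)\bigr)+\bigl(D\rho u_k(x)\!-\!D\rho u_k(y)\bigr)+\bigl(F_k(x)\!-\!F_k(y)\bigr)+\bigl(\mu_k(y)\!-\!\mu_k(x)\bigr)\div u_k(y),
\]
the integrand in the left-hand side decomposes, modulo the bounded multiplier $\mu_k(x)^{-1}\in[1/\bar\mu,\bar\mu]$, into four contributions: pressure, effective flux, forcing, and viscosity variation.

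\textbf{Pressure and forcing terms.} For the pressure difference, insert the pointwise bound \eqref{nonmonotone} and combine it with the two elementary consequences of \eqref{defchi}, namely $\chi_1\le C\bar\rho_k$ and $|\delta\rho_k|\le C\chi(\delta\rho_k)$, together with the Young-type inequality $\rho_k^{\tilde\gamma-1}(x)\bar\rho_k\le C(\rho_k^{\tilde\gamma}(x)+\rho_k^{\tilde\gamma}(y))$; after exchanging $x\leftrightarrow y$ in the residual $\rho_k^{\tilde\gamma}(y)\chi(\delta\rho_k)w_1(x)$ contributions (which is legitimate since the original bad term of Lemma \ref{boundQh}\,ii) is paired with its $w_1(y)$-symmetric counterpart) one recovers exactly the RHS of the statement. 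The terms involving $Q_k$ and the $\tilde P_k$, $R_k$ differences get absorbed into $\|K_h\|_{L^1}\eps(h)$ via the compactness assumption in \eqref{nonmonotone} combined with the uniform $L^p$ bound on $\rho_k$. The forcing difference $F_k(x)-F_k(y)$ and the viscosity-variation factor $\mu_k(y)-\mu_k(x)$ are handled in the same way using the $L^1$ compactness of $F_k$ and $\mu_k$; both contribute to $\eps(h)$, with the additional factor $\div u_k(y)\in L^2$ being absorbed by Cauchy--Schwarz against $\chi_1 w_1\in L^2$.

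\textbf{Effective flux (the main obstacle).} The heart of the proof is the estimate on $\int_0^t\int K_h(x-y)\bigl(D\rho u_k(x)-D\rho u_k(y)\bigr)\Phi$ with $\Phi=\chi_1 w_1(x)/\mu_k(x)$, to which Lemma \ref{Drhou} is to be applied. The difficulty is that Lemma \ref{Drhou} requires $\Phi\in L^\infty$, whereas $\chi_1\le C\bar\rho_k$ is only $L^p$-integrable. The plan is to truncate: write $\Phi=\Phi_M+(\Phi-\Phi_M)$ where $\Phi_M$ is obtained by cutting off $\bar\rho_k$ at a level $M$. On the one hand, $\|\Phi_M\|_{L^\infty}\le CM$ and the time--space norm $C_{\Phi_M}$ required by Lemma \ref{Drhou} is controlled by $M$ as well, using the transport equation \eqref{eqw} for $w_1$, the continuity equation \eqref{continuity}, and the $H^1$ bound on $u_k$ from \eqref{bounduk}; this yields a contribution of order $C\,M\,h^\theta\|K_h\|_{L^1}$. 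On the other hand, the remainder is estimated by H\"older against the $L^2_t L^{\bar p}_x$ bound \eqref{rhout} on $D\rho u_k$ and the $L^p$ bound on $\rho_k$ restricted to $\{\bar\rho_k>M\}$ (which has small $L^{\bar p/(\bar p-1)}$ norm, since $p>2\ge \bar p/(\bar p-1)$ for $\bar p>1$); this produces a bound of order $C M^{-\alpha}\|K_h\|_{L^1}$ for some $\alpha>0$. Optimizing in $M$ gives the desired $\|K_h\|_{L^1}(\eps(h)+h^{\theta'})$ with a smaller exponent $\theta'>0$.

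\textbf{Part ii).} The argument for the product-weight version is identical, with $w_1(x)$ replaced everywhere by $w_1(x)w_1(y)$ and $\chi_1$ replaced by $\chi'(\delta\rho_k)\bar\rho_k$. The symmetry of the test weight in $x$ and $y$ now produces directly both $\rho_k^{\tilde\gamma}(x)w_1(x)w_1(y)$ and $\rho_k^{\tilde\gamma}(y)w_1(x)w_1(y)$ on the right-hand side with no need for the symmetrization step used in part i), while the effective-flux estimate proceeds verbatim as in the previous paragraph, applied to $\Phi=\chi'(\delta\rho_k)\bar\rho_k\,w_1(x)w_1(y)/\mu_k(x)$.
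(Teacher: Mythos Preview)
Your handling of the effective-flux term $D\rho u_k$ (truncate, apply Lemma~\ref{Drhou}, optimize) is essentially the paper's argument; the only missing technicality is that $1/\mu_k$ has no useful time regularity, so the paper replaces it by a smooth $\mu_{k,\eta}$ before invoking Lemma~\ref{Drhou} and optimizes in $\eta$ at the end. Part~ii) is also fine with your approach, since the target inequality is genuinely symmetric in $x,y$.

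The real gap is in the pressure term for Part~i). Your plan is to apply the Lipschitz bound from \eqref{nonmonotone} blindly and then ``symmetrize''. After using $|\chi'(\delta\rho_k)|\,|\delta\rho_k|\le C\chi(\delta\rho_k)$ and Young's inequality, this produces an unavoidable cross term
\[
\int_0^t\!\int_{\Pi^{2d}} K_h(x-y)\,\rho_k^{\tilde\gamma}(y)\,\chi(\delta\rho_k)\,w_1(x)\,dx\,dy.
\]
You claim this is handled by pairing with a $w_1(y)$-symmetric counterpart, but swapping $x\leftrightarrow y$ in that integral only turns it into $\int K_h\,\rho_k^{\tilde\gamma}(x)\,\chi(\delta\rho_k)\,w_1(y)$, which is the \emph{same} cross term. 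Neither form matches the right-hand side of the lemma (which has only $\rho_k^{\tilde\gamma}(x)w_1(x)$), and neither can be absorbed by the penalization in Lemma~\ref{boundQh}\,ii). Concretely, take $\rho_k(y)$ large and $\rho_k(x)$ small: then $w_1(x)\approx 1$, $\chi(\delta\rho_k)\approx\rho_k(y)$, so the integrand is of order $\rho_k^{\tilde\gamma+1}(y)$, whereas the available penalization at $y$ is $\rho_k^{\tilde\gamma}(y)\,w_1(y)\,\chi(\delta\rho_k)\le \rho_k^{\tilde\gamma+1}(y)\,e^{-\lambda\rho_k(y)}$, exponentially smaller.

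The paper's proof avoids this precisely by \emph{not} using the Lipschitz bound uniformly. It splits into three cases. Case~1: $(P_k(x)-P_k(y))\,\delta\rho_k\ge 0$ gives a favorable sign. Case~2: opposite signs with, say, $\rho_k(y)\ge 2\rho_k(x)$; then necessarily $P_k(x)\ge P_k(y)\ge 0$, so $|P_k(x)-P_k(y)|\le P_k(\rho_k(x))\le C\rho_k^{\tilde\gamma}(x)+R_k(x)$ --- the \emph{smaller} density controls the pressure jump --- and moreover $\bar\rho_k\le 3|\delta\rho_k|$ gives $|\chi'|\bar\rho_k\le C\chi$, yielding only $\rho_k^{\tilde\gamma}(x)\chi\,w_1(x)$. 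Case~3: $\rho_k(x)\sim\rho_k(y)$, so $\rho_k^{\tilde\gamma}(y)\le C\rho_k^{\tilde\gamma}(x)$ directly, and the pointwise bound $w_1(x)\le e^{-\lambda\rho_k(x)}$ absorbs residual powers of $\bar\rho_k$. This sign analysis, not symmetrization, is what localizes the right-hand side at $x$.
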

While in the case with viscosity
\begin{lemma}
Assume that $\rho_k$ solves \eqref{continuity}, that \eqref{rhout}, \eqref{bounduk}, \eqref{boundrhok} with $\gamma>d/2$ and $p>2$ hold. Assume moreover that $u$ solves \eqref{divu0} with $\mu_k$ compact in $L^1$ and satisfying \eqref{elliptic}, $F_k$ compact in $L^1$, $P_k$ satisfying \eqref{monotone}. Then there exists a continuous function $\eps(.)$ with $\eps(0)=0$ and depending only on the smoothness of $\mu_k$ and $F_k$ s.t.
\[\begin{split}
& -\frac{1}{2} \int_0^t\int_{\Pi^{4d}} K_h(x-y)\,(\div u_k(x)-\div u_k(y))\,\chi'(\delta\rho_k)\,\bar\rho_k\,W_{0}\,\overline K_h(x-z)\,\overline K_h(y-w)\\   
& - \frac{1}{2} \int_0^t \int_{\Pi^{4d}} K_h(x-y)(\div u_k(x) + \div u_k(y))
             (\chi'(\delta\rho_k)\delta\rho_k - 2 \chi(\delta\rho_k))\\
&\qquad\qquad\qquad\qquad\qquad\qquad\qquad W_0(z,w)
             \overline K_h(x-z) \overline K_h(y-w)\\  
& - \frac{\lambda}{2}\, \int_0^t\int_{\Pi^{3d}} K_h(x-y) \chi(\delta\rho_k)\,  \overline K_h(x-z)\,M|\nabla u_k|\, w_0(z)   \\ 
&\quad \leq C\,\|K_h\|_{L^1}\,\eps(h)
+C\,\int_0^t\int_{\Pi^{4d}}K_h(x-y)\,\chi(\delta\rho_k)\,W_{0}\,\overline K_h(x-z)\,\overline K_h(y-w).     
\end{split}\]\label{divwithviscosity}
\end{lemma}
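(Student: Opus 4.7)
The plan is to substitute the equation \eqref{divu0}, rewritten as $\mu_k \div u_k = P_k + D\rho u_k + F_k$, into each of the three terms on the left hand side, and to control the resulting contributions via (i) the quasi-monotonicity of $P_k$, (ii) the compactness of $\mu_k$, $F_k$, $\tilde P_k$ and $Q_k$, (iii) Lemma \ref{Drhou} applied to the effective flux $D\rho u_k$, and (iv) the dissipation term $T_3$ on the left hand side.

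Denote $s_1 = \rho_k(x)$, $s_2 = \rho_k(y)$. For the main left hand side term $T_1$ the dominant contribution is the pressure part $(P_k(t,x,s_1) - P_k(t,y,s_2))/\mu_k(x)$, which we split as $(P_k(t,x,s_1) - P_k(t,x,s_2))/\mu_k(x) + (P_k(t,x,s_2) - P_k(t,y,s_2))/\mu_k(x)$. When both $s_1, s_2$ are large enough, \eqref{monotone} shows that the second bracket equals $\tilde P_k(x) - \tilde P_k(y)$ and that the first bracket equals $g(s_1) - g(s_2)$, where $g(s) := P_k(t,x,s) - \tilde P_k(t,x)$ is independent of $t,x$ and satisfies $\partial_s(g(s)/s) \geq 0$. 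The algebraic identity
\[
(g(s_1) - g(s_2))(s_1 + s_2) = (s_1 g(s_1) - s_2 g(s_2)) + s_1 s_2\,(g(s_1)/s_1 - g(s_2)/s_2),
\]
combined with the monotonicity of $g(s)/s$ and of $s \mapsto s\,g(s)$ (the latter following from $g \geq 0$ and $g'(s) \geq g(s)/s$), shows that multiplication by $\chi'(s_1-s_2)$ (which has the sign of $s_1 - s_2$) produces a nonpositive quantity that we discard. On the complementary bounded-density region, the Lipschitz bound in \eqref{monotone} and the $L^p$ bound on $\rho_k$ yield an $\eps(h)\|K_h\|_{L^1}$ contribution, as does the $\tilde P_k(x) - \tilde P_k(y)$ piece by the compactness \eqref{compactFmu}.

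The remaining pieces in $T_1$ are the commutators $(1/\mu_k(x) - 1/\mu_k(y))$ multiplied by $P_k(y)$, $F_k(y)$ or $D\rho u_k(y)$, together with the main pieces $(F_k(x) - F_k(y))/\mu_k(x)$ and $(D\rho u_k(x) - D\rho u_k(y))/\mu_k(x)$. The commutators and the $F_k$ difference produce $\eps(h)\|K_h\|_{L^1}$ errors directly from $|1/\mu_k(x) - 1/\mu_k(y)| \leq \bar\mu^2 |\mu_k(x) - \mu_k(y)|$ and the compactness of $\mu_k$ and $F_k$, combined with the $L^p$ bound on $\rho_k$ and the $L^2_t L^{\bar p}_x$ bound on $D\rho u_k$ from \eqref{rhout}. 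The effective-flux piece is handled by a direct application of Lemma \ref{Drhou} to $\Phi(t,x,y) = \chi'(\delta\rho_k) \bar\rho_k\, W_{0,h}(x,y)/\mu_k(x)$, where $W_{0,h}$ denotes the smoothing of $W_0$ in the $z,w$ variables by $\overline K_h(x-z)\overline K_h(y-w)$; the required $L^\infty_{t,x}$ bound and $W^{1,1}_t W^{-1,1}_x$ bound on $\Phi$ follow from \eqref{boundrhok}, \eqref{rhorho_t} and the boundedness of $\chi'$ and $1/\mu_k$, producing an $h^\theta \|K_h\|_{L^1}$ contribution.

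Finally, for the second left hand side term $T_2$ one uses $|\chi'(\delta\rho_k)\delta\rho_k - 2\chi(\delta\rho_k)| \leq C\chi(\delta\rho_k)$ from \eqref{defchi} and the pointwise inequality $|\div u_k| \leq d\,M|\nabla u_k|$ to estimate $|T_2|$ by $C\int K_h(M|\nabla u_k|(x) + M|\nabla u_k|(y))\chi(\delta\rho_k) W_0 \overline K_h \overline K_h$. Expanding $W_0(z,w) = w_0(z) + w_0(w)$ and integrating out the $w$ variable, this is absorbed for $\lambda$ large enough by the dissipation term $T_3$, up to commutator errors relating the maximal function at $x$ or $y$ to its value at the smoothing point $z$, which are of size $\eps(h)\|K_h\|_{L^1}$ thanks to the shift and square-function estimates of Section \ref{usefulsection}. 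The main obstacle in implementing this plan is precisely this absorption step, together with the careful accounting for the bounded-density regime in the quasi-monotonicity argument of the second paragraph.
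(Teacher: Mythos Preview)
Your plan for the first term $T_1$ (pressure, commutators, Lemma~\ref{Drhou}) is essentially the paper's approach, modulo the truncation $I_k^L=\phi(\rho_k(x)/L)\phi(\rho_k(y)/L)$ which you omit but which is needed to make $\Phi=\chi'(\delta\rho_k)\bar\rho_k W_{0,h}/\mu_k$ bounded before invoking Lemma~\ref{Drhou}. The real gap is in your treatment of $T_2$.

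You propose to bound $|T_2|$ by $C\int K_h\,(M|\nabla u_k|(x)+M|\nabla u_k|(y))\,\chi(\delta\rho_k)\,W_0\,\overline K_h\,\overline K_h$ and absorb this by the dissipation $T_3=\frac{\lambda}{2}\int K_h\,\chi(\delta\rho_k)\,\overline K_h(x-z)\,M|\nabla u_k|(z)\,w_0(z)$. But $T_3$ carries $M|\nabla u_k|$ at the smoothing point $z$, not at $x$ or $y$, so the absorption leaves residuals of the form $\int K_h\,\overline K_h(x-z)\,(M|\nabla u_k|(x)-M|\nabla u_k|(z))\,\chi(\delta\rho_k)\,w_0(z)$ and cross--terms such as $M|\nabla u_k|(x)\,w_0(w)$. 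These are \emph{not} of size $\eps(h)\|K_h\|_{L^1}$: the shift estimates of Section~\ref{usefulsection} apply to the averaged operators $D_r u$ (or to convolutions $N_h\star$), not to the maximal function $M|\nabla u_k|$, and in any case they yield $|\log h_0|^{1/2}$ only after integration $\int_{h_0}^1 dh/h$, never an $\eps(h)$ bound for a fixed $h$. Since $M|\nabla u_k|$ is merely bounded in $L^2$ and is not compact, there is no mechanism to make these residuals small.

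The paper avoids this by substituting \eqref{divu0} into $T_2$ as well. The pressure contributions of $T_1$ and $T_2$ are then \emph{combined} into a single term
\[
I_1\sim -\int K_h\,\big[(P_k^x-P_k^y)\chi'\,\bar\rho_k+(P_k^x+P_k^y)(\chi'\,\delta\rho_k-2\chi)\big]\,\frac{I_k^L}{\mu_{k,\eta}}\,W_0\,\overline K_h\,\overline K_h,
\]
and the identity $(P_k^x-P_k^y)\bar\rho_k-(P_k^x+P_k^y)\delta\rho_k=2\,(P_k^x\,\rho_k(y)-P_k^y\,\rho_k(x))$ together with the quasi--monotonicity of $P_k(s)/s$ shows that $I_1$ is bounded by $\bar P\int K_h\,\chi(\delta\rho_k)\,W_0\,\overline K_h\,\overline K_h+\eps_0(h)\|K_h\|_{L^1}$. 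The remaining $D\rho u_k+F_k$ part of $T_2$, combined with $T_3$, is handled using the \emph{pointwise} inequality $\mu_k\,M|\nabla u_k|(z)\geq \mu_k\,\div u_k(z)\geq D\rho u_k(z)+F_k(z)$ (valid since $P_k\geq 0$) to absorb the value at $z$, and Lemma~\ref{Drhou} for the differences $D\rho u_k(x)-D\rho u_k(z)$, $D\rho u_k(y)-D\rho u_k(z)$. In short: the dissipation $T_3$ is designed to swallow the effective flux at $z$, not $|\div u_k(x)|$ directly.
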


\begin{proof} The computations are very similar for i) and ii) in Lemma \ref{divnoviscosity} and for Lemma \ref{divwithviscosity}. For simplicity, in order to treat the proofs together as much as possible,
we denote
\[\begin{split}
&G_1(t,x,y)=\chi_1(t,x,y)\,w_1(x),\quad G_2(t,x,y)=\chi'(\delta\rho_k)\,\bar\rho_k\,w_1(t,x)\,w_1(t,y),\\
& G_0(t,x,y)=\frac{1}{2} \chi'_0(\delta\rho_k)\,\bar\rho_k\,\int_{\Pi^{2d}}W_0(t,z,w)\,\overline K_h(x-z)\,\overline K_h(y-w)\,dz\,dw.
\end{split}\]
The first step is to truncate: Denote $I_k^L(t,x,y)=\phi(\rho_k(t,x)/L)\,\phi(\rho_k(t,y)/ L)$ for some smooth and compactly supported $\phi$, 
\[\begin{split}
& -\int_0^t\int_{\Pi^{2d}} K_h(x-y)\,(\div u_k(x)-\div u_k(y))\,G_i\\
&\leq C\,\|K_h\|_{L^1}\,L^{-\theta_0}-\int_0^t\int_{\Pi^{2d}} K_h(x-y)\,(\div u_k(x)-\div u_k(y))\,G_i\,I_k^L.\\
\end{split}\]
Here for $i=0,\,1,\,2$, $G_i\leq C\,(\rho_k(t,x)+\rho_k(t,y))$ (even $G_2\leq 2$) and consequently, as $\div u_k\in L^2$ uniformly, only $p>2$ is required with $\theta_0=(p-2)/2>0$. 

Introduce an approximation $\mu_{k,\eta}$ of $\mu_k$, satisfying \eqref{elliptic} and s.t.
\begin{equation}\begin{split}
&\|\mu_{k,\eta}\|_{W^{2,\infty}_{t,x}}\leq C\,\eta^{-2}, \quad \|\mu_{k,\eta}-\mu_{k}\|_{L^1}\leq \eps_0(\eta),\\
& \int_0^T\int_{\Pi^{2d}} K_h(x-y)\,|\mu_{k,\eta}(t,x)-\mu_{k,\eta}(t,y)|\,dx\,dy\,dt\leq \,\|K_h\|_{L^1}\,\eps_0(h),
\end{split}\label{muketa}\end{equation}
from \eqref{compactFmu}.
  Use \eqref{divu0} to decompose 
\[\begin{split}
&-\int K_h(x-y)\,(\div u_k(x)-\div u_k(y))\,G_i\,I_k^L\,dx\,dy\\
&\qquad=2\,A_i+2\,B_i+2\,E_i,
\end{split}\]
with 
\[
A_i=-\int K_h(x-y)\,(P_k(\rho_k(x))-P_k( \rho_k(y)))\,G_i\,\frac{I_k^L}{\mu_{k,\eta}(x)}\,dx\,dy,
\]
and
\[
B_i=\int K_h(x-y)\,\tilde F_k(t,x,y)\,G_i\,\frac{I_k^L}{\mu_{k,\eta}(x)}\,dx\,dy,
\]
where
\[\begin{split}
\tilde F_k(t,x,y)=&F_k(t,x)-F_k(t,y)+\mu_k(y)\,\mu_{k,\eta}(x)\,\div u_k(t,y)\,\left(\frac{1}{\mu_{k,\eta}(x)}- \frac{1}{\mu_k(y)}\right)\\
&-\mu_k(x)\,\mu_{k,\eta}(x)\,\div u_k(t,x)\,\left(\frac{1}{\mu_{k,\eta}(x)}- \frac{1}{\mu_k(x)}\right).
\end{split}\]
Finally
\[
E_i=\int K_h(x-y)\,(D\rho u_k(t,y)-D\rho u_k(t,x))\,G_i\,\frac{I_k^L}{\mu_{k,\eta}(x)}\, dx\,dy,
\]
with as before
\[
D\rho u_k=\Delta^{-1}\,\div(\partial_t(\rho_k\,u_k)+\div(\rho_k\,u_k\otimes u_k)).
\]
For $B_i$ by the compactness of $F_k$, $\mu_k$, estimates \eqref{muketa} and \eqref{compactFmu}, and by \eqref{elliptic}
\begin{equation}
B_i\leq C\,L\,\int_0^t \int_{\Pi^{2d}} K_h(x-y)\,|\tilde F_k(t,x,y)|\leq C\,L\,(\eps_0(h)+\eps_0(\eta))\,\|K_h\|_{L^1}.
\label{boundBiallcase}
\end{equation}
Note that again $|G_i|\leq C\,(\rho_k(x)+\rho_k(y))$ for $i=0,\,1,\,2$.

For $E_i$, we use Lemma \ref{Drhou} by defining simply
\[
\Phi_i(t,x,y)=G_i\,I_k^L(t,x,y)\,\frac{1}{\mu_{k,\eta}(t,x)}.
\]
By \eqref{elliptic},
\[
\|\Phi_i\|_{L^\infty_{t,x}}\leq C\,L.
\]
As for the time derivative of $\Phi$, for $i=1,\,2$, $G_i$
is a combination of functions of $\rho_k(t,x)$, $\rho_k(t,y)$ and $w_i$ which all satisfy the same transport equation (with different right-hand sides). By \eqref{continuity},  
\[\begin{split}
&\partial_tG_i+\div_x\left(u_k(x)\,G_i\right)+\div_y\left(u_k(y)\,G_i\right)
=f_{1,i}\,\div_x u_k(x)+f_{2,i}\,\div_y u_k(y)\\
&\quad+f_{3,i}\,D_i(x)\,+f_{4,i}\,D_i(y),
\end{split}\]
where the $D_i$ are the penalizations introduced in Section \ref{weightsec} and the $f_{n,i}$ are again combinations of functions of $\rho_k(t,x)$, $\rho_k(t,y)$ and $w_i$. Every $f_{n,i}$ contains as a factor $\phi(\rho_k(x)/L)$ or a derivative of $\phi$ and thus
\[
\|f_{n,i}\|_{L^\infty}\leq C\,L,\qquad\forall n,\;i.
\]
Finally by the smoothness of $\mu_{k,\eta}$
\[
\begin{split}
&\partial_t\Phi_i+\div_x\left(u_k(x)\,\Phi_i\right)
+\div_y\left(u_k(y)\,\Phi_i\right)
=\tilde f_{1,i}\,\div_x u_k(x)+\tilde f_{2,i}\,\div_y u_k(y)\\
&\quad+\tilde f_{3,i}\,D_i(x)\,+\tilde f_{4,i}\,D_i(y)+\Phi_i\,g_\eta,
\end{split}
\]
and it is easy to check that the constant $C_{\Phi_i}$ as defined in Lemma \ref{Drhou} is bounded by $C\,L\,\eta^{-1}$. 

\medskip

     The case $i=0$ is slightly more complicated as $W_0$ is integrated against $\overline K_h$ so the equation on $\Phi_0$ involves non local terms and we have to take into account extra terms
 as mentioned in the statement of Lemma 8.4.
By \eqref{eqw}, denoting $w_{0,h}=\overline K_h\star w_0$
\[
\partial_t w_{0,h}+u_k(t,x)\cdot\nabla w_{0,h}-\alpha_k \Delta_x w_{0,h}=-\overline K_h\star (D_0\,w_0)+R_h-\overline K_h\star(\div u_k\,w_0),
\]
with
\[
R_h=\int_{\Pi^d} \nabla\overline K_h(x-z)\cdot(u_k(t,x)-u_k(t,z))\,w_0(t,z)\,dz.
\]
Remark that $R_h$ is uniformly bounded in $L^2_{t,x}$ by usual commutator estimates.

Finally as $\mu_{k,\eta}$ is smooth in time, one has
\[\begin{split}
&\partial_t\Phi_0+\div_x\left(u_k(x)\,\Phi_0\right)+ \div_y\left(u_k(y)\,\Phi_0\right)
-\alpha_k\,( \Delta_x+\Delta_y)\, \Phi_0\\
& =f_{1,0}\,\div_x u_k(t,x)+f_{2,0}\,\div_y u_k(t,y)+\alpha_k\,\left( f_{3,0}\,|\nabla_x\rho_k(x)|^2+f_{4,0}\,|\nabla_x\rho_k(y)|^2\right)\\
&\ -\frac{\Phi_\rho}{\mu_{k,\eta}}\,\left(\overline K_h\star (D_0\,w_0)+R_h-\overline K_h\star(\div u_k\,w_0)\right)-2\,\frac{\alpha_k}{\mu_{k,\eta}}\,\nabla_x\Phi_\rho\cdot\nabla_x w_{0,h}\\
&\ +\Phi_0\,g_\eta ,
\end{split}\]
where $\Phi_\rho=\delta\rho_k\,\bar\rho_k\,I_k^L(t,x,y)$,
$g_\eta$ is a function involving first and second derivatives of $\mu_{k,\eta}$ in $t$ and $x$ and $\nabla u_k$. The $f_{j,0}$ are combinations of functions of $\rho_k(t,x)$ and $\rho_k(t,y)$, multiplied by $w_{0,h}$, and involving $\phi(\rho_k(x)/L)$, $\phi'(\rho_k(x)/L)$, or $\phi''(\rho_k(x)/L)$ and the corresponding term with $\rho_k(y)$. 

By the $L^\infty$ bounds on $\Phi_\rho$, $w_0$, each $f_{j,0}$ and by $\eqref{bounduk}$, one obtains
\[
\left\|\partial_t\int \overline K_h(x-y)\, \Phi_0(t,x,y)\,dy\right\|_{L^1_t\,W^{-1,1}_x}\leq C\,L\,\eta^{-1}.
\]   
Therefore $C_\phi\leq C\,L\,\eta^{-1}$. Thus for all three cases, Lemma \ref{Drhou} yields 
\begin{equation}
E_i\leq C\,L\,\eta^{-1}\,\|K_h\|_{L^1}\,h^\theta,\label{boundEiallcase}
\end{equation}
for some $\theta>0$.

\bigskip

{\em Proof of Lemma \ref{divwithviscosity}: The term $A_0$.} The terms $A_i$ are where lies the main difference between Lemmas \ref{divnoviscosity} and \ref{divwithviscosity} as $P_k$ is not monotone in the first case and monotone after a certain threshold in the second.  For this reason we now proceed separately for  Lemma \ref{divwithviscosity} and Lemma \ref{divnoviscosity}. In the case with diffusion for Lemma \ref{divwithviscosity}, there also exist extra terms to handle, namely $J+I$ with
\[
J=-\frac{\lambda}{2} \,  \int_0^t\int_{\Pi^{3d}} K_h(x-y) \chi(\delta\rho_k) \overline K_h(x-z) \,M|\nabla u_k|(z)\, w_0(z),
\] 
and
\[\begin{split}
 I= 
& - \frac{1}{2}\int_0^t\int_{\Pi^{4d}}K_h(x-y)\,(\div u_k(x)+\div u_k(y))\\
&\qquad\qquad\qquad\qquad\qquad (\chi'(\delta\rho_k)\,\delta\rho_k-2\,\chi(\delta\rho_k))\,\,W_{0}\,\overline K_h(x-z)\,\overline K_h(y-w).
\end{split}\]
We decompose this last term in a manner similar to what we have just done, first of all by introducing the truncation of $\rho_k$
\[
\begin{split}
I\leq &C\,\|K_h\|_{L^1}\,L^{-\theta_0}\\
& - \frac{1}{2}\int_0^t\int_{\Pi^{4d}}K_h(x-y)\,(\div u_k(x)+\div u_k(y))\\
&\qquad\qquad\qquad\qquad\qquad (\chi'(\delta\rho_k)\,\delta\rho_k-2\,\chi(\delta\rho_k))\,I_k^L\,W_{0}\,\overline K_h(x-z)\,\overline K_h(y-w),
\end{split}
\]
with again $\theta_0=(p-2)/2$. Now introduce the $\mu_k$
 \[\begin{split}
& I\leq  C\,\|K_h\|_{L^1}\,L^\theta
 - \frac{1}{2}\int_0^t\int_{\Pi^{4d}}K_h(x-y)\,(\mu_k(t,x)\div u_k(x)+\mu_k(t,y)\div u_k(y))
    \\
&\qquad\qquad\qquad\qquad\qquad   \frac{I_k^L}{\mu_{k,\eta}(t,x)}\, (\chi'(\delta\rho_k)\,\delta\rho_k-2\,\chi(\delta\rho_k))\,\,W_{0}\,\overline K_h(x-z)\,\overline K_h(y-w) \\
&+\frac{1}{2} \int_0^t\int_{\Pi^{4d}}K_h(x-y) H_k(x,y) (\chi'\,\delta\rho_k-2\,\chi(\delta\rho_k))\,I_k^L\,W_{0}\,\overline K_h(x-z)\,\overline K_h(y-w) \\
\end{split}\]
where
\[
H_k(t,x,y) 
 = \mu_k(x) \div u_k(x) \bigl(\frac{1}{\mu_{k,\eta}(x)} - \frac{1}{\mu_{k}(x)} \bigr)
   - \mu_k(y) \div u_k(y) \bigl(\frac{1}{\mu_{k}(y)} - \frac{1}{\mu_{k,\eta}(x)} \bigr).
\]
By the compactness of $\mu_k$, one has that
\[\begin{split}
&\int_0^t\int_{\Pi^{4d}}K_h(x-y) H_k(x,y) (\chi'\,\delta\rho_k-2\,\chi(\delta\rho_k))\,\,W_{0}\,\overline K_h(x-z)\,\overline K_h(y-w)\\
&\qquad \leq \|K_h\|_{L^1}\,\eps_0(h)\,\|u_k\|_{L^2_t\,H^1_x}\,\|\rho_k\|_{L^{2}_{t,x}}\leq C\,\eps_0(h)\,\|K_h\|_{L^1}.
\end{split}
\]
This implies that
 \[\begin{split}
& I\leq  
 - \frac{1}{2}\int_0^t\int_{\Pi^{4d}}K_h(x-y)\,(\mu_k(t,x)\div u_k(x)+\mu_k(t,y)\div u_k(y))
    \\
&\qquad\qquad\qquad\qquad   \frac{I_k^L}{\mu_{k,\eta}(t,x)}\, (\chi'(\delta\rho_k)\,\delta\rho_k-2\,\chi(\delta\rho_k))\,\,W_{0}\,\overline K_h(x-z)\,\overline K_h(y-w)\\
&\qquad+C\,\|K_h\|_{L^1}(\,L^{-\theta}+\eps_0(h)). \\
\end{split}
\]
Using \eqref{divu0} or namely that 
$\mu_k \div u_k = D\rho\,u_k +F_k+ P_k(\rho_k)$, the quantity $A_0+I+J$ may be written
\begin{equation}
A_0+I+J\leq  C\,\|K_h\|_{L^1}(\,L^{-\theta}+\eps_0(h))+I_1+I_2,\label{boundFdiff}
\end{equation}
with
\[
\begin{split}
&I_1=A_0- \frac{1}{2}\int_0^t\int_{\Pi^{4d}}K_h(x-y)\,(
      P_k(x,\rho_k(x)) +  P_k(y,\rho_k(y)))\frac{I_k^L}{\mu_{k,\eta}(t,x)}\\
&\qquad\qquad\qquad\qquad\qquad (\chi'(\delta\rho_k)\,\delta\rho_k-2\,\chi(\delta\rho_k))\,\,W_{0}\,\overline K_h(x-z)\,\overline K_h(y-w), \\
\end{split}\]
and
 \[\begin{split}
&I_2= - \frac{1}{2}\int_0^t\int_{\Pi^{4d}}K_h(x-y)\,
   (D\rho\,u_k(x)+F_k(x) + D\rho\,u_k(y)+F_k(y))\frac{I_k^L}{\mu_{k,\eta}(t,x)}\\
&\qquad\qquad\qquad\qquad\qquad (\chi'(\delta\rho_k)\,\delta\rho_k-2\,\chi(\delta\rho_k))\,\,W_{0}\,\overline K_h(x-z)\,\overline K_h(y-w) \\
&-  \frac{\lambda}{2} \,   \int_0^t\int_{\Pi^{3d}} K_h(x-y) \chi(\delta\rho_k) \overline K_h(x-z) \, M|\nabla u_k|(z)\, w_0(z).
\end{split}
\]
In this case with diffusion, because $P_k$ is essentially monotone, the term $A_0$ is mostly dissipative and helps control the rest. More precisely
 \[\begin{split}
 I_1= -\frac{1}{2} \int_0^t \int_{\Pi^{2d}} \Bigl[K_h(x-y) \,
 & \bigl[ (P_k(x,\rho_k(x))-P_k(y,\rho_k(y)))\chi'(\delta\rho_k) \overline\rho_k \\
+ &(P_k(x,\rho_k(x))+P_k(y,\rho_k(y)))(\chi'(\delta\rho_k)\delta\rho_k - 2 \chi(\delta\rho_k))
    \bigr]  \\
& \frac{I_k^L}{\mu_{k,\eta}(t,x)}  \int_{\Pi^{2d}}\,\,W_{0}\,\overline K_h(x-z)\,\overline K_h(y-w) dzdw
  \Bigr]dx dy.
  \end{split}\]
As $P_k\geq 0$ and by \eqref{defchi}, $\chi'(\delta\rho_k)\delta\rho_k - 2 \chi(\delta\rho_k)\geq -\chi'(\delta\rho_k)\delta\rho_k$, thus
\[\begin{split}
&(P_k(x,\rho_k(x))-P_k(y,\rho_k(y)))\chi' \bar\rho_k + (P_k(x,\rho_k(x))+P_k(y,\rho_k(y)))(\chi'\delta\rho_k - 2 \chi(\delta\rho_k))\\
&\quad\geq \chi'(\delta\rho_k)\,\left[(P_k(x,\rho_k(x))-P_k(y,\rho_k(y)))\,\bar\rho_k- (P_k(x,\rho_k(x))+P_k(y,\rho_k(y)))\,\delta\rho_k\right].
\end{split}\]
Without loss of generality, we may assume that $\rho_k(x)\geq \rho_k(y)$ and hence $\chi'(\delta\rho_k)\geq 0$. Developing
\[\begin{split}
&(P_k(x,\rho_k(x))-P_k(y,\rho_k(y)))\,\bar\rho_k- (P_k(x,\rho_k(x))+P_k(y,\rho_k(y)))\,\delta\rho_k\\
&\qquad=2\,P_k(x,\rho_k(x))\,\rho_k(y) -2\,P_k(y,\rho_k(y))\,\rho_k(x).
\end{split}\]
We now use the quasi-monotonicity \eqref{monotone} of $P_k(x,s)/s$. First of all if $\rho_0\leq \rho_k(y)\leq \rho_k(x)$  then necessarily $P_k$ depends only on $\rho_k(x)$ or $\rho_k(y)$ plus $\tilde P_k$. Thus 
\begin{equation}\label{estim1}
P_k(x,\rho_k(x))\,\rho_k(y) -\,P_k(y,\rho_k(y))\,\rho_k(x)\geq -|\tilde P_k(t,x)-\tilde P_k(t,y)|.
\end{equation}

If $\rho_k(y)\leq \rho_0$, by \eqref{monotone} $P_k(x,s)\rightarrow \infty$ as $s\rightarrow\infty$ while $P_k(y,\rho_k(y))$ is bounded. Hence there exists
 $\bar \rho$ large enough with respect to $\rho_0$, s.t. if $\rho_k(x)\geq \bar\rho$, then again 
 $$P_k(x,\rho_k(x))\,\rho_k(y) -\,P_k(y,\rho_k(y))\,\rho_k(x)\geq 0.$$

The only case where one does not have the right sign is hence where both $\rho_k(x)$ and $\rho_k(y)$ are bounded by $\bar\rho$ and $\rho_0$. Therefore using the local regularity of $P_k$ given by \eqref{monotone}
\begin{eqnarray} 
&
\nonumber 
(P_k(x,\rho_k(x))-P_k(y,\rho_k(y)))\,\bar\rho_k- (P_k(x,\rho_k(x))+P_k(y,\rho_k(y)))\,\delta\rho_k\\
&\hspace{4cm} \label{estim2}\geq -\bar P\,|\delta\rho_k|-Q_k(t,x,y).
\end{eqnarray}
Introducing these estimates (\ref{estim1}) and (\ref{estim2}) in $I_1$ yields 
\begin{equation}\begin{split}
I_1&\le \bar P \int_0^t\!\!\int_{\Pi^{2d}} K_h(x-y)\, (|\tilde P_k(x)-\tilde P_k(y)|+Q_k+|\delta\rho_k|)\, 
  \frac{|\chi'(\delta\rho_k)|\,I_k^L}{\mu_{k,\eta}(t,x)}  \\
&\hspace{2cm}\int_{\Pi^{2d}}\!\!\,W_{0}\,\overline K_h(x\!-z)\,\overline K_h(y\!-w) \\
&\leq \eps_0(h)\,\|K_h\|_{L^1}+\bar P\,\int_0^t\int_{\Pi^{4d}} K_h(x-y)
\chi(\delta\rho_k)\,W_{0}\,\overline K_h(x-z)\,\overline K_h(y-w).
\end{split}\label{boundF1diff}
\end{equation}
Turning now to $I_2$, we observe that $\mu_k\,M|\nabla u_k| \ge \mu_k\,\div u_k \ge D\rho\,u_k+F_k$ and that $\chi(\delta_k)\geq (2\chi(\delta_k)-\chi'(\delta_k)\,\delta_k)/C$. Therefore for $\lambda$ large enough, using $W_0(z,w)=w_0(z)+w_0(w)$ and the symmetry
\[\begin{split}
&I_2\leq - \frac{1}{2}\int_0^t\int_{\Pi^{4d}}K_h(x-y)\,
   \left(D\rho\,u_k(x)-D\rho\,u_k(z)+F_k(x)-F_k(z)\right.\\
&\quad\left. + D\rho\,u_k(y)-D\rho\,u_k(z)+F_k(y)-F_k(z)\right)\frac{I_k^L}{\mu_{k,\eta}(t,x)} (\chi'(\delta\rho_k)\,\delta\rho_k-2\,\chi(\delta\rho_k))\\
&\qquad\qquad\qquad\qquad\qquad\qquad\qquad\qquad\qquad\qquad
\,W_{0}\,\overline K_h(x-z)\,\overline K_h(y-w). \\
\end{split}
\]
The differences in the $F_k$ are controlled by the compactness of $F_k$ and the differences in the $D\rho\,u_k$ by Lemma \ref{Drhou} as for the terms $E_i$.
Hence, finally
\begin{equation}
I_2\leq C\,\|K_h\|_{L^1}\,(\eps_0(h)+L\,\eta^{-1}\,h^\theta).
\label{boundF2diff}
\end{equation}

\medskip

{\em Conclusion for Lemma \ref{divwithviscosity}.} We sum up the contributions from $B_0$ in \eqref{boundBiallcase}, $E_0$ in \eqref{boundEiallcase}, $A_0+I+J$ in \eqref{boundFdiff} together with the bounds on $I_1$ in \eqref{boundF1diff} and $I_2$ in \eqref{boundF2diff} to obtain
\[\begin{split}
& -\frac{1}{2}\int_0^t\int_{\Pi^{2d}} K_h(x-y)\,(\div u_k(x)-\div u_k(y))\,G_0\\
& -\frac{1}{2}\int_0^t\int_{\Pi^{4d}} K_h(x-y)\,(\div u_k(x)+\div u_k(y))\,(\chi'(\delta\rho_k)\,\delta\rho_k-2\chi(\delta\rho_k))\\
&\qquad\qquad\qquad\qquad\qquad W_0(z,w)\,\overline K_h(x-z)\,\overline K_h(y-w)\\
&\quad-\frac{\lambda}{2}\int_0^t\int_{\Pi^{3d}} K_h(x-y)\,\chi(\delta\rho_k)\,\overline K_h(x-z)\,M\,|\nabla u_k|(z)\,w_0(z)\\
&\qquad \leq C\,\|K_h\|_{L^1}\left(L^{-\theta_0}+L\,(\eps_0(h)+\eps_0(\eta))+ L\,\eta^{-1}\,h^\theta\right)\\
&\qquad+C\,\int_0^t\int_{\Pi^{4d}}K_h(x-y)\,
\chi(\delta\rho_k)\,W_0(z,w)\, \overline K_h(x-z)\,\overline K_h(y-w).
\end{split}\]
Just optimizing in $L$ and $\eta$ leads to the desired $\eps(h)$ and concludes the proof of Lemma \ref{divwithviscosity}.

\bigskip

{\em Proof of Lemma \ref{divnoviscosity}: The term $A_i$ with $i=1,\;2$}. 
It now remains to analyze more precisely the terms $(P_k(x,\rho_k(x))-P_k(y,\rho_k(y)))\,G_i$
 for i=1,2 concerning the case without diffusion but with non monotone pressure.  We will split the study in three cases but remark that now the possible dependence of $P_k$ in terms of $x$ affects the estimates. For this reason, we carefully write explicitly this dependence.

\medskip

\noindent {\it Case \rm 1).}  The case $(P_k(x,\rho_k(x))-P_k(y,\rho_k(y)))\delta\rho_k\ge 0$.
Since $G_2$ obviously have the same sign as $\delta\rho_k$, one simply has
\[\begin{split}
(P_k(x,\rho_k(x))-P_k(y,\rho_k(y)))\,G_i&
\geq 0,
\end{split}\]
for $i=2$. In the same case, for $G_1$
\[\begin{split}
&(P_k(x,\rho_k(x))-P_k(y,\rho_k(y)))\,G_1\\
&=(P_k(x,\rho_k(x))-P_k(y,\rho_k(y)))\,\left(\frac{1}{2}\chi'(\delta\rho_k)\,\bar\rho_k+\chi(\delta \rho_k)-\frac{1}{2}\chi'(\delta\rho_k)\,\delta\rho_k\right)\,w_1(x)\\
&\geq |P_k(x,\rho_k(x))-P_k(y,\rho_k(y))|\,\left(\frac{1}{2}|\chi'(\delta\rho_k)|\,\bar\rho_k-\left|\chi(\delta\rho_k) -\frac{1}{2}\chi'(\delta\rho_k)\,\delta\rho_k\right|\right)\,w_1(x)\\
&\geq 0,
\end{split}\]
by \eqref{defchi} as $\bar\rho_k\geq |\delta\rho_k|$. Therefore in that case the terms have the right sign and can be dropped.

\bigskip

\noindent {\it Case \rm 2).}  The case $(P_k(x,\rho_k(x))-P_k(y,\rho_k(y)))\delta\rho_k< 0$ and  $\rho_k(y) \le \rho_k(x)/2$ or 
$\rho_k(y)\ge 2\,\rho_k(x)$ for some constant $C$. 

\smallskip

%
%
\noindent -- 
For $i=1$, first assume that $P_k(x,\rho_k(x))\geq P_k(y,\rho_k(y))$ while $\rho_k(y)\geq 2\,\rho_k(x)$.
\[
(P_k(x, \rho_k(x))-P_k(y,\rho_k(y)))\,G_1\geq -\,P_k(\rho_k(x)) \,(|\chi'(\delta\rho_k)|\,\bar \rho_k+\chi(\delta\rho_k))\,w_1(x).
\]
Now observe that since $\rho_k(y)\geq 2\,\rho_k(x)$ then
\[
|\chi'(\delta\rho_k)|\,\bar \rho_k\leq \frac{3}{2}\,|\chi'(\delta\rho_k)|\,\rho_k(y)\leq 3\,|\chi'(\delta\rho_k)|\,|\delta\,\rho_k|\leq C\,\chi(\delta\rho_k),
\]
by \eqref{defchi}. Therefore in that case by \eqref{nonmonotone}
\[
(P_k(x, \rho_k(x))-P_k(y,\rho_k(y)))\,G_1\geq -C\,(\rho_k(x)^{\tilde\gamma}+R_k(x))\, \,\chi(\delta\rho_k)\,w_1(x),
\]
%
%
Note that the result is not symmetric in $x$ and $y$: We also have to check also 
$P_k(x, \rho_k(x))\leq P_k(y, \rho_k(y))$ and $\rho_k(y)\leq \rho_k(x)/2$.
Then simply bound since now $\rho_k(y)\leq \rho_k(x)$
\[\begin{split}
(P_k(x, \rho_k(x))-P_k(y, \rho_k(y)))\,G_1&\geq -C\,((\rho_k(y))^{\tilde\gamma}+R_k(y))\, \,\chi(\delta\rho_k)\,w_1(x)\\
&\geq -C\,((\rho_k(x))^{\tilde\gamma}+R_k(y))\, \,\chi(\delta\rho_k)\,w_1(x).
\end{split}\]
In both cases, one finally obtains
\[\begin{split}
(P_k(x, \rho_k(x))-P_k(y, \rho_k(y)))\,G_1\geq &-C\,((\rho_k(x))^{\tilde\gamma}+R_k(x))\, \,\chi(\delta\rho_k)\,w_1(x)\\
&-|R(x)-R(y)|\,\bar\rho_k\,w_1(x).
\end{split}\]

\medskip

\noindent -- For $i=2$. The calculations are similar (simpler in fact) for $G_2$ and this lets us deduce that if $P_k(x,\rho_k(x))- P_k(y,\rho_k(y))$ and $\rho_k(x)-\rho_k(y)$ have different signs but $\rho_k(y)\leq \rho_k(x)/2$ or $\rho_k(y)\geq 2\,\rho_k(x)$ then 
\[\begin{split}
(P_k(x,\rho_k(x))-P_k(y,\rho_k(y)))\,G_2\geq &-C\,((\rho_k(x))^{\tilde\gamma}+\tilde P_k(x)\,\rho_k(x)\\
&+(\rho_k(y))^{\tilde\gamma}+\tilde P_k(y)\,\rho_k(y))\, \,\chi(\delta\rho_k)\,w_1(x)\,w_1(y).
\end{split}\]

\bigskip

\noindent {\it Case \rm 3).}  For $i=1,\, 2$, the situation  where $P_k(x,\rho_k(x))- P_k(y,\rho_k(y))$ and $\rho_k(x)-\rho_k(y)$ have different signs but  $\rho_k(x)/2\leq \rho_k(y)\leq 2\,\rho_k(x)$. 
Then one bluntly estimates using the Lipschitz bound on $P_k$ given by \eqref{nonmonotone}
\[\begin{split}
\left|(P_k(x,\rho_k(x))-P_k(y,\rho_k(y)))\right|\leq C\,(&(\rho_k(x))^{\tilde\gamma-1}+\tilde P_k(x)\\
&+(\rho_k(y))^{\tilde\gamma-1}+\tilde P_k(y))\,|\delta\rho_k|+Q_k. 
\end{split}\]
Bounding now the $G_i$ by \eqref{defchi},
\[\begin{split}
(P_k(x, \rho_k(x))-P_k(y, \rho_k(y)))\,G_2\leq &C\,((\rho_k(x))^{\tilde\gamma}+\tilde P_k(x)\,\rho_k(x)
+(\rho_k(y))^{\tilde\gamma}\\
&+\tilde P_k(y)\,\rho_k(y))\,\chi(\delta\rho_k)\,w_1(x)\,w_1(y)+Q_k\,\bar\rho_k\,w_1(x),
\end{split}
\]
and
\[\begin{split}
(P_k(x, \rho_k(x))-P_k(y, \rho_k(y)))\,G_1&\leq C\,((\rho_k(x))^{\tilde\gamma}+\tilde P_k(x)\,\rho_k(x)+(\rho_k(y))^{\tilde\gamma}\\
&\quad+\tilde P_k(y)\,\rho_k(y)) \,\chi(\delta\rho_k)\,w_1(x)+Q_k\,\bar\rho_k\\
&\leq C\,(1+\rho_k^{\tilde \gamma}(x)+\tilde P_k(x)\,\rho_k(x))\,\chi(\delta\rho_k)\,w_1(x)\\
&\qquad+(Q_k+|P_k(x)-P_k(y)|\,\bar\rho_k)\,\bar\rho_k\,w_1(x),
\end{split}\]
as $\rho_k(x)$ and $\rho_k(y)$ are of the same order. 
From Prop. \ref{weightprop}, point $i$, we know that $w_1(x)\leq e^{-\lambda\,\rho_k(x)^{p-1}}$. One the other hand, we are precisely in the case where $\rho_k(x)$ and $\rho_k(y)$ are of the same order. Hence $\bar\rho_k^l\,w_1$ is uniformly bounded for any $l>0$.
Hence we finally obtain in this case
\[\begin{split}
(P_k(x, \rho_k(x))-P_k(y, \rho_k(y)))\,G_2\leq C\,(&(\rho_k(x))^{\tilde\gamma}+\tilde P_k(x)\,\rho_k(x)
+(\rho_k(y))^{\tilde\gamma}\\
&+\tilde P_k(y)\,\rho_k(y))\,\chi(\delta\rho_k)\,w_1(x)\,w_1(y)+Q_k,
\end{split}
\]
and
\[\begin{split}
(P_k(x, \rho_k(x))-P_k(y, \rho_k(y)))\,G_1&
\leq C\,(1+\rho_k^{\tilde \gamma}(x)+\tilde P_k(x)\,\rho_k(x))\,\chi(\delta\rho_k)\,w_1(x)\\
&\qquad+Q_k+|P_k(x)-P_k(y)|.
\end{split}\]

\medskip

From the analysis of these three cases, one has that
\[\begin{split}
A_1\leq &C\, \int K_h(x-y)\,(1+\rho_k^{\tilde \gamma}(x)+\tilde P_k(x)\,\rho_k(x)+R_k(x))\,\chi(\delta\rho_k)\,w_1(x)\,dx\,dy\,dt\\
&+C\, \int K_h(x-y)\,(Q_k+|\tilde P_k(x)-\tilde P_k(y)|+|R_k(x)- R_k(y)|)\,dx\,dy\,dt.
\end{split}
\]
Therefore by the compactness properties of $P_k$ and the estimate on $Q_k$ in the assumption \eqref{monotone}
\begin{equation}\begin{split}
A_1\leq &C\, \int K_h(x-y)\,(1+\rho_k^{\tilde \gamma}(x)+\tilde P_k(x)\,\rho_k(x)+R_k(x))\,\chi(\delta\rho_k)\,w_1(x)\,dx\,dy\,dt\\
&+\|K_h\|_{L^1}\,\eps_0(h),
\end{split}\label{estimateA1}
\end{equation}
and
\begin{equation}\begin{split}
A_2\leq &C\, \int K_h(x-y)\,(1+\rho_k^{\tilde \gamma}(x)+\tilde P_k(x)\,\rho_k(x)+R_k(x)+\rho_k^{\tilde \gamma}(y)\\
&+\tilde P_k(y)\,\rho_k(y)+R_k(y))
\,\chi(\delta\rho_k)\,w_1(x)\, w_1(y)\,dx\,dy\,dt+\|K_h\|_{L^1}\,\eps_0(h).\label{estimateA2}
\end{split}\end{equation}

\bigskip

\noindent {\it Conclusion of the proof of Lemma \ref{divnoviscosity}.} Summing up every term, namely \eqref{boundBiallcase}-\eqref{boundEiallcase} and \eqref{estimateA1}-\eqref{estimateA2}, we eventually find
that
\[\begin{split}
& -\int_0^t\int_{\Pi^{2d}} K_h(x-y)\,(\div u_k(x)-\div u_k(y))\,G_1\\
&\quad \leq C\,\|K_h\|_{L^1}\left(L^{-\theta}+L\,(\eps_0(h)+\eps_0(\eta))+ L\,\eta^{-1}\,h^\theta\right)\\
&\qquad+C\,\int_0^t\int_{\Pi^{2d}}K_h(x-y)\,(1+\rho_k(x)^{\tilde\gamma}+\tilde P_k(x)\,\rho_k(x)+R_k)\,
\chi(\delta\rho_k)\,w_1(x),
\end{split}\]
while
\[\begin{split}
& -\int_0^t\int_{\Pi^{2d}} K_h(x-y)\,(\div u_k(x)-\div u_k(y))\,G_2\\
&\quad \leq C\,\|K_h\|_{L^1}\left(L^{-\theta}+L\,(\eps_0(h)+\eps_0(\eta))+ L\,\eta^{-1}\,h^\theta\right)\\
&\qquad+C\,\int_0^t\int_{\Pi^{2d}}K_h(x-y)\,\Big(1 +\rho_k(x)^{\tilde\gamma} +\tilde P_k(x)\,\rho_k(x)+R_k(x)+\rho_k(x)^{\tilde\gamma}\\
&\hspace{4cm}+\tilde P_k(y)\,\rho_k(y)+R_k(y)\Big)\,
\chi(\delta\rho_k)\,w_1(x)\,w_1(y).
\end{split}\]
To conclude the proof of Lemma \ref{divnoviscosity}, one optimizes in $\eta$ and $L$. Just remark that since the inequalities depend polynomially in $L$ and $\eta$ then the result depends on $\eps_0^\theta$ for some $\theta$.
\end{proof}

\subsection{Conclusion of the proofs of Theorems \ref{maincompactness} and \ref{maincompactness2}.}
We combine Lemma \ref{boundQh} with Lemma \ref{divwithviscosity} or \ref{divnoviscosity} and we finally use Prop. \ref{weightprop}. Let us summarize the required assumptions.
   In all cases one assumes that $\rho_k$ solves \eqref{continuity} and that $\div u_k$ is coupled with $\rho_k$ through \eqref{divu0}; bounds are assumed on the viscosity as per \eqref{elliptic}, on the time derivative of $\rho_k\,u_k$ per \eqref{rhout}, on $u_k$ per \eqref{bounduk}. Finally the viscosity $\mu_k$ and the force term $F_k$ are assumed to be compact in $L^1$.
 Moreover
\begin{itemize}
\item In the case with diffusion, $\alpha_k>0$, one assumes that the pressure term $P_k$ satisfies \eqref{monotone} and the bounds \eqref{boundrhok} on $\rho_k$ with $\gamma>d/2$ and $p>2$.
\item In the case without diffusion, $\alpha_k=0$, one needs only \eqref{nonmonotone} on the pressure $P_k$ and the bounds \eqref{boundrhok} on $\rho_k$ with $\gamma>d/2$ and $p>2$. Moreover for Prop. \ref{weightprop}, it is necessary that $p\geq \tilde \gamma$ (in general $\tilde\gamma=\gamma<p$ so this is not a big issue).
\end{itemize}

\bigskip

Then one obtains by taking $\lambda$ large enough and using a simple Gronwall lemma
\begin{equation}
\begin{split}
&\int_{h_0}^1\int_{\Pi^{4d}} \overline K_h(x-z)\,\overline K_h(y-w)\,(w_0(t,z)+w_0(t,w))\,K_h(x-y)\,\chi(\delta\rho_k)\,dx\,dy\,dz\,dw \frac{dh}{h}\\
& = \int_{\Pi^{4d}} \overline K_h(x-z)\,\overline K_h(y-w)\,(w_0(t,z)+w_0(t,w))\,{\cal K}_{h_0}(x-y)\,\chi(\delta\rho_k)\,dx\,dy\,dz\,dw  \\
&\qquad\leq C\, \Bigl(|\log h_0|^{1/2} + \ep_{h_0}(k)+\int_{h_0}^1 \eps(h)\frac{dh}{h}\Bigr),\\
\end{split}\label{fw0}\end{equation}
and
\begin{equation}
\begin{split}
&\int_{h_0}^1\int_{\Pi^{2d}} (w_1(t,x)+w_1(t,y))\,\overline K_h(x-y)\,\chi(\delta\rho_k)\,dx\,dy\frac{dh}{h}\\
&=\int_{\Pi^{2d}} (w_1(t,x)+w_1(t,y))\,{\cal K}_{h_0}(x-y)\,\chi(\delta\rho_k)\,dx\,dy\\
&\qquad\leq C\,\left(|\log h_0|^{1/2}+\int_{h_0}^1 \eps(h)\,\frac{dh}{h}\right),\\
\end{split}\label{fw1}\end{equation}
with finally
\begin{equation}
\begin{split}
&\int_{\Pi^{2d}} w_1(t,x)\,w_1(t,y)\,K_h(x-y)\,\chi(\delta\rho_k)\,dx\,dy\\
&\qquad\leq C\,\|K_h\|_{L^1}\,\left(h^\theta+\eps(h)\right),\\
\end{split}\label{fw2}\end{equation}
where $\eps$ depends only on the smoothness of $\mu_k$ and $F_k$ and of $p>2$.

\bigskip

The key point in all three cases is to be able to remove the weights from those estimates. For that, one uses point ii) of Prop. \ref{weightprop}. 

\medskip

\noindent {\it The case with $w_0(x) + w_0(y)$.} Denote $\omega_\eta=\{\overline K_h\star w_0(t,x)\leq \eta\}\subset [0,\ T]\times \Pi^d$ and remark that 
\[\begin{split}
\int_{\Pi^{2d}} {\cal K}_{h_0}(x-y)\,\chi(\delta\rho_k)&=\int_{h_0}^1\int_{\Pi^{2d}} {\overline K}_h(x-y)\,\chi(\delta\rho_k)\,\frac{dh}{h}\\
&=\int_{h_0}^1\int_{x\in\omega_\eta^c\ or\ y\in\omega_\eta^c} {\overline K}_h(x-y)\,\chi(\delta\rho_k)\,\frac{dh}{h}\\
&+\int_{h_0}^1\int_{x\in\omega_\eta\ and\ y\in\omega_\eta} {\overline K}_h(x-y)\,\chi(\delta\rho_k)\,\frac{dh}{h}.\\
\end{split}\]
Now
\[\begin{split}
&\int_{h_0}^1\int_{x\in\omega_\eta^c\ or\ y\in\omega_\eta^c}  \overline K_h(x-y)\,\chi(\delta\rho_k)\,\frac{dh}{h}\\
&\qquad\leq \frac{1}{\eta}\, \int_{h_0}^1 \int_{\Pi^{2d}}  \overline K_h(x-y) (\overline K_h\star w_{0}(x)+\overline K_h\star w_{0}(y))\, \chi(\delta\rho_k)
\frac{dh}{h},
\end{split}
\]
while by point iii) in Prop. \ref{weightprop}, using that $\rho\in L^p((0,T)\times \Pi^d)$ with $p>2$ and recalling that $\chi(\xi)\leq C\,|\xi|$
\[
\begin{split}
\int_{h_0}^1\int_{x\in\omega_\eta\ and\ y\in\omega_\eta} {\overline K}_h(x-y)\,\chi(\delta\rho_k)\,\frac{dh}{h}&\leq 2\,\int_{h_0}^1\int_{\Pi^{2d}} {\overline K}_h(x-y)\,\rho_k \,\ind_{\overline K_h\star w_0\le \eta}\,\frac{dh}{h}\\
&\leq \frac{C\,|\log h_0|}{|\log \eta|^{1/2}}.
\end{split}
\]
Therefore combining this with \eqref{fw0}, one obtains
\[\begin{split}
\int_{\Pi^{2d}} {\cal K}_{h_0}(x-y)\,\chi(\delta\rho_k)\,dx\,dy&\leq C\,\left(\frac{\eps_{h_0}(k)+
     |\log h_0|^{1/2} + \int_{h_0}^1 \eps(h)\frac{dh}{h} }{\eta}+\frac{\|{\cal K}_{h_0}\|_{L^1}}{|\log \eta|^{1/2}}\right)
\end{split}
\]
recalling that 
\[
\eps_{h_0}(k) = \alpha_k \int^1_{h_0} \frac{dh}{h} h_0^{-2},
\]
and denoting 
\[
\bar \eps(h_0)=\frac{1}{|\log h_0|}\int_{h_0}^1 \eps(h)\,\frac{dh}{h}
\]
Remark that $\bar\eps(h_0)\rightarrow 0$ since $\eps(h)\rightarrow 0$:
 For instance if $\eps(h)=h^\theta$ then $\bar \eps(h_0)\sim |\log h_0|^{-1}$.
  The estimate then reads
  \[\begin{split}
\int_{\Pi^{2d}} {\cal K}_{h_0}(x-y)\,\chi(\delta\rho_k)\,dx\,dy\leq
   C\,&\left(|\log h_0|\frac{\alpha_k h_0^{-2} +
     |\log h_0|^{-1/2} + \bar \eps(h_0) }{\eta}\right.\\
&\left.+\frac{\|{\cal K}_{h_0}\|_{L^1}}{|\log \eta|^{1/2}}\right).\\
\end{split}
\] 
As  $\|{\cal K}_{h_0}\|_{L^1}\sim |\log h_0|$,  by optimizing in $\eta$, the following 
estimate is obtained
  \[\begin{split}
\int_{\Pi^{2d}} {\cal K}_{h_0}(x-y)\,\chi(\delta\rho_k)\,dx\,dy&\leq
 \frac{C\|{\cal K}_{h_0}\|_{L^1}}{|\log (\alpha_k h_0^{-2} + |\log h_0|^{-1/2}+\bar \eps(h_0))|^{1/2}}.
\end{split}
\] 
Per Prop. \ref{propcomp}, this gives the compactness of $\delta\rho_k$ as the r.h.s. is negligible against $\|{\cal K}_{h_0}\|_{L^1}$ as $h_0\rightarrow 0$. And it proves the case i) of Theorem \ref{maincompactness}.

\bigskip

\noindent {\it The case with $w_1(x)+w_1(y)$.}
Similarly, from \eqref{fw1}, one then proves that in the corresponding case 
\[\begin{split}
\int_{\Pi^{2d}} {\cal K}_{h_0}(x-y)\,\chi(\delta\rho_k)\,dx\,dy&\leq C\,|\log h_0|\,\left(\frac{|\log h_0|^{-1/2}+\bar\eps(h_0)}{\eta}+\frac{1}{|\log \eta|^\theta}\right)\\
&\leq \frac{C\,\|{\cal K}_{h_0}\|_{L^1}}{|\log (|\log h_0|^{-1/2}+\bar \eps(h_0))|^\theta},
\end{split}
\]
again using part ii) of Prop. \ref{weightprop}.

In both cases,
using Prop. \ref{kernelcompactness} together with Lemma \ref{lemmakernelaveraged} in the second case, one concludes that $\rho_k$ is locally compact in $x$ and then in $t,\,x$.  Thus we've shown the case ii) and concluded the proof of Theorem \ref{maincompactness}.

\bigskip

\noindent {\it The case with $w_1(y) w_1(x)$.}
The situation is more complicated for \eqref{fw2} and the product $w_1(y) w_1(x)$. Indeed  $w_0(x)+w_0(y)$ or $w_1(x)+w_1(y)$ are small only if both $w_0(x)$ and $w_0(y)$ are small (or the corresponding terms for $w_1$). But $w_1(x)\,w_1(y)$ can be small if either $w_1(x)$ or $w_1(y)$ is small. This was previously an advantage with then simpler computations but not here and \eqref{fw2} does not provide compactness.
  
This is due to the fact that one does not control the size of $\{w_w\leq \eta\}$ but only the mass of $\rho_k$ over that set. The difference between the two is the famous vacuum problem for compressible fluid dynamics which is still unsolved. 

The best that can be done by part ii) of Prop. \ref{weightprop} is for any $\eta,\;\eta'$ 
\[\begin{split}
\int_{\Pi^{2d}} \ind_{\rho_k(x)\geq \eta}\,\ind_{\rho_k(y)\geq \eta}\,K_h(x-y)\,\chi(\delta\rho_k)&\leq \frac{1}{\eta'^2}\,\int\,w_1(x)\,w_1(y)\,K_h(x-y)\,\chi(\delta\rho_k)\\
&+C\,\frac{\|K_h\|_{L^1}}{\eta^{1/2}\,|\log\eta'|^{\theta/2}},
\end{split}\]
using that $\rho_k\in L^2$ uniformly. Using \eqref{fw2} and optimizing in $\eta'$, one finds for some $\theta>0$
\[\begin{split}
\int_{\Pi^{2d}} \ind_{\rho_k(x)\geq \eta}\,\ind_{\rho_k(y)\geq \eta}\,K_h(x-y)\,\chi(\delta\rho_k)&\leq C\,\frac{\|K_h\|_{L^1}}{\eta^{1/2}\,
|\log (\eps(h)+h^\theta)|^{\theta/2}}.
\end{split}
\]
If $\mu_k$ and $F_k$ are uniformly in $W^{s,1}$ for $s>0$, then 
\[
\int_{\Pi^{2d}} \ind_{\rho_k(x)\geq \eta}\,\ind_{\rho_k(y)\geq \eta}\,K_h(x-y)\,\chi(\delta\rho_k)\leq C\,\frac{\|K_h\|_{L^1}}{\eta^{1/2}\,
|\log h|^{\theta/2}},
\]
which concludes the proof of Th. \ref{maincompactness2}. Note however that in many senses \eqref{fw2} is more precise than the final result.
%
\subsection{The coupling with the pressure in the anisotropic case}

\bigskip
In that case we need the weight $w_a$ and its regularization $w_{a,h}$, defined by \eqref{eqw} with \eqref{dk00} in order to compensate some terms coming from the anisotropic non-local part of the stress tensor.
\begin{lemma}
There exists $C_*>0$ s.t. assuming that $\rho_k$ solves \eqref{continuity} with $\alpha_k=0$, that \eqref{rhout},  \eqref{bounduk}, \eqref{boundrhok} with $\gamma>d/2$ and $p>\gamma+1+\ell=\gamma^2/(\gamma-1)$ hold where $\ell=1/(\gamma-1)$; assuming moreover that $P_k$ satisfying \eqref{monotone}, that $u$ solves \eqref{divu0noniso} with 
\begin{equation}
a_\mu\, \leq C_*.\label{smallamu}
\end{equation}
Then there exists $0<\theta<1$ s.t. for $\chi_a$ verifying \eqref{defchi0} for this choice of $\ell$
\[
\begin{split}
&\int_{h_0}^1 \int_{\Pi^{2d}} \frac{\overline K_{h}(x-y)}{h} (w_{a,h}(x) + w_{a,h}(y))\,  \chi_a(\delta\rho_k) (t)  \\
& \qquad 
\leq \int_{h_0}^1 \int_{\Pi^{2d}} \frac{\overline K_{h}(x-y)}{h} (w_{a,h}(x) + w_{a,h}(y))\,  \chi_a(\delta\rho_k)|_{t=0}+C\,(1+\ell)\,|\log h_0|^\theta.  
\end{split}
\]
\label{divanisotropic}
\end{lemma}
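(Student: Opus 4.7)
The plan is to apply Lemma \ref{boundQhAniso} and then substitute the anisotropic divergence relation \eqref{divu0noniso} for $\div u_k$ inside the two quantities $I$ and $II$. This splits the right-hand side into three types of contributions: a \emph{local pressure} contribution from $\nu_k P_k(\rho_k)$, a \emph{non-local pressure} contribution from $\nu_k a_\mu A_\mu P_k(\rho_k)$, and an \emph{effective flux} contribution from $\nu_k(\Delta - a_\mu E_k)^{-1}\div(\partial_t(\rho_k u_k) + \div(\rho_k u_k\otimes u_k))$. The effective flux contribution is treated along the lines of the isotropic case: after a truncation $\rho_k \mapsto \rho_k\,\phi(\rho_k/L)$ and a smoothing of $\nu_k$, we apply Lemma \ref{Drhou} with the test function built from $\chi_a'(\delta\rho_k)\bar\rho_k w_{a,h}(x)$ and its symmetric counterpart; exactly as before this produces an error of the form $C\,L\,\eta^{-1}\,h^\theta + C\,L^{-\theta_0}$, which is $o(|\log h_0|)$ after integration against $dh/h$ and optimization in $L$ and $\eta$. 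The integrability requirement $p > \gamma + 1 + \ell$ with $\ell = 1/(\gamma-1)$ is what makes the truncation argument work with the weight $\chi_a$ of degree $1+\ell$.

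For the local pressure contribution, the combination of $I$ with $(P_k(x,\rho_k^x) - P_k(y,\rho_k^y))\chi_a'(\delta\rho_k)\bar\rho_k$ and of (the local half of) $II$ with $(P_k(x,\rho_k^x)+P_k(y,\rho_k^y))(\chi_a'(\delta\rho_k)\delta\rho_k - 2\chi_a(\delta\rho_k))$ is precisely the combination that condition \eqref{defchi0} was designed to control: the second inequality in \eqref{defchi0}, read with $\xi = \rho_k^x$, $\tilde\xi = \rho_k^y$ and combined with the quasi-monotonicity $\partial_s(P_k/s)\geq 0$ beyond $\rho_0$ coming from \eqref{monotone}, forces this pairing to have the right sign up to a term controlled by $(\rho_k^x)^\gamma + (\rho_k^y)^\gamma$ localized against $w_{a,h}$. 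This controllable remainder is then absorbed by half of the dissipation $\Pi_a$, which contains exactly the term $\lambda\, w_{a,h}\, \overline K_h\star|\div u_k|$, since $|\div u_k| \gtrsim |P_k(\rho_k)| - a_\mu|A_\mu P_k(\rho_k)| - |\text{flux}|$. The constant $\ell = 1/(\gamma-1)$ is fixed so that $\chi_a'(\delta\rho_k)\bar\rho_k$ scales like $\bar\rho_k^{1+\ell}\sim \bar\rho_k^{\gamma/(\gamma-1)}$, which is precisely balanced against $P_k\sim\rho_k^\gamma$ in the inequality; the factor $(1+\ell)$ in the final constant simply tracks this choice.

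The main obstacle is the non-local pressure term $\nu_k a_\mu\, A_\mu P_k(\rho_k)$, which has no sign. The plan is to crudely bound the resulting contributions to $I$ and $II$ by
$$C\,a_\mu \int_0^t\!\!\int_{\Pi^{2d}} \frac{\overline K_h(x-y)}{h}\,\bigl(|A_\mu P_k(\rho_k)|(x)+|A_\mu P_k(\rho_k)|(y)\bigr)\,\bar\rho_k\,(w_{a,h}(x)+w_{a,h}(y)),$$
and to absorb this against the second half of $\Pi_a$, which contains exactly $\lambda\, w_{a,h}(x)\,\overline K_h\star|A_\mu\rho_k^\gamma|(x)$. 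The comparison $|A_\mu P_k(\rho_k)|\lesssim |A_\mu\rho_k^\gamma| + \text{l.o.t.}$ follows from $|P_k(s)|\lesssim s^\gamma$ and the $L^q$-boundedness of $A_\mu$. The absorption requires $a_\mu \leq C_*$ for a universal $C_*$ depending only on $\lambda$, on the singular-kernel bound $|A_\mu(x)|\leq C/|x|^d$, and on $\bar\nu$; this is precisely the smallness \eqref{smallamu}. The subtle bookkeeping item here is that $\Pi_a$ carries the regularized weight $w_{a,h} = \overline K_h\star w_a$ rather than $w_a$, so each time we exchange $\overline K_h\star$ with a pointwise product or with $A_\mu$ we produce a commutator error. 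These commutator errors are controlled by point iv) of Proposition \ref{weightprop} together with Lemma \ref{shiftNlemma}, both of which give bounds of order $|\log h_0|^\theta$ with $\theta<1$; summing all such errors produces the final $C(1+\ell)|\log h_0|^\theta$ on the right-hand side and closes the estimate.
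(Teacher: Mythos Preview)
The genuine gap is in your treatment of the non-local pressure contribution to $I$, i.e.\ the term
\[
I^R=-\frac{a_\mu\nu_k}{2}\int_{h_0}^1\!\int_0^t\!\int_{\Pi^{2d}}\frac{\overline K_h(x-y)}{h}\bigl(A_\mu P_k(\rho_k)(x)-A_\mu P_k(\rho_k)(y)\bigr)\chi_a'(\delta\rho_k)\,\bar\rho_k\,w_{a,h}(x).
\]
You propose to discard the difference structure, bound by $|A_\mu P_k|(x)+|A_\mu P_k|(y)$, and absorb against $\Pi_a$. This cannot close: $\Pi_a$ carries the factor $\chi_a(\delta\rho_k)\sim|\delta\rho_k|^{1+\ell}$, whereas your crude bound carries $|\chi_a'(\delta\rho_k)|\,\bar\rho_k\sim|\delta\rho_k|^{\ell}\,\bar\rho_k$. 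Since $\bar\rho_k\geq|\delta\rho_k|$ always, the ratio $\bar\rho_k/|\delta\rho_k|$ is unbounded and no smallness of $a_\mu$ helps.

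The paper's argument is structurally different and uses the difference. One decomposes $A_\mu=L_h+R_h$ with $L_h$ supported in $\{|z|\leq h/|\log h|\}$ and $R_h=A_\mu\star N_{\delta_h}$ a regularization. For the $R_h$ part, $R_h\star\rho_k^\gamma(x)-R_h\star\rho_k^\gamma(y)$ is estimated via a further Littlewood--Paley decomposition and Lemma~\ref{shiftNlemma}, producing a $|\log h_0|^\theta$ error. For the localized $L_h$ part, after symmetrizing the position of the weight (controlling the commutator via the Lipschitz bound on $w_{a,h}$), a sharp H\"older inequality with conjugate exponents $(\gamma+\ell+1)/\gamma$ and $(\ell+1)/\ell$ converts it into
\[
I^L_h\leq C\,a_\mu\,\nu_k\,\gamma\int_{h_0}^1\!\int_0^t\!\int_{\Pi^{2d}}\frac{\overline K_h}{h}\,|\delta\rho_k|^{1+\ell}\,\bar\rho_k^{\gamma}\,w_{a,h}(x).
\]
The algebraic identity $(\ell+1)(\gamma-1)=\gamma$, i.e.\ $\ell=1/(\gamma-1)$, is forced precisely here so that the exponents match. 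This is then absorbed not by $\Pi_a$ but by the \emph{local} pressure dissipation $I^D+II^D$, which by the third line of \eqref{defchi0} yields the negative term $-C\gamma\,\frac{\nu_k}{2}\int|\delta\rho_k|^{1+\ell}\bar\rho_k^{\gamma}\,w_{a,h}$, under $a_\mu\leq C_*$.

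So your description of the local pressure term is also off: $I^D+II^D$ is not ``right sign up to a remainder absorbed by $\Pi_a$''; it is the principal dissipative sink that swallows the bad non-local $L_h$ pieces. The penalization $\Pi_a$ is used only for the contributions coming from $II$ (and from $II_0$), where the factor $\chi_a'\delta\rho_k-2\chi_a$ already scales like $\chi_a(\delta\rho_k)$ and matches the structure of $\Pi_a$.
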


\begin{proof}
To simplify the estimate, we assume in this proof that $P_k(\rho_k)=\rho_k^\gamma$, the extension when $P_k$ satisfies instead \ref{monotone} being straightforward. We also recall that $\chi_a$ satisfies \eqref{defchi0}, meaning that for all practical purposes $\chi_a(\xi)\sim |\xi|^{1+\ell}$.

 We use the point iii) in Lemma 
\ref{boundQhAniso}
\[
\begin{split}
&\int_{h_0}^1 \int_{\Pi^{2d}} \frac{\overline K_{h}(x-y)}{h} (w_{a,h}(x) + w_{a,h}(y))\,  \chi_a(\delta\rho_k) (t)  \\
& \qquad 
- \int_{h_0}^1 \int_{\Pi^{2d}} \frac{\overline K_{h}(x-y)}{h} (w_{a,h}(x) + w_{a,h}(y))\,  \chi_a(\delta\rho_k)|_{t=0} \\ 
&\qquad\qquad \le  C|\log h_0|^\theta + I + II-\Pi_a  \\
\end{split}
\]
where $0<\theta<1$ and with the dissipation term by symmetry
\[
 \Pi_a=   \lambda \int_0^t \int_{h_0}^1 \int_{\Pi^{2d}} w_{a,h}(x)\,
     \chi_a(\delta \rho_k)\,\overline K_h \star (|\div u_k|+ |A_\mu P_k(\rho_k)|)(x)\,
    \bar{K_h} \frac{dh}{h} 
\]
while still by symmetry
\[\begin{split}
I= - \frac{1}{2}\int_{h_0}^1\int_0^t\int_{\Pi^{2d}} &\frac{\overline K_h(x-y)}{h}\,(\div u_k(x)-\div u_k(y))
        \chi_a'(\delta\rho_k) \,\bar \rho_k\,w_{a,h}(x),
\end{split}\]
and 
\[\begin{split}
II=- \frac{1}{2}\int_{h_0}^1 \int_0^t\int_{\Pi^{2d}} 
&\frac{\overline K_h(x-y)}{h}\,(\div u_k(x)+\div u_k(y))\\
& \hskip3cm  (\chi'_a(\delta\rho_k) \delta\rho_k - 2 \chi_a(\delta\rho_k))\, w_{a,h}(x).
\end{split}\]

\bigskip
\noindent {\bf I) The quantity $I$.}
We recall that in this case one has the formula \eqref{divu0noniso} on $\div u_k$ 
\begin{equation}\begin{split}
\div u_k=& \nu_k P_k(\rho_k) +\nu_k\, a_\mu A_\mu P_k(\rho_k) \\
&+
\nu_k(\Delta_\mu-a_\mu E_k)^{-1}\, \div(\partial_t(\rho_k\,u_k)+\div(\rho_k\,u_k \otimes u_k)),
\end{split}\label{decomposedivu}
\end{equation}
leading to the notation
\[
\tilde D\rho\,u_k=\nu_k(\Delta_\mu-a_\mu E_k)^{-1}\, \div(\partial_t(\rho_k\,u_k)+\div(\rho_k\,u_k \otimes u_k)).
\]
Therefore, one may decompose
\[
I=I_0+I^D+I^R,
\]
with
\[\begin{split}
I_0=-\frac{1}{2}\int_{h_0}^1 \int_0^t\int_{\Pi^{2d}} &\frac{\overline K_h(x-y)}{h}\,(\tilde D\rho\,u_k(x)-\tilde D\rho\,u_k(y))\\
&\quad  \chi_a'(\delta\rho_k) \,\bar \rho_k\,(w_{a,h}(x)+w_{a,h}(y)),
\end{split}
\]
while
\[\begin{split}
I^D=-\frac{\nu_k}{2}\int_{h_0}^1 \int_0^t\int_{\Pi^{2d}} &\frac{\overline K_h(x-y)}{h}\,(P_k(\rho_k(x))
   - P_k(\rho_k(y))\\
&\quad  \chi_a'(\delta\rho_k) \,\bar \rho_k\,w_{a,h}(x),
\end{split}
\]
and
\[\begin{split}
I^R=- \frac{a_\mu\,\nu_k}{2}
\int_{h_0}^1 \int_0^t\int_{\Pi^{2d}} &\frac{\overline K_h(x-y)}{h}\,(A_\mu P_k(\rho_k(x))
   -A_\mu P_k(\rho_k(y)))    \\
&\quad  \chi'_a(\delta\rho_k)  \,\bar \rho_k\,w_{a,h}(x).
\end{split}
\]

\bigskip

\noindent {\bf  I-1) The term $I_0$.}
This term is handled just as in the proof of Lemmas \ref{divnoviscosity}-\ref{divwithviscosity} by using Lemma \ref{Drhou} and for this reason we do not fully detail all the steps here. First note that Lemma \ref{Drhou} applies to $\tilde D\rho_k \,u_k$ as well as for $D\,\rho_k\,u_k$ as
\[
\tilde D\rho_k\,u_k= (\nu_k(\Delta_\mu-a_\mu E_k)^{-1}\,\Delta)\,D\rho_k\,u_k.
\]
Then as before, we first truncate by using some smooth function $I_k^L(t,x,y)= \phi(\rho_k(t,x)/L)
\phi(\rho_k(t,y)/L)$ with some smooth and compactly supported function $\phi$  leading to $I_0=I_0^L+I_0^{RL}$ with
\[
\begin{split}
I_0^L=-\frac{1}{2}\int_{h_0}^1 \int_0^t\int_{\Pi^{2d}} &\frac{\overline K_h(x-y)}{h}\,(\tilde D\rho\,u_k(x)-\tilde D\rho\,u_k(y))\\
&\quad  \chi_a'(\delta\rho_k) \,I_k^L\,\bar \rho_k\,(w_{a,h}(x)+w_{a,h}(y)),
\end{split}
\]
and
\[
\begin{split}
I_0^{RL}=-\frac{1}{2}\int_{h_0}^1 \int_0^t\int_{\Pi^{2d}} &\frac{\overline K_h(x-y)}{h}\,(\tilde D\rho\,u_k(x)-\tilde D\rho\,u_k(y))\\
&\quad  \chi_a'(\delta\rho_k) \,(1-I_k^L)\,\bar \rho_k\,(w_{a,h}(x)+w_{a,h}(y)).
\end{split}
\]
Remark that $\div u_k\in L^2_{t,x}$, $P_k(\rho_k)\in L^{p/\gamma}_{t,x}$ and since
$A_\mu$ is an operator of $0$ order, $A_\mu P_k(\rho_k)\in L^{p/\gamma}_{t,x}$. Therefore by Eq. \eqref{decomposedivu}
\[
\sup_k\|\tilde D\,\rho_k\,u_k\|_{L^{\min(2,p/\gamma)}_{t,x}}<\infty.
\]
On the other hand $|\chi_a'(\delta\rho_k)|\leq C\,(1+\ell)\,(|\rho_k(x)|^\ell+|\rho_k(y)|^\ell)$ and this lets us bound very simply $I^{RL}_0$ by H\"older estimates
\[\begin{split}
I_0^{RL}&\leq C\,(1+\ell)\,|\log h_0|\,\|\tilde D\,\rho_k\,u_k\|_{L^{\min(2,p/\gamma)}_{t,x}}\,\|(1-I_k^L)\,\rho_k^{1+\ell}\|_{L^{\max(2,q)}_{t,x}}\\
&\leq C\,(1+\ell)\,|\log h_0|\,\|(1-I_k^L)\,\rho_k^{1+\ell}\|_{L^{\max(2,q)}_{t,x}},
\end{split}\]
with $1/q+\gamma/p=1$. But $q\,(1+\ell)<p$ by the assumption $p>\gamma+1+\ell$ and similarly $2\,(1+\ell)<p$. As a consequence for some exponent $\theta_1>0$
\begin{equation}
I_0^{RL}\leq C\,(1+\ell)\,|\log h_0|\,L^{-\theta_1}.\label{boundI0RL}
\end{equation}  
We now use Lemma \ref{Drhou} for $\tilde D\rho_k\,u_k$ and $\Phi=\chi_a'(\delta\rho_k)\,I_k^L\,\bar\rho_k\,w_{a,h}(x)$. We note that $\|\Phi\|_{L^\infty}\leq C\,(1+\ell)\,L^{1+\ell}$. Moreover just as in the proof of 
Lemmas \ref{divnoviscosity}-\ref{divwithviscosity}, we can show that $\Phi$ satisfy a transport equation giving that
\[\begin{split}
C_\Phi&=\left\|\int_{\Pi^d} \overline K_h(x-y)\Phi(t,x,y)\,dy\right\|_{W^{1,1}_t W_x^{-1,1}}\\
&\qquad+\left\|\int_{\Pi^d} \overline K_h(x-y)\Phi(t,x,y)\,dx\right\|_{W^{1,1}_t W_x^{-1,1}}
\leq C\,(1+\ell)\,L^{1+\ell}.
\end{split}\]
By Lemma \ref{Drhou}, we obtain that for some $\theta_2>0$
\begin{equation}
I_0^L\leq C\,(1+\ell)\,L^{1+\ell}\,\int_{h_0}^1 h^{\theta_2}\,\frac{dh}{h}\leq C\,(1+\ell)\,L^{1+\ell}.\label{boundI0L}
\end{equation}
By optimizing in $L$, this lets us conclude that again for some $0<\theta<1$  and provided that $p>\gamma+1+\ell$
\begin{equation}
I_0\leq C\,(1+\ell)\,|\log h_0|^\theta.\label{boundmoddrhou}
\end{equation}

\bigskip

\noindent {\bf 1-2) The term $I^D$.} This term has the right sign as
\[\begin{split}
&\int_{h_0}^1 \int_0^t\int_{\Pi^{2d}} \frac{\overline K_h(x-y)}{h}\,(\rho_k^\gamma(x)-\rho_k^\gamma(y))
   \,  \chi'_a(\delta\rho_k) \,\bar \rho_k\,(w_{a,h}(x)+w_{a,h}(y))\\
&\quad \geq C\,\int_{h_0}^1 \int_0^t\int_{\Pi^{2d}} \frac{\overline K_h(x-y)}{h}\,
\chi'_a(\delta\rho_k) \, \delta \rho_k\,\bar \rho_k^\gamma\, (w_{a,h}(x)+w_{a,h}(y)).
\end{split}
\]
We will actually give a more precise control on $I^D+II^D$ later on when the corresponding decomposition of $II = II _0 + II^D + II^R$ will be introduced.
 
 \bigskip
 
 \noindent {\bf I-3) The term $I^R$.} The difficulty is thus in this quantity. From its definition, $A_\mu$ is a convolution operator. With a slight abuse of notation, we denote by $A_\mu$ as well its kernel or
\[
A_\mu f=\int_{\Pi^d} A_\mu(x-y)\,f(y)\,dy,
\]
and note that $A_\mu$ corresponds to an operator of $0$ order, {\em i.e.} it for instance satisfies the property $\int_A A_\mu=0$ for any annulus $A$ centered at the origin, $|A_\mu(x)|\leq C\,|x|^{-d}$.
Decompose
\[
A_\mu=L_h+R_h,
\qquad \mbox{supp}\,L_h\subset \{|x|\leq \delta_h\},
\]
such that both $L_h$ and $R_h$ remain bounded on any $L^p$ space, $1<p<\infty$, and moreover $R_h$ is a regularization of $A_\mu$ that is $R_h=A_\mu\star N_{\delta_h}$ for some smooth kernel $N_{\delta_h}$. 
The scale $\delta_h$ has to satisfy that
\[
\delta_h<<h,\quad \log \frac{h}{\delta_h}<< |\log h|.
\]
 For simplicity we choose here $\delta_h= {h}/{|\log h|}$.

\medskip
\noindent {\it Contribution of the $R_h$ part.} 

The first step is to decompose $R_h$ into dyadic blocks in Fourier. Introduce a decomposition of identity $\Psi_l$ as in sections \ref{usefulsection} and \ref{Besov} s.t. $1=\sum_l \hat\Psi_l$ and write
\begin{equation}
R_h=\sum_{l=|\log_2 h|}^{|\log_2 \delta_h|}  \Psi_l\star R_h+\tilde R_h,\quad \tilde R_h=\sum_{l<|\log_2 h|} \Psi_l\star R_h=\tilde N_h\star N_{\delta_h}\star A_\mu.
\end{equation}
Note that we of course require of the $\Psi_l$ to satisfy all the assumptions specified in section \ref{Besov} for the definition of Besov spaces. Define now $\overline N_h=\tilde N_h\star N_{\delta_h}$, this kernel 
$\overline N_h$ therefore satisfies that for any $s>0$
\begin{equation}
\|\overline N_h\|_{W^{s,1}}\leq C\,h^{-s},\label{regNh}
\end{equation}
and moreover by the localization property of the $\Psi_k$, one has that for $s>0$ and any $|\omega|\leq 1$
\begin{equation}
\int_{\Pi^d} |z|^s\,|\overline N_h(z)+\overline N_h(z+\omega\,r)|\,dr\leq C\,h^s.\label{momNh}
\end{equation}
Fix $t$ for the moment and decompose accordingly
\[\begin{split}
&\int_{h_0}^1 \int_{\Pi^{d}} \frac{\overline K_h(z)}{h}\, \|R_h\star \rho_k^\gamma(t,.)-R_h\star\rho_k^\gamma(t,.+z)\|_{L^q_x} \\
&\qquad\leq\int_{h_0}^1 \int_{\Pi^{d}} \frac{\overline K_h(z)}{h}\, \|\tilde R_h\star \rho_k^\gamma(t,.)-\tilde R_h\star\rho_k^\gamma(t,.+z)\|_{L^q_x}\\
&\qquad+ \int_{h_0}^1\sum_{l=|\log_2 h|}^{|\log_2 \delta_h|} \int_{\Pi^{d}} \frac{\overline K_h(z)}{h}\, \|\Psi_l\star R_h\star \rho_k^\gamma(t,.)-\Psi_l\star R_h\star\rho_k^\gamma(t,.+z)\|_{L^q_x}.
\end{split}\]
By \eqref{regNh} and \eqref{momNh}, the kernel $\overline N_h$ satisfies the assumptions of Lemma \ref{shiftNlemma} and thus with $U_{k}=A_\mu\star \rho_k^\gamma$, applying Lemma \ref{shiftNlemma}, for any $q>1$
\[\begin{split}
&\int_{h_0}^1 \int_{\Pi^{d}} \frac{\overline K_h(z)}{h}\, \|\tilde R_h\star \rho_k^\gamma(t,.)-\tilde R_h\star\rho_k^\gamma(t,.+z)\|_{L^q_x}\\
&\qquad=\int_{h_0}^1 \int_{\Pi^{d}} \frac{\overline K_h(z)}{h}\, \|\overline N_h\star U_k(t,.)-\overline N_h\star U_k(.+z)\|_{L^q_x}\\
&\qquad \leq C\,|\log h_0|^{1/2}\, \|U_k(t,.)\|_{L^q_x}. 
\end{split}
\]
Recalling that $A_\mu$ is continuous on every $L^p$ space, one has that $\|U_k\|_{L^p_x}\leq C\,\|\rho_k^\gamma\|_{L^q_x}$ hence
\[
\int_{h_0}^1 \int_{\Pi^{d}} \frac{\overline K_h(z)}{h}\, \|\tilde R_h\star \rho_k^\gamma(t,.)-\tilde R_h\star\rho_k^\gamma(t,.+z)\|_{L_x^q}\leq C\,|\log h_0|^{1/2}\,\|\rho_k(t,.)\|_{L^{q\,\gamma}_{x}}^{\gamma}.
\]
On the other hand simply by bounding 
$$|\Psi_l\star R_h\star \rho_k^\gamma(x)-\Psi_l\star R_h\star\rho_k^\gamma(y)|^q\leq C\,|\Psi_l\star R_h\star \rho_k^\gamma(x)|^q+|\Psi_l\star R_h\star\rho_k^\gamma(y)|^q$$
we write
\[\begin{split}
&\int_{h_0}^1\sum_{l=|\log_2 h|}^{|\log_2 \delta_h|} \int_{\Pi^{d}} \frac{\overline K_h(z)}{h}\, \|\Psi_l\star R_h\star \rho_k^\gamma(t,.)-\Psi_l\star R_h\star\rho_k^\gamma(t,.+z)\|_{L^q_x}\\
&\qquad\leq C\,\sum_{l\leq |\log_2 h_0|+\log_2|\log_2 h_0|}\|\Psi_l\star R_h\star \rho_k^\gamma(t,.)\|_{L^q_x}\,\int_{2^{-l}}^{l\,2^{-l}} \frac{dh}{h}.
\end{split}
\]
recalling that $\delta_h=h/|\log_2 h|$. This leads to
\[\begin{split}
&\int_{h_0}^1\sum_{l=|\log_2 h|}^{|\log_2 \delta_h|} \int_{\Pi^{d}} \frac{\overline K_h(z)}{h}\, \|\Psi_l\star R_h\star \rho_k^\gamma(t,.)-\Psi_l\star R_h\star\rho_k^\gamma(t,.+z)\|_{L_x^q}\\
&\qquad \leq C\,\sum_{l\leq 2\,|\log_2 h_0|}\log l\, \|\Psi_l\star R_h\star \rho_k^\gamma(t,.)\|_{L^q_x},\\
\end{split}
\]
and can in turn be directly bounded by
\[\begin{split}
&\leq C\,\log|\log h_0|\,\sum_{l\leq 2\,|\log_2 h_0|} \|\Psi_l\star R_h\star \rho_k^\gamma(t,.)\|_{L^q_{x}}\\
&\leq C\,\log|\log h_0|\,|\log h_0|^{1/2}\|R_h\star \rho_k^\gamma(t,.)\|_{L^q_{x}}\leq C\,|\log h_0|^{\theta}\,\|\rho_k(t,.)\|_{L^{q\gamma}_{tx}}^{\gamma}, 
\end{split}
\]
with $0<\theta<1$ by Lemma \ref{truncatedbesov}. Combining with the previous estimate, we deduce that
\begin{equation}\begin{split}
\int_{h_0}^1 \int_{\Pi^{d}} \frac{\overline K_h(z)}{h}\, 
 \|R_h\star \rho_k^\gamma(t,.)
 & -R_h\star\rho_k^\gamma(t,.+z)\|_{L_x^q} \\
\hskip6cm
& \leq C\,|\log h_0|^{\theta}\,\|\rho_k(t,.)\|_{L^{q\gamma}_{x}}^{\gamma}.
\end{split}\label{smallRh1}
\end{equation}
with $0<\theta<1$. And therefore since $\chi_a'(\xi)\leq (1+\ell)\,|\xi|^\ell$, by H\"older's inequality with the relation 
$1/q+ {(1+\ell)}/{(1+\gamma+\ell)}=1$, that is $q= {(1+\ell+\gamma)}/{\gamma}$
\[
\begin{split}
&\int_{h_0}^1\int_0^t \int_{\Pi^{2d}} \frac{\overline K_h(x-y)}{h}\,(R_h\star\rho_k^\gamma (x)-R_h\star \rho_k^\gamma(y))\,\chi'_a(\delta\rho_k) \,\bar \rho_k\,(w_{a,h}(x)+w_{a,h}(y))\\
& \geq -C\,(1\!+\!\ell)\,\int_0^t\|\rho_k(t,.)\|_{L^{1+\ell+\gamma}_x}^{1+\ell}\,\int_{h_0}^1 \int_{\Pi^{d}} \frac{\overline K_h(z)}{h}\,\|R_h\star\rho_k^\gamma (t,.)-R_h\star \rho_k^\gamma(t,.+z)\|_{L^q_x}.\\
\end{split}
\]
Finally by \eqref{smallRh1} there exists $0<\theta<1$ s.t.
\begin{equation}
\begin{split}
&\int_{h_0}^1\int_0^t \int_{\Pi^{2d}} \frac{\overline K_h(x-y)}{h}\,(R_h\star\rho_k^\gamma (x)-R_h\star \rho_k^\gamma(y))\, \\
&\hskip5cm  \chi'_a(\delta\rho_k) \,\bar \rho_k\,(w_{a,h}(x)+w_{a,h}(y))\\
&\qquad\geq -C\,(1+\ell)\,|\log h_0|^{\theta}\,\|\rho_k\|_{L^{\gamma+\ell+1}}^{\gamma+\ell+1}.
\end{split}\label{boundIRh}
\end{equation}

\medskip
\noindent {\it Contribution of the $L_h$ part.}
It remains the term with $L_h$ where we symmetrize the position of the weight with respect to the convolution with $L_h$ by
\[\begin{split}
&\int_{h_0}^1\int_0^t \int_{\Pi^{2d}} \frac{\overline K_h(x-y)}{h}\,(L_h\star\rho_k^\gamma (x)-L_h\star \rho_k^\gamma(y))\,\chi'_a(\delta\rho_k) \,\bar \rho_k\,w_{a,h}(x)\\
&= I^L_h-\mbox{Diff},
\end{split}
\]
with
\[\begin{split}
&I^L_h=\int_{h_0}^1\int_0^t \int_{\Pi^{3d}} \frac{\overline K_h(x-y)}{h}\,L_h(z)(\rho_k^\gamma (x-z) -\rho_k^\gamma(y-z))\\
&\qquad\qquad\qquad\qquad\qquad \chi'_0(\delta\rho_k) \,\bar \rho_k\,w_h^{1-n}(x)\,w_h^{n}(x-z),\\
\end{split}
\]
for $n=1-1/\gamma$.
   Recall that since $w_{a,h}=\overline K_h\star w_a$,  $|w_{a,h}(x)-w_{a,h}(x-z)|\leq h^{-1}\,|z|$ while $|z|\sim \delta_h$ on the support of $L_h$.
Thus using that $|\chi_a'(\xi)|\leq C\,|\xi|^{\ell}$ from \eqref{defchi0} and that $|L_h(z)|\leq C\,|z|^{-d}$, one has 
\[\begin{split}
&\mbox{Diff}=\int_{h_0}^1 \int_0^t\int_{\Pi^{3d}} \frac{\overline K_h(x-y)}{h}\,L_h(z)(\rho_k^\gamma (x-z)- \rho_k^\gamma(y-z))\\
&\qquad\qquad
\chi'_a(\delta\rho_k) \,\bar \rho_k\,w_{a,h}^{1-n}(x)\,(w_{a,h}^n(x)-w_{a,h}^n(x-z))\\
&\ \leq C\,(1+\ell)\,\int_{h_0}^1\int_0^t \int_{\Pi^{3d}} \ind_{|z|\leq \delta_h}\,\frac{\overline K_h(w)}{h\,|z|^d}\,  |\rho_k^\gamma(x-z)+\rho_k^\gamma(x-z+w)|\,\bar \rho^{\ell+1}\,h^{-n}\,|z|^{n}\\
&\ \leq C\,(1+\ell)\,\int_{h_0}^1\int_0^t \int_{\Pi^{3d}} \ind_{|z|\leq \delta_h}\,\frac{\overline K_h(w)}{h^{1+n}\,|z|^{d-n}}\,  \big(\rho_k^{\gamma+\ell+1}(x-z)+\rho_k^{\gamma+\ell+1}(x-z+w)\\
&\qquad\qquad\qquad\qquad+\rho_k^{\gamma+\ell+1}(x)+\rho_k^{\gamma+\ell+1}(x-w)\big).\\
\end{split}
\]
Using $|z|\le \delta_h=\frac{h}{|\log_2 h|}$, we obtain on the other hand that
\[\begin{split}
\int_{h_0}^1\frac{dh}{h^{1+n}}\int_{|z|\leq \delta_h} \frac{dz}{|z|^{d-n}}&\leq C\,\int_{h_0}^1\frac{\,\delta_h^n dh}{h^{1+n}}=C\,\int_{h_0}^1\frac{dh}{h\,|\log h|^n}\\
&\leq C\,|\log h_0|^{1-n}.
\end{split}
\]
As $n=1-1/\gamma$, this leads to  
\begin{equation}
\mbox{Diff}\leq C\,(1+\ell)\,\|\rho_k\|_{L^{\gamma+\ell+1}}^{\gamma+\ell+1}\,|\log h_0|^{1/\gamma}.\label{boundDiff}
\end{equation}
As for the first term by H\"older inequality, using again that $|\chi_a'(\xi)|\leq C\,\ell\,|\xi|^{\ell}$
\[
\begin{split}
&I^L_h\leq C\,(1+\ell)\,\int_{h_0}^1\int_0^t \int_{\Pi^{d}}\,\left(\int_{\Pi^d} |\delta\rho_k|^{\ell+1}\,\bar \rho_k^{(\ell+1)/\ell}\,w_{a,h}^{(1-\theta)\,(\ell+1)/\ell}(x)\,dx\right)^{\ell/(\ell+1)}\\
&\quad \left(\int_{\Pi^d} \left(L_h\star\left((\rho_k^\gamma(.)  -\rho_k^\gamma(.+w))\,w_{a,h}^{\theta}(.)\right)\right)_{|x}^{(\gamma+\ell+1)/\gamma}\,dx\right)^{\gamma/(\gamma+\ell+1)}
\, \frac{\overline K_h(w)}{h}\,dw,\\
\end{split}
\]
provided that $l$ is chosen s.t.
\[
\frac{\gamma}{\gamma+\ell+1}+\frac{\ell}{\ell+1}=1,\quad\mbox{or}\qquad \ell+1=\frac{\gamma}{\gamma-1},
\]
implying for instance that
\[
\frac{\ell+1}{\ell}=\gamma, \quad \frac{\gamma+\ell+1}{\gamma}=\ell+1\dots
\]
Given those algebraic relations and recalling that $L_h\star$ is continuous on every $L^p$ for any $1<p<\infty$
\[
\begin{split}
&I^L_h\leq C\,(1+\ell)\,\int_{h_0}^1 \int_0^t \int_{\Pi^{d}} \frac{\overline K_h(w)}{h}\,\left(\int_{\Pi^d} 
 |\rho_k^\gamma (x) -\rho_k^\gamma(x+w)|^{\frac{\gamma+\ell+1}\gamma}\,w_{a,h} \,dx\right)^{\frac\gamma{\gamma+\ell+1}}\\
&\qquad\qquad\left(\int_{\Pi^d}
      |\chi'_0(\delta_k)|^{\gamma}\,\bar \rho_k^{\gamma}\,w_{a,h}\,dx\right)^{\ell/(\ell+1)}\,dw.\\
\end{split}
\]
Since, using the definition of $\ell$, 
\[
|\rho_k^\gamma (x) -\rho_k^\gamma(x+w)|^{(\gamma+\ell+1)/\gamma}
\leq \gamma\,\bar\rho^\gamma\,|\delta\rho_k|^{(\gamma+\ell+ 1)/\gamma}
=\gamma\,\bar\rho^\gamma\,|\delta\rho_k|^{\ell+1},
\]
one has
\begin{equation}
I^L_h\leq C\,(1+\ell)\,\gamma \int_{h_0}^1 \int_{\Pi^{2d}} \frac{\overline K_h(w)}{h}\,|\delta\rho_k|^{\ell+1}\,\bar \rho_k^{\gamma}\,w_{a,h}(x)\,dx\,dw,\label{boundILh}
\end{equation}
which multiplied by $- a_\mu\,\nu_k/2$  will be bounded by $I^D+II^D$ provided $|a_\mu|$ is small enough.

\bigskip

\noindent {\bf II) The quantity II.}
Let us turn to $II$ and decompose it as for $I$
\[
II=II_0+II^D+II^R,
\]
where
\[\begin{split}
II_0=-\frac{1}{2}\int_{h_0}^1 \int_0^t\int_{\Pi^{2d}} &\frac{\overline K_h(x-y)}{h}\,(\tilde D\rho\,u_k(x)+\tilde D\rho\,u_k(y))\\
&\quad (\chi'_a(\delta\rho_k)\delta\rho_k - 2\chi_a(\delta\rho_k))\,(w_{a,h}(x)+w_{a,h}(y)),
\end{split}
\]
while
\[\begin{split}
II^D=-\frac{\nu_k}{2}\int_{h_0}^1 \int_0^t\int_{\Pi^{2d}} &\frac{\overline K_h(x-y)}{h}\,(P_k(\rho_k(x))+P_k(\rho_k(y)))\\
&\quad (\chi'_a(\delta\rho_k)\delta\rho_k - 2\chi_a(\delta\rho_k))\,(w_{a,h}(x)+w_{a,h}(y)),
\end{split}
\]
and
\[\begin{split}
II^R= -\frac{a_\mu\,\nu_k}{2} \,\int_{h_0}^1 \int_0^t\int_{\Pi^{2d}} &\frac{\overline K_h(x-y)}{h}\,
    (A_\mu P_k(\rho_k(x)) +A_\mu P_k(\rho_k(y)))\\
&\quad  (\chi'_a(\delta\rho_k)\delta\rho_k -2\chi_a(\delta\rho_k))\,(w_{a,h}(x)+w_{a,h}(y)).
\end{split}
\]

\bigskip

\noindent {\bf II-1) Term $II_0$.} For the term $II_0$, using Lemma \ref{Drhou} in a manner identical to $I_0$
\[\begin{split}
II_0\leq -\int_{h_0}^1 \int_0^t\int_{\Pi^{2d}} &\frac{\overline K_h(x-y)}{h}\,\overline K_h\star \tilde D\rho\,u_k(x) (\chi'_a(\delta\rho_k)\delta\rho_k - 2\chi_a(\delta\rho_k))\,w_{a,h}(x)\\
& + C\,(1+\ell)\,\|\rho_k\|_{L^{\gamma+\ell+1}}^{1+\ell}\,|\log h_0|^{\theta},
\end{split}
\]
for some $0<\theta<1$. Using formula \eqref{divu0noniso} or \eqref{decomposedivu}, one has that
\[
\div u_k - a_\mu A_\mu P_k(\rho_k) \geq \tilde D\rho\,u_k,
\]
and hence since $- \chi'_a\,\xi + 2\chi_a\geq -C\,(1+\ell)\,\chi_a$
\begin{equation}
\begin{split}
&II_0\leq C\,\ell\,\|\rho_k\|_{L^{\gamma+\ell+1}}^{1+\ell}\,|\log h_0|^{\theta}\\
& +C\,(1\!+\!\ell)\,\int_{h_0}^1 \int_0^t\int_{\Pi^{2d}} \frac{\overline K_h(x-y)}{h}\, 
  (\overline K_h\star (|\div u_k| + a_\mu |A_\mu P_k(\rho_k)|)\,  \chi_a(\delta\rho_k)\,w_{a,h}(x),\\
\end{split}\label{boundII0}
\end{equation}
and the first integral will be bounded by $\Pi_a/2$ for $\lambda$ large enough.

\bigskip

\noindent {\bf II-2) Term $II^D$.}  The term $II^D$ is controlled by $I^D$:
For $a\geq b$, by \eqref{defchi0}
\[
(a^\gamma+b^\gamma)\,
(- \chi_a'(a-b)(a-b) + 2\chi_a(a-b)) \geq- (a^\gamma-b^\gamma)\,\frac{\ell-1}{\ell}\chi_a'(a-b)(a+b).
\]
Therefore 
\begin{equation}
I^D+II^D\leq -C\,\gamma\,\frac{\nu_k}{2}\,\int_0^t\int_{\Pi^{2d}} \frac{\overline K_h(x-y)}{h}\,|\delta\rho_k|^{\ell+1}\,\bar \rho_k^\gamma\,(w_h(x)+w_h(y)),
\label{boundIID}
\end{equation}
for some $C$ independent of $\ell$ and $\gamma$.

\bigskip

\noindent {\bf  II-3) Term $II^R$.}
  The control on the last term, $II^R$, requires the use of the penalization $\Pi_a$
\[\begin{split}
II^R+\frac{1}{2}\Pi_a\leq &-a_\mu\,\nu_k \,\int_{h_0}^1 \int_0^t\int_{\Pi^{2d}} \frac{\overline K_h(x-y)}{h}\,(A_\mu\rho_k^\gamma (x)-A_\mu\overline K_h\star \rho_k^\gamma(x))\\
&\hskip4.5cm 
 (\chi'_a(\delta\rho_k)\delta\rho_k -2\chi_a(\delta\rho_k))\,w_{a,h}(x).\\
\end{split}
\]
We use the same decomposition of $A_\mu=L_h+R_h$ as for $I^R$.

\medskip

\noindent {\it Contribution of the $R_h$ part.}

Note that, as $\chi_a(\xi)\leq  C\,|\xi|^{1+\ell}$ and $|\chi_a'|\leq C\,(1+\ell)\,|\xi|^\ell$, for $q= {(1+\ell+\gamma)}/{\gamma}$ or $1/q+ {(1+\ell)}/{(1+\ell+\gamma)}=1$ 
\[\begin{split}
&-a_\mu\,\nu_k \,\int_{h_0}^1 \int_0^t\int_{\Pi^{2d}} \frac{\overline K_h(x-y)}{h}\,(R_h\star\rho_k^\gamma (x)-\overline K_h\star R_h\star\rho_k^\gamma(x))\\
&\qquad\qquad
 (\chi'_a(\delta\rho_k)\delta\rho_k -2\chi_a(\delta\rho_k))\,w_{a,h}(x)\\
&\ \leq C\,(1+\ell)\int_0^t\|\rho_k(t,.)\|_{L^{1+\ell+\gamma}_x}^{1+\ell}\, \int_{h_0}^1 \int_{\Pi^{d}} \frac{\overline K_h(z)}{h}\,\|R_h\star\rho_k^\gamma (t,.)-R_h\star\rho_k^\gamma (t,.+z)\|_{L^q_x}.
\end{split}
\]
Now by estimate \eqref{smallRh1}, we have that
\begin{equation}\begin{split}
&-a_\mu\,\nu_k \,\int_{h_0}^1 \int_0^t\int_{\Pi^{2d}} \frac{\overline K_h(x-y)}{h}\,(R_h\star\rho_k^\gamma (x)-\overline K_h\star R_h\star\rho_k^\gamma(x))\\
&\hskip6cm 
 (\chi'_a(\delta\rho_k)\delta\rho_k -2\chi_a(\delta\rho_k))\,w_{a,h}(x)\\
&\ \leq C\,(1+\ell)\,|\log h_0|^{3/4}\int_0^t \|\rho_k(t,.)\|_{L^{1+\ell+\gamma}_x}^{1+\ell}\,\|\rho_k(t,.)\|_{L^{\gamma\,q}_x}^\gamma\\
&\ \leq C\,(1+\ell)\,|\log h_0|^{3/4}\,\|\rho_k\|_{L^{1+\ell+\gamma}_{t,x}}^{1+\ell+\gamma}.
\end{split}\label{boundIIRh}
\end{equation}

\medskip

\noindent {\it Contribution of the $L_h$ part.} Similarly as for $I^R$, we symmetrize the weights leading to the following decomposition
\[
\begin{split}
&-a_\mu\,\nu_k \,\int_{h_0}^1 \int_0^t\int_{\Pi^{2d}} \frac{\overline K_h(x-y)}{h}\,(L_h\star\rho_k^\gamma (x)-\overline K_h\star L_h\star\rho_k^\gamma(x)) \\
&\hskip6cm  (\chi'_a(\delta\rho_k)\delta\rho_k -2\chi_a(\delta\rho_k))\,w_{a,h}(x)\\
&\qquad\qquad=II^L_h+\mbox{Diff}_2,
\end{split}
\]
where
\[
\begin{split}
II^L_h=a_\mu\,\nu_k\, \int_{h_0}^1 \int_0^t\int_{\Pi^{2d}} &\frac{\overline K_h(x-y)}{h}\,L_h\star\bigl(w_{a,h}^n(x)\,(\rho_k^\gamma (x)-\overline K_h\star \rho_k^\gamma(x))\bigr)\\
&\hskip2.8cm  (-\chi_a'(\delta\rho_k)\delta\rho_k +2\chi_a(\delta\rho_k))\,w_{a,h}^{1-n}(x),
\end{split}
\]
with still $n=1-1/\gamma$. The term $\mbox{Diff}_2$ is controlled as the term $\mbox{Diff}$ in $I^R$ using the regularity of $w_{a,h}$ and yielding
\begin{equation}
\mbox{Diff}_2\leq C\,(1+\ell)\,|\log h_0|^\gamma\,\|\rho_k\|_{L^{\gamma+1+\ell}}^{\gamma+1+\ell}.\label{boundDiff2}
\end{equation}

\smallskip

We handle $II^L_h$ with H\"older estimates quite similar to the ones used for the term~$I^L_h$, recalling that $L_h\star$ is bounded on any $L^q$ space for $1<q<\infty$
\[\begin{split}
&II^L_h\leq C\, a_\mu\,\nu_k\,(1+\ell)\,\int_{h_0}^1 \int_0^t\int_{\Pi^{d}} \frac{\overline K_h(w)}{h}\,\|w_{a,h}^\theta\,(\rho_k^\gamma (.)-\overline K_h\star \rho_k^\gamma(.)))\|_{L^{\ell+1}}\\
&\qquad\qquad\qquad\qquad\qquad \||\delta\rho_k|^{\ell+1}\,w_{a,h}^{1-\theta}(x)\|_{L^{(\ell+1)/\ell}}\,dw\\
&\ \leq C\, a_\mu\,\nu_k\,(1+\ell)\,\int_{h_0}^1 \int_0^t\int_{\Pi^{d}} \frac{\overline K_h(w)}{h} \int_{\Pi^d} w_{a,h}(x)\,|\rho_k^\gamma(x)-\overline K_h\star \rho_k^\gamma(x)|^{\ell+1}\,dx\,dw\\
&\ +C\,  a_\mu\,\nu_k\,(1+\ell)\,\int_{h_0}^1 \int_0^t\int_{\Pi^{d}} \frac{\overline K_h(w)}{h} \int_{\Pi^d}
w_{a,h}(x)\,|\delta\rho_k|^{(\ell+1)^2/\ell}\,dx\,dw.
\end{split}\]
One has immediately that
\[\begin{split}
&\int_{h_0}^1 \int_0^t\int_{\Pi^{2d}} \frac{\overline K_h(w)}{h}\,
w_{a,h}(x)\,|\delta\rho_k|^{(\ell+1)^2/\ell}\,dx\,dw\\
&\qquad\leq\int_{h_0}^1 \int_0^t\int_{\Pi^{2d}} \frac{\overline K_h(w)}{h}\,
w_{a,h}(x)\,|\delta\rho_k|^{\ell+1}\,\bar\rho_k^{\gamma}\,dx\,dw\,
\end{split}\]
as $|\delta\rho_k|\leq\bar\rho_k$ and again $(\ell+1)/\ell=\gamma$. 

\medskip

\noindent Finally as $(\ell+1)\,(\gamma-1)=\gamma$
\[\begin{split}
&\int_{h_0}^1 \int_0^t\int_{\Pi^{2d}} \frac{\overline K_h(w)}{h}\,  w_{a,h}(x)\,|\rho_k^\gamma(x)-\overline K_h\star \rho_k^\gamma(x)|^{\ell+1}\,dx\,dw\\
&\ \leq \gamma\,\int_{h_0}^1 \int_0^t\int_{\Pi^{3d}} \frac{\overline K_h(w)}{h}\,\overline K_h(z)\,w_{a,h}(x)\,|\rho_k(x)-\rho_k(x+z)|^{\ell+1}\, (\rho_k(x)+\rho_k(x+z))^\gamma\\
&\ \leq \gamma\,\int_{h_0}^1 \int_0^t\int_{\Pi^{2d}} \frac{\overline K_h(z)}{h}\,w_{a,h}(x)\,|\rho_k(x)-\rho_k(x+z)|^{1+\ell}\,(\rho_k(x)+\rho_k(x+z))^\gamma,\\
\end{split}\]
as $\overline K_h(w)$ is the only term depending on $w$ and is of integral $1$. 
Therefore 
\begin{equation}
II_h^L\leq C\,a_\mu\,\nu_k\,\gamma\,(1+\ell)\,\int_{h_0}^1 \int_0^t\int_{\Pi^{2d}} \frac{\overline K_h(z)}{h}\,w_{a,h}(x)\,|\delta\rho_k|^{1+\ell}\,\bar\rho_k^\gamma.
\label{boundIILh}\end{equation}

\bigskip

To conclude, we sum all the contributions, more precisely \eqref{boundmoddrhou}, \eqref{boundIRh}, \eqref{boundDiff}, \eqref{boundILh}, \eqref{boundII0}, \eqref{boundIID}, \eqref{boundIIRh}, \eqref{boundDiff2}, and \eqref{boundIILh}, to find for some $0<\theta<1$ and provided $p>\gamma+1+\ell$
\[
\begin{split}
&\int_{h_0}^1 \int_{\Pi^{2d}} \frac{\overline K_{h}(x-y)}{h} (w_{a,h}(x) + w_{a,h}(y))\,  \chi_a(\delta\rho_k) (t)  \\
& \ \leq 
\int_{h_0}^1 \int_{\Pi^{2d}} \frac{\overline K_{h}(x-y)}{h} (w_{a,h}(x) + w_{a,h}(y))\,  \chi_a(\delta\rho_k)|_{t=0}+C\,(1+\ell)\,|\log h_0|^\theta\\
&\qquad+C\,\left(a_\mu\,\nu_k\,(1+\ell)-C\,\frac{\nu_k}{2}\right)\,\gamma\,\int_{h_0}^1 \int_0^t\int_{\Pi^{2d}} \frac{\overline K_h(z)}{h}\,w_{a,h}(x)\,|\delta\rho_k|^{1+\ell}\,\bar\rho_k^\gamma.
\end{split}
\]
This finishes the proof of the lemma: As $1+\ell={\gamma}/{(\gamma-1)}$ is bounded (we recall that $\gamma>d/2$), if $a_\mu\leq C_*$ for $C_*>0$ well chosen, the last term in the r.h.s. is non positive.
\end{proof}
\subsection{Conclusion of the proof of Theorem \ref{maincompactness3}}
We combine Lemmas \ref{boundQhAniso} and \ref{divanisotropic} to get
 the following estimate
\[\begin{split}
\int_{h_0}^1 \int_{\Pi^{2d}}  
& \overline K_h(x-y) (w_{a,h}(x)+w_{a,h}(y)) \chi_a(\delta\rho_k)(t)
    dx dy \frac{dh}{h}  \\ 
& \le C\, |\log h_0|^\theta + \mbox{initial value},
\end{split}\]
with $0<\theta<1$. 
 We now follow the same steps as in the proof of Theorem \ref{maincompactness} with the weight $w_0(t,x)+w_0(t,y)$. We define $\omega_\eta=\{w_{a,h}(t,x)\leq \eta\}$ and note that
\[\begin{split}
\int_{\Pi^{2d}} {\cal K}_{h_0}(x-y)\,\chi_a(\delta\rho_k)&=\int_{h_0}^1\int_{\Pi^{2d}} {\overline K}_h(x-y)\,\chi_a(\delta\rho_k)\,\frac{dh}{h}\\
&=\int_{h_0}^1\int_{x\in\omega_\eta^c\ or\ y\in\omega_\eta^c} {\overline K}_h(x-y)\,\chi_a(\delta\rho_k)\,\frac{dh}{h}\\
&+\int_{h_0}^1\int_{x\in\omega_\eta\ and\ y\in\omega_\eta} {\overline K}_h(x-y)\,\chi_a(\delta\rho_k)\,\frac{dh}{h}.\\
\end{split}\]
Now
\[\begin{split}
&\int_{h_0}^1\int_{x\in\omega_\eta^c\ or\ y\in\omega_\eta^c}  \overline K_h(x-y)\,\chi_a(\delta\rho_k)\,\frac{dh}{h}\\
&\qquad\leq \frac{1}{\eta}\, \int_{h_0}^1 \int_{\Pi^{2d}}  \overline K_h(x-y) (w_{a,h}(x)+w_{a,h}(y)) \chi_a(\delta\rho_k)
\frac{dh}{h}\leq C\, |\log h_0|^\theta,
\end{split}
\]
while by point iii) in Prop. \ref{weightprop} and using the $L^p$ bound on $\rho$, for some $\theta>0$
\[
\begin{split}
\int_{h_0}^1\int_{x\in\omega_\eta\ and\ y\in\omega_\eta} {\overline K}_h(x-y)\,\chi_a(\delta\rho_k)\,\frac{dh}{h}&\leq 2\,\int_{h_0}^1\int_{\Pi^{2d}} {\overline K}_h(x-y)\,\rho_k^{1+\ell}\,\ind_{\overline K_h\star w_a\leq \eta}\,\frac{dh}{h}\\
&\leq \frac{C\,|\log h_0|}{|\log \eta|^{\theta}}.
\end{split}
\]
Hence we have
\[
\begin{split}
\int_{\Pi^{2d}} {\cal K}_{h_0}(x-y)\,\chi_a(\delta\rho_k)(t)
&\leq C\,|\log h_0|\,\left(\frac{|\log h_0|^{\theta-1}}{\eta}+\frac{1}{|\log \eta|^{\theta}}\right)\\
&\leq \frac{C\,\|{\cal K}_{h_0}\|_{L^1}}{|\log |\log h_0||^{\theta}},
\end{split}
\]
by optimizing in $\eta$ and recalling that $\|{\cal K}_{h_0}\|_{L^1}=|\log h_0|$.

 Using Prop. \ref{kernelcompactness} together with Lemma \ref{lemmakernelaveraged}, one concludes that $\rho_k$ is locally compact in $t,\;x$.  Thus we
conclude the proof of Theorem \ref{maincompactness3}.
%
\section{Proof of Theorems \ref{MainResultPressureLaw} and \ref{MainResultAniso}: Approximate sequences \label{sec-proof-mainresults}}
In this section, we construct approximate systems that allow
to use Theorems \ref{maincompactness} and \ref{maincompactness3} to prove  Theorems \ref{MainResultPressureLaw} and \ref{MainResultAniso}.

We do not need to use here pressure laws $P$ which depend explicitly on $t$ or $x$, which simplifies the form of the assumptions on the behavior of $P$, either \eqref{monotone} or \eqref{nonmonotone}.
\subsection{From regularized systems with added viscosity to no viscosity}
Our starting point for global existence is the following regularized system  
\begin{equation}\label{Barocompeps}
\left\{
\begin{array}{rl}
& \partial_t \rho_{k} + {\rm div} (\rho_{k} u_{k}) 
     = \alpha_k \Delta \rho_{k},\\
& \partial_t (\rho_k u_k) 
      + {\rm div}(\rho_k u_k\otimes u_k)  
      - \mu \Delta u_k - (\lambda+\mu) \nabla {\rm div } u_k -{\cal A}_\eps\star u_k 
      \\
& \hskip3cm  + \nabla P_\eps(\rho_k) + \alpha_k \nabla \rho_k\cdot \nabla u_k
                                    =\rho_k f,
\end{array}
\right. 
\end{equation}
with the fixed initial data
\begin{equation}
\rho_k\vert_{t=0} =\rho^0,\qquad \rho_k\, u_k\vert_{t=0} =\rho^0\,u^0.\label{datainitial}
\end{equation} 
The pressure $P_\eps$ satisfies the bound
 \eqref{gammacontrol} with $\gamma>3d/(d+2)$ uniformly in $\eps$, that is 
\[
C^{-1}\,\rho^\gamma-C\leq P_\eps(\rho)\leq C\,\rho^\gamma+C,
\]
implying that $e(\rho)\geq C^{-1}\,\rho^{\gamma-1}-C$.
 In addition we ask that $P_\eps$ satisfies the quasi-monotone property \eqref{monotone} but possibly depending on $\eps$, {\em i.e.} there exists $\rho_{0,\eps}$ s.t.
\[
(P_\eps(s)/s)'\geq 0\ \mbox{for all}\ s\geq \rho_{0,\eps}.
\]
And finally we impose an $\eps$ dependent bound \eqref{gammacontrol} on $P_\eps$ for some $\gamma_\eps>d$
\begin{equation}
C^{-1}_\eps\,\rho^{\gamma_\eps}-C\leq P_\eps(\rho)\leq C_\eps\,\rho^{\gamma_\eps}+C,\label{gamma_eps}
\end{equation}
Similarly ${\cal A}_\eps$ is assumed to be a given smooth function, possibly depending on $\eps$ but such that the operator defined by
\[
{\cal D}_\eps\,f=- \mu \Delta f - (\lambda+\mu) \nabla {\rm div } f-{\cal A}_\eps\star f
\]   
satisfies \eqref{generalstress} and \eqref{generalstress2} uniformly in $\eps$. 

  As usual the equation of continuity is regularized by means of an artificial viscosity term
and the momentum balance is replaced by a Faedo-Galerkin approximation to eventually reduce the problem
on $X_n$, a finite-dimensional vector space of functions. 
 
This approximate system can then be solved by a standard procedure: The velocity $u$ of the approximate momentum equation is looked as a fixed point of a suitable integral operator. Then 
given $u$, the approximate continuity equation is solved directly by means of the standard theory of linear parabolic equations. This methodology concerning the compressible Navier--Stokes equations is well explained and described in the reference books \cite{Fei}, \cite{FeNo}, \cite{NoSt}. We omit the rest of this classical (but tedious) procedure and we assume that we have well posed and global weak solutions to \eqref{Barocompeps}.

We now use the classical energy estimates detailed in subsection \ref{basicaprioriest}. Note that they remain the same in spite of the added viscosity in the continuity equation because in particular of the added term $\alpha_k \nabla \rho_k\cdot \nabla u_k$ in the momentum equation. Let us summarize the {\it a priori} estimates that are obtained
\[
\sup_{k,\eps}\,\sup_t \int_{\Pi^{d}} (\rho_k\,|u_k|^2+\rho_k^\gamma)\,dx<\infty,\qquad
\sup_{k,\eps}\int_0^T\int_{\Pi^d} |\nabla u_k|^2\,dx\,dt<\infty,
\] 
from \eqref{energyestimates}. The estimate \eqref{gainintegrability} may actually require the $\eps$ dependent bound from \eqref{gamma_eps} with $\gamma_\eps>d$ to control the additional term $\alpha_k\,\nabla\rho_k\cdot\nabla u_k$. It provides
\[
\sup_{k}\int_0^T\int_{\Pi^d} \rho_k^p(t,x)\,dx\,dt<\infty,
\]
with $p_\eps=\gamma_\eps+2\,\gamma_\eps/d-1$ or $p=\gamma+2\,\gamma/d-1$ which means $p>2$ as $\gamma>3\,d/(d+2)$. This bound may not be independent of $\eps$ because it requires \eqref{gamma_eps}. However since $\alpha_k$ vanishes at the limit, it still implies that any weak limit $\rho$ of $\rho_k$ satisfies
\[
\sup_{\eps}\int_0^T\int_{\Pi^d} \rho_k^p(t,x)\,dx\,dt<\infty,
\]
for $p=\gamma+2\,\gamma/d-1$.

From these a priori estimates, we obtain \eqref{boundrhok} and \eqref{bounduk}.
And from those bounds it is straightforward to deduce that $\rho_k\,u_k$ and $\rho_k\,|u_k|^2$ belong to $L^q_{t,x}$ for some $q>1$, uniformly in $k$. Therefore using the continuity equation in \eqref{Barocompeps}, we deduce \eqref{rhorho_t}. Using the momentum equation, we obtain \eqref{rhout} but this bound (and only this bound) is not independent of $\eps$ because of ${\cal A}_\eps$.
 
Finally taking the divergence of the momentum equation and inverting $\Delta$
\[\begin{split}
(\lambda+2\mu)\,\div u_k=&P_\eps(\rho_k)+\Delta^{-1}\div(\partial_t(\rho_k\,u_k)+\div(\rho_k\,u_k\otimes u_k))\\
&-\Delta^{-1}\,\div (\rho_k\,f+{\cal A}_\eps \star u_k)+\alpha_k\Delta^{-1}\,\div (\nabla\rho_k\cdot\nabla u_k),
\end{split}\]
which is exactly \eqref{divu0} with $\mu_k=\lambda+2\,\mu$ satisfying \eqref{elliptic} and compact, while
\[
F_k=-\Delta^{-1}\,\div (\rho_k\,f+{\cal A}_\eps \star u_k)+\alpha_k\Delta^{-1}\,\div (\nabla\rho_k\cdot\nabla u_k).
\]
The first term in $F_k$ is also compact in $L^1$ since ${\cal A}_\eps$ is smooth for a fixed $\eps$. On the other hand
\[
\alpha_k\Delta^{-1}\,\div (\nabla\rho_k\cdot\nabla u_k)
\]
converges to $0$ in $L^1$ since $\sqrt{\alpha_k}\nabla \rho_k$ is uniformly bounded in $L^2$ and $\nabla u_k$ is as well in $L^2$. Therefore $F_k$ is compact in $L^1$.
  We may hence apply point i) of Theorem \ref{maincompactness} to obtain the compactness of $\rho_k$ in $L^1$. Then extracting converging subsequences, we can pass to the limit in every term (see subsection \ref{classicalapproach} for instance) and obtain the existence of weak solutions to
\begin{equation}\label{Barocompepslim}
\left\{
\begin{array}{rl}
& \partial_t \rho + {\rm div} (\rho u) 
     = 0,\\
& \partial_t (\rho u) 
      + {\rm div}(\rho u\otimes u)  
      - \mu \Delta u - (\lambda+\mu) \nabla {\rm div } u -{\cal A}_\eps\star u 
  + \nabla P_\eps(\rho) =\rho f.
\end{array}
\right. 
\end{equation} 
\subsection{General pressure laws: End of proof  (Theorem \ref{MainResultPressureLaw})}
    Consider now a nonmonotone pressure $P$ satisfying \eqref{gammacontrol} and \eqref{nonmonotonehyp}.  
     Let us fix $c_{0,\eps}=1/ \eps$ and define
\[
P_\eps(\rho)=P(\rho)\quad \mbox{if}\ \rho\leq c_{0,\eps},\qquad 
P_\eps(\rho)=  P(c_{0,\eps}) + C (\rho-c_{0,\eps})^\gamma,
  \quad \mbox{if}\ \rho\geq c_{0,\eps}. 
\] 
If $\gamma\leq d$ then we also add to $P_\eps$ a term in $\eps\,\rho_\eps^{\gamma_\eps}$ to satisfy \eqref{gamma_eps}.
 Note that $P_\eps$ is Lipschitz, converges uniformly to $P$ on any compact interval. 
Due to \eqref{gammacontrol}  there exists $\rho_{0,\eps}$ with $\rho_{0,\eps}\to +\infty$ as $\eps\to +\infty$ 
 such that for $\rho\ge \rho_{0,\eps}$,
$$(P_\eps(s)/s)'= (P_\eps'(s)s - P(s))/s^2
       \ge \bigl(C (\gamma-1) (\rho -c_{0,\eps})^\gamma - P(c_{0,\eps})\bigr)/s^2 \ge 0.$$
  The approximate pressure $P_\eps$ still satisfies \eqref{gammacontrol} with $\gamma$ and due
  to the previous inequality it satisfies \eqref{monotone} for $\rho\geq \rho_{0,\eps}$ and 
  \eqref{nonmonotone} in the following sense: For all $s\ge0$
 $$ |P'_\eps(s)| \le \overline P s^{\tilde\gamma-1} \ind_{s\le c_{0,\eps}} 
                                 + C (\gamma-1) s^{\gamma-1} \ind_{s\ge c_{0,\eps}}.$$
     As a consequence, we have existence of weak solutions $(\rho,\; u)$ to \eqref{Barocompeps} for this $P_\eps$ (assuming ${\cal A}_\eps=0$) for any $\eps>0$. 
 Consider a sequence $\eps_k\rightarrow 0$ and the corresponding sequence $(\rho_k,\;u_k)$ of weak solutions to \eqref{Barocompeps}.  

Because the previous {\it a priori} estimates were uniform in $\eps$ for the limit $\rho$ and $u$, (including \eqref{rhout} since ${\cal A}_\eps=0$), then the sequence $(\rho_k,\;u_k)$
satisfies all the bounds \eqref{boundrhok}, \eqref{bounduk}, \eqref{rhout}, \eqref{rhorho_t} and \eqref{nonmonotone}. 
 
Moreover the representation \eqref{divu0} still holds with $\mu_k=2\mu+\lambda$, compact in $L^1$ and satisfying \eqref{elliptic}.
 Finally the exponent $p$ in \eqref{boundrhok} can be chosen up to $\gamma+2\,\gamma/d-1$. Since $\gamma>3\,d/(d+2)$ then $p>2$ and since $\gamma>(\tilde\gamma+1)\,d/(d+2)$ then one has $p>\tilde\gamma$ as well. 

Therefore all the assumptions of point ii) of Theorem \ref{maincompactness} are satisfied and one has the compactness of $\rho_k$. Extracting converging subsequences of $\rho_k$ and $u_k$, one passes to the limit in every term. Note in particular that $P_{\eps_k}(\rho_k)$ converges in $L^1$ to $P(\rho)$ by the compactness of $\rho_k$, the uniform convergence of $P_{\eps_k}$ to $P$ on compact intervals and by truncating $P_{\eps_k}(\rho_k)$ for large values of $\rho_k$ since the exponent $p$ in \eqref{boundrhok} is strictly larger than $\gamma$. 

This proves the global existence in Theorem \ref{MainResultPressureLaw}. The regularity of $\rho$ follows from Theorem \ref{maincompactness2}, which concludes the proof of Theorem \ref{MainResultPressureLaw}.   
\subsection{Anisotropic viscosities: End of proof (Theorem \ref{MainResultAniso})}
For simplicity, we take $f=0$. Consider now a ``quasi-monotone'' pressure $P$ satisfying \eqref{strictmonotone}.  Observe that $P$ then automatically satisfies \eqref{gammacontrol}
since $P(0)=0$.
  To satisfy \eqref{monotone}, we have to modify $P$ on an interval $(c_{0,\eps}, +\infty)$
  with $c_{0,\eps} \to + \infty$ when $\eps\to +\infty$.
More precisely we consider $P_\eps$ as defined in the previous subsection
  \[
P_\eps(\rho)=P(\rho)\quad \mbox{if}\ \rho\leq c_{0,\eps},\qquad 
P_\eps(\rho)=  P(c_{0,\eps}) + C (\rho-c_{0,\eps})^\gamma,
  \quad \mbox{if}\ \rho\geq c_{0,\eps}. 
\] 
Remark that since $\gamma>d$ here, we never need to add a term with $\gamma_\eps$.

  Now given any smooth kernel, for instance $\overline K$, we define
\[
{\cal A}_\eps\star\,u=\div(\delta A(t)\,\nabla\,\overline K_\eps\star u).
\]
Because of the smallness assumption on $\delta A(t)$, the operator ${\cal D}_\eps$ satisfies \eqref{generalstress} and \eqref{generalstress2} uniformly in $\eps$. Therefore we have existence of global weak solutions to \eqref{Barocompepslim} with this choice of $P_\eps$ and ${\cal A}_\eps$.
  We again consider a sequence of such solutions $(\rho_k,\;u_k)$ corresponding to some sequence $\eps_k\rightarrow 0$.
 Because the estimates are uniform in $\eps$ for \eqref{Barocompepslim}, we have again that this sequence satisfies the bounds \eqref{boundrhok}, \eqref{bounduk}, \eqref{rhorho_t}.  
We now assume that 
\[
\gamma>\frac{d}{2}\,\left[\left(1+\frac1d\right)+\sqrt{1+\frac1{d^2}}\right],
\]
implying that $p$ in \eqref{boundrhok} is strictly larger than $\gamma^2/(\gamma-1)$. Moreover observe that
\[
\|{\cal A}_{\eps_k} u_k\|_{L^2_{t}\,H^{-1}_x}\leq C\,\|\nabla u_k\|_{L^2_{t,x}},
\]
such that \eqref{rhout} is also satisfied.

For simplicity, we assume that $\delta A$ has a vanishing trace (otherwise just add the corresponding trace to $\mu$).
Denote $a_\mu=2\,\|\delta A\|_{L^\infty}/(2\,\mu+\lambda)$ and the operator $E_k$
\[
E_k\,u=-\div\left(\frac{\delta A(t)}{2\,\|\delta A\|_{L^\infty}}\,\nabla\,\overline K_\eps\star u\right)
=-\sum_{ij}\frac{\delta A_{ij}(t)}{2\,\|\delta A\|_{L^\infty}}\,\partial_{ij} \overline K_{\eps_k}\star u.
\]
For $a_\mu$ small enough, $\Delta-a_\mu\, E_k$ is a uniform elliptic operator so that $(\Delta-a_\mu\,E_k)^{-1}\,\Delta$ is bounded on every $L^p$ space, uniformly in $k$. For the same reason, $A_\mu=(\Delta-a_\mu\,E_k)^{-1}\,E_k$ is bounded on every $L^p$ space with norm less than $1$ and can be represented by a convolution with a singular integral.

Taking the divergence of the momentum equation in \eqref{Barocompeps}, one has
\[
(2\mu+\lambda)\,\big(\Delta\,\div u_k-a_\mu\,E_k\,\div u\big)=\Delta\,P(\rho_k)+\div(\partial_t(\rho_k\,u_k)+\div(\rho_k\,u_k\otimes u_k)).
\]
Just write $\Delta\,P_\eps(\rho_k)=(\Delta-a_\mu\,E_k)\,P_\eps(\rho_k)+a_\mu\,E_k\,P_\eps$, take the inverse of $\Delta-a_\mu\,E_k$ to obtain
\[\begin{split}
(2\mu+\lambda)\,\div u=&P(\rho_k)+a_\mu\,(\Delta-a_\mu\,E_k)^{-1}\,E_k\, P(\rho_k)\\
&+(\Delta-a_\mu\,E_k)^{-1}\, \div(\partial_t(\rho_k\,u_k)+\div(\rho_k\,u_k\otimes u_k)),
\end{split}\]
which is exactly \eqref{divu0noniso} with $\nu_k=(2\,\mu+\lambda)^{-1}$.  As a consequence, if $a_\mu\leq C_*$, which is implied by $\|\delta A\|_{L^\infty}$ small enough, then Theorem \ref{maincompactness3} applies and $\rho_k$ is compact. Passing to the limit again in every term proves Theorem \ref{MainResultAniso}. Note that $P_{\eps_k}(\rho_k)$ converges
in $L^1$ to $P(\rho)$ for the same reason than in the previous subsection.

\bigskip

\noindent {\it The case with $D(u)$ instead of $\nabla u$.}
Let us finish this proof by remarking on the different structure in the case with symmetric stress tensor $\div(A\,D\,u)$. In that case, one cannot find $\div u_k$ by taking the divergence of the momentum equation but instead we have to consider the whole momentum equation. Let us write it as
\[
{\cal E}\,u_k=\nabla P(\rho_k)+\partial_t(\rho_k\,u_k)+\div(\rho_k\,u_k\otimes u_k),
\] 
with ${\cal E}$ the elliptic vector-valued operator
\[
{\cal E}\,u=\mu\,\Delta u+(\mu+\lambda)\,\nabla \div u+\div(\,\delta A\,D\,u).
\] 
The operator ${\cal E}$ is invertible for $\delta A$ small enough as one can readily check in Fourier for instance where $-\hat{\cal E}$ becomes a perturbation of $\mu\,|\xi|^2\,I+(\mu+\lambda)\,\xi\otimes \xi$. Its inverse has most of the usual properties of inverses of scalar elliptic operator (with the exception of the maximum principle for instance). Therefore, one may still write
\[
\div u_k=\div\,{\cal E}^{-1}\,\nabla P(\rho_k)+\div\,{\cal E}^{-1}\,(\partial_t(\rho_k\,u_k)+\div(\rho_k\,u_k\otimes u_k)),
\]
leading to the variant \eqref{divu0nonisosym} of the simpler formula \eqref{divu0noniso}.
%
\section{Extension to the Navier--Stokes-Fourier system\label{withtemperature}}
We present here a direct application of our compactness results to compressible Navier-Stokes systems with temperature. Those systems are considered to be more physically realistic. They also offer many examples of non monotone pressure laws thus being especially relevant to our approach without any thermodynamic stability assumption.
 
This chapter is in many respects a preliminary work, with non optimal assumptions. It is likely that further works would require estimates specifically tailored to the case under consideration, and for the same reason would require to specifically tailor the compactness Theorem \ref{maincompactness}. But we believe that the result shown here can provide a good general basis or first step for such future works.
\subsection{Model and estimates}
The heat-conducting compressible Navier--Stokes equations  read
 \begin{equation}\label{HeatCNS}
\left\{
\begin{array}{rl}
& \partial_t \rho + {\rm div} (\rho u) =0,\\
& \partial_t (\rho u) + {\rm div}(\rho u\otimes u) - {\rm div} \, {\cal S}  + \nabla P(\rho,\vartheta)  = 0,\\
& \partial_t (\rho E) + {\rm div}(\rho u E)  + {\rm div} (P(\rho,\vartheta) u) 
                                   =  {\rm div} (  {\cal S} u) + {\rm div} (\kappa(\vartheta) \nabla\vartheta),
\end{array}
\right. 
\end{equation} 
where $E= |u|^2/2 + e$ is the total energy with $P=P(\rho,\vartheta)$ and $e=e(\rho,\vartheta)$ respectively  stand for the  pressure and the  (specific) internal energy. As usual the system is supplemented with initial conditions. 

We denote the stress tensor ${\cal D}={\rm div}\, {\cal S}$ and as in the barotropic case,  we always assume that for some function $\bar \mu$
\[
\int \nabla u\,:\,{\cal S}\,dx\sim \int \bar \mu\,|\nabla u|^2\,dx.
\]
Instead of the temperature $\vartheta$, one could also choose as a third unknown the internal energy (or the entropy as defined below). Some formulas are easier when using the set of variables $(\rho,\vartheta)$ and some with the set $(\rho, e)$. 
For this reason and simplicity we here follow the classical notations of Thermodynamics for partial derivatives, denoting for instance $\partial_\vartheta f|_{\rho}$ if $f$ is the function $f(\rho,\vartheta)$ and $\partial_e f|_{\rho}$ 
if instead one considers the composition $f(\rho, \vartheta(\rho,e))$.
 
The total mass as well as the total energy of the system are constants of 
motion namely
\[
\int \rho(t,\cdot) \, dx = \int \rho_0 \, dx
\]
and 
\[
\int (\frac{1}{2} \rho |u|^2 + \rho e(\rho,\vartheta))(t,\cdot) \, dx 
= \int (\frac{1}{2} \rho_0 |u_0|^2 + \rho_0 e(\rho_0,\vartheta_0)) \, dx. 
\]
From the mathematical viewpoint, the heat-conducting compressible Navier--Stokes
system suffers the deficiency of strong  {\it a priori} bounds. For instance in comparison with the previous barotropic Navier--Stokes, the energy bound does not yield any $L^2_t H^1_x$ bound on the velocity.

\smallskip

 In order to be consistent with the second principle of Thermodynamics which implies
 the existence of the entropy as a closed differential form in the energy balance, the following
 compatibility condition, called "Maxwell equation'' between $P$ and $e$ has to be satisfied
\[
P = \rho^2{ \frac{\partial e}{\partial\rho}}\Bigl\vert_{\vartheta} 
    + \vartheta \frac{\partial P}{\partial\vartheta}\Big\vert_{\rho}.
\]
The specific entropy $s=s(\rho,\vartheta)$ is defined up to an additive constant by
\[
\frac{\partial s}{\partial \vartheta}\Big\vert_{\rho}= \frac{1}{\vartheta} \frac{\partial e}{\partial \vartheta}\Big\vert_{\rho}
    \qquad  \hbox{ and } \qquad
     \frac{\partial s}{\partial \rho}\Big\vert_{\vartheta}= - \frac{1}{\rho^2} \frac{\partial P}{\partial\vartheta}
      \Big\vert_{\rho}.
\]
An other important assumption on the entropy function is made,
$$\hbox{ the entropy } s \hbox{ is a concave function of } (\rho^{-1}, e).$$
This property ensures in particular the non negativity of the so-called $C_v$ coefficient
given by 
\[
C_v = \frac{\partial e}{\partial\vartheta}\Big\vert_{\rho} 
             = - \frac{1}{\vartheta^2} \frac{\partial^2 s}{\partial e^2}\Big\vert_{\rho}^{-1}. 
\]
If $(\rho,\vartheta)$ are smooth and bounded
from below away from zero and if the velocity field is smooth, then the total energy balance
can be replaced by the thermal energy balance
\[
C_v \rho (\partial_t\vartheta + u\cdot \nabla\vartheta)  - {\rm div}(\kappa(\vartheta)\nabla\vartheta) 
   = {\cal S} : \nabla u 
   - \vartheta \frac{\partial P(\rho,\vartheta)}{\partial\vartheta} {\rm div} u.
\]
Furthermore, dividing by $\vartheta$, we arrive at the entropy equation
\begin{equation}
\partial_t(\rho s) + {\rm div}(\rho s u) 
   - {\rm div} \Bigl(\frac{\kappa(\vartheta)\nabla\vartheta}{\vartheta}\Bigr) = 
   \frac{1}{\vartheta} \Bigl({\cal S} : \nabla u + \frac{\kappa|\nabla\vartheta|^2}{\vartheta} \Bigr).\label{entropyEq}
\end{equation}
\subsection{The entropy estimate through thermodynamical stability}
The first difficulty in the Navier--Stokes--Fourier system is to obtain a $L^2$ estimate on $\nabla u$, since, contrary to the barotropic case, it does not follow from the dissipation of energy.

Instead a first approach is to use the entropy. Coupling the equation on the total energy and the equation on $s$, we get
\begin{equation}\begin{split}
& \int (\frac{1}{2} \rho |u|^2 + \rho [e(\rho,\vartheta) - \vartheta_\star s])(t,\cdot) \, dx 
+ \int_0^t\int  \frac{\vartheta_\star}{\vartheta} \Bigl({\cal S}:\nabla u 
     +  \frac{\kappa |\nabla\vartheta|^2}{\vartheta}\Bigr) \\
& = \int (\frac{1}{2} \rho_0 |u_0|^2 + \rho_0 [e(\rho_0,\vartheta_0)- \vartheta_\star s_0]) \, dx, 
\end{split}\label{energytotal}
\end{equation}
for any constant temperature $\vartheta_\star$.  This is exactly the type of modified energy estimate that we are looking for as it provides a control on $\kappa(\vartheta)\,|\nabla \vartheta|^2/\vartheta^2$ and with any reasonable choice of stress tensor, a control on $|\nabla u|^2/\vartheta$. 

However it is useless unless one can bound
\[
H_{\vartheta_\star}(\rho,\vartheta) = \rho [e(\rho,\vartheta) -  \vartheta_\star s].
\]
Observe that (using the mass conservation) 
\[
\frac{d}{dt} \int H_{ \vartheta_\star}(\rho,\vartheta) = 
   \frac{d}{dt}  \int \Bigl[H_{ \vartheta_\star}(\rho,\vartheta)  
     - \frac{\partial H_{ \vartheta_\star}( \rho_\star,  \vartheta_\star)}{\partial \rho} (\rho-  \rho_\star)
     - H_{ \vartheta_\star}( \rho_\star,  \vartheta_\star)\Bigr].
\]
Remark that, using the Maxwell equation and the definition of the entropy, we have
\[
  \frac{\partial H_{ \vartheta_\star}(\rho,\vartheta)} {\partial \vartheta} =
    \frac{\rho}{\vartheta} (\vartheta-  \vartheta_\star) \frac{\partial e(\rho,\vartheta) }{\partial \vartheta}, \qquad
    \frac{\partial^2 H_{ \vartheta_\star}(\rho, \vartheta_\star)} {\partial \rho^2} 
      =  \frac{1}{\rho} \frac{\partial P(\rho, \vartheta_\star)}{\partial\rho}.
\]
Those estimates were developed in the works by E. {\sc Feireisl} and collaborators, under the assumption of thermodynamic stability
\begin{equation}
\frac{\partial e(\rho,\vartheta)}{\partial \vartheta} >0, \qquad 
\frac{\partial P(\rho,\vartheta)}{\partial \rho} > 0. \label{thermostability}
\end{equation}
The meaning of such condition is that both the specific heat at constant volume $C_v$
and the compressibility of the fluid are positive.
However as we have already seen, the latter condition is violated by several physical state law, such as the standard Van der Waals equation of state.

Under \eqref{thermostability}, one obtains that $H_{\vartheta_\star}$ is increasing in $\vartheta$ for $\vartheta>\vartheta_\star$ and decreasing for $\vartheta<\vartheta_\star$, that is that  $H_{\vartheta_\star}$ has a minimum at $\vartheta=\vartheta_\star$. The second part of \eqref{thermostability} implies that  $H_{\vartheta_\star}$ is convex in $\rho$. 

One chooses accordingly $\rho_\star$ as the minimum of $H_{\vartheta_\star}(\rho,\vartheta_\star)$ and decompose the quantity linked to $H_{\overline\vartheta}(\rho,\vartheta)$ into two parts
\begin{equation}\begin{split}
& H_{ \vartheta_\star}(\rho,\vartheta)  
     - \frac{\partial H_{ \vartheta_\star}( \rho_\star,  \vartheta_\star)}{\partial \rho} (\rho-  \rho_\star)
     - H_{ \vartheta_\star}( \rho_\star,  \vartheta_\star) \\
& = \bigl[H_{ \vartheta_\star}(\rho,\vartheta)  - H_{ \vartheta_\star}(\rho, \vartheta_\star) \bigr] \\
&   \hskip1cm  + \bigl[H_{ \vartheta_\star}(\rho, \vartheta_\star) 
     - \frac{\partial H_{ \vartheta_\star}( \rho_\star, \vartheta_\star)}{\partial \rho} (\rho-  \rho_\star)
     - H_{ \vartheta_\star}( \rho_\star,  \vartheta_\star)\bigr] \geq 0
\end{split}\label{Hsign}
\end{equation}
by the thermodynamic stability assumptions on $\partial e(\rho,\vartheta)/\partial\vartheta$
and $\partial P(\rho,\vartheta)/\partial\rho$.
 As a consequence under \eqref{thermostability}, one deduces that
\begin{equation}
\int_0^t\int  \Bigl(\bar\mu\,\frac{|\nabla u|^2}{\vartheta} 
     +  \kappa(\vartheta)\,\frac{\mu\, |\nabla\vartheta|^2}{\vartheta^2}\Bigr)\,dx\,dt \leq constant.\label{entropydissipation}
\end{equation}

\subsection{The pressure laws covered by previous works.}
In every previous work, the viscous stress tensor is assumed to be isotropic
\[
{\cal S}= \mu\,(\nabla u+\nabla u^T)+\lambda\,\div u\, {\rm Id},
\]
with coefficients $\mu,\;\lambda$ either constant or depending only on $\vartheta$.

\medskip

\noindent {1) \it The pressure law as a perturbation of the barotropic case.} It is due  to {\sc E. Feireisl} who considered  pressure laws under the form
\[
P(\rho,\vartheta) = P_c(\rho) + \vartheta P_\vartheta(\rho),
\]
where
\begin{equation}\begin{split}
& P_c(0)=0, \quad P'_c(\rho) \ge a_1 \rho^{\gamma-1} - b \hbox{ for } \rho >0,\\
& P_c(\rho) \le a_2 \rho^\gamma + b \hbox{ for all } \rho \ge 0,\\
& P_\vartheta(0) = 0, \qquad P'_\vartheta(\rho) \ge 0 \hbox{ for all } \rho \ge 0,\\
& P_\vartheta(\rho) \le c(1+\rho^\beta),
\end{split}\label{pressureheat}
\end{equation}
and
\[
\gamma >d/2, \qquad \beta < \frac{\gamma}{2} \hbox{ for } d=2, \quad
    \beta=  \frac{\gamma}{3} \hbox{ for } d=3
\]
with constants $a_1>0$, $a_2$, $b$ and $P_c$, $P_\vartheta$ in
${\cal C}[0,+\infty) \cap {\cal C}^1(0,+\infty)$.
   In agreement with Maxwell law and the entropy definition, it implies the following form
on the internal energy 
\[
e(\rho,\vartheta) = \int_{ \rho_\star}^\rho \frac{P_c(s)}{s^2} ds 
                               + Q(\vartheta),
\]
where $Q'(\vartheta)= C_v(\vartheta)$ (specific heat at constant volume). The entropy is given by
\[
s(\rho,\vartheta) = \int_{ \rho_\star}^\vartheta \frac{C_v(s)}{s} ds -  S_\vartheta(\rho),
\]
where $S_\vartheta$ is the thermal pressure potential given through 
$\displaystyle S_\vartheta(\rho) = \int_{ \rho_\star}^\rho {P_\vartheta(s)}/{s^2} ds$.
   The heat conductivity coefficient $\kappa$ is assumed to satisfy
  \[
\kappa_1(\vartheta^\alpha + 1)  \le \kappa(\vartheta) \le \kappa_2(\vartheta^\alpha+1) 
     \hbox{ for all } \vartheta \ge 0,
\]
 with constants $\kappa_1>0$ and $\alpha\ge 2$.
  The thermal energy $Q=Q(\vartheta) = \int_0^\vartheta C_v(z) dz$ has not yet been determined and is assumed
 to satisfy $\int_{z\in [0,+\infty)} C_v(z)>0$ and $C_v(\vartheta) \le c(1+\vartheta^{\alpha/2-1})$.
    Because the energy and pressure satisfy \eqref{thermostability}, the estimate on $H_\vartheta$ gives a control on $\rho^\gamma$ in $L^\infty(0,T;L^1(\Omega))$
and through \eqref{entropydissipation} a control on $\vartheta$ in $L^2(0,T; L^6(\Omega))$ in dimension $3$ and in $L^2(0,T;L^p(\Omega))$
for all $p<+\infty$ in dimension $2$.

\medskip

Because \eqref{entropydissipation} does not provide an $H^1_x$ bound on $u$, {\sc E. Feireisl}  
combines it with a direct energy estimate (see below).
   Therefore one obtains the exact equivalent of estimates \eqref{energyestimates} in this case. Using similar techniques, one then proves \eqref{gainintegrability} for $0<a<\min (1/d, 2d/\gamma -1)$. 
  The proof of compactness and existence follows the main steps shown in subsection \ref{classicalapproach}.


 \bigskip

 \noindent {2) \it  Pressure laws with large radiative contribution.} It is due to {\sc E. Feireisl} and {\sc A.~Novotny }  who consider   pressure laws exhibiting  both coercivity of type $\rho^\gamma$ and $\vartheta^4$ for large densities and temperatures namely 
\[
P(\rho,\vartheta) = \vartheta^{\gamma/(\gamma-1)} Q(\frac{\rho}{\vartheta^{1/(\gamma-1)}}) +\frac{a}{3}  \vartheta^4 \hbox{ with }a>0, \quad \gamma >3/2,
\]
with 
\[
Q\in {\cal C}^1([0,+\infty)), \qquad Q(0)=0, \qquad Q'(Z) >0 \hbox{ for all } Z\ge 0,
\]
and
\[
\lim_{Z\to +\infty} \frac{Q(Z)}{Z^\gamma} = Q_\infty >0.
\]
In agreement to Maxwell law and the definition of entropy, it implies the following
form on the internal energy and the entropy
\[
e(\rho,\vartheta) = \frac{1}{(\gamma-1)} \frac{\vartheta^{\gamma/(\gamma-1)}}{\rho}
     Q(\frac{\rho}{\vartheta^{1/(\gamma-1)}}) + a \frac{\vartheta^4}{\rho},
\]
and 
\[
s(\rho,\vartheta) = S\left(\frac{\rho}{\vartheta^{1/(\gamma-1)}}\right) + \frac{4a}{3}\frac{\vartheta^3}{\rho}.
\]
They
impose
\[
0 < -S'(Z) =\frac{1}{\gamma-1} \frac{\gamma Q(Z) - Q'(Z)Z}{Z} < c  < +\infty \hbox{ for all } Z>0,
\]
with $\lim_{Z\to +\infty} S(Z) = 0$ so that thermodynamical stability \eqref{thermostability} holds.
Therefore the energy provides uniform bounds in $L^\infty_t L^1_x$ for $\vartheta^4$ and 
 $\rho^\gamma$.
   One assumes in this case that the viscosities and heat conductivity satisfy
 \[
\mu, \lambda \in {\cal C}^1([0,+\infty)) \hbox{ are Lipschitz and } \quad
     \mu\, (1+\vartheta) \le \mu(\vartheta), \quad 0\le \lambda(\vartheta), \quad  \mu_0>0,
 \]
 and 
\[
\kappa \in {\cal C}^1([0,+\infty), \qquad 
    \kappa_0 (1+\vartheta^3) \le \kappa(\vartheta) \le \kappa_1 (1+\vartheta^3), 
    \qquad 0<\kappa_0\le \kappa_1.
 \]
Almost everywhere convergence of the temperature is obtained using
the radiation term. Extra integrability on $P(\rho,\vartheta)$ can be derived just as in the  barotropic case. 
Finally the same procedure as in the barotropic case is followed to have compactness on the density, relying heavily on the monotonicity of the pressure ${\partial P(\rho,\vartheta)}/{\partial\rho}>0$. This gives global existence of weak solutions (in a sense that we precise later).

\bigskip

  With respect to these previous works, we focus here, as in the barotropic case, in removing the assumption of monotonicity on the pressure law, which is consistent with the discussion of such laws in subsection \ref{discusslaws}.

\subsection{The direct entropy estimate \label{secenergytemp}}
We explain here the general framework for our result on the Navier--Stokes--Fourier system. The estimates here closely follow the ones pioneered  by {\sc P.--L. Lions}, and {\sc E. Feireisl} and {\sc A. Novotny}; just as in the barotropic case, our contribution is the new compactness argument not the energy estimates. With respect to the previous discussion, we only present them here in a more general context as in particular we will not need the monotonicity of $P$.  

If one removes the monotonicity assumption on $P$ then thermodynamic stability \eqref{thermostability} does not hold anymore. Following {\sc  P.--L. Lions}, it is however possible to obtain the estimate \eqref{entropydissipation} directly by integrating the entropy equation \eqref{entropyEq}
\[
\int_0^t\int  \Bigl(\mu\,\frac{|\nabla u|^2}{\vartheta} 
     +  \kappa{(\vartheta)}\frac{ |\nabla\vartheta|^2}{\vartheta^2}\Bigr)\,dx\,dt \leq C\,\int \rho\,s(t,x)\,dx.
\]
Therefore the bound \eqref{entropydissipation} holds under the general assumption that there exists $C$ s.t.
\begin{equation}
s(\rho,\vartheta)\leq C\,e(\rho,\vartheta)+\frac{C}{\rho}.\label{boundentropyenergy}
\end{equation}
Recall that
\[
e=m(\vartheta)+\int_{\rho^\star}^\rho \frac{(P(\rho',\vartheta) - \vartheta\partial_{\vartheta}P(\rho',\vartheta))}{\rho'^2}\,d\rho',
\]
and
\[
\partial_\rho s= 
-  \frac{\partial_\vartheta P}{\rho^2}.
\]
We also have that $\partial_\vartheta s =\partial_\vartheta e / \vartheta$, therefore as long as $m(\vartheta)\geq 0$ with 
\[
\int_{\vartheta^*}^\vartheta \frac{m'(s)}{s}\,ds \le C(1+m(\vartheta)),
\]
and
\[
 -\int_{\rho^*}^\rho \frac{\partial_\vartheta P}{\rho'^2}\,d\rho' \le
  C+C\,\int_{\rho^*}^\rho \frac{P -\vartheta \partial_\vartheta P}{\rho'^2}\,d\rho',
\]
for some $C>0$  then \eqref{boundentropyenergy} is automatically satisfied  
and one obtains the bound \eqref{entropydissipation}.
Moreover if $e(\vartheta,\rho)\geq \rho^{\gamma-1}/C$ then one also has that 
$\rho\in L^\infty_t\,L^\gamma_x$.
  Assuming now that
\[
\kappa_1\,(\vartheta^\alpha+1)\leq \kappa(\vartheta)\leq \kappa_2\,(\vartheta^\alpha+1),
\]
with $\alpha \ge 2$, one deduces from \eqref{entropydissipation} that $\log \vartheta\in L^2_t H^1_x$ and $\vartheta^{\alpha/2}\in L^2_t\,H^1_x$ or 
by Sobolev embedding $\vartheta\in L^{\alpha}_t\,L^{\alpha/(1-2/d)}_x$. 
   By a H\"older estimate, it is also possible to obtain a Sobolev-like, $L^{p_1}_t W^{1,p_2}_x$, bound on $u$
\[\begin{split}
&\int_0^T\left(\int |\nabla u|^{p_2}\,dx\right)^{p_1/p_2}\,dt
\leq \left(\int_0^T\int \frac{|\nabla u|^2}{\vartheta}\,dx\,dt\right)^{p_1/2}\\
&\qquad\left(\int_0^T\left(\int \vartheta^{p_2/(2-p_2)}\,dx\right)^{p_1(2-p_2)/(p_2(2-p_1))}dt\right)^{(2-p_1)/2}<\infty,
\end{split}\] 
provided that $p_2/(2-p_2)=\alpha/(1-2/d)$ and $p_1/(2-p_1)=\alpha$ or
\begin{equation}
p_1=\frac{2\,\alpha}{1+\alpha},\quad p_2=\frac{2\,\alpha\,d}{d\,(\alpha+1)-2}.\label{sobolevexputemp}
\end{equation}
Unfortunately this Sobolev estimate does not allow to derive an equivalent of \eqref{gainintegrability}. Instead one requires a $L^2_t H^1_x$ estimate on $u$ (the critical point is in fact the $L^2_t$ with value in some Sobolev in $x$).
   Instead one can easily extend the argument by {\sc E. Feireisl} and {\sc A. Novotny}: For any $\phi(\rho)$, one can write
\[
\frac{1}{2}\frac{d}{dt} \int \rho |u|^2 + \frac{d}{dt} \int \phi(\rho)
    + \int {\cal S}:\nabla u 
    = \int (P(\vartheta,\rho)-\phi'(\rho)\,\rho+\phi(\rho))\,{\rm div u}.
\]
This leads to the assumption that there exists some $\phi$ s.t.
\begin{equation}
\begin{aligned}
& C^{-1} \rho^\gamma - C   \le \phi(\rho) \le C \rho^\gamma + C, \\
&  \left|P(\vartheta,\rho)-\phi'(\rho)\,\rho+\phi(\rho)\right|\leq C
\Bigl( \rho^{\beta_1}+\vartheta^{\beta_2}+\sqrt{\rho e(\vartheta,\rho)}\Bigr),
\label{pressureminusbarotrope}
\end{aligned}
\end{equation}
 with
\begin{equation}
\beta_1\leq \frac\gamma 2,\quad \beta_2\leq \frac\alpha 2. \label{assumptionbetas}
\end{equation}
Indeed, with \eqref{pressureminusbarotrope}, one has
\[
\int_0^T\int {\cal S}:\nabla u\,dx\,dt\leq E(\rho^0,u^0,\vartheta^0)
    +C\,\int_0^T\int \Bigl(\rho^{\beta_1}+\vartheta^{\beta_2}+\sqrt{\rho e(\vartheta,\rho)}\Bigr)\,|\div u|\,dx\,dt.
\] 
By \eqref{generalstress}, this leads to
\begin{equation}
\int_0^T\int |\nabla u|^2\,dx\,dt\leq C\,E(\rho^0,u^0,\vartheta^0)+C\,\|\nabla u\|_{L^2_{t,x}}\,\|\rho^{\beta_1}+\vartheta^{\beta_2}\|_{L^2_{t,x}},\label{H1withtemp}
\end{equation}
and the desired $H^1$ bound follows from \eqref{assumptionbetas}. It is now possible to follow the same steps to obtain an equivalent of \eqref{gainintegrability} if $\gamma>d/2$
\begin{equation}
\int_0^T \int_\Omega \rho^{\gamma+a}\,dx\,dt\leq C(T,E(\rho^0,u^0,\vartheta^0)),\quad\mbox{for all}\ a< 1/d \label{gainintegrabilitytemp}
\end{equation}
Note here that the assumptions \eqref{pressureminusbarotrope}-\eqref{assumptionbetas} are likely not optimal and will be improved in future works. They nevertheless already cover the two examples presented above.
\subsection{Main result in the heat-conducting case}
    For convenience, we repeat here all the assumptions presented above and the concluding
mathematical result which may be obtained.  Let $P(\rho,\vartheta)$ a positive pressure law such that 
 that for some $C>0$ and $\gamma>d$
\begin{equation}
\left\{\begin{aligned}
&P(\rho,\vartheta) \hbox{ such that } -\int_{\rho^*}^\rho \frac{\partial_\vartheta P}{\rho'^2}\,d\rho' \le
  C+C\,\int_{\rho^*}^\rho \frac{P -\vartheta \partial_\vartheta P}{\rho'^2}\,d\rho' , \\
& e(\rho,\vartheta)= m(\vartheta) + \int_{\rho^\star}^\rho \frac{(P(\rho',\vartheta) -
     \vartheta \partial_\vartheta P(\rho',\vartheta))}{\rho'^2}\,d\rho'\geq \frac{\rho^{\gamma-1}}{C} +\frac{\vartheta^{\gamma_\vartheta}}{C\,\rho},\\
&\quad   \hbox{ with }  m(\vartheta)\ge 0 \\
& \hskip1cm  \hbox{ and }  \int_{\vartheta^*}^\vartheta \frac{m'(s)}{s}\,ds \le C(1+m(\vartheta))\leq C\,(1+\vartheta^{\alpha\,(\gamma+a-1)/2(\gamma+a)}), \\
&\kappa_1\,(\vartheta^\alpha+1)\leq \kappa(\vartheta)\leq \kappa_2\,(\vartheta^\alpha+1),\quad \mu,\;\lambda \ \mbox{constant and } \alpha\ge 4,\\
& C^{-1} \rho^\gamma- C\leq \phi(\rho)\leq C\,\rho^\gamma + C, \\ 
&|P(\rho,\vartheta)-\phi'(\rho)\,\rho+\phi(\rho)|
  \leq C\,\rho^{\beta_1}+C\,\vartheta^{\beta_2}+C\,\sqrt{\rho e(\rho,\vartheta)},  \\
  & |\partial_\vartheta P(\rho,\vartheta)| \le C \rho^{\beta_3} + C \vartheta^{\beta_4}
\end{aligned}\right.\label{assumptionswithtemp}
\end{equation}
for
\begin{equation}\left\{\begin{aligned}
&\beta_1\leq \frac{\gamma}{2},\quad \beta_2< \frac{\alpha}{2},\\
& \beta_3 < \frac{\gamma+ a+1}{2}, \qquad \beta_4 < \frac{\alpha}{2},
\\
&\frac{2}{d}\,\mu+\lambda>0, \quad \gamma_{\vartheta} \ge 0\\
\end{aligned}\right.\label{coefftemp}
\end{equation}
where we recall that $a<\min\left(1/d, 2\gamma/d -1\right)=1/d$ since $\gamma>d$ here.
  We also assume that the specific heat is positive (as is necessary for the physics) {\em i.e.}
\begin{equation}
C_v=\partial_\vartheta e(\rho,\vartheta)>0,\quad \forall \rho,\;\vartheta,\label{specificheat>0}
\end{equation}
and that the pressure contains a radiative part
\begin{equation}
\partial_\vartheta^2 P(\rho=0,\vartheta)>0.\label{radiative}
\end{equation}
We do not need to impose monotony on $P$ and it is enough that
\begin{equation}
\begin{split}
&\bigl|\partial_\rho P(\rho,\vartheta)\bigr|\leq C\,\rho^{\gamma-1}+C\,\vartheta^b
\qquad \hbox{ with }  0 \le  b<\frac{\alpha}{2}.\\
\end{split}\label{nonmonotonewithtemp}
\end{equation}
Finally the initial data has to satisfy
\begin{equation}\label{initemp1}
\begin{aligned}
&\rho_0 \in L^\gamma(\Pi^d), \quad
\vartheta_0 \in L^{\gamma_{\vartheta}}(\Pi^d)  \\
& \hskip2cm \hbox{ with } \rho_0\ge 0,  \quad 
      \vartheta_0 >0 \hbox{ in } \Pi^d   \qquad \hbox{ and }
       \int_{\Pi^d} \rho_0 = M_0>0,  
      \end{aligned}
      \end{equation}
 and
 \begin{equation}\label{initemp2}
      E_0 = \int_{\Pi^d} \Bigl(\frac{1}{2}\frac{|(\rho u)_0|^2}{\rho_0} + \rho_0 e(\rho_0,\vartheta_0)\Bigr) <+\infty.
\end{equation}

\bigskip

Then,  on the condition of approximate solutions construction, we can prove the following result.
\begin{theorem}
Assume that \eqref{assumptionswithtemp}--\eqref{nonmonotonewithtemp} are satisfied with $\gamma>d$
and that the initial data satisfy \eqref{initemp1} and \eqref{initemp2}.
Then there exists
\begin{equation}\begin{split}
&\rho\in L^\infty([0,\ T],\ L^\gamma(\Pi^d))\cap L^{\gamma+a}([0,\ T]\times\Pi^d),\
 \quad \hbox{ for all } \  a< 1/d,\\
&u\in L^2([0,\ T],\ H^1(\Pi^d))\cap L^\infty([0,\ T],\ L^2_\rho(\Pi^d)),\\
&\vartheta\in L^\alpha([0,\ T],\ L^{\alpha/(1-2/d)}(\Pi^d)),\ \log \vartheta\in L^2 ([0,\ T],\ H^1(\Pi^d)),
\end{split}\label{estimateswithtemp}
\end{equation}
with $\vartheta>0$ \hbox{a.a. on } $(0,T)\times\Pi^d$ that is solution in the sense of distributions to
\begin{equation}
\begin{split}
& \partial_t \rho + {\rm div} (\rho u) =0,\\
& \partial_t (\rho u) + {\rm div}(\rho u\otimes u) - \mu\,\Delta u-(\lambda+\mu)\nabla \div u  + \nabla P(\rho,\vartheta)  = 0,
\end{split} \label{cont+momentwithtemp}
\end{equation}
while the equation on the temperature is satisfied in the following sense: In the sense of distribution, the entropy solves
\begin{equation}
\partial_t(\rho\,s(\rho,\vartheta))+\div(\rho\,s(\rho,\vartheta)\,u)-\div\left(\frac{\kappa(\vartheta)\,\nabla \vartheta}{\vartheta}\right)\geq \frac{1}{\vartheta}\left({\cal S}:\nabla u+\frac{\kappa\,|\nabla\vartheta|^2}{\vartheta}\right),
\label{localenergyineq}
\end{equation}
with ${\cal S}=\mu\,(\nabla u+\nabla u^T)+\lambda\,\div u\,{\rm Id} $ and recalling that
\[
s(\rho,\vartheta)=-\int_{\rho^\star}^\rho \frac{\partial_\vartheta P}{\rho'^2}(\rho',\vartheta)\,d\rho'+\int_0^\vartheta \frac{m'(\vartheta')}{\vartheta'}\, d\vartheta'. 
\]
This is supplemented by the total energy property
\begin{equation}
\int(\rho\frac{|u|^2}{2}+\rho\,e(\rho,\vartheta))\,dx  =   \int(\rho^0\frac{|u^0|^2}{2}+\rho^0\,e(\rho^0,\vartheta^0))\,dx.
\label{energyconswithtemp}
\end{equation}\label{Mainresultwithtemp}
and the initial data conditions satisfied by $(\rho,\rho u, \rho s)$ in a weak sense
$$\rho\vert_{t=0} = \rho_0, \qquad \rho u\vert_{t=0} = (\rho u)_0, \qquad \rho s\vert_{t=0^+} \ge \rho_0
     s(\rho_0, \vartheta_0).$$
\end{theorem}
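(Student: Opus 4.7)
The plan is to follow the three-step scheme already successfully applied for Theorem \ref{MainResultPressureLaw}: build a suitable two-level approximate sequence $(\rho_{k,\eps}, u_{k,\eps}, \vartheta_{k,\eps})$, derive uniform estimates that match the hypotheses of Theorem \ref{maincompactness}, and then transfer compactness of $\rho_k$ via the already-developed stability machinery. The theorem statement explicitly excludes the construction itself (``on the condition of approximate solutions construction''), so the real content of the proof is to verify that assumption \eqref{nonmonotone} on a $(t,x)$-dependent pressure is met once compactness of the temperature is established, and to produce estimates compatible with the constraints $\gamma>d$, $\alpha\geq 4$, etc.

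For the approximation, I would follow section 9 and the monograph \cite{FeNo}: on the parabolic level ($\alpha_k>0$), replace $P(\rho,\vartheta)$ by a family $P_\eps(\rho,\vartheta)=P(\rho,\vartheta)+\eps\rho^{\gamma_\eps}+\delta_\eps\rho^{\bar\gamma}$ with $\gamma_\eps>d$ chosen so that $P_\eps$ becomes monotone in $\rho$ after a threshold $\rho_{0,\eps}$ (hence satisfying \eqref{monotone} as a function of $\rho$ for each frozen $\vartheta$), and add the usual $\alpha_k\Delta\rho_k$ in the continuity equation with the compensating $\alpha_k\nabla\rho_k\cdot\nabla u_k$ in the momentum equation together with a smoothed thermal conduction and a Faedo-Galerkin truncation of $u$. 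The Galerkin solver yields the approximate sequence exactly as in Chapter 3 of \cite{FeNo}, and I would omit this classical (and lengthy) step.

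The uniform estimates are precisely those collected in Section \ref{secenergytemp}. The direct entropy estimate coming from \eqref{entropyEq} and hypothesis \eqref{assumptionswithtemp} produces uniform bounds on $\nabla u/\sqrt\vartheta$ and on $\kappa(\vartheta)^{1/2}\nabla\vartheta/\vartheta$ in $L^2_{t,x}$; combined with the conservation of total energy \eqref{energyconswithtemp}, assumption \eqref{radiative} and the lower bound on $\rho e$ in \eqref{assumptionswithtemp}, this controls $\rho$ in $L^\infty_tL^\gamma_x$, $\vartheta$ in $L^\alpha_tL^{\alpha/(1-2/d)}_x$, and $\rho|u|^2$ in $L^\infty_tL^1_x$. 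The test of the momentum equation by $u$ together with the Bogovski trick and conditions \eqref{pressureminusbarotrope}, \eqref{coefftemp} upgrades this to \eqref{H1withtemp} (i.e.\ $\nabla u\in L^2_{t,x}$ uniformly) and further to the gain of integrability \eqref{gainintegrabilitytemp}, namely $\rho\in L^{\gamma+a}_{t,x}$ for $a<1/d$. Since $\gamma>d$ we have $\gamma+a>d+1>\max(2,\tilde\gamma)$ for the natural choice $\tilde\gamma=\gamma$, so \eqref{boundrhok} holds with $p>\max(2,\tilde\gamma)$. The bounds \eqref{rhorho_t} and \eqref{rhout} follow from the continuity and momentum equations respectively, the latter using $\partial_\vartheta P\in L^{(2+0)}_{t,x}$ via \eqref{nonmonotonewithtemp} and $\beta_3,\beta_4<\alpha/2$ (as in \eqref{coefftemp}).

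The crucial new ingredient, absent in the barotropic case, is \emph{compactness of the temperature}. From $\kappa(\vartheta_k)^{1/2}\nabla\vartheta_k/\vartheta_k\in L^2_{t,x}$ and $\alpha\geq 4$, one obtains uniform bounds for $\log\vartheta_k$ and $\vartheta_k^{\alpha/2}$ in $L^2_tH^1_x$. Combining these with the entropy inequality \eqref{entropyEq}, which provides time-regularity of $\rho_k s(\rho_k,\vartheta_k)$ in a negative Sobolev norm, the Aubin-Lions lemma gives almost everywhere convergence of $\vartheta_k$ (the map $(\rho,s)\mapsto\vartheta$ being well defined by \eqref{specificheat>0} on $\{\rho>0\}$, and the vacuum set being controlled via the monotonicity of $\rho_\star\mapsto H_{\vartheta_\star}$ in $\rho$ for the approximation). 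With $\vartheta_k\to\vartheta$ strongly in $L^q_{t,x}$ for every $q<\alpha/(1-2/d)$, I now set $P_k(t,x,s):=P(s,\vartheta_k(t,x))$ and verify \eqref{nonmonotone}: by \eqref{nonmonotonewithtemp} and the mean value theorem,
\[
\bigl|P_k(t,x,s)-P_k(t,y,r)\bigr|\le \bar P\bigl(s^{\tilde\gamma-1}+r^{\tilde\gamma-1}\bigr)|s-r|+Q_k(t,x,y),
\]
where $Q_k(t,x,y)=C|\vartheta_k(t,x)^{b+1}-\vartheta_k(t,y)^{b+1}|$ (plus lower-order terms), and the compactness of $\vartheta_k$ together with $b<\alpha/2$ ensures
$\limsup_k \int K_h(x-y)Q_k/\|K_h\|_{L^1}\to 0$. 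Identifying $\tilde P_k$ and $R_k$ in the same way from the splitting given by \eqref{assumptionswithtemp}, Theorem \ref{maincompactness} ii) applies (after first passing to the limit $\alpha_k\to 0$ via its part i), exactly as in subsection 9.1) and yields the strong $L^1_{t,x}$ convergence of $\rho_k$. With both $\rho_k$ and $\vartheta_k$ strongly convergent, passing to the limit in \eqref{cont+momentwithtemp}, in the total energy equality, and (lower-semicontinuously) in the entropy equation to obtain the inequality \eqref{localenergyineq}, is standard. The main obstacle I anticipate is not any single step but the careful tracking of integrability exponents: several of the constraints in \eqref{coefftemp} (especially $\beta_3<(\gamma+a+1)/2$ coupling with the $\partial_t(\rho_ku_k)$ estimate and $b<\alpha/2$ coupling with the stability hypothesis on $Q_k$) are precisely what allow the conclusions of Theorem \ref{maincompactness} to be invoked, and any relaxation of $\gamma>d$ would require genuinely new estimates of the type mentioned in the remarks after Theorem \ref{MainResultPressureLaw}.
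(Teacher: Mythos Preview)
Your overall strategy matches the paper's: approximate so that the top-level pressure satisfies \eqref{monotone}, derive the a~priori estimates of subsection~\ref{secenergytemp}, verify \eqref{nonmonotone} for $P_k(t,x,s)=P(s,\vartheta_k(t,x))$, and invoke Theorem~\ref{maincompactness}. There is, however, one genuine ordering problem and one structural imprecision worth flagging.

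\textbf{Ordering of the compactness arguments.} You claim to obtain \emph{full} (a.e.) compactness of $\vartheta_k$ via Aubin--Lions on $\rho_k s(\rho_k,\vartheta_k)$, and only then verify \eqref{nonmonotone} and deduce compactness of $\rho_k$. But inverting $(\rho,\rho s)\mapsto\vartheta$ pointwise requires $\rho_k(t,x)\to\rho(t,x)$ a.e.; without that, convergence of $\rho_k s(\rho_k,\vartheta_k)$ does not yield convergence of $\vartheta_k$ (the inverse map depends on the unknown $\rho_k$). Your appeal to ``monotonicity of $\rho_\star\mapsto H_{\vartheta_\star}$'' is misplaced here: that is precisely the thermodynamic-stability route the paper is designed to avoid. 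The paper resolves this by observing that the last condition in \eqref{nonmonotone} only asks for \emph{spatial} oscillation control of $\tilde P_k,R_k,Q_k$, and this follows directly from $\vartheta_k^{\alpha/2},\log\vartheta_k\in L^2_tH^1_x$ (no time compactness of $\vartheta_k$ needed). One then applies Theorem~\ref{maincompactness}~ii) to get $\rho_k$ compact, and \emph{after} that deduces a.e.\ convergence of $\vartheta_k$ from the compactness of $\rho_k s_\eps(\rho_k,\vartheta_k)$ together with the a.e.\ limit of $\rho_k$, the invertibility in $\vartheta$ via \eqref{specificheat>0}, and the radiative assumption \eqref{radiative} on the vacuum set.

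\textbf{Structure of the approximation and form of $Q_k$.} The paper uses three levels $(\alpha_k,\eps,\delta)$ rather than two: the $\eps$-level truncates $P$ for $\rho>\bar\rho_\eps$ \emph{and} modifies $\kappa$ so that $\eps\le\vartheta_k\le\eps^{-1}$; this bounded temperature is what makes the additional $\alpha_k$-terms in the entropy equation controllable and is what guarantees \eqref{monotone} (the truncated $P_\eps$ is constant in $\rho$ beyond $\bar\rho_\eps$, so $P_{\eps,\delta}=P_\eps+\delta(\rho^\gamma+\rho^2)$ is quasi-monotone). Your two-level scheme with $\eps\rho^{\gamma_\eps}$ does not obviously provide the needed $L^\infty$ bound on $\vartheta_k$ at the parabolic stage. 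Also, your $Q_k=C|\vartheta_k^{b+1}(x)-\vartheta_k^{b+1}(y)|$ is not what comes out of the hypotheses: using $|\partial_\vartheta P|\le C\rho^{\beta_3}+C\vartheta^{\beta_4}$ from \eqref{assumptionswithtemp} one gets
\[
Q_k(t,x,y)=C\bigl(\rho_k(x)^{\beta_3}+\rho_k(y)^{\beta_3}+\vartheta_k(x)^{\beta_4}+\vartheta_k(y)^{\beta_4}\bigr)\,|\vartheta_k(t,x)-\vartheta_k(t,y)|,
\]
and $\tilde P_k(t,x)=\vartheta_k(t,x)^b$ from \eqref{nonmonotonewithtemp}; the integrability constraints \eqref{coefftemp} on $\beta_3,\beta_4,b$ are exactly what make these satisfy the smallness condition in \eqref{nonmonotone} via the $H^1$-type bounds on $\vartheta_k$ and the $L^{\gamma+a}$ bound on $\rho_k$.
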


\begin{remark}
   An example of pressure law included in Theorem \ref{Mainresultwithtemp} is a perturbation of the truncated virial expansion as described in subsection \ref{discusslaws} plus a radiative part namely
\[
P(\rho,\vartheta)=\rho^\gamma
                               +\vartheta\,\sum_{n=0}^{N} B_n(\vartheta)\,\rho^n 
\]
with $\gamma>2N\geq 4$. Choosing $m=constant$ for simplicity in this example, this leads to 
\[
e(\rho,\vartheta)=m+\frac{\rho^{\gamma-1}}{\gamma-1}
   -\sum_{n\ge 2}^N \vartheta^2\,B_n'(\vartheta)\,\frac{\rho^{n-1}}{n-1}
   - \vartheta^2 B'_1(\vartheta) \log \rho
   + \vartheta^2 B'_0(\vartheta) \frac{1}{\rho} ,
\]

\noindent For simplicity,  let us assume that $B_1= constant$, which is the normal virial assumption, see {\rm  \cite{Fei}} for example, so that this term vanishes. 
The entropy reads
\[
s(\rho,\vartheta)=-\sum_{n\ge 2}^N (\vartheta\,B_n'(\vartheta)+B_n(\vartheta))\,\frac{\rho^{n-1}}{n-1}
  +B_1\,\log \rho + (\vartheta B_0'(\vartheta)+ B_0(\vartheta)) \frac{1}{\rho}
\]

\medskip

\noindent {\rm 1)} We assume that the pressure contains a radiative part, namely that $B_0$ is convex in $\vartheta$ with $ C^{-1}\,\vartheta^{b-1}\leq B_0(\vartheta) \leq \vartheta^{b-1}$ and $ C^{-1}\,\vartheta^{b-2}\leq B_0'(\vartheta) \leq \vartheta^{b-2}$ where $2\le b \le \alpha/2$. This already satisfies \eqref{radiative}. 

\medskip

\noindent {\rm 2)} For $n\geq 2$ , the coefficients $B_n$ can have any sign but we require a concavity assumption: 
\[
\frac{d}{d\vartheta}(\vartheta^2\,B_n') \leq 0.
\]
 This ensures, with the assumption on $B_0$, that the specific heat $C_v = \partial_\vartheta e (\rho, \vartheta)$ satisfies {\rm \eqref{specificheat>0}}. This is again a classical assumption for the virial, see still {\rm \cite{Fei}}.  Note that it would be enough to ask this concavity of some of them and moreover that this is automatically satisfied if $B_n\sim -\vartheta^\nu$, that is precisely for the coefficients contributing to the non-monotony of $P$ in $\rho$.

\medskip

\noindent {\rm 3)} We also require some specific bounds on the $B_n$ namely that there exist $\bar B_n$ and $\eps>0$ s.t.
\begin{eqnarray}
&& \displaystyle 
\nonumber |\vartheta\, B_n'(\vartheta )|+|B_n(\vartheta)|\leq C\,\vartheta^{
    \frac{\gamma-n}{\gamma}\,b-1-\eps},\\
&&  \displaystyle 
|B_n(\vartheta)|+|\vartheta\, B_n(\vartheta )-\bar B_n|\leq C \vartheta^{\frac{\alpha}{2}\left(1-\frac{2n}{\gamma}\right)-\eps}.\label{assumeBn}
\end{eqnarray}
First of all this gives us a bound from below on $e$
\[\begin{split}
e(\rho,\vartheta)&\geq m+\frac{\rho^{\gamma-1}}{\gamma-1}+C^{-1}\,\frac{\vartheta^{b}}{\rho}-\sum_{n=2}^N \vartheta\,\vartheta^{\frac{\gamma-n}{\gamma}\,b-1-\eps} \,\frac{\rho^{n-1}}{n-1}\\
&\geq m+\frac{\rho^{\gamma-1}}{\gamma-1}+C^{-1}\,\frac{\vartheta^{b}}{\rho}-C\,\sum_{n=2}^N \left(\rho^{\gamma-1-\eps'} +\frac{\vartheta^{b-\eps'}}{\rho}\right),
\end{split}\]
by Young's inequality, so that this implies \eqref{assumptionswithtemp}$_{2}$. Assumption \eqref{nonmonotonewithtemp} is proved with an identical calculation. The same calculation also proves Assumption \eqref{assumptionswithtemp}$_{1}$ by showing that $s\leq C\,(\rho^{\gamma-1}+\vartheta^b+1)$.

\medskip

\noindent {\rm 4)}  Then choosing 
$$\phi(\rho)=\frac{\rho^\gamma}{\gamma-1}+\sum_{ 2\leq n\leq N} \bar B_n\,\frac{\rho^n}{n-1}
      + \bar B_0,$$ 
and using again the second part of \eqref{assumeBn}
 we have that
\[\begin{split}
|P-\phi'(\rho)\,\rho+\phi(\rho)|&\leq C\,\sum_{n\leq N} |\vartheta\, B_n(\vartheta )-\bar B_n|\,\rho^n\leq C\,\sum_{n\leq N} \vartheta^{\frac{\alpha}{2}\left(1-\frac{2n}{\gamma}\right)-\varepsilon }\,\rho^n\\
&\leq C\,N\,(\rho^{\gamma/2}+\vartheta^{\alpha/2-\eps}),
\end{split}\]
still by Young's inequality. This yields \eqref{assumptionswithtemp}$_{4,5}$ with the right inequalities on $\beta_1$ and $\beta_2$ in \eqref{coefftemp}.
 The same calculation also proves that $|\partial_\vartheta P|\leq C\,(\rho^{\gamma/2+\eps'}+\vartheta^{\alpha/2-\eps'})$ thus proving \eqref{assumptionswithtemp}$_{6}$ with bounds \eqref{coefftemp} on $\beta_3,\;\beta_4$.

\medskip

We will treat more general virial expansions in a future work. Note for a fixed $\vartheta$ then $P(\rho,\vartheta)$ is indeed increasing with respect to $\rho$ after a critical $\bar\rho_\vartheta$ which depends on $\vartheta$ and can be arbitrarily large where $\vartheta>>1$. This is the reason why $P$ does not satisfy any of the classical monotonicity assumption such as \eqref{strictmonotone} or \eqref{monotone} and why our new approach is needed.
  Our pressure law has two important parts: the radiative term (corresponding to $n=0$) to get compactness on the temperature and the $\phi(\rho)$ term to get compactness on the density 
in time and space. 
\end{remark}

\begin{remark} The assumption \eqref{nonmonotonewithtemp} is more demanding in the Navier-Stokes-Fourier setting (though satisfied by the example just above). It would be more natural to have instead
\begin{equation}
|\partial_\rho P(\rho,\vartheta)| \leq \bar P(\vartheta)\,\rho^{\gamma-1}
      +  C(\vartheta), \label{nonmonotonewithtempandcoeff}
\end{equation}
for some unbounded function $\bar P$ of $\vartheta$. This should be possible and require some modifications in the proof of Theorem \ref{maincompactness}, combined with appropriate estimates on $\vartheta$.  This  will be the subject of the forthcoming work {\rm \cite{BrFeJaNo}}.
    In general obtaining more optimal results for the Navier-Stokes-Fourier system seems to depend on adapting precisely the estimates and Theorem \ref{maincompactness} to the specific model under consideration. 
\end{remark}

\begin{remark}
With appropriate additional growth assumption, it is possible to show that the combination of \eqref{localenergyineq} and \eqref{energyconswithtemp} implies the usual energy equation (and not just an inequation as \eqref{localenergyineq}. The reason for the formulation \eqref{localenergyineq}-\eqref{energyconswithtemp} is that it cannot be proved without those additional assumptions. We refer to {\rm \cite{Fei}} and {\rm \cite{FeNo}}. 
\end{remark}

\begin{remark}
In Theorem \ref{Mainresultwithtemp}, the viscosity coefficients $\mu,\;\lambda$ are independent of the temperature $\vartheta$. Instead several models use temperature dependent coefficients $\mu(\vartheta),\;\lambda(\vartheta)$. To handle that case, the proof given below would have to be modified; the compactness of the temperature would have to be established first. 
\end{remark}

\noindent{\bf Sketch of the proof.} 
 The proof of Theorem \ref{Mainresultwithtemp} follows the same steps as the proof of Theorem \ref{MainResultPressureLaw}. For this reason we only sketch briefly the points which are similar and insist more on the differences induced by the presence of the temperature. 
    The general method is taken from \cite{FeNo} and is adapted to take advantage of our new compactness estimates.
   As usual one starts from an approximate model 
\begin{equation}
\begin{split}
& \partial_t \rho_k + {\rm div} (\rho_k u_k) =\alpha_k\,\Delta\rho_k,\\
& \partial_t (\rho_k u_k) + {\rm div}(\rho_k u_k\otimes u_k) - \mu\,\Delta u_k-(\lambda+\mu)\nabla \div u_k  + \nabla P_{\eps,\delta}(\rho_k,\vartheta_k)\\
&\hspace{5cm}+\alpha_k\,\nabla\rho_k\cdot\nabla u_k  = 0,\\
&\partial_t(\rho_k\,s_\eps(\rho_k,\vartheta_k)) + \div(\rho_k\,s_\eps(\rho_k,\vartheta_k)\,u_k) 
- \div (\frac{\kappa_\eps(\vartheta_k)}{\vartheta_k} \,\nabla \vartheta_k ) \\
&\hspace{2cm}= \sigma^1_{\varepsilon,\delta, \alpha_k} +   \frac{1}{\vartheta_k}\left({\cal S}_k:\nabla u_k
+\frac{\kappa_\eps(\vartheta_k)}{\vartheta_k}|\nabla \vartheta_k|^2\right)\\ 
&\hspace{3cm} + \alpha_k \frac{\Delta \rho_k}{\vartheta_k} \bigl(\vartheta_k s_\eps(\rho_k,\vartheta_k) 
- e_{\eps}(\rho_k,\vartheta_k)  - \frac{P_{\eps}(\rho_k,\vartheta_k) }{\rho_k} \bigr),
\end{split} \label{NSFapprox}
\end{equation}
with again ${\cal S}_k=\mu\,(\nabla u_k+\nabla u_k^T)+\lambda\,\div u_k\,Id$ and
          where $\sigma^1_{\varepsilon, \delta, \alpha_k} \in M^1([0,T]\times \Pi^d)$ vaguely  converges to $\sigma^1_{\varepsilon, \delta} \in M^1([0,T]\times \Pi^d)$.
The entropy actually only satisfies an inequality since $\sigma^1_{\eps,\delta,\alpha_k}$ is unknown and $s_\eps$ is hence not fully defined from \eqref{NSFapprox}. However we supplement this by imposing the conservation of the total energy
\[
\int \rho\,\left(\frac{|u|^2}{2}+e_{\eps,\delta}(\rho,\vartheta)\right)\,dx=
\int \rho^0\,\left(\frac{|u^0|^2}{2}+e_{\eps,\delta}(\rho^0,\vartheta^0)\right)\,dx.
\]
  Note that the last quantity in the entropy relation was already introduced to compensate some terms in the energy equality. Imposing the conservation of energy is a simple way of fixing the remaining additional terms (which are all positive) instead of writing them explicitly.

  Remark  that contrary to the barotropic case, we always need here two steps of approximations in the pressure (and hence energy), indexed by $\eps$ and $\delta$.
  The existence of such solutions may be done adapting the procedure for instance 
that we can find in \cite{FeNo} using appropriate Galerkin scheme, penalization terms
and precise estimates. This important part is out of the scope of our paper but will be fully described 
with more complicated  pressure law in \cite{BrFeJaNo} (replacing the assumption \eqref{nonmonotonewithtemp} by \eqref{nonmonotonewithtempandcoeff}). 

\bigskip

The first level of approximation is obtained by adding a barotropic correction
\begin{equation}
P_\delta(\rho,\vartheta)=P(\rho,\vartheta)+\delta\,(\rho^{\gamma}+ \rho^2). \label{barotropcorr}
\end{equation}
 From the approximated pressure, one defines as usual the approximate energy
\begin{eqnarray}
e_\delta(\rho,\vartheta) 
&& \nonumber = m(\vartheta)
+\int_{\rho^*}^\rho \frac{(P_\delta (\rho',\vartheta)  
    - \vartheta \partial_\vartheta P_\delta (\rho',\vartheta)}{\rho'^2}\,d\rho'  \\
&&     =  e(\rho,\vartheta) +
    \frac{\delta}{\gamma -1} \,\rho^{\gamma-1} 
    +  \delta \rho,
\label{energydelta}
\end{eqnarray} 
and the entropy also has the straightforward expression
\begin{equation}
s_\delta(\rho,\vartheta)=\int_0^\vartheta \frac{m'(s)}{s}\,ds
-\int_{\rho^*}^\rho \frac{\partial_\vartheta P_\delta (\rho',\vartheta)}{\rho'^2}\,d\rho'=s(\rho,\vartheta).\label{entropydelta}
\end{equation}

In the second stage of approximation, indexed by $\eps$, we first replace $\kappa(\vartheta)$ by $\kappa_\eps(\vartheta)$. We assume that $\kappa_\eps=\kappa+\tilde \kappa_\eps$ with $\tilde \kappa_\eps$ a positive function which vanishes for $2\,\eps<\vartheta<1/2\,\eps$ and satisfies
\begin{equation}
\tilde \kappa_\eps(\vartheta)\longrightarrow +\infty\ \mbox{as}\ \vartheta\rightarrow \eps\ \mbox{or}\ \eps^{-1},\qquad \int_{\eps}^{\eps^{-1}} \tilde\kappa_\eps(\vartheta)\,d\vartheta<\infty.
\label{truncatetemp}
\end{equation} 
This is slightly different from \cite{FeNo} but gives similar bounds on the temperature while being simpler to handle in our case.
 Then we define $P_{\eps}$ as an approximate pressure law of $P_\delta$ truncated for large densities, namely for $\bar\rho_\eps=\eps^{-1}$, we define
\begin{equation}
P_{\eps}(\rho,\vartheta)=P(\rho,\vartheta)\quad\mbox{if}\ \rho\leq\bar\rho_\eps,\quad P_{\eps}(\rho,\vartheta)=P(\bar\rho_\eps,\vartheta)   
 \>   \mbox{if}\ \rho>\bar\rho_\eps\label{truncatepressure}
\end{equation}
Note that since the temperature will be bounded by \eqref{truncatetemp}, it is not necessary to truncate it in \eqref{truncatepressure}. This leads to
\[
P_{\eps,\delta}(\rho,\vartheta)=P_{\eps}(\rho,\vartheta)+\delta\,(\rho^\gamma + \rho^2).
\]
We also use $\bar \rho_\eps$ to truncate the initial density $\rho^0$: We define $\rho^{0}_\eps=\rho^0$ if $\rho^0\leq \bar \rho_\eps$ and $\rho^{0}_\eps=\bar\rho_\eps$ otherwise. We impose $\rho^0_\eps,\;u^0,\vartheta^0$ as initial data for \eqref{NSFapprox}.

The energy density is still defined from the pressure $P_{\eps,\delta}$ or $P_\eps$, 
\begin{equation}
\begin{split}
&e_{\eps}(\rho,\vartheta)=m(\vartheta)
+\int_{\rho^*}^\rho \frac{(P_\varepsilon (\rho',\vartheta)  
    - \vartheta \partial_\vartheta P_\varepsilon (\rho',\vartheta)}{\rho'^2}\,d\rho',\\
& \nonumber e_{\eps,\delta}(\rho,\vartheta)=m(\vartheta)
+\int_{\rho^*}^\rho \frac{(P_{\varepsilon,\delta} (\rho',\vartheta)  
    - \vartheta \partial_\vartheta P_{\varepsilon,\delta} (\rho',\vartheta)}{\rho'^2}\,d\rho' \\
  &
  \hskip1.4cm  = e_\eps(\rho,\vartheta)+ \frac{\delta}{\gamma-1} \,\rho^{\gamma-1}
  + \delta \rho
\end{split}\label{energyeps}
\end{equation}
and so is the entropy which is actually independent of $\delta$ (just as $s_\delta$ is independent of $\delta$ in \eqref{entropydelta})
\begin{equation}
s_{\eps,\delta}(\rho,\vartheta)=s_\eps(\rho,\vartheta)=\int_0^\vartheta \frac{m'(s)}{s}\,ds
-\int_{\rho^*}^\rho \frac{\partial_\vartheta P_\varepsilon (\rho',\vartheta)}{\rho'^2}\,d\rho'.\label{entropyeps}
\end{equation}
Remark that for $\rho<\rho_\eps$ then $e_\eps(\rho,\vartheta)=e(\rho,\vartheta)$ and similarly $s_\eps(\rho,\vartheta)=s(\rho,\vartheta)$. This trivially implies in particular that $e_\eps(\rho^0_\eps,\vartheta^0)\geq 0$ and that
\[
\int_{\Pi^d} \rho_\eps^0\,e_\eps(\rho^0_\eps,\vartheta^0)\,dx\longrightarrow \int_{\Pi^d} \rho^0\,e(\rho^0,\vartheta^0)\,dx\quad\mbox{as}\ \eps\rightarrow 0,
\] 
which enables to propagate the energy. At later times, one may not have $\rho_k\leq \bar\rho_\eps$ and $e_\eps(\rho_k,\vartheta_k)$ may be negative at some points. Nevertheless from \eqref{assumptionswithtemp}$_{6}$, one has that for $\rho>\bar\rho_\eps$
\begin{equation}
\left|e_\eps(\rho,\vartheta)- e_\eps(\bar\rho_\eps,\vartheta)\right|\leq C\,\vartheta\,\frac{\bar\rho_\eps^{\beta_3}}{\rho} +C\,\frac{\vartheta^{\beta_4+1}}{\rho}.\label{diffe_eps}
\end{equation}

\medskip

The first step is to pass to the limit $\alpha_k\rightarrow 0$ in the system \eqref{NSFapprox} to obtain a global weak solution to  
\begin{equation}
\begin{split}
& \partial_t \rho + {\rm div} (\rho u) =0,\\
& \partial_t (\rho u) + {\rm div}(\rho u\otimes u) - \mu\,\Delta u-(\lambda+\mu)\nabla \div u  + \nabla P_{\eps,\delta}(\rho,\vartheta)=0,\\
& \partial_t(\rho\,s_\eps(\rho,\vartheta))+\div(\rho\,s_\eps(\rho,\vartheta)\,u)-\div\left(\frac{\kappa_\eps(\vartheta)\,\nabla \vartheta}{\vartheta}\right) \\
& \hskip5cm = \sigma^2_{\eps, \delta} +   \frac{1}{\vartheta}\left({\cal S}:\nabla u+\frac{\kappa_\eps\,|\nabla\vartheta|^2}{\vartheta}\right)
\end{split} \label{NSFapproxeps}
\end{equation}
where $ \sigma^2_{\varepsilon,\delta}\in M_+([0,T]\times \Pi^d )$ with a total mass bounded
uniformly with respect to $\varepsilon$ and $\delta$. The system is still supplemented by $\rho^0_\eps,\;u^0,\;\vartheta^0$ as initial data.

  Note that the entropy still only satisfies an inequality, given the unknown positive measure, however this is supplemented by the preservation of total energy
\[
\int \rho\,\left(\frac{|u|^2}{2}+e_{\eps,\delta}(\rho,\vartheta)\right)\,dx=
\int \rho^0\,\left(\frac{|u^0|^2}{2}+e_{\eps,\delta}(\rho^0,\vartheta^0)\right)\,dx.
\]
As we remarked above, this identity occurs also for $\alpha_k$ fixed as mentioned in \cite{FeNo}; this is the reason for the appropriate corrections in $\alpha_k$ in the momentum and entropy equations. 

     Then one considers a sequence $\eps_k\to 0$ and the corresponding sequences $\rho_k,\;u_k, \;\vartheta_k$ of solution to \eqref{NSFapproxeps}. Passing again to the limit, we obtain global weak solutions to 
\begin{equation}
\begin{split}
& \partial_t \rho + {\rm div} (\rho u) =0,\\
& \partial_t (\rho u) + {\rm div}(\rho u\otimes u) - \mu\,\Delta u-(\lambda+\mu)\nabla \div u  + \nabla P_{\delta}(\rho,\vartheta)=0,\\
& \partial_t(\rho\,s(\rho,\vartheta))+\div(\rho\,s(\rho,\vartheta)\,u)-\div\left(\frac{\kappa(\vartheta)\,\nabla \vartheta}{\vartheta}\right) \\
& \hskip5cm 
= \sigma^3_\delta  +   \frac{1}{\vartheta}\left({\cal S}:\nabla u+\frac{\kappa\,|\nabla\vartheta|^2}{\vartheta}\right),
\end{split} \label{NSFapproxdelta}
\end{equation}
with $\sigma^3_\delta  \in M_+([0,T]\times \overline \Omega)$ with total mass bounded 
uniformly with respect to $\delta$ and with now $\rho^0,\;u^0,\;\vartheta^0$ as initial data.. We still  conserve the energy equality
\[
\int \rho\,\left(\frac{|u|^2}{2}+e_{\delta}(\rho,\vartheta)\right)\,dx=
\int \rho^0\,\left(\frac{|u^0|^2}{2}+e_{\delta}(\rho^0,\vartheta^0)\right)\,dx.
\]
Finally, we take a sequence $\delta_k\rightarrow 0$ and obtain global weak solutions
 as announced by Theorem \ref{Mainresultwithtemp}.

\bigskip

The three limits are somewhat similar with slightly different {\it a priori} estimates and in the case of \eqref{NSFapprox} to \eqref{NSFapproxeps} additional difficulties in obtaining those a priori estimates.  This limit uses point i) of Theorem \ref{maincompactness} and the other two point ii). For this reason, we sketch the limit \eqref{NSFapprox} to \eqref{NSFapproxeps} separately and both limits, \eqref{NSFapproxeps} to \eqref{NSFapproxdelta} and \eqref{NSFapproxdelta} to \eqref{cont+momentwithtemp}-\eqref{localenergyineq} at the same time.  

\bigskip

\noindent {\em The limit from \eqref{NSFapprox} to \eqref{NSFapproxeps}.} We use the a priori estimates described above, in particular in subsection \ref{secenergytemp}. The entropy estimate though requires more care for System \eqref{NSFapprox}. We emphasize here that most of the complications in this limit were already present in \cite{FeNo} and are solved here in a very similar manner. The novel features of our work are mostly present in the other two limits, from \eqref{NSFapproxeps} to \eqref{NSFapproxdelta} and from \eqref{NSFapproxdelta} to the limit system. 

However because we handle pressure terms that are more general, we can unfortunately not simply use the result of \cite{FeNo} to skip this first limit. Instead one has to check carefully that indeed at this level, estimates work the same.

First of all, given assumption \eqref{truncatetemp}, standard parabolic estimates imply that any solution to \eqref{NSFapprox} is either constant in $x$ or satisfies
\begin{equation}
\eps\leq\vartheta_k(t,x)\leq\frac{1}{\eps}.\label{minmaxtemp}
\end{equation}
Remark that since $\displaystyle \int_\eps^{\eps^{-1}} \tilde\kappa_\eps<\infty$, we do not have strict inequalities in \eqref{minmaxtemp}.
  In addition, the conservation of energy yields
\begin{equation}
\sup_{t,k}\int_{\Pi^d} \rho_k^\gamma(t,x)\,dx<\infty.\label{firstenergycons}
\end{equation}
It also useful to write a partial energy estimate, based only on the barotropic part of the potential energy
\begin{equation}\begin{split}
&\int \rho_k\,\left(\frac{|u_k|^2}{2}+\delta\,\left(\rho^2 +\frac{\rho_k^{\gamma}}{\gamma-1}\right)\right)\,dx  \\
&  =
\int \rho^0\,\left(\frac{|u^0|^2}{2}+\delta\,\left((\rho^0_k)^2+\frac{(\rho_k^0)^{\gamma}}{\gamma-1}\right)\right)\,dx\\
&\quad +\int_0^t\int \left(P_\eps(\rho_k,\vartheta_k)\,\div u_k-{\cal S}_k:\nabla u_k\right) \\
&\hskip2cm  -\alpha_k\,\delta\int_0^t\int \left(2+\gamma\,\rho_k^{\gamma-2}\right)\,|\nabla\rho_k|^2.\end{split}\label{energybarotrope}
\end{equation}
Since $P_\eps\leq C_\eps$ ($\vartheta_k$ is bounded and $P_\eps$ is truncated in $\rho_k$), we deduce that
\begin{equation}\begin{split}
&\int \rho_k\,\left(\frac{|u_k|^2}{2}+\delta\,\left(\rho^2 +\frac{\rho_k^{\gamma}}{\gamma-1}\right)\right)\,dx   \\
& 
 \hskip2cm \leq C_\eps-\alpha_k\,\delta\int_0^t\int \left(2+\gamma\,\rho_k^{\gamma-2}\right)\,|\nabla\rho_k|^2.\end{split}\label{energydissipk}
\end{equation}

\noindent Now observe that 
 \begin{equation}\label{relationimportante}
\begin{split}
&\frac{\Delta \rho_k}{\vartheta_k} \bigl(\vartheta_k\, s_{\eps} - e_{\eps} - \frac{P_{\eps}}{\rho_k}\bigr) \\
& = \div \Bigl[\bigl(\vartheta_k\, s_{\eps} - e_{\eps} -\frac{P_{\eps}}{\rho_k}  \bigr) \frac{\nabla\rho_k}{\vartheta_k}\Bigr]
+ {\partial_\rho P_{\eps}} \frac{|\nabla\rho_k|^2}{\rho_k\,\vartheta_k}  \\
& \hskip5.5cm - \Bigl(e_{\eps} + \rho\,\partial_\rho e_{\eps})\Bigr) \frac{\nabla\rho_k\cdot \nabla\vartheta_k}{\vartheta^2_k}
\end{split} 
\end{equation}
where the functions $P_{\eps}$, $e_{\eps}$ and $s_\eps$ are all taken on the points $\rho_k(t,x)$, $\vartheta_k(t,x)$. The identity \eqref{relationimportante} is obtained through the general relations between $s$, $e$  and $P$ and in particular
\[
\frac{\partial}{\partial\rho}  (\vartheta s - e - \frac{P}{\rho}) 
= -\frac{1}{\rho}\frac{\partial P}{\partial\rho} 
\]
and
\[
\frac{\partial}{\partial\theta}  
( s - \frac{e}{\vartheta} - \frac{P}{\rho\vartheta}) = \frac{1}{\vartheta^2}\Bigl(e+\rho 
\frac{\partial e}{\partial\rho}).
\]
Integrating in space and time the entropy equation in \eqref{NSFapprox}, we therefore get
 \begin{equation}\label{entropyestdelta}
\begin{split}
\int_0^t\int_{\Pi^d}   \frac{1}{\vartheta_k}\left({\cal S}_k:\nabla u_k
 +\frac{\kappa_\eps(\vartheta_k)}{\vartheta_k}|\nabla \vartheta_k|^2\right)
+ \alpha_k \int_0^t\int_{\Pi^d}  \partial_\rho P_{\varepsilon}  \frac{|\nabla\rho_k|^2}{\rho_k\vartheta_k} \\ 
 \le \alpha_k \int_0^t\int_{\Pi^d} \Bigl(e_{\eps} + \rho_k\,{\partial_\rho e_{\eps}})\Bigr) \frac{\nabla\rho_k\cdot \nabla\vartheta_k}{\vartheta_k^2}
+ \int_{\Pi^d} \rho_k s_\eps (t) - \int_{\Pi^d} (\rho_k s_\eps)(0).
\end{split}
\end{equation}
Therefore for $\alpha_k$ fixed we have extra
 terms in the right-hand side with respect to the case of System \eqref{NSFapproxeps} or \eqref{NSFapproxdelta} as described in subsection \ref{secenergytemp}, namely
\[
\alpha_k \int_{\Pi^d}
     (e_{\eps} + \rho_k\,\partial_\rho e_{\eps}) \frac{\nabla \rho_k\cdot \nabla\vartheta_k}{\vartheta_k^2} 
\]
 and an additional term on the left-hand side
\[
 \alpha_k\int_{\Pi^d}  \partial_\rho P_{\eps}\, \frac{|\nabla\rho_k|^2}{\vartheta_k}.
 \]
    On the other hand by the bounds \eqref{minmaxtemp} on $\vartheta_k$ and 
the truncation in $P_\eps$ imply that $s_{\eps}$ is bounded by some constant depending on $\eps$, hence trivially in that case
\[
\int_{\Pi^d} \rho_k\,s_\eps(\rho_k,\vartheta_k)\,dx\leq C_\eps.
\]
From the definition \eqref{truncatepressure} of $P_\eps$, we obtain that  
\begin{equation}
\partial_\rho P_{\eps}(\rho,\vartheta)=0,\qquad \forall\ \rho>\bar\rho_\eps,\quad\forall \eps\leq\vartheta_k(t,x)\leq\frac{1}{\eps}.\label{monotonetemp}
\end{equation}
From the bounds \eqref{minmaxtemp} on $\vartheta_k$, we deduce that $P_{\eps}$ satisfies
\[
 \alpha_k\int_{\Pi^d}  \partial_\rho P_{\eps}\, \frac{|\nabla\rho_k|^2}{\vartheta_k}\geq 
-C\,\alpha_k\int_{\rho_k\leq \bar\rho_\eps}  |\nabla\rho_k|^2.
 \]
On the other hand, by the dissipation \eqref{energydissipk}, we get a control on
$$\alpha_k \delta  \int_0^T \int_{\Pi^d}  |\nabla \rho_k|^2 \le C_{\epsilon}$$
and thus
\begin{equation} \label{rhsP}
 \alpha_k\int_{\Pi^d}  \partial_\rho P_{\eps}\, \frac{|\nabla\rho_k|^2}{\vartheta_k}\geq 
-C_{\delta, \varepsilon} .
\end{equation}
Turning to the last term in the entropy estimate, 
we  remark that $e_\eps$ and $\rho\partial_\rho e_\eps$ are bounded still from the truncation of $P_\eps$, so that one obtains
\begin{equation}\begin{split}
& \alpha_k \int_{\Pi^d}
     (e_{\eps} + \rho_k\,\partial_\rho e_{\eps}) \frac{\nabla \rho_k\cdot \nabla\vartheta_k}{\vartheta_k^2}       \\
    &\hskip2cm \leq \alpha_k\,C_\eps\int_{\Pi^d}\nabla\rho_k \cdot \nabla \vartheta_k\\
&\hskip2cm  
\leq \alpha_k^{3/2}\,\frac{\delta\,\eps}{2} \int_{\Pi^d} |\nabla\rho_k|^2+C_{\eps,\delta} \,\alpha_k^{1/2} \int_{\Pi^d} |\nabla\vartheta_k|^2 \\
& \hskip2cm
\leq  \alpha_k^{1/2} C_{\varepsilon, \delta}
\end{split}\label{rhse}
\end{equation}
Combining \eqref{rhsP}-\eqref{rhse} with \eqref{entropyestdelta} concludes the bound on the entropy.

\medskip

The rest of the a priori estimates described in subsection \ref{secenergytemp} follow in a straightforward manner. We hence summarize the uniform bounds here
\begin{equation}\begin{split}
& \eps\leq \vartheta_k\leq \frac{1}{\eps},\\
&\sup_{t,k} \int_{\Pi^d} \left(\rho_k^\gamma+\rho_k\,|u_k|^2+\vartheta_k^{\gamma_\vartheta}\right)\,dx
      +  \alpha_k \delta \int_0^T \int_{\Pi^d} |\nabla\rho_k|^2 < \infty,\qquad  \\
& \sup_k \int_0^T\int_{\Pi^d} \left(\rho_k^{\gamma + a}+|\nabla u_k|^2\right)\,dx\,dt<\infty,\quad a<1/d,\\
&\sup_k \int_0^T\int_{\Pi^d} \left(|\nabla \vartheta_k^{\alpha/2}|^2+|\nabla \log \vartheta_k|^2 \right)\,dx\,dt<\infty,\\
& \sup_k\,\|\partial_t (\rho_k\,s_\eps(\rho_k,\vartheta_k))\|_{M^1([0,T]; W^{-1,1}(\Pi^d))}<\infty.
\end{split}\label{boundwithalphak}
\end{equation}
Combining those bounds with the continuity and momentum equations shows that \eqref{boundrhok}, \eqref{bounduk}, \eqref{rhout}, and \eqref{rhorho_t}
are satisfied. 
  Moreover taking the divergence of the momentum equation in \eqref{NSFapprox} and inverting the Laplacian, one obtains
\[\begin{split}
(\lambda+2\,\mu)\,\div u_k=&P_{\eps,\delta}(\rho_k,\vartheta_k)+\Delta^{-1}\,\div\left(\partial_t(\rho_k\,u_k)+\div(\rho_k\,u_k\otimes u_k)\right)\\
&+\alpha_k\,\Delta^{-1}\,\div\,(\nabla\rho_k\cdot\nabla u_k).
\end{split}\]
This is exactly the identity \eqref{divu0} with $P_k(\rho,t,x)=P_{\eps,\delta}(\rho,\vartheta_k(t,x))$ and $\mu_k=\lambda+2\,\mu$ which satisfies \eqref{elliptic}. 
   Observe that as a consequence if $r$ is strictly larger than $\bar\rho_\eps$ then $P_k(t,x,\rho)$ is only $\delta(\rho^\gamma+\rho^2)$ plus a given function of $\vartheta_k$ thus satisfying the first part of \eqref{monotone}. In addition if $r$ and $s$ are less than $\bar\rho_\eps$
\[\begin{split}
|P_k(t,x,r)-P_k(t,y,s)|&\leq \max_{\vartheta\in [\vartheta_k(t,x),\ \vartheta_k(t,y)],\;\rho\leq\bar\rho_\eps} \Big(|\partial_\rho P_{\eps,\delta}(\rho,\vartheta)|\,|r-s|\\
&\qquad+|\partial_\vartheta P_{\eps,\delta}(\rho,\vartheta)|\,|\vartheta_k(t,x)-\vartheta_k(t,y)|\Big)\\
&\leq C_\eps\,(|r-s|+|\vartheta_k(t,x)-\vartheta_k(t,y)|).
\end{split}\]
Since $\vartheta_k$ belongs uniformly to an appropriate Sobolev space, that means that the last part of \eqref{monotone} is also satisfied.
We may hence apply the variant of Theorem \ref{maincompactness} yielding the compactness of $\rho_k$ in $L^1$.
 
Since $u_k\in L^2_t H^1_x$, we get the compactness on $\sqrt\rho_k u_k$ in $L^2_t L^2_x$ from the momentum equation.  The passage to the limit in the continuity equation is therefore as usual. 
  We also have compactness in space for $u_k$ and $\vartheta_k$ respectively from the viscosity and conductivity.  

   The next step is to obtain the compactness of the temperature. This relies on the entropy inequation in \eqref{NSFapproxeps} but follows the now classical approach and for this reason we only sketch the procedure:
\begin{itemize}
\item Observe that $\rho_k\,s_\eps(\rho_k,\vartheta_k)$ in some $L^p_{t,x}$ with $p>1$ from \eqref{boundwithalphak}. In fact $\rho_k s_\eps(\rho_k,\vartheta_k)$ is in $L^\infty_t L^\gamma_x$
uniformly using the definition of $s_\eps(\rho_k,\vartheta_k)$.
\item Observe that $\partial_t (\rho_k \,s_\eps(\rho_k,\vartheta_k))$ is uniformly in
$M^1([0,T]; W^{-1,1}(\Pi^d))$. Rewrite the entropy equation as
\[
\partial_t (\rho_k \,s_\eps(\rho_k,\vartheta_k))
   +{\rm div} (R_{\eps,k}^1) + R^2_{\eps,k}=\sigma^1_{\eps,\delta,\alpha_k},
\]
where $R^1_{\eps,k}$, $R^2_{\eps,k}$ contains all the other terms. By our previous estimates and in particular from \eqref{boundwithalphak}, we know that $\|R^i_{\eps,k}\|_{L^1_t L^1_x}$ is uniformly bounded in $k$ for $i=1,2$.  Hence integrating the previous equation
\[
\int_0^t\int_{\Pi^d} \sigma^1_{\eps,\delta,\alpha_k}
=  \int_{\Pi^d} \rho_k \,s_\eps(\rho_k,\vartheta_k)+\int_0^t \int_{\Pi^d} R^2_{\eps,k}
   - \int_{\Pi^d} [\rho_k \,s_\eps(\rho_k,\vartheta_k)]\vert_{t=0} 
\leq C_\eps. 
\]
Recalling that $\sigma^1_{\eps,\delta,\alpha_k}\geq 0$ then this implies that $\|\sigma^1_{\eps,\delta,\alpha_k}\|_{M^1_{t,x}}\leq C_\eps$. Finally this bounds $\partial_t (\rho_k \,s_\eps(\rho_k,\vartheta_k))$ in $M^1_{t,x}+L^1_t W^{-1,1}_x\subset M^1_t W^{-1,1}_x$. 
\item Obtain the $a.e.$ convergence of $\rho_k\, s_\eps(\rho_k,\vartheta_k)(t,x)$. From the first point and the compactness in space of $\vartheta_k$ and $\rho_k$, we deduce that $\rho_k\,s_\eps(\rho_k,\vartheta_k)$ is compact in space. From the second point, we deduce that it is compact in time. Therefore after extraction $\rho_k\,s_\eps(\rho_k,\vartheta_k)$ converges strongly in $L^1_{t,x}$ and after possibly further extraction, one has that there exists $l(t,x)$ s.t.
\[
\rho_k(t,x)\,s_\eps(\rho_k(t,x),\vartheta_k(t,x))\longrightarrow l(t,x), \quad \mbox{for}\ a.e.\ t,\;x.
\]
\item Obtain the $\rho$ a.e. convergence of $\vartheta_k(t,x)$. Extracting again, we have from the compactness of $\rho_k(t,x)$, that $\rho_k(t,x)$ converges $a.e.$ to $\rho(t,x)$. 
By the assumption on the specific heat \eqref{specificheat>0}, $s_\eps$ is invertible in $\vartheta_k$ ($P_\eps$ and so $s_\eps$ is not truncated in $\vartheta$ only in $\rho$). Therefore this implies that, $\rho$ a.e.,  $\vartheta_k(t,x)$ converges. 
\item Obtain the a.e. (not just $\rho$ a.e.) convergence of $\vartheta_k$. The integral defining $s$ and $s_\eps$ is singular at $\rho=0$ (because of the $\rho'^{-2}$ factor). In fact $\rho\,s(\rho,\vartheta)|_{\rho=0}=\partial_\vartheta P(\rho=0,\vartheta)$. Assumption \eqref{radiative} on the radiative part of $P$ guarantees that $\rho\,s$ is still invertible in $\vartheta$ even if $\rho=0$ and hence that $\vartheta_k$ converges a.e. even on the set where $\rho$ vanishes. 
\item Dominated convergence then implies the compactness of $\vartheta_k$ in $L^p([0,\ T]\times \Pi^d)$ for any $p<\infty$ (recall that for fixed $\epsilon$, we have $\vartheta_k$ 
 uniformly bounded in time and space).
\end{itemize}
This allows us to pass to the limit in every term of the momentum equation, including the barotropic term $\delta\, \rho^\gamma$ using the extra-integrability  $\rho_k\in L^{\gamma+a_\delta}$ in \eqref{boundwithalphak}. We can also simply pass to the limit in the entropy $s_\eps(\rho_k,\vartheta_k)$. 

Define $\bar\kappa_\eps$ by $\bar\kappa_\eps'=\kappa_\eps$. By \eqref{truncatetemp}, we know that $\bar\kappa_\eps(\vartheta)$ is bounded on $[\eps,\ \eps^{-1}]$. This lets us pass to the limit in $\kappa_\eps(\vartheta_k)\,\nabla \vartheta_k=\nabla \bar\kappa_\eps(\vartheta_k)$ by the compactness of $\vartheta_k$.
 Let us  pass to the limit next  in 
\[
\alpha_k\, \frac{\Delta\rho_k}{\vartheta_k}\,\left(\vartheta_k\,s_\eps-e_{\eps} -\frac{P_{\eps}}{\rho_k}\right).
\]
By relation \eqref{relationimportante}, the bounds on $s_\eps$, $e_{\eps}$, $P_{\eps}$, the a priori estimates in \eqref{rhse} and \eqref{boundwithalphak} and the dissipation term in \eqref{energydissipk}, this term will converge to $0$ if we can prove that
\[
\alpha_k\,\int_0^T\int_{\Pi^d} \frac{|\nabla \rho_k|^2}{\rho_k}\,dx\,dt\longrightarrow 0.
\]
On the other hand from the continuity equation, one has that
\[\begin{split}
&\int_{\Pi^d} \rho_k\,\log \rho_k(t,x)\,dx-\int_{\Pi^d} \rho_k^0\,\log \rho_k^0(x)\,dx+\int_0^t\int_{\Pi^d} \rho_k\,\div u_k\,dx\,dt\\
&\qquad=-\alpha_k\,\int_0^T\int_{\Pi^d} \frac{|\nabla \rho_k|^2}{\rho_k}\,dx\,dt.
\end{split}\]
By the compactness of $\rho_k$, we can pass to the limit in every term in the l.h.s. so
\[\begin{split}
&\int_{\Pi^d} \rho\,\log \rho(t,x)\,dx-\int_{\Pi^d} \rho^0\,\log \rho^0(x)\,dx+\int_0^t\int_{\Pi^d} \rho\,\div u\,dx\,dt\\
&\qquad=-\lim\alpha_k\,\int_0^T\int_{\Pi^d} \frac{|\nabla \rho_k|^2}{\rho_k}\,dx\,dt.
\end{split}\]
On the other hand, $\rho$ solves the continuity equation without any diffusion and since it belongs to $L^2$ by \eqref{boundwithalphak}, one also has that
\[
\int_{\Pi^d} \rho\,\log \rho(t,x)\,dx-\int_{\Pi^d} \rho^0\,\log \rho^0(x)\,dx+\int_0^t\int_{\Pi^d} \rho\,\div u\,dx\,dt=0,
\]
which has for consequence the required property
\[
\lim\alpha_k\,\int_0^T\int_{\Pi^d} \frac{|\nabla \rho_k|^2}{\rho_k}\,dx\,dt=0.
\] 
  Note now that is not possible to pass to the limit in the r.h.s. of the entropy equation
\eqref{NSFapprox}$_3$ without the use of appropriate defect measure.
   For instance the r.h.s. contains $\mu\,|\nabla u_k|^2$ which does not in general converge to $\mu\,|\nabla u|^2$, as this would require the compactness of $\nabla u_k$ in $L^2$.
     Instead one uses convexity properties  and compactness on the temperature $\vartheta_k$ to prove that
\[
w-\lim \frac{1}{\vartheta_k}\left({\cal S}_k:\nabla u_k+\frac{\kappa_\eps\,|\nabla\vartheta_k|^2}{\vartheta_k}\right)
=
\tilde\sigma^2_{\eps, \delta} +
 \frac{1}{\vartheta}\left({\cal S}:\nabla u+\frac{\kappa_\eps\,|\nabla\vartheta|^2}{\vartheta}\right)
\]
where $\tilde \sigma^2_{\eps, \delta}\in M_+([0,T]\times \overline \Omega)$.
     This leads to the equality in the entropy equation in \eqref{NSFapproxeps}$_3$ with a positive (and a priori unknown)  measure $\sigma^2_{\eps, \delta}$ which is the sum of $\tilde \sigma^2_{\eps, \delta}$ and the vague limit of $\sigma^1_{\varepsilon, \delta,\alpha_k}$ when $\alpha_k$ tends to zero. 
   Note we can also write an inequality ignoring the positive measure.
   Finally we conclude the proof of this limit by noticing that we may pass to the strong limit in all the terms of the energy conservation. Therefore we indeed obtain the equality in \eqref{energyconswithtemp}. 

Now use \eqref{assumptionswithtemp}$_1$ (which implies \eqref{boundentropyenergy}) to write the inequalities 
\[
s(\rho,\vartheta)  \le C [ e(\rho,\vartheta) + \rho^{-1}],\qquad
s_\varepsilon (\rho,\vartheta)  \le C [ e_\varepsilon (\rho,\vartheta) + f_\varepsilon(\rho)], \]
where $f_\varepsilon(\rho) = 1/\overline{\rho}_\varepsilon+1/\rho$.

We can hence integrate in $t,\;x$ the entropy equations to find
\begin{equation}
\begin{split}
& \int_0^T \int_{\Pi^d} \sigma^3_{\delta}\leq \sup_t \int_{\Pi^d} \rho_k\,s(\rho_k,\vartheta_k)\leq  C\,\sup_t \int_{\Pi^d}\left(\rho_k\,e(\rho_k,\vartheta_k)+1\right)\leq C,\\
& \int_0^T \int_{\Pi^d} \sigma^2_{\varepsilon, \delta}\leq \sup_t \int_{\Pi^d} \rho_k\,s_\eps(\rho_k,\vartheta_k) \\
&\hskip1.9cm  \leq  C\,\sup_t \int_{\Pi^d}\left(\rho_k\,e_\eps(\rho_k,\vartheta_k)+1+\rho_k\right)\leq C.
\end{split}\label{defectmeasurecontrol}
\end{equation}
This ensures that $\sigma^2_{\varepsilon,\delta},\;\sigma^3_\delta \in M^1([0,T]\times \overline \Omega)$ uniformly with respect to $\varepsilon$ and $\delta$. We again refer to \cite{Fei, FeNo} for the details of similar procedures. 

It only remains to pass to the limit in the energy equality: For all $t$
\[
\int_{\Pi^d}\left(\rho_k\,e_{\eps,\delta}(\rho_k,\vartheta_k) +\rho_k\,\frac{|u_k|^2}{2}\right)\,dx= \int_{\Pi^d}\left(\rho_\eps^0\,e_{\eps,\delta}(\rho_\eps^0,\vartheta^0) +\rho_\eps^0\,\frac{|u^0|^2}{2}\right)\,dx.
\]
It is straightforward to show the convergence of $\rho_k\,e_{\eps,\delta}(\rho_k,\vartheta_k)$ to $\rho\,e_{\eps,\delta}(\rho,\vartheta)$ in $L^1_{loc, t,x}$, from the compactness of $\rho_k,\;\vartheta_k$, the a priori estimates and the bounds on $e_{\eps,\delta}$ that can be derived from \eqref{assumptionswithtemp}$_{6}$, \eqref{nonmonotonewithtemp} (see \eqref{diffe_eps} for instance). Similarly the compactness of $\rho_k,\;\rho_k\,u_k$ and the a priori estimates imply the convergence of $\rho_k\,|u_k|^2$ in $L^1_{loc, t,x}$. Since this convergence is only $L^1$ in time and not $L^\infty$, we need one more step: Extract another subsequence s.t. for $a.e.$ $t$
\[
\int_{\Pi^d}\left(\rho_k\,e_{\eps,\delta}(\rho_k,\vartheta_k) +\rho_k\,\frac{|u_k|^2}{2}\right)\,dx\longrightarrow \int_{\Pi^d}\left(\rho\,e_{\eps,\delta}(\rho,\vartheta) +\rho\,\frac{|u|^2}{2}\right)\,dx,
\] 
which finally gives the conservation of energy $a.e.$ $t$ since the left-hand side is constant.

\bigskip

\noindent{\em The limits \eqref{NSFapproxeps} to \eqref{NSFapproxdelta} and \eqref{NSFapproxdelta} to the final system}. 
First we consider a sequence $\eps_k\rightarrow 0$ and corresponding sequences of solutions $\rho_k$, $u_k$, $\vartheta_k$ to \eqref{NSFapproxeps}. Some of the a priori estimates for this sequence of solutions to \eqref{NSFapproxeps} are obtained by simply keeping only the estimates in \eqref{boundwithalphak} uniform in $\eps$. The others, such as the entropy estimate, have to be derived again. But one proceeds exactly as described in subsection \ref{secenergytemp}, leading to  
\begin{equation}\eps\rightarrow0\left\{\begin{aligned}
&\sup_{t,k} \int_{\Pi^d} \left(\rho_k^\gamma +\rho_k\,|u_k|^2+\vartheta_k^{\gamma_\vartheta} \right)\,dx<\infty,\\
& \sup_k \int_0^T\int_{\Pi^d} \left(\rho_k^{\gamma+a_\delta}+|\nabla u_k|^2\right)\,dx\,dt<\infty
 \qquad \hbox{ for all } \> a_\delta<  \frac{1}{d},\\
&\sup_k \int_0^T\int_{\Pi^d} \left(|\nabla \vartheta_k^{\alpha/2}|^2+|\nabla \log \vartheta_k|^2 \right)\,dx\,dt<\infty,
\end{aligned}\right.\label{boundwitheps}
\end{equation}
Similarly for \eqref{NSFapproxdelta}, we obtain
\begin{equation}\delta\rightarrow0\left\{\begin{aligned}
&\sup_{t,k} \int_{\Pi^d} \left(\rho_k^\gamma+\rho_k\,|u_k|^2+\vartheta_k^{\gamma_\vartheta} \right)\,dx<\infty,\\
& \sup_k \int_0^T\int_{\Pi^d} \left(\rho_k^{\gamma+a}+|\nabla u_k|^2\right)\,dx\,dt<\infty
   \quad \hbox{ for all }  a< \frac{1}{d},\\
&\sup_k \int_0^T\int_{\Pi^d} \left(|\nabla \vartheta_k^{\alpha/2}|^2+|\nabla \log \vartheta_k|^2 \right)\,dx\,dt<\infty.
\end{aligned}\right.\label{boundwithdelta}
\end{equation}
The bounds \eqref{boundwithdelta} will of course imply \eqref{estimateswithtemp} after passage to the limit. 
   As before combining those bounds with the continuity and momentum equations shows that \eqref{boundrhok}, \eqref{bounduk}, \eqref{rhout}, and \eqref{rhorho_t}
are satisfied with $p=\gamma+a_\delta$ or $p=\gamma+a$ for System \eqref{NSFapproxeps} and for System \eqref{NSFapproxdelta}. 

\bigskip

     We now have to check that the pressure defined by $P_k(t,x,\rho_k)=P(\vartheta_k(t,x),\rho_k)$ if $\rho \le \epsilon_k^{-1}$
or $P_k(t,x,\rho_k)=P(\vartheta_k(t,x), \varepsilon_k^{-1})  $ if $\rho \ge \epsilon_k^{-1}$ satisfies
 the stability assumptions, namely  \eqref{nonmonotone}. 

\medskip

\noindent {\it First property.} 
  First of all by the bounds in \eqref{assumptionswithtemp}
$$
|P(\vartheta_k(x),\rho_k(x))-P(\vartheta_k(y),\rho_k(y))|\leq Q_k(t,x,y) + I 
$$
with 
$$ Q_k(t,x,y)
    =C\,(\rho_k(x)^{\beta_3}+\rho_k(y)^{\beta_3}+ (\vartheta_k(t,x))^{\beta_4}+(\vartheta_k(t,y))^{\beta_4})\,|\vartheta_k(t,x)-\vartheta_k(t,y)|$$
 and
\[
I= |P(\vartheta_k(t,x),\rho_k(x))-P(\vartheta_k(t,x),\rho_k(y))|.
\]
Note that $\vartheta$ is compact in $x$ in $L^1_{t,x}$ and uniformly bounded in $L^\alpha_t L^{\alpha/(1-2/d)}_x$. Since $\beta_4\leq \alpha-1\leq \alpha/2$ as $\alpha\geq 2$, one has that for some $\theta<1$
\[\begin{split}
&\int_0^T\int_{\Pi^{2d}} K_h(x-y)\, (\vartheta_k(t,x))^{\beta_4}+(\vartheta_k(t,y))^{\beta_4})\,|\vartheta_k(t,x)-\vartheta_k(t,y)|\\
&\quad\leq C\,\|K_h\|_{L^1}^\theta\,\left(\int_0^T\int_{\Pi^{2d}} K_h(x-y)\,|\vartheta_k(t,x)-\vartheta_k(t,y)|\right)^{1-\theta}.
\end{split}\]
Similarly $\rho_k$ is uniformly bounded in $L^{\gamma+a}_{t,x}$ so that for example
\[
\int \rho_k^{\beta_3}\,\vartheta_k\leq \|\vartheta_k\|_{L^\alpha_{t,x}}\,\|\rho_k\|^{\beta_3\,\alpha/(\alpha-1)}\leq C,
\]
as $\beta_3\,\alpha/(\alpha-1)\leq \gamma+a$. Indeed $\beta_3<(\gamma+a+1)/2$ and $\alpha\geq 4$.
 This implies that
\[
\sup_k\int_0^T\int_{\Pi^{2d}} \frac{K_h(x-y)}{\|K_h\|_{L^1}}\,Q_k(t,x,y)\longrightarrow 0,\quad \mbox{as}\ h\rightarrow 0.
\]
Note that if $\partial_\rho P_k(t,x,s)  = 0$ if $s> \epsilon_k^{-1}$. Hence we can use \eqref{nonmonotonewithtemp} on to write 
\[\begin{split}
|P(\vartheta_k(t,x),\rho(x))-P(\vartheta_k(t,x),\rho_k(y))|
    \le &C( \rho_k(x)^{\gamma -1}  + \rho_k(y)^{\gamma-1} + \vartheta_k(t,x)^b)\\ &\ |\rho_k(x) - \rho_k(y)|.
\end{split}\]
Denote $\tilde P_k(t,x)=\vartheta_k(t,x)^b$.
Let us now use again that
$\vartheta$ is compact in $x$ in $L^1_{t,x}$ and uniformly bounded in 
$L^\alpha_t L^{\alpha/(1-2/d)}_x$ and the assumption on $b\leq\alpha/2$ in \eqref{nonmonotonewithtemp} to get the convergence
\[ 
\sup_k \int_0^T\int_{\Pi^{2d}} \frac{K_h(x-y)}{\|K_h\|_{L^1}\|}\, |\tilde P_k(t,x)-\tilde P_k(t,y)| \to 0 \hbox{ as } h\to 0,
\]
together with the uniform bound
\[
\sup_k\|\tilde P_k\|_{L^2_{t,x}}\leq \sup_k\|\vartheta_k(t,x)^b\|_{L^2_{t,x}}\leq \sup_k\|\vartheta_k(t,x)\|_{L^\alpha_{t,x}}<\infty.
\]

\medskip

\noindent {\it Second property.} Following the same procedure as in the previous bounds, we obtain from \eqref{assumptionswithtemp} and \eqref{nonmonotonewithtemp}
\[\begin{split}
P_k(t,x,\rho_k(t,x))\leq C\,\Big(&\rho_k(t,x)^\gamma+(\vartheta_k(t,x))^b\,\rho_k(t,x) +(\vartheta_k(t,x))^{1+\beta_4}\\
&+\rho_k(x)^{\beta_3}\,\vartheta_k(t,x)\Big).
\end{split}\]
By Young's inequalities with $1/\gamma^*+1/\gamma=1$
\[\begin{split}
P_k(t,x,\rho_k(t,x))\leq C\,\Big(&\rho_k(t,x)^\gamma+(\vartheta_k(t,x))^{b\,\gamma^*} +(\vartheta_k(t,x))^{1+\beta_4}\\
&+(\vartheta_k(t,x))^{\gamma/(\gamma-\beta_3)}\Big).
\end{split}\]
Define
\[
R_k(t,x,y)=(\vartheta_k(t,x))^{b\,\gamma^*} +(\vartheta_k(t,x))^{1+\beta_4}
+(\vartheta_k(t,x))^{\gamma/(\gamma-\beta_3)},
\]
and observe that $R_k$ is compact in $x$ in $L^1$ since $\vartheta_k$ is compact in $x$ and $b\,\gamma^*<\alpha$ by \eqref{nonmonotonewithtemp}, $1+\beta_4<\alpha$ since $\alpha>2$, with finally $ \gamma/(\gamma-\beta_3)<\alpha$ as $\beta_3<(\gamma+a+1)/2$, $a< 2\gamma/d-1$ and $\alpha\geq 4>d$. 

\medskip

Thus we conclude that the pressure law satisfies \eqref{nonmonotone} for both limits with $\tilde\gamma=\gamma$. We can again apply Theorem \ref{maincompactness}, point ii in this case. We again need that $p>\tilde\gamma$.
   Therefore we obtain the compactness of $\rho_k$ and from the momentum equation, the compactness of $\sqrt{\rho_k}\,u_k$. As in the limit of \eqref{NSFapprox} to \eqref{NSFapproxeps}, the next step is to obtain the compactness  in time of $\vartheta_k$. We follow the same procedure which however now requires more work on the first two points:
\begin{itemize}
\item Bound $\rho_k\,s_{\eps_k}(\rho_k,\vartheta_k)$ or $\rho_k\,s(\rho_k,\vartheta_k)$ uniformly in some $L^p_{t,x}$ with $p>1$. This is not immediate anymore since $s$ is not bounded and $s_{\eps_k}$ is not uniformly bounded. One uses the expression of the entropy in \eqref{entropyeps} and the bounds on $|\partial_\vartheta P|$ and $m$ in \eqref{assumptionswithtemp} to find
\begin{equation}
\rho_k\,s_{\eps_k}(\rho_k,\vartheta_k)\leq C\,(\rho_k^{\beta_3}+\vartheta_k^{\beta_4}+ \rho_k\,\vartheta^{\alpha(\gamma+a-1)/(\gamma+a)}),
\end{equation}
and similarly for $\rho_k\,s$. 
Then the a priori estimates on $\rho_k\in L^{\gamma+a}$ or $\rho_k\in L^{2+a_\delta}$ and $\vartheta_k\in L^\alpha_{t,x}\cap L^\infty_t L^{\gamma_\vartheta}_x$ are enough provided $\beta_3<\gamma+a$ and $\beta_4<\alpha$ or $\beta_4<\gamma_\vartheta$; both are ensured by the stronger condition in \eqref{coefftemp}. 
\item Bound $\partial_t (\rho_k \,s_{\eps_k}(\rho_k,\vartheta_k))$, $\partial_t (\rho_k \,s(\rho_k,\vartheta_k))$ uniformly in $M^1([0,T], W^{-1,1}(\Pi^d))$. 
One first has to bound $\rho_k s_{\eps_k}\,u_k$ in $L^1_{t,x}$. For instance
\[
\int_0^T\int_{\Pi^d} \rho_k^{\beta_3}\,u_k\leq \int_0^T\int_{\Pi^d} (\rho_k\,|u_k|^2+\rho_k^{2\beta_3-1}),
\] 
which is bounded provided $2\beta_3-1<\gamma+a$ and $2\,\beta_3-1<2+a_\delta$ leading to the assumption on $\beta_3$ in \eqref{coefftemp}. As for $\int \vartheta^{\beta_4}_k\,u_k$, we have by Sobolev embedding that $u_k\in L^2_t L^{2d/(d-2)}$ and therefore need that $\vartheta_k^{\beta_4}\in L^2_t L^{2d/(d+2)}$. Given that $\vartheta_k\in L^\alpha_{t,x}\cap L^\infty_t\,L^{\gamma_\vartheta}_x$, this is ensured by the condition on $\beta_4$ (note that the condition could in fact be improved by using that $\vartheta_k\in L^\alpha_t L^{\alpha/(1-2/d)}_x$). The last term is treated in a similar manner. From the entropy dissipation 
$\kappa_\eps(\vartheta_k)\,|\nabla\vartheta_k|^2/\vartheta_k$ is uniformly in $L^1_{t,x}$ as well.  Therefore, we just have now
to use that $\sigma^2_{\varepsilon,\delta}$ is uniformly in $M^1([0,T]\times \overline \Omega)$ 
proved  previously through relation \eqref{defectmeasurecontrol}.  
  Thus we conclude the bound on $\partial_t (\rho_k \,s_{\eps_k}(\rho_k,\vartheta_k))$.
  Similar calculations may be done concerning $\partial_t (\rho_k \,s(\rho_k,\vartheta_k))$
 with the defect measures appearing from the limit $\varepsilon_k\to 0$.  

\end{itemize}
 
   To conclude and pass to the limit in every term of the  momentum equation,  we use  the integrability properties on $\rho_k$, $\vartheta_k$ \eqref{assumptionswithtemp}${}_4$, \eqref{assumptionswithtemp}${}_5$, \eqref{assumptionswithtemp}${}_6$ and \eqref{nonmonotonewithtemp} to provide an $L^p_t L^p_x$ integrability with $p>1$ of the pressure and more precisely
\[
\int_{\rho_k>L\;\mbox{or}\;\vartheta_k>L} |P(\rho_k,\vartheta_k)|\leq C\,\int_{\rho_k>L\;\mbox{or}\;\vartheta_k>L} \left(\vartheta_k\,\rho_k^{\beta_3}+\rho_k^\gamma+\vartheta_k^{b}\,\rho_k\right)
\leq C\,L^{-\theta},
\] 
for some $\theta>0$ since $\rho_k$ is uniformly in $L^{\gamma+a}$, $\vartheta_k$ in $L^\alpha$ and from the assumptions \eqref{coefftemp} on the $\beta_i$. Since $P_{\eps,\delta}$ contains the additional barotropic part $\delta\,(\rho^2_k+\rho_k^\gamma)$ which is also controlled, one has that
\[
\int_{\rho_k>L\;\mbox{or}\;\vartheta_k>L} \left(|P_{\eps,\delta}(\rho_k,\vartheta_k)| +|P_{\delta}(\rho_k,\vartheta_k)|\right) \leq C\,L^{-\theta}.
\] 
This lets us pass to the limit in $P_{\delta}$ and $P_{\eps,\delta}$ by using the pointwise convergence of $\rho_k$ and $\vartheta_k$. 

The type same control applies to $\rho_k\,s_{\eps,\delta}(\rho_k,\vartheta_k)$ or $\rho_k\,s_{\delta}(\rho_k,\vartheta_k)$ so that we may pass to the limit in the entropy equation. 
  The right--hand side requires the use of convexity just as for the limit of System \eqref{NSFapprox} to \eqref{NSFapproxeps} and the properties of the total mass defect measures control follow the same calculations than \eqref{defectmeasurecontrol}.

\section{Models occurring in other contexts\label{otherextensions}}
Macroscopic models in various biological settings involve a density $\rho$ that
is transported by a velocity vector field $u$ with source term, such as 
$$\partial_t \rho + {\rm div} (\rho u) = \rho\,  G(P(\rho), c)$$
for some functions $G$ and $P$. The function $G$ may include birth and death
terms and it could also depend on other quantities such as nutrients concentration
denoted $c$, for example Oxygen in cancer modeling.  This concentration is typically governed by
a parabolic equation with right--hand side modeling the consumption of the resource(s)
\begin{equation}
\partial_t c - \Delta c = -\rho \, H(P(\rho),c)\label{biochemical}
\end{equation}
with 
$$\partial_P H \le 0, \qquad \partial_c H \ge 0, \qquad H(P,0)=0.$$
 The velocity field is described through a constitutive law
 for instance
\begin{equation}\label{SimpleStreamFunc}
-\nu \Delta \Psi + \alpha \Psi = P(\rho)  - S, \qquad u=-\nabla \Psi
\end{equation}
where $S$ is a given source term.

There have been several studies of such systems with applications 
to crowd motion, traffic jams, cancerology using specific reformulations:
gradient flow or kinetic descriptions and appropriate choices for $f$ and~$G$.

\medskip

 For instance the special case $c=0$, $\alpha=0$, $G\equiv 0$  has been studied by {\sc B.~Maury}, {\sc A. Roudneff--Chupin} and F.  {\sc Santambrogio} (see \cite{MaRoSa}) through the framework of optimal transportation reformulating the problem as a gradient flow in the Wasserstein space of measures. 
  Other examples concern the reformulation through a kinetic formulation. For instance, very recently, {\sc B.~Perthame} and {\sc N. Vauchelet} (see \cite{PeVa}) have studied the case $c=0$ in the whole space  with the pressure law $P(\rho)= (\gamma +1)  \rho^\gamma/\gamma$ with $\gamma> 1$, $G$ satisfying $$ G \in {\cal C}^1(\R), \qquad G'(\cdot) \le - \eta <0, \qquad G(P_M) = 0 \hbox{ for some } P_M>0$$ and $\alpha, \nu >0$.

  The main result of this last paper is  the ''stiff pressure law'' limit, namely the limit $\gamma \to +\infty$, leading to a free boundary model which generalizes the classical Hele-Shaw equation. Such kind of limit has also been performed for the compressible Navier--Stokes  equations 
by {\sc P.--L. Lions} and  {\sc N.~Masmoudi} (see \cite{LiMa}) with $P(\rho) = a \rho^\gamma$ with $G\equiv 0$. 

Recently   {\sc C. Perrin} and {\sc E. Zatorska} (see \cite{PeZa}) have studied the singular limit $\varepsilon \to 0$ for a  singular pressure law $P(\rho) = \varepsilon \rho^\gamma / (1-\rho)^\beta$ with $\gamma, \beta >3$. The advantage of such pressure law is that $0\le \rho \le 1$ for a fixed $\varepsilon$ which is important for some applications as mentioned by B. {\sc Maury} in his review paper \cite{Ma}.

Pressure laws which blows-up for a critical density are of course the exact analogous of the Van der Waals equation of state for compressible fluid dynamics.  They are also encountered in other setting such as crowd motion, granular flow, sedimentation problems. 

\medskip

  An other possibility to describe the velocity field is to consider the Brinkman equation instead of Eq. \eqref{SimpleStreamFunc},
namely
\[
-\nu \Delta u + \alpha u + \nabla P(\rho)= S,
\]
or with a Stokes viscosity term
\[
- \nu \Delta u - (\lambda + \nu) \nabla {\rm div u} + \alpha u + \nabla P(\rho)= S. 
\]
This type of correction accounts for flow through medium where the grains of the media are porous themselves and has been justified in \cite{Al}.   If the velocity is irrotational then this model is exactly reduced to Eq.  {\rm (\ref{SimpleStreamFunc})}. Note however that viscoelastic models for tumor growth may allow for instance to observe a lemon-like shape tumor, whereas with a Newtonian model an ovoid  is obtained. Even limited, this difference can eventually lead to bigger ones as the outer rim is composed of proliferating cells with exponential  growth. The kind of shape obtained for instance
 in \cite{BrCoGrRiSa} is observed in {\it in--vitro} experiments.

\bigskip

\noindent{\bf What our method can bring:}

We do not try to state a theorem here given the large variety of possible models. Instead we give a few elements for which the method introduced here could prove crucial
\begin{itemize}
\item More complex pressure laws, attractive and repulsive, could be considered. This would be the exact equivalent of Theorem \ref{MainResultPressureLaw}. Note that biological systems frequently exhibit preferred ranged of densities for instance with attractive interactions for low densities and repulsive at higher ones.
\item More importantly the transition from attractive to repulsive interactions may depend on the concentration $c$ of nutrients or other bio-chemicals. This is similar to the dependence on the temperature in the state laws for the Navier-Stokes-Fourier system. For example if the pressure blows-up at some thresholds, enforcing a maximal density, then this threshold and the maximal density will depend in general on $c$. Because $c$ is not necessarily uniformly bounded, the range of attractive interactions (where $G(P(\rho),c)$ is decreasing in $\rho$) is not compactly supported and classical approaches may fail.
\item There can be several nutrients or bio-chemicals. That means that in general one has several $c_1,\;c_2,\;\ldots$ with several equations \eqref{biochemical} (or a vector-valued one if the diffusion speeds are the same). If chemotaxis is considered, some of those bio-chemicals may be attractive while other are repulsive. This may lead to a complicated pattern of interactions which again cannot be handled by classical approaches.   
\item Many of this models are posed in porous media which are inherently anisotropic. In biology for instance the tissue or the porous matrix is heterogeneous. Therefore the equation for $u$ should read
$$-{\rm div} (A(t,x)D(u)) + \alpha (t,x) \, u +  \nabla P(\rho)= S$$
for some $A$ and $\alpha$ and with in general a non-monotone pressure law.  Our new approach could for instance help to enrich the model mathematically studied recently in
 \cite{DoTr}.

\end{itemize}

\section{Appendix: Notations \label{notations}}
For the reader convenience, we repeat and summarize here some of our main notations.

\bigskip

\noindent {\em Physical quantities.}

\begin{itemize}
\item$\rho(t,x)$, or $\rho_k(t,x)$ denotes the density of the fluid.

\item $u(t,x)$, or $u_k(t,x)$ denotes the velocity field of the fluid.

\item $P(.)$, or $P_k(.)$ denotes the pressure law.

\item $e(\rho)$ is the internal energy density.\\  In the barotropic case, $e(\rho)=\int_{\rho_{ref}}^\rho P(s)/s^2\,ds$.

\item $E(\rho,u)=\int \rho\,(|u|^2/2+e(\rho))$ is the total energy of the fluid.

\item $\mu$, $\lambda$ and $\mu_k$ denote various viscosity coefficients or combination thereof.

\item ${\cal S}$ denotes the viscous stress tensor. \\
          In the simplest isotropic case: ${\cal S} = 2 \mu D(u) + \lambda \div u {\rm Id}$. 
          
\item ${\cal D}$ is the diffusion  term related to the viscous stress tensor
          by ${\cal D} u =\div {\cal S}$.

\end{itemize}

\noindent In the Navier--Stokes--Fourier case: \\we have the additional notations, appearing only in section \ref{withtemperature}.

\begin{itemize}

\item  $\vartheta(t,x)$ is the temperature field of the fluid.
         
\item $s(\rho,\vartheta)$ is the entropy of the fluid. 

\item $\kappa(\vartheta)$ is the heat conductivity coefficient.

\item $C_v$ is the specific heat of the fluid.
\end{itemize}

\bigskip

\noindent{\em Technical notations.}

\begin{itemize}

\item $d$ is the dimension of space.

\item $k$ as an index always denotes the index of a sequence.

\item $h$ and $h_0$ are scaling parameters used to measure oscillations of certain quantities such as the density.

\item $K_h$ is a convolution kernel on $\Pi^d$, \\ $K_h(x)=(h+|x|)^{-a}$ for $x$ small enough and with $a>d$.

\item $\overline{K}_h$ is equal to $K_h/\|K_h\|_{L^1}$.

\item $ {\cal K}_{h_0}=\int_{h_0}^1 \overline{K}_h(x)\,\frac{dh}{h}$ is the weighted average of $K_h$. Note that $\|{\cal K}\|_{L^1}\sim |\log h_0|.$

\item $w_0$, $w_1$ and $w_a$ are the weights and 
         $w_{i,h}= \overline K_h\star w_i$ their regularization with $i=0,1,a$.

\item $C$ is a constant whose exact value may change from one line to another but which is always independent of $k$, $h$ or other scaling parameters.

\item $\eps(h)$ is a smooth function with $\eps(0)=0$.

\item $\theta$ is an exponent whose exact value may change as for $C$ but in $(0,1)$.

\item The exponent $p$ is most of the time such that $\rho\in L^p_{t,x}$.

\item $q$ and $r$ are other exponents for $L^p$ type spaces that are used when needed.

\item $I,\;II,\ldots$ and $A,\;B,\;D,\;E\ldots$ are notations for some intermediary quantities used in the proofs. Their definitions may change from one proof to another.

\item $x,\;y,\;w,\;z$ are typically variables of integration over the space domain.

\item $\delta\rho_k(x,y)=\rho_k(x)-\rho_k(y)$ the difference of densities.

\item $\bar \rho_k(x,y)=\rho_k(x)+\rho_k(y)$ the sum of densities.

\item $D\,\rho u_k=\Delta^{-1}\div(\partial_t (\rho_k\,u_k)+\div(\rho_k\,u_k\otimes u_k))$ denotes the effective flux.

\item The individual weights $w(t,x)=w_0,\;w_1,\;w_a$ satisfy Eq. \eqref{eqw} or
\[
\partial_t w+u_k\cdot \nabla w= -D\,w+\alpha_k\,\Delta_x w,
\]
where $D=D_0,\; D_1, \;D_a$ are the penalizations in \eqref{dk0}, \eqref{dk1}, \eqref{dk00}.
\item The weights $w_0$ or $w_a$ may be convolved to give $w_h=\overline K_h\star w_0$, $w_{a,h}=\overline K_h\star w_a$.
\item The weights are then added or multiplied to obtain the composed $W(t,x,y)= W_0,\;W_1,\;W_2,\;W_a$ with
\[\begin{split}
&W_0=w_0(x)+w_0(y),\quad W_1=w_1(x)+w_1(y),\\
& W_2=w_1(x)\,w_1(y),\quad W_a=w_a(x)+w_a(y).
\end{split}
\]
The main properties of the weights are given in Prop. \ref{weightprop}.
\end{itemize}

\section{Appendix:  Besov spaces and Littlewood-Paley decomposition\label{Besov}}
We only recall some basic definitions and properties of Besov spaces for use in Lemma \ref{shiftDulemma}. We start with the classical Littlewood-Paley decomposition and refer
to the readers for instance to \cite{BaChDa}, \cite{Ab} and \cite{BeSe} for details and applications
to fluid mechanic. 
   Choose any family $\Psi_k\in {\cal S}(\Pi^d)$ s.t.
\begin{itemize}
\item Its Fourier transform $\hat \Psi_k$ is positive and compactly supported in the annulus $\{2^{k-1}\leq |\xi|\leq 2^{k+1}\}$.
\item It leads to a decomposition of the identity in the sense that there exists $\Phi$ with $\hat \Phi$ compactly supported in $\{|\xi|\leq 2\}$ s.t. for any $\xi$
\[
1=\hat\Phi(\xi)+\sum_{k\geq 1} \hat \Psi_k(\xi).
\]
\item The family is localized in $\Pi^d$ in the sense that for all $s>0$
\[
\sup_k \|\Psi_k\|_{L^1}<\infty,\quad \sup_k 2^{ks}\,\int_{\Pi^d} |z|^s\,|\Psi_k(z)|\,dz<\infty.
\] 
\end{itemize}
Note that in $\R^d$, one usually takes $\Psi_k(x)=2^{kd}\,\Psi(2^k\,x)$ but in the torus, it can be advantageous to use a more general family. It is still necessary to take it smooth enough for the third assumption to be satisfied (it is for instance the difference between the Dirichlet and Fejer kernels).

For simplicity, we then denote $\Psi_0=\Phi$ for $k=0$ and for $k\geq 1$, $\Psi_k(x)=2^{kd}\,\Psi(2^{-k}\,x)$. For any $f\in {\cal S}'(\R^d)$, we also write $f_k=\Psi_k\star f$ and then obtain the decomposition
\begin{equation}
f=\sum_{k=0}^\infty f_k.\label{littlewoodpaley}
\end{equation}
From this decomposition one may easily define the Besov spaces
\begin{defi}
The Besov space $B^s_{p,q}$ is the space of all $f\in L^1_{loc}\cap{\cal S}'(\R^d)$ for which 
\[
\|f\|_{B^{s}_{p,q}}=\left\| 2^{s\,k}\,\|f_k\|_{L^p_x}\right\|_{l^q_k}=\left(\sum_{k=0}^\infty 2^{s\,k\,q}\,\|f_k\|_{L^p_x}^q\right)^{1/q}<\infty.
\]
\end{defi}
The main properties of the Littlewood-Paley decomposition that we use in this article can be summarized as
\begin{prop}
For any $1<p<\infty$ and any $s$, there exists $C>0$ s.t. for any $f\in L^1_{loc}\cap{\cal S}'(\R^d)$ 
\[\begin{split}
\frac{2^{s\,k}}{C}\,\|f_k\|_{L^p}\leq &\|\Delta^{s/2}\,f_k\|_{L^p}\leq C\, {2^{s\,k}}\,\|f_k\|_{L^p},\\
 C^{-1}\,\left\| \left(\sum_{k=0}^\infty 2^{2\,k\,s}\,|f_k|^2\right)^{1/2}\right\|_{L^p}\leq &\|f\|_{W^{s,p}}\leq C\,\left\| \left(\sum_{k=0}^\infty 2^{2\,k\,s}\,|f_k|^2\right)^{1/2}\right\|_{L^p}.
\end{split}
\]
And as a consequence for $1< p\leq 2$
\[
C^{-1}\,\|f\|_{B^s_{p,2}}\leq \|f\|_{W^{s,p}}\leq C\,\|f\|_{B^{s}_{p,p}}.
\]
\label{propLP}
\end{prop}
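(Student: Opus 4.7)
\textbf{Plan of proof for Proposition \ref{propLP}.}

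The three assertions are classical consequences of frequency localization and the Littlewood--Paley square function theorem; I would handle them in sequence, since each builds on the previous one.

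\emph{Step 1: Bernstein-type estimates.} The first double inequality is a standard Bernstein estimate for functions whose Fourier transform is localized in an annulus. Since $\hat f_k$ is supported in $\{2^{k-1} \leq |\xi|\leq 2^{k+1}\}$, I would choose an auxiliary smooth cutoff $\chi\in C_c^\infty(\R^d)$ with $\chi\equiv 1$ on $\{1/2\leq |\xi|\leq 2\}$ and $\chi$ supported in $\{1/4\leq |\xi|\leq 4\}$, set $\chi_k(\xi)=\chi(2^{-k}\xi)$, so that $\hat f_k = \chi_k \hat f_k$. Then $|\xi|^s \chi_k(\xi) = 2^{sk}\,m_k(\xi)$ with $m_k(\xi) = (2^{-k}|\xi|)^s \chi(2^{-k}\xi)$, and the family $\{m_k\}$ satisfies the Mikhlin--H\"ormander condition uniformly in $k$. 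Applying the Mikhlin multiplier theorem on $L^p$ for $1<p<\infty$ gives $\|\Delta^{s/2} f_k\|_{L^p}\leq C\,2^{sk}\|f_k\|_{L^p}$. The reverse inequality is obtained symmetrically by writing $\chi_k(\xi) = (|\xi|^s\chi_k(\xi))\cdot |\xi|^{-s}\tilde\chi_k(\xi)$ with another cutoff $\tilde\chi_k$ equal to $1$ on the support of $\chi_k$, and applying Mikhlin again. The case $k=0$ is trivial.

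\emph{Step 2: Littlewood--Paley square function.} For the second double inequality, I would invoke the classical Littlewood--Paley theorem on $\Pi^d$ (see \cite{Stein} or \cite{BaChDa}): for any $1<p<\infty$ and any $g\in L^p$,
\[
C^{-1}\,\Bigl\|\Bigl(\sum_{k\geq 0} |g_k|^2\Bigr)^{1/2}\Bigr\|_{L^p} \leq \|g\|_{L^p} \leq C\,\Bigl\|\Bigl(\sum_{k\geq 0} |g_k|^2\Bigr)^{1/2}\Bigr\|_{L^p},
\]
where $g_k=\Psi_k\star g$. Applying this to $g=\Delta^{s/2} f$ (or equivalently defining $W^{s,p}$ via this Littlewood--Paley norm when $s\notin\N$) and noting that $(\Delta^{s/2} f)_k = \Delta^{s/2} f_k$ up to harmless localization, Step 1 yields $|\Delta^{s/2} f_k|$ pointwise comparable in the $L^p$ sense to $2^{sk}|f_k|$; more precisely, by the uniform Mikhlin bound the vector-valued operator $(f_k)_k \mapsto (2^{-sk}\Delta^{s/2} f_k)_k$ is bounded on $L^p(\ell^2)$, which is what is needed to conclude
\[
C^{-1}\,\Bigl\|\Bigl(\sum_{k} 2^{2sk}|f_k|^2\Bigr)^{1/2}\Bigr\|_{L^p} \leq \|f\|_{W^{s,p}} \leq C\,\Bigl\|\Bigl(\sum_{k} 2^{2sk}|f_k|^2\Bigr)^{1/2}\Bigr\|_{L^p}.
\]

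\emph{Step 3: Besov embeddings for $1<p\leq 2$.} Both remaining inequalities are reduced to comparing the $L^p$ norm of an $\ell^2$ square function with an $\ell^p$ sum of $L^p$ norms, using only the elementary fact that for non-negative sequences $(a_k)$ and exponent $q=p/2$:
\begin{itemize}
\item if $q\leq 1$, the reverse Minkowski inequality in $L^q$ gives $\|\sum_k a_k\|_{L^q} \geq \sum_k \|a_k\|_{L^q}$;
\item if $q\leq 1$, the sub-additivity of $t\mapsto t^q$ gives $(\sum_k a_k)^q \leq \sum_k a_k^q$ pointwise.
\end{itemize}
Setting $a_k = 2^{2sk}|f_k|^2$, the first yields $\|(\sum 2^{2sk}|f_k|^2)^{1/2}\|_{L^p} \geq (\sum 2^{2sk}\|f_k\|_{L^p}^2)^{1/2} = \|f\|_{B^s_{p,2}}$, combined with Step 2 this gives the lower bound. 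The second yields $\|(\sum 2^{2sk}|f_k|^2)^{1/2}\|_{L^p} \leq (\sum 2^{skp}\|f_k\|_{L^p}^p)^{1/p} = \|f\|_{B^s_{p,p}}$, which combined with Step 2 gives the upper bound.

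\emph{Main obstacle.} There is no genuine technical difficulty in the plan; the only substantive ingredient is the vector-valued Littlewood--Paley inequality in Step 2, which is the deep harmonic-analytic input. The rest is essentially bookkeeping, with the only care needed being the book-keeping of cutoffs in Step 1 to ensure the Mikhlin constants are uniform in $k$, and the correct direction of Minkowski / reverse Minkowski in Step 3 (where the restriction $p\leq 2$ enters through the fact that $p/2\leq 1$).
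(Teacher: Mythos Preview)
The paper does not actually prove Proposition \ref{propLP}; it is stated in the Besov-space appendix as a classical fact, with the references \cite{BaChDa}, \cite{Ab}, \cite{BeSe} given at the start of that section for details. Your three-step sketch (Bernstein via Mikhlin, the Littlewood--Paley square-function theorem, then the elementary $\ell^p$/$\ell^2$ comparison using $p/2\le 1$) is exactly the standard route to these inequalities and is correct; in particular your identification of the restriction $p\le 2$ through reverse Minkowski and subadditivity of $t\mapsto t^{p/2}$ is on point.
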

In particular a consequence of Prop. \ref{propLP} is the following bound on truncated Besov norm
\begin{lemma} For any $1<p\leq 2$, there exists $C>0$ s.t. for any $f\in L^1_{loc}\cap{\cal S}'(\R^d)$ and any $K\in \N$
\[
\sum_{k=0}^K 2^{s\,k}\,\|f_k\|_{L^p_x}\leq C\,\sqrt{K}\,\|f\|_{W^{s,p}}
\]
\label{truncatedbesov}
\end{lemma}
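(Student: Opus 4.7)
The plan is to combine a Cauchy--Schwarz estimate on the sum over $k$ with the square function characterization of $W^{s,p}$ given in Proposition \ref{propLP}. First, applying Cauchy--Schwarz to the sum,
\[
\sum_{k=0}^K 2^{sk}\|f_k\|_{L^p}
  \leq \sqrt{K+1}\,\Big(\sum_{k=0}^\infty 2^{2sk}\|f_k\|_{L^p}^2\Big)^{1/2},
\]
which isolates the $\sqrt{K}$ factor and reduces matters to bounding the Besov-type quantity $(\sum_k 2^{2sk}\|f_k\|_{L^p}^2)^{1/2}$ by $C\,\|f\|_{W^{s,p}}$.

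The heart of the argument is then to interchange the $\ell^2$ in $k$ and the $L^p$ in $x$ in the right direction (this is the embedding $W^{s,p}\hookrightarrow B^s_{p,2}$, valid precisely for $p\leq 2$). I would carry this out by duality. Writing
\[
\Big(\sum_k 2^{2sk}\|f_k\|_{L^p}^2\Big)^{1/2}
  = \sup \ \sum_k c_k\,2^{sk}\int f_k\,g_k,
\]
where the supremum runs over nonnegative sequences $(c_k)$ with $\|c\|_{\ell^2}\leq 1$ and nonnegative functions $g_k$ with $\|g_k\|_{L^{p'}}\leq 1$, a pointwise Cauchy--Schwarz in $k$ followed by H\"older in $(L^p,L^{p'})$ gives
\[
\sum_k c_k\,2^{sk}\int f_k\,g_k
  \leq \Big\|\Big(\sum_k 2^{2sk}|f_k|^2\Big)^{1/2}\Big\|_{L^p}
       \Big\|\Big(\sum_k c_k^2 g_k^2\Big)^{1/2}\Big\|_{L^{p'}}.
\]
Since $p'\geq 2$, the ordinary Minkowski inequality applied in $L^{p'/2}$ yields $\|(\sum_k c_k^2 g_k^2)^{1/2}\|_{L^{p'}}^2\leq \sum_k c_k^2\|g_k\|_{L^{p'}}^2\leq 1$, so the second factor is at most $1$.

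To conclude, I would invoke Proposition \ref{propLP}, which provides $\|(\sum_k 2^{2sk}|f_k|^2)^{1/2}\|_{L^p}\leq C\,\|f\|_{W^{s,p}}$, and absorb $\sqrt{K+1}\leq \sqrt{2K}$ for $K\geq 1$ into the constant. The only genuine obstacle is the duality/Minkowski step: it is precisely there that the hypothesis $p\leq 2$ enters, since one needs $p'/2\geq 1$ for the ordinary Minkowski inequality to run in the available direction. For $p>2$ the mixed-norm comparison reverses and the statement as written would fail.
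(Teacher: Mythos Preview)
Your proof is correct and follows the same approach as the paper: Cauchy--Schwarz on the finite sum, then the embedding $W^{s,p}\hookrightarrow B^s_{p,2}$ for $p\le 2$ via Proposition~\ref{propLP}. The only difference is that the paper invokes the last line of Proposition~\ref{propLP} directly (which already states $\|f\|_{B^s_{p,2}}\le C\|f\|_{W^{s,p}}$), whereas you re-derive that embedding from the square-function characterization by your duality/Minkowski argument --- a harmless unpacking of what the proposition already records.
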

\begin{proof}
By a simple Cauchy-Schwartz estimate
\[
\sum_{k=0}^K 2^{s\,k}\,\|f_k\|_{L^p_x}\leq \sqrt{K}\,\left(\sum_{k=0}^\infty 2^{2\,s\,k}\,\|f_k\|_{L^p_x}^2\right)^{1/2}=\sqrt{K}\,\|f\|_{B^s_{p,2}},
\]
which concludes by Prop. \ref{propLP}.
\end{proof}

\bigskip

\noindent {\it Acknowledgments.} The authors would like to thank E. {\sc Feireisl} for 
constructive comments that contributed to improving the quality of redaction.

\end{document}